\def\checkmark{\tikz\fill[scale=0.4](0,.35) -- (.25,0) -- (1,.7) -- (.25,.15) -- cycle;} 
\def\halfcheckmark{\tikz\draw[scale=0.4,fill=black](0,.35) -- (.25,0) -- (1,.7) -- (.25,.15) -- cycle (0.75,0.2) -- (0.77,0.2)  -- (0.6,0.7) -- cycle;}
\DeclareFontFamily{U}{mathx}{\hyphenchar\font45}
\DeclareFontShape{U}{mathx}{m}{n}{
      <5> <6> <7> <8> <9> <10>
      <10.95> <12> <14.4> <17.28> <20.74> <24.88>
      mathx10
      }{}
\DeclareSymbolFont{mathx}{U}{mathx}{m}{n}
\DeclareMathSymbol{\bigtimes}{1}{mathx}{"91}
\DeclareMathSymbol{\invques}{\mathord}{operators}{`>}
\DeclareRobustCommand{\tmquestiondown}{%
  \ifmmode\invques\else\textquestiondown\fi
}
\numberwithin{section}{chapter}
\numberwithin{equation}{chapter}
\newcommand{\mylabel}[2]{#2\def\@currentlabel{#2}\label{#1}}
\newtheorem{theorem}{Theorem}[section]
\newtheorem{lemma}[theorem]{Lemma}
\newtheorem{conj}[theorem]{Conjecture}
\newtheorem{proposition}[theorem]{Proposition}
\newtheorem{corollary}[theorem]{Corollary}
\newtheorem{defn}[theorem]{Definition}
\newtheorem{example}[theorem]{Example}
\newtheorem{remark}[theorem]{Remark}
\newtheorem{convention}[theorem]{Convention}
\newcommand{\lb}{[[}
\newcommand{\rb}{]]}
\newcommand{\pr}{\mathrm{pr}}
\newcommand{\Gal}{\operatorname{Gal}}
\newcommand{\Fil}{\operatorname{Fil}}
\newcommand{\DD}{\mathbb{D}}
\newcommand{\BB}{\mathbb{B}}
\newcommand{\cB}{\mathcal{B}}
\newcommand{\NN}{\mathbb{N}}
\newcommand{\QQ}{\mathbb{Q}}
\newcommand{\Qp}{\mathbb{Q}_p}
\newcommand{\Zp}{\mathbb{Z}_p}
\newcommand{\ZZ}{\mathbb{Z}}
\newcommand{\et}{\textup{\'et}}
\newcommand{\cBF}{\mathcal{BF}}
\renewcommand{\AA}{\mathbb{A}}
\newcommand{\FF}{\mathbb{F}}
\newcommand{\FFF}{\mathcal{F}}
\newcommand{\fF}{\mathfrak{F}}
\newcommand{\g}{\mathbf{g}}
\newcommand{\ord}{\mathrm{ord}}
\newcommand{\vp}{\varphi}
\newcommand{\cL}{\mathcal{L}}
\newcommand{\cH}{\mathcal{H}}
\newcommand{\cO}{\mathcal{O}}
\newcommand{\HIw}{H^1_{\mathrm{Iw}}}
\newcommand{\GL}{\mathrm{GL}}
\newcommand{\Brig}{\BB_{{\rm rig},L}^+}
\newcommand{\col}{\mathrm{Col}}
\newcommand{\image}{\mathrm{Im}}
\newcommand{\cyc}{\textup{cyc}}
\newcommand{\fC}{\mathfrak{C}}
\newcommand{\loc}{\mathrm{loc}}
\newcommand{\ff}{\mathfrak{f}}
\newcommand{\fL}{\mathfrak{L}}
\newcommand{\fm}{\mathfrak{M}}
\newcommand{\fI}{\mathfrak{I}}
\newcommand{\Hom}{\mathrm{Hom}}
\newcommand{\Sel}{\mathrm{Sel}}
\newcommand{\Char}{\mathrm{char}}
\newcommand{\ac}{\textup{ac}}
\newcommand{\LL}{\Lambda}
\newcommand{\TT}{\mathbb{T}}
\newcommand{\f}{\textup{\bf f}}
\newcommand{\Gr}{\textup{Gr}}
\newcommand{\lra}{\longrightarrow}
\newcommand{\res}{\textup{res}}
\newcommand{\TSym}{\textup{TSym}}
\newcommand{\etaf}{v_{f,\beta}^*}
\newcommand{\ofs}{\omega_{f^\star}}
\newcommand{\ogs}{\omega_{g^\star}}
\newcommand{\BF}{\textup{BF}}
\newcommand{\cP}{\mathcal{P}}
\newcommand{\cF}{\mathcal{F}}
\newcommand{\cN}{\mathcal{N}}
\newcommand{\Dcris}{\mathbb{D}_{\rm cris}}
\newcommand{\fM}{\m}
\newcommand{\Tw}{\mathrm{Tw}}
\newcommand{\olinerho}{\overline{\rho}}
\newcommand{\sC}{\mathscr{C}}
\definecolor{Green}{rgb}{0.0, 0.5, 0.0}
\newcommand{\p}{\mathfrak{p}}
\newcommand{\m}{\mathfrak{m}}
\newcommand{\cG}{\mathcal{G}}
\newcommand{\cR}{\mathcal{R}}
\newcommand{\Cp}{\mathbb{C}_p}
\newcommand{\x}{\mathrm{\bf x}}
\newcommand{\bz}{\mathbf{z}}
 \definecolor{pAlgae}{RGB}{87,115,135}
\definecolor{airforceblue}{rgb}{0.36, 0.54, 0.66}
	\definecolor{bondiblue}{rgb}{0.0, 0.58, 0.71}
\definecolor{britishracinggreen}{rgb}{0.0, 0.26, 0.15}
\definecolor{camouflagegreen}{rgb}{0.47, 0.53, 0.42}
\definecolor{darkcyan}{rgb}{0.0, 0.55, 0.55}
\begin{document}
\frontmatter

\title{Iwasawa theory of twists of elliptic modular forms over imaginary quadratic fields at inert primes}

\alttitle{La théorie d'Iwasawa des twists des formes modulaires elliptiques sur les corps quadratiques imaginaires aux nombres premiers inertes}

\begin{abstract}
Our primary goal in this manuscript is to study the Iwasawa theory for semi-ordinary families of automorphic forms on $\mathrm{GL}_2\times\mathrm{Res}_{K/\mathbb{Q}}\mathrm{GL}_1$, where $K$ is an imaginary quadratic field where the prime $p$ is inert. We prove divisibility results towards Iwasawa main conjectures in this context, utilizing the optimized signed factorization procedure for Perrin-Riou functionals and Beilinson--Flach elements for a family of Rankin--Selberg products of $p$-ordinary forms with a fixed $p$-non-ordinary modular form. The optimality enables an effective control on the $\mu$-invariants of Selmer groups and $p$-adic $L$-functions as the modular forms vary in families, which is crucial for our patching argument to establish one divisibility in an Iwasawa main conjecture in three variables. 
\end{abstract}

\begin{altabstract}
Le but principal du présent manuscrit est d'étudier la théorie d'Iwasawa pour les familles semi-ordinaires de formes automorphes sur $\mathrm{GL}_2\times\mathrm{Res}_{K/\mathbb{Q}}\mathrm{GL}_1$, où $K$ est un corps quadratique imaginaire dans lequel le nombre premier $p$ est inerte. Nous démontrons des résultats de divisibilité en vue des conjectures principales d'Iwasawa dans ce contexte, en utilisant la procédure de factorisation signée optimisée pour les fonctionnelles de Perrin-Riou et les éléments de Beilinson--Flach pour une famille de produits de Rankin--Selberg de formes $p$-ordinaires avec une forme modulaire $p$-non-ordinaire fixée. L'optimalité permet un contrôle effectif sur les $\mu$-invariants des groupes de Selmer et des fonctions $L$ $p$-adiques lorsque les formes modulaires varient en familles, ce qui est crucial pour notre argument de recollement visant à établir une divisibilité dans une conjecture principale d'Iwasawa à trois variables.
\end{altabstract}

\author{K\^az\i m B\"uy\"ukboduk}
\address{UCD School of Mathematics and Statistics\\ University College Dublin\\ Ireland}
\email{kazim.buyukboduk@ucd.ie}

\author{Antonio Lei}
\address{Department of Mathematics and Statistics\\University of Ottawa\\
150 Louis-Pasteur Pvt\\
Ottawa, ON\\
Canada K1N 6N5}
\email{antonio.lei@uottawa.ca}

\subjclass{11R23 (primary); 11F11, 11R18 (secondary)}
\keywords{Iwasawa theory, Rankin--Selberg products, semi-ordinary primes, locally restricted Euler systems, Beilinson--Flach elements}
\altkeywords{Théorie d'Iwasawa, produits de Rankin--Selberg, nombres premiers semi-ordinaires, systèmes d'Euler localement restreints, éléments de Beilsinson--Flach}
\maketitle
\setcounter{page}{4}

\tableofcontents

\mainmatter

\chapter{Introduction}
We fix forever a prime $p\ge 5$. Let $f\in S_{k_f+2}(\Gamma_1(N_f),\varepsilon_f)$ be a cuspidal eigenform  which is not of CM type, where the level $N_f$ is coprime to $p$. We assume that $f$ admits a $p$-ordinary stabilization $f_{\alpha}$ with $U_p$-eigenvalue $\alpha_f$. We fix an imaginary quadratic field $K$\index{$K$: imaginary quadratic field} with discriminant $D_K$ which is coprime to $N_f$ and where $p$ remains inert. We let $c$ denote any lift of a generator of $\Gal(K/\QQ)$ to $G_\QQ$. We also fix a ray class character $\chi$ of $K$ of order coprime to $p$ (which we call the branch character, following Hida) and of conductor coprime to $D_KN_f$ (but not necessarily to $p$). We assume that $\chi\neq \chi^c$; i.e. we do not treat the ``Eisenstein'' case.

Our eventual goal is to prove  divisibility results on the main conjectures for $p$-semi-ordinary families of automorphic motives on $\GL_2\times {\rm Res}_{K/\QQ}\GL_1$ \footnote{In fact, we treat somewhat a more general class of semi-ordinary Rankin--Selberg convolutions $f_\alpha\otimes g_\mu$ on  $\GL_2\times\GL_2$, where $f_\alpha$ is above and $g$ is a non-ordinary cuspidal eigenform.}. In more explicit (but still very rough) terms, we will prove divisibilities in the Iwasawa main conjectures for families of Rankin--Selberg convolutions $f_{\alpha} \times \theta(\chi\psi)$, where $\psi$ varies among algebraic Hecke characters of $K$ with $p$-power conductor and $\theta(\chi\psi)$ is the theta-series of $\chi\psi$. Our results in this work can be considered as a piece of evidence towards the variational versions of Bloch--Kato conjectures for the relevant class of motives.

 \section{Previous Works}
 \label{sec_previous_works}
There are many earlier results concerning the arithmetic and Iwasawa theoretic aspects of Rankin--Selberg convolutions $f_{\alpha} \times \theta(\chi\psi)$ and their families. We record a significant portion of those that our manuscript extends, highlighting the key differences with the present work both in terms of the assumptions as well as the techniques involved. Table~\ref{table_summary} below serves as a summary of this comparison. We note that the labels in the first row of this table indicate the section numbers where the corresponding results are summarized.

The reader will notice that we have left out the discussion concerning the cyclotomic Iwasawa theory in \S\ref{sec_previous_works}, concentrating either on the anticyclotomic case (c.f. \S\ref{subsec_1_1_1_2021_09_07}) and Iwasawa theory for the $\ZZ_p^2$-extension $K_\infty$ of $K$ (c.f. \S\ref{subsec_1_1_2_2021_09_07} and \S\ref{subsec_1_1_3_2021_09_07}). This is for the following reason. When $\chi=\chi^c$ (so that $\chi$ is a Hecke character of $\QQ$; a scenario that our paper will not address at all), the cyclotomic Iwasawa theory for $f_{\alpha} \times \theta(\chi)$ reduces to that for $f_{\alpha}\otimes\chi$ and $f_{\alpha}\otimes\chi\epsilon_K$, where $\epsilon_K$ is the quadratic character associated to $K/\QQ$. This can be studied by Kato's Euler system, both when $f_\alpha$ is ordinary or otherwise. In general, one can descend to the cyclotomic tower from $K_\infty/K$. In the anticyclotomic case, there is a dichotomy that depends on the root number in the functional equation of the complex $L$-function $L(f/K,s)$ (customarily labelled as the \textit{definite case} when the root number is $+1$ and the \textit{indefinite case} when it is $-1$) and it is a subtle problem to determine how descent from $K_\infty/K$ to the anticyclotomic tower plays out in either of these cases.

In \S\ref{subsec_1_1_1_2021_09_07}, we will review prior works in the anticyclotomic case, which employ arguments based primarily on an anticyclotomic ``Heegner-type'' Euler system and have no immediate bearing to the full $\ZZ_p^2$-extension. 

With the exception of \S\ref{subsec_1_1_2_2021_09_07}, where we review the work of Skinner--Urban, all results we summarize in \S\ref{sec_previous_works} towards the $\ZZ_p^2$-extension involve Beilinson--Flach elements. One often needs to work with a number of hypotheses on the branch character $\chi$, and the organization of our overview reflects the varying set of conditions on $\chi$. The results (both in the present work and those we discuss in \S\ref{subsec_1_1_2_2021_09_07} and \S\ref{subsec_1_1_3_2021_09_07}) towards anticyclotomic Iwasawa theory are deduced from those for the $\ZZ_p^2$-extension through a descent argument. In this manuscript, this comes about unraveling Nekov\'a\v{r}'s general machinery developed in \cite[\S11]{nekovar06}, c.f. \S\ref{subsubsec_anticyclo_1}--\S\ref{subsubsec_anticyclo_3} in the main body of the manuscript.

We would like to emphasize that one novelty of the present paper is that we overcome certain technical hurdles particular to the case where $p$ is inert in $K$. It is worthwhile to note that the only scenario in the earlier works where $p$ is allowed to be inert in $K/\QQ$ is the one discussed in \S\ref{subsubsec_1_1_1_1_2021_09_07}.

\subsection{Anticyclotomic Iwasawa Theory}
\label{subsec_1_1_1_2021_09_07}
The first set of treatments of the anticyclotomic Iwasawa theory of Rankin--Selberg convolutions $f_{\alpha} \times \theta(\chi\psi)$ rely  on Heegner-type Euler systems. We have organized our exposition of these results as two separate threads: In \S\ref{subsubsec_1_1_1_1_2021_09_07} and \S\ref{subsubsec_1_1_1_2_2021_09_07}, we consider the case when $\chi=\mathds{1}=\varepsilon_f$, whereas in \S\ref{subsubsec_1_1_1_3_2021_09_07}, we allow $\chi$ to be a general ring class character over $K$. 

\subsubsection{The definite case: Heegner points and level raising congruences ($\chi=\mathds{1}$)}
\label{subsubsec_1_1_1_1_2021_09_07}
 Bertolini--Darmon in \cite{BertoliniDarmon2005} studied the ``definite'' anticyclotomic Iwasawa main conjectures for the base change $f_{/K}$ of a modular form $f\in S_{2}(\Gamma_0(N_f))$ to $K$, where the prime $p$ may split or remain inert in $K/\QQ$ and $f$ has ordinary reduction at $p$. The family in question consists of the self-dual twists of the Rankin--Selberg motives associated to $f\times \theta(\psi)$ as $\psi$ varies among anticyclotomic Hecke characters of $K$ with $p$-power conductor.  In particular, they showed that the Selmer group of $f_{/K}$ over the anticyclotomic $\Zp$-extension is cotorsion over the Iwasawa algebra and showed that its characteristic ideal divides the corresponding anticyclotomic $p$-adic $L$-function attached to $f_{/K}$, showing one half of the anticyclotomic Iwasawa main conjecture. In the definite case, because of the absence of Heegner hypothesis, Heegner points are not available for $f_{/K}$. Instead, one may find a modular form $g$ of level $N\ell$  which is congruent to $f$ modulo a power of $p$ for some auxiliary prime $\ell$ employing Ribet's results on level raising \cite{ribet90}. Furthermore, one may choose such a $g$ for which the Heegner hypothesis holds. One may then compare the Selmer groups of $f$ and $g$ and employ the Heegner-point Euler system attached to $g$ to show (allowing $g$ to vary) one inclusion of the anticyclotomic Iwasawa main conjecture for $f$.

The work of Chida and Hsieh~\cite{chidahsiehanticyclomainconjformodformscomposito} generalized the work of  Bertolini--Darmon alluded to above and studied the ``definite'' anticyclotomic Iwasawa main conjectures for the base change to $K$ of a modular form $f\in S_{k_f+2}(\Gamma_0(N_f))$ of  even weight that is small compared to $p$. In this work, the prime $p$ once again may split or remain inert in $K/\QQ$. As in \cite{BertoliniDarmon2005}, they proved one inclusion of the anticyclotomic Iwasawa main conjecture. While the idea of proof is similar, Chida and Hsieh have removed several hypotheses from \cite{BertoliniDarmon2005} (for example, $f$ being $p$-isolated and the maximality of the image of the residual representation of $f$).

In \cite{darmoniovita}, Darmon--Iovita studied a generalization of \cite{BertoliniDarmon2005} in the case where $f$ is of weight two with $a_p(f)=0$ and $p$ splits in $K$. They defined plus and minus anticyclotomic $p$-adic $L$-functions as well as the plus and minus Selmer groups, paralleling the works of Pollack \cite{pollack03} and Kobayashi \cite{kobayashi03} in the cyclotomic setting. They showed that these Selmer groups are cotorsion over the corresponding Iwasawa algebra and their characteristic ideals divide the signed $p$-adic $L$-functions. The proofs, as in \cite{BertoliniDarmon2005}, are based on a Heegner-point Euler system argument blended with level-raising congruences, where they proceed via a successive choice of modular forms that are congruent to $f$ modulo ever increasing powers of $p$ and that satisfy the Heegner hypothesis. After the present manuscript was circulated, the authors together with A. Burungale in  \cite{BBL1} generalized the results of \cite{darmoniovita} to two new settings: Firstly, they relaxed the hypothesis in op. cit. that $a_p(f)=0$. Secondly, they allow $p$ be inert in $K/\QQ$ so long as $a_p(f)=0$ and the Hecke field of $f$ can be embedded in $\Qp$.

\subsubsection{Indefinite case: Heegner points ($\chi=\mathds{1}$)} 
\label{subsubsec_1_1_1_2_2021_09_07}
The indefinite case refers to the scenario where the sign of the functional equation of the Rankin--Selberg $L$-series $L(f/K,\psi,s)$ equals $-1$ for all anticyclotomic characters $\psi$ at the central critical value $s=1$ (note that we are implicitly assuming  this in the current subsection, unless stated otherwise), forcing the $L$-function to vanish at $s=1$. When $f$ gives rise to an elliptic curve $E_{/\QQ}$ via the Eichler--Shimura construction (i.e., if the Hecke field $K_f$ of $f$ equals $\QQ$) and $E$ has good ordinary reduction at $p$, Bertolini in \cite{Bertolini1995Compositio} proved, as a reflection of this systematic vanishing of the $L$-functions, that the (discrete) Bloch--Kato Selmer group of the eigenform $f$ over the anticyclotomic $\Zp$-extension is of corank one over the corresponding Iwasawa algebra. When $E$ has good supersingular reduction at $p$ with $a_p(E)=0$, Kim \cite{kim07} and Longo--Vigni \cite{longovigni} proved a similar result for Kobayashi-type plus and minus Selmer groups. These results have implications on the growth of the Mordell--Weil ranks of $E$ inside the anticyclotomic $\Zp$-extension of $K$.

In the ordinary setting, Howard in \cite{howardcompositio1} proved, refining the result of Bertolini that we have alluded to above, one inclusion in  Perrin-Riou's Heegner point (``indefinite'' anticyclotomic) main conjecture for $f\in S_{2}(\Gamma_0(N_f))$ whose Hecke field $K_f$ equals $\QQ$, under the additional assumption that the $p$-adic $G_K$-representation attached to $f$ surjects onto $\GL_2(\Zp)$. More precisely, he showed that the characteristic ideal of the maximal cotorsion quotient of the anticyclotomic (discrete) Selmer group is related to the index of the Heegner points inside the global Iwasawa cohomology. This result was extended to more general $f\in S_{2}(\Gamma_0(N_f))$ (to cover the cases when $K_f\neq\QQ$) in \cite{howardduke} (in fact, Howard in \cite{howardduke} treats the much more general scenario when $f$ in question is a Hilbert modular form of parallel weight $2$ over a totally real field). 

Later in \cite{howard2007}, Howard initiated the study of ``indefinite'' anticyclotomic main conjectures in nearly-ordinary families. The nearly-ordinary family studied in op. cit. interpolates the self-dual twists of the Rankin--Selberg motives associated to $f_{\alpha}\times \theta(\psi)$ as $\psi$ varies among anticyclotomic Hecke characters of $K$ with $p$-power conductor and $f_{\alpha}$ in a Hida family. In particular, he defined the so-called big Heegner points, which interpolate Heegner cycles attached to $f_\alpha$ as it it varies over a Hida family (this fact was established  later, by Castella~\cite{CastellapadicvariationofHeegnerpoints} and Ota~\cite{Ota2020}), and showed that they satisfy Kolyvagin's Euler-system relations. Works of Fouquet~\cite{Fouquet2013} and the first named author~\cite{KbbBigHeegner} utilize the big Heegner points introduced in \cite{howard2007} as an input to obtain results towards ``indefinite'' anticyclotomic main conjectures in the multivariate setting of \cite{howard2007}. In all these works, there is a priori no restriction on the local behaviour of $p$ in the extension $K/\QQ$.  However,  these works have consequences towards main conjectures \emph{without} $p$-adic $L$-functions and one needs to assume further that $p$ is split in $K/\QQ$ in order to deduce statements involving $p$-adic $L$-functions (see also \cite[Theorem E]{disegniuniversalHeegcycle} where this matter is addressed).

In the aforementioned results where the Kolyvagin-system machinery is employed, the residual representation of $f$ at $p$ is assumed to be irreducible. In a recent work by Castella--Grossi--Lee--Skinner \cite{CGLS}, this hypothesis has been weakened in the case where $f$ is of weight two and $p$ splits in $K$.

In the indefinite case, since $L(f/K,1)=0$, it is interesting to study the derivative of $L(f/K,s)$ at $s=1$. In the seminal work of Gross--Zagier \cite{GZ86} (and subsequent generalizations \cite{GZ87,zhang97,zhangexporankinLseriesvolume}), it has been  shown that $L'(f/K,1)$ is related to the height of the Heegner point/cycle attached to $f_{/K}$.
Suppose that the weight of $f$ is 2, $p$ is a split prime in $K$ and that $f$ is ordinary at $p$. In \cite{perrinriou87}, Perrin-Riou proved a $p$-adic analogue of these results, showing that the derivative of the $p$-adic Rankin--Selberg $L$-function of $f_{/K}$ at the trivial character is related to the (cyclotomic) $p$-adic height of the Heegner points associated to $f_{/K}$. When $f$ is of (even) weight greater than 2, Nekov\'a\v{r} in \cite{nekovar95} generalized Perrin-Riou's result, where the first order derivative of the relevant $p$-adic $L$-function is computed in terms of the (cyclotomic) $p$-adic heights of Heegner cycles.

Howard in \cite{howard05} proved an ``Iwasawa-theoretic'' version of Perrin-Riou's result, confirming a conjecture of Mazur and Rubin in ~\cite{MR_ICM2002}. In more precise terms, he showed that 
\begin{itemize}
    \item[a)] the cyclotomic $p$-adic pairings in \cite{perrinriou87} can be interpolated along the anticyclotomic pairing, to give rise to a $\LL(\Gamma_\ac)$-adic height pairing (where $\Gamma_\ac$ is the Galois group of the anticyclotomic $\Zp$-extension of $K$ and $\LL(\Gamma_\ac)=\Zp[[\Gamma_\ac]]$);
    \item[b)] the partial derivative of Perrin-Riou's two-variable $p$-adic $L$-function attached to $f_{/K}$ along the cyclotomic direction can be computed in terms of the $\LL(\Gamma_\ac)$-adic height of the tower of Heegner points along the anticyclotomic tower. 
\end{itemize}
 More recently, Castella in~\cite{castellaJLMS} (see also the related work of present authors in~\cite{BLForum}, where there is no restriction on the weight of $f$) gave a new proof of Howard's result using Beilinson--Flach elements defined in \cite{LLZ1,LLZ2,KLZ1,KLZ2}. This approach will be outlined in \S\ref{subsubsec_1_1_3_1_2021_09_07} with further details. We also remark that Disegni in \cite{disegni17,disegniuniversalHeegcycle} established analogues of the results of Howard in \cite{howard05} in greater generality.
 
 We close \S\ref{subsubsec_1_1_1_2_2021_09_07} by noting that when $f$ is non-ordinary at $p$, the utility of Heegner points and Heegner cycles in the  study of indefinite anticyclotomic Iwasawa theory was rather limited, until the introduction of generalized Heegner cycles by Bertolini--Darmon--Prasanna in~\cite{bertolinidarmonprasanna13}. The applications of generalized Heegner cycles to indefinite anticyclotomic Iwasawa theory will be summarized in \S\ref{subsubsec_1_1_1_3_2021_09_07}. Before moving ahead in this direction, we note that Kobayashi in~\cite{kobayashi13, kobayashi2014_GZ} has proved a $p$-adic Gross--Zagier formula at non-ordinary primes when the eigenform $f$ has weight $2$ and since then, also announced in \cite{kobayashi_higherweight_nonord_GZ} a $p$-adic Gross--Zagier formula at non-ordinary primes when the slope of the $p$-stabilized eigenform is ``small'' relative to the weight. Building on this result, the first named author and Pollack--Sasaki proved in~\cite{BPSI} a $p$-adic Gross--Zagier formula at non-ordinary primes for all weights (without any restriction on the slope, including the non-$\theta$-critical critical-slope case).

\subsubsection{Generalized Heegner Cycles ($\chi$ is general and $p$ is split)}
\label{subsubsec_1_1_1_3_2021_09_07}
In their seminal work \cite{bertolinidarmonprasanna13}, Bertolini--Darmon--Prasanna introduced generalized Heegner cycles that come attached to the central critical twists of Rankin--Selberg motives of the form $f\times \theta(\chi\psi)$, where $\psi$ varies among anticyclotomic Hecke characters of $K$ with $p$-power conductor.% We recall that the condition that $\psi$ be anticyclotomic amounts to requirement that $\psi^c=\psi^{-1}$.  [Perhaps should mention this earlier if we want to make such a reminder?]
To wit,  we remark that in the scenario when $\psi$ has finite order, generalized Heegner cycles essentially reduce to their classical  counterparts,  namely  Heegner cycles. In \cite{bertolinidarmonprasanna13}, Bertolini--Darmon--Prasanna proved a spectacular variant of the $p$-adic Gross--Zagier formula, where they expressed  the values of their anticyclotomic $p$-adic $L$-function at critical points that lie outside the range of interpolation in terms of the generalized Heegner cycles. In \cite{castella13} (respectively, in \cite{JLZ}; see also \cite{BL-GHC}), this result has been extended so as to allow variation of $f$ when $f_\alpha$ runs in  a slope-zero family (respectively, positive  slope family).

Castella and Hsieh  showed in \cite{CastellaHsiehGHC} that the generalized Heegner cycles give rise to an anticyclotomic Euler system for the central critical twist of the motive $f_{/K}\otimes \chi\psi$ over $K$, assuming that (besides a number of minor technical conditions) 
\begin{itemize}
    \item $(p)=\p\p^c$ is split in $K/\QQ$,
    \item $N$ is a product of primes that split in $K/\QQ$ (classical Heegner hypothesis),
    \item $\varepsilon_f=\mathds{1}$ and $\chi^c=\chi^{-1}$.
\end{itemize}
We note that the central critical value of interest is $L(f_{/K}\otimes\chi\psi,1+\frac{k_f}{2})$ and the running assumptions on $\epsilon_f$, $\chi$ and $\psi$ guarantee that the relevant global root number $\epsilon(f_{/K}\otimes\chi\psi)$ equals $\pm 1$. Thanks to the running Heegner hypothesis, we have $\epsilon(f_{/K})=-1$. It then follows that $$\epsilon(f_{/K}\otimes\chi\psi)=\begin{cases} -1 & \hbox{$\infty$-type of  $\psi$ is $(-a,a)$ with $|a|\leq\frac{k_f}{2}$,}\\
+1 & \hbox{$\infty$-type of  $\psi$ is $(-a,a)$ with $|a|>\frac{k_f}{2}$.}
\end{cases}$$ 

A generic non-vanishing result of Hsieh~\cite[Theorem 3.7]{hsiehnonvanishing} shows that 
$$L(f_{/K}\otimes\chi\psi,1+{k_f}/{2})\neq 0$$ 
for all but finitely many choices of $\psi$ with infinity type $(-a,a)$ verifying the inequality $|a|>\frac{k_f}{2}$. In this case, the Bloch--Kato conjecture predicts that the relevant Selmer group is finite and the proof of this prediction is one of the main results of \cite{CastellaHsiehGHC}.

In the complementary case when $\psi$ has infinity type $(-a,a)$ with $|a|\leq\frac{k_f}{2}$, the Bloch--Kato conjecture predicts that the relevant Selmer group has infinite order. Castella--Hsieh proved in \cite[Theorem B]{CastellaHsiehGHC}  a partial result in this direction, assuming the non-vanishing of a generalized Heegner cycle.

Although Castella--Hsieh do not make claims towards Iwasawa theoretic statements, their key ingredient in op. cit. is the  anticyclotomic (Iwasawa theoretic) Euler system that they constructed out of the generalized Heegner cycles, where the assumption that $v_p(\alpha)=0$ is crucially used in their argument. In \cite{kobayashiGHC, kobayashiota,kobayashiota2}, Kobayashi and Ota developed what they call the theory of integral Perrin-Riou twists, which enables them to twist systems of generalized Heegner cycles (which are not norm-compatible) by anticyclotomic characters, to gain control on the subtle behaviour of the denominators of these systems and in turn allow to extend the results of \cite{CastellaHsiehGHC} to the case where $v_p(\alpha)>0$. We also note that \cite{kobayashiota,kobayashiota2}, unlike its predecessor \cite{CastellaHsiehGHC}, in fact offers applications towards anticyclotomic Iwasawa main conjectures that concern the arithmetic properties of the Bertolini--Darmon--Prasanna $p$-adic $L$-functions (to which we shall refer as the ``BDP-type main conjectures'').

\subsection{Iwasawa theory for $K_\infty/K$ ($\chi=\mathds{1}$, $p$ splits and $v_p(\alpha)=0$)}
\label{subsec_1_1_2_2021_09_07}
The discussion in \S\ref{subsec_1_1_2_2021_09_07} will review the results of \cite{skinnerurbanmainconj} (in conjunction with \cite{kato04}) and of \cite{castellawanGL2families2018, xinwanwanrankinselberg}, which are complementary to one another in the sense that the former treats the definite case (namely, when the global root number verifies $\epsilon(f_{/K})=1$) whereas the latter concerns the indefinite case. These results allow variation in Hida families and we note that the property as to whether the members of a given Hida family fall within the definite case or otherwise is constant in the family.
\subsubsection{Eisenstein congruences} 
Skinner--Urban in \cite{skinnerurbanmainconj}
\label{subsubsec_1_1_2_1_2021_09_07}
proved the Iwasawa main conjecture for the base change $f_{/K}$ of a modular form $f$ of any weight, along the $\ZZ_p^2$-tower $K_\infty$ of $K$, assuming that 
\begin{itemize}
    \item $p$ splits in $K/\QQ$,
    \item $\varepsilon_f=\mathds{1}$ and $k_f\equiv 0 \mod p-1$,
    \item $f$ is residually non-Eisenstein and $p$-distinguished (in the sense that the residual representation $\overline{\rho}_f$, where $\rho_f$ is Deligne's $p$-adic representation attached to $f$, is irreducible and its restriction  to the decomposition group at $p$ is non-scalar),
    \item writing $N=N^+N^-$, where the prime divisors of $N^+$ are split in $K/\QQ$, whereas those dividing $N^-$ are inert, $N^-$ is a square-free product of an odd number of primes,
    \item $\overline{\rho}_f$ is ramified at every prime $\ell$ dividing  $N^-$,
    \item The image of $\rho_f$ contains a conjugate of ${\rm SL}_2(\ZZ_p)$.
\end{itemize}
 The fourth condition above ensures that we are in the definite setting.
 This hypothesis is needed in \cite{skinnerurbanmainconj} as Vatsal's main results in \cite{Vatsal2003} on non-vanishing of  $L$-functions associated to modular forms, which requires this hypothesis on $N^-$, is one of the ingredients of the proofs in \cite{skinnerurbanmainconj}.  
 
 The main result \cite[Theorem 3.31]{skinnerurbanmainconj} of Skinner and Urban in fact allows a treatment with variation of eigenforms in families that are naturally parametrized by $3$ formal variables. The families in loc. cit. concern with interpolation of Rankin--Selberg motives associated to $f_{\alpha}\times \theta(\psi)$ as $\psi$ varies among Hecke characters of $K$ with $p$-power conductors and $f_{\alpha}$ in a Hida family. In fact, Skinner and Urban deduced the Iwasawa main conjectures for a single eigenform (verifying the list of conditions above) over the cyclotomic $\Zp$-extension of $\QQ$, the anticyclotomic $\Zp$-extension of $K$ and  the $\Zp^2$-extension $K_\infty/K$ from their result that concerns $3$-parameter families, via Iwasawa descent. 
 
 It is important to note that the fundamental breakthrough in \cite{skinnerurbanmainconj} is their proof of a one-sided containment in the Iwasawa main conjectures in $3$-variables (c.f. Theorem 3.26 in op. cit.), which asserts that the characteristic ideal of the appropriate Selmer group is contained in the ideal generated by the relevant $p$-adic $L$-function. This containment is opposite to that one would hope to deduce from an Euler system, and we shall call it an ``Eisenstein-ideal containment'' to ease our discussion. The choice of this nomenclature is due to the fact that the ``Eisenstein-ideal containment'' in op. cit. is deduced as a consequence of their important generalization of the Eisenstein-ideal method employed by Mazur and Wiles~\cite{mazurwiles, wiles90} in their proofs of the Iwasawa main conjectures for $\GL_1$. In \cite{skinnerurbanmainconj}, the Eisenstein ideal in question accounts for congruences between cusp forms and Eisenstein series on the group ${\rm GU}(2,2)_{/\QQ}$. We also remark that the strategy to prove ``Eisenstein-ideal containments'' in Iwasawa main conjectures have been utilized in different contexts (most notably, in \cite{HidaTilouine1994,MazurTilouine1990}; see also \cite{HarrisLiSkinner} for an extremely general but conjectural framework), exploiting congruences between stable and
other types of `special' (endoscopic) automorphic forms.

We do \emph{not} pursue in the present manuscript ``Eisenstein-ideal containments'' in Iwasawa main conjectures. Instead,  our main results concern the opposite containments, which arise from the Euler-system machinery.
 
To deduce from the one-sided containment in \cite[Theorem 3.26]{skinnerurbanmainconj} towards the full main conjectures in $3$-variables, the sought after equality (Theorem 3.31 in op. cit.), Skinner and Urban relied on the work of Kato~\cite{kato04}, where he proved the opposite containment in the cyclotomic main conjecture (in a single variable) for an eigenform. This step involves a careful descent procedure and it is based on a general criterion recorded as Lemma 3.2 in \cite{skinnerurbanmainconj}. It crucially requires that the pseudo-null submodules of the relevant dual Selmer groups are trivial (which are established by  Greenberg's criteria). 

In the present work, whenever we  employ an argument involving Iwasawa descent, we prefer to work with Selmer complexes. There are  two main reasons for this choice: The first is the built-in descent formalism  for Selmer complexes, and the second is  Nekov\'a\v{r}'s very general results in \cite{nekovar06} concerning their projective dimensions of the associated (extended) Selmer groups (which, among other things, allows  one to control the pseudo-null submodules of these extended Selmer groups).

 \subsubsection{Heegner points, Beilinson--Flach elements and indefinite anticyclotomic Iwasawa theory} 
 \label{subsubsec_1_1_2_2_2021_09_07}
In~\cite{castellawanGL2families2018}, Castella and Wan complemented the results of Skinner and Urban that we have reviewed in \S\ref{subsubsec_1_1_2_1_2021_09_07}, to cover the indefinite set up. More precisely, they prove the Iwasawa main conjecture in $3$-variables for the base change $\f_{/K}$ of a Hida family of eigenforms $\f$ along the $\ZZ_p^2$-tower $K_\infty$ of $K$ (up to $\mu$-invariants), assuming that 
\begin{itemize}
    \item $p$ splits in $K/\QQ$,
    \item the tame nebentype $\varepsilon_\f$ of the Hida family $\f$ is trivial and the tame level $N$ is square-free,
    \item $\f$ is residually non-Eisenstein and $p$-distinguished,
    \item writing $N=N^+N^-$ as before, $N^-$ is a product of an even number of primes,
    \item the residual representation $\overline{\rho}_\f$ is ramified at every prime $\ell$ dividing  $N^-$\,.
\end{itemize}
The fourth condition above is known as the Shimura--Heegner hypothesis and places one in the indefinite setting, ensuring that the sign $\epsilon(f_{/K})$ of the functional equation for the Hecke $L$-function $L(f_{/K},s)$ at the central critical point equals $-1$. 

The argument in \cite{castellawanGL2families2018} has three main steps:
\begin{enumerate}
    \item[\mylabel{CW2020_1}{{\bf (1)}}] Building on the explicit reciprocity laws for the Beilinson--Flach elements proved by Kings--Loeffler--Zerbes in \cite{KLZ2}, Castella and Wan recast the $3$-variable main conjecture as a ``BDP-type main conjecture'', which concerns a Hida--Rankin $p$-adic $L$-function that admits the Bertolini--Darmon--Prasanna $p$-adic $L$-functions as its specializations. This ``BDP-type main conjecture'' is akin to the main conjecture considered in \cite{kobayashiota,kobayashiota2} in the non-ordinary setting (c.f. \S\ref{subsubsec_1_1_1_3_2021_09_07} above). We note that the explicit reciprocity laws in \cite{KLZ2} are not quite as general as needed in  \cite{castellawanGL2families2018} and the forthcoming work of Burungale--Skinner--Tian is expected to provide the required extension. We note that this technical difficulty is also present in \cite{BL_SplitOrd2020} where the present authors treat the case when $\chi\neq \chi^c$ (but it may be that $\overline{\chi}=\overline{\chi}^c$, in which case the associated theta-series, which is cuspidal owing to assumption that $\chi\neq \chi^c$, does admit an Eisenstein congruence), which we will review in \S\ref{subsec_1_1_4_2021_09_07}. 
   \item[\mylabel{CW2020_2}{{\bf (2)}}] The ``Eisenstein-ideal containment'' in this BDP-type main conjecture has been established by Wan in \cite{xinwanwanrankinselberg}. This result is the indefinite analogue of the containment proved by Skinner--Urban in the definite set-up (c.f. \S\ref{subsubsec_1_1_2_1_2021_09_07}) and its proof follows their strategy, but dwelling  on congruences between Klingen--Eisenstein series and cusp forms on the group ${\rm GU}(3,1)$, rather than ${\rm GU}(2,2)$.
  \item[\mylabel{CW2020_3}{{\bf (3)}}] Via the explicit reciprocity laws for generalized Heegner cycles proved in \cite{CastellaHsiehGHC} (which interpolate the celebrated formulae of Bertolini--Darmon--Prasanna in families), the anticyclotomic restriction of the ``BDP-type main conjecture'' can be related to Howard's big Heegner poin main conjecture~\cite[Conj. 3.3.1]{howard2007}. One  readily has containments in the opposite direction to those proved in \cite{xinwanwanrankinselberg} (and reviewed in item \ref{CW2020_2} above) thanks to \cite{Fouquet2013, KbbBigHeegner}. 
\end{enumerate}
One may finally employ the Iwasawa descent argument \cite[Lemma 3.2]{skinnerurbanmainconj} to combine these steps to deduce the required equality in Iwasawa main conjectures.
%%%%%%%%%%
%%%%%%%%%%
%%%%%%%%%%
%%%%%%%%%%

\subsection{Iwasawa theory for $K_\infty/K$ ($\chi\not\equiv \chi^c$ and $p$ is split)}
\label{subsec_1_1_3_2021_09_07}
We now discuss prior results pertaining to the ``residually non-Eisenstein'' scenario, where we assume that $\chi\not\equiv \chi^c$. All these results dwell primarily on the Beilinson--Flach Euler system, especially those concerning the full $\ZZ_p^2$-extension $K_\infty$ of $K$. In the special case where $\varepsilon_f=\mathds{1}$ and $\chi^c=\chi^{-1}$ is anticyclotomic, the specializations of these results over $K_\infty$ to the anticyclotomic tower has an overlap with those in \S\ref{subsubsec_1_1_1_3_2021_09_07}. 

We underline the fact that in all earlier works in this direction, it is required that the prime $p$ be split in $K/\QQ$. Throughout \S\ref{subsec_1_1_3_2021_09_07}, we retain our notation from the earlier portions of \S\ref{sec_previous_works}.

\subsubsection{Beilinson--Flach elements:  $v_p(\alpha)=0$} 
\label{subsubsec_1_1_3_1_2021_09_07}
The Beilinson--Flach elements of \cite{LLZ1, KLZ2} that come attached to the motive of $f\otimes\theta(\chi\psi)$ over $\QQ$ (and $p$-adic families of such) is, a priori, an Euler system over $\QQ$. This means that the Beilinson--Flach classes are defined as cohomology classes over abelian extensions of $\QQ$. To derive the strongest applications towards the Iwasawa theory over the imaginary quadratic field $K$, it would be desirable to construct an Euler system associated to the motive of $f\otimes \chi\psi$ over $K$, with cohomology classes defined over abelian extensions of $K$. This goal was achieved in \cite{LLZ2} assuming that \begin{itemize}
    \item $k_f=0$ (i.e., the eigenform $f$ is of weight $2$),
    \item $p=\p\p^c$ splits in $K/\QQ$,
    \item $\chi$ verifies the $p$-distinguished condition that $$v_p(\chi(\p)-\chi(\p^c))=0\,,$$
where $v_p$ is the normalized $p$-adic valuation,
\end{itemize}  
through a patching argument, which involves a comparison of the CM branches of Hida families with the same branch character but different tame levels. In \cite{BLForum}, the present authors extended this result (starting off with the Rankin--Iwasawa classes of Kings--Loeffler--Zerbes in \cite{KLZ2}) to cover the case of higher weight forms $f$, but still assuming that $p$ splits and $\chi$ is $p$-distinguished.

The patching argument in \cite{LLZ2, BLForum} relies crucially on the existence of $p$-adic families of CM forms. This in turn translates to the requirement that $p$ be split in $K/\QQ$, as otherwise the eigenforms that have CM by $K$ will be non-ordinary and according to \cite{CITJHC70}, there are no CM families of positive slope. This fundamental difficulty in the scenario when $p$ is inert is the main motivation for the present work.    

Let us resume our discussion in the setting when $p$ is split and $\chi$ is $p$-distinguished. Armed with an Euler system over $K$, Castella~\cite{castellaJLMS} (when $k_f=0$) and the present authors~\cite{BLForum} (general $k_f$) proved results for the family that consists of Rankin--Selberg motives associated to $f\times \theta(\chi\psi)$ where $\psi$ varies among Hecke characters of $K$ with $p$-power conductor. This amounts to the proof of a divisibility in the main conjecture for $f_{/K}\otimes\chi$ over the $\ZZ_p^2$-extension $K_\infty/K$.

In \cite{castellaJLMS, BLForum}, the authors descend to the anticyclotomic tower, treating the definite and indefinite cases simultaneously. The descent argument in both articles are modeled after the work of Agboola--Howard and Arnold in \cite{agboolahowardordinary, arnoldhigherweightanticyclo} that concern the case where $f$ itself is a CM form. In these works, the authors start off with the elliptic unit Euler system and Rubin's proof in \cite{rubinmainconj} of the main conjectures for ${{\rm GL}_1}_{/K}$. The key difference between the definite and indefinite scenarios turns out to be the $p$-semi-local position of the elliptic units relative to the $\p$-ordinary filtration; this is established using the Coates--Wiles explicit reciprocity laws for elliptic units. This beautiful observation forms the backbone of the descent argument in \cite{agboolahowardordinary, arnoldhigherweightanticyclo}. In \cite{castellaJLMS, BLForum}, elliptic units are replaced by the Beilinson--Flach elements and one determines the $p$-local position of the Beilinson--Flach elements relative to the $p$-ordinary filtration (whereby showing in the indefinite case that they fall within the Greenberg submodule determined by the $p$-ordinary filtration, whereas they are in general position in the definite case) relying on the reciprocity laws of Kings--Loeffler--Zerbes~\cite{KLZ2}. 

In \cite[\S5.3.1]{BL_SplitOrd2020}, we have simplified this descent procedure relying on the work of Nekov\'a\v{r}~\cite{nekovar06}. When $p$ is inert (as it is in the present work), this alteration is crucial for our purposes, since the descent to the anticyclotomic tower in \cite{castellaJLMS, BLForum}  fundamentally relies on the fact $p$ splits in $K/\QQ$.

We end our discussion \S\ref{subsubsec_1_1_3_1_2021_09_07} noting that the work of Wan~\cite{xinwanwanrankinselberg}, in some cases, complements the results of \cite{castellaJLMS,BLForum} to obtain a proof of Iwasawa main conjectures up to $\mu$-invariants.  

\subsubsection{Beilinson--Flach elements:  $v_p(\alpha)>0$}
\label{subsubsec_1_1_3_2_2021_09_07} Suppose that $f$ is non-ordinary at $p$, by which we mean that $\alpha$ is a non-unit, and $v_p(\alpha)<k_f+1$. The cyclotomic Iwasawa Theory of $f$ is very different from the ordinary counterpart. For example, the behaviour of the classical Bloch--Kato Selmer groups over the cyclotomic $\Zp$-extension is a manifestation of this difference: The direct limit of the discrete Bloch--Kato Selmer groups along the cyclotomic $\Zp$-extension of $\QQ$ is no longer cotorsion over the corresponding Iwasawa algebra. When $f$ corresponds to an elliptic curve $E_{/\QQ}$, Kobayashi \cite{kobayashi03} defined the so-called signed Selmer groups, which are cotorsion subgroups of the usual Selmer group. He further formulated a main conjecture relating these Selmer groups to Pollack's signed $p$-adic $L$-functions defined in \cite{pollack03}. One inclusion of the main conjecture has been shown to hold utilizing Beilinson--Kato's Euler system given in \cite{kato04}. Later, the hypothesis that $a_p(E)=0$ has been removed by Sprung \cite{sprung09}, allowing the treatment of small primes $p$. For the higher weight case, similar results have been obtained in \cite{lei09,LLZ0,leiloefflerzerbes11}, where the signed objects are defined using the theory of Wach modules in $p$-adic Hodge Theory. We note that the elimination of the condition that $a_p(f)=0$ (following Sprung's ideas) is particularly significant for the signed Iwasawa theory of higher weight forms.

Still under the hypothesis that $\chi$ is $p$-distinguished and that $p$ splits in the imaginary quadratic field $K$, the present authors \cite{BFSuper} studied the Iwasawa theory of $f\times \theta(\chi \psi)$ as  $\psi$ varies over Hecke characters whose $p$-adic Galois avatars factor through the $\Zp^2$-extension of $K$ and the anticyclotomic $\Zp$-extension of $K$. Note in particular that $\theta({\chi\psi})$ is always $p$-ordinary. Based on the work of Loeffler--Zerbes \cite{LZ0} on Wach modules over $\Zp^2$-extensions, we developed the theory of two-variable Coleman maps for modular forms at a non-ordinary prime. This allowed us to define two-variable signed Selmer groups and signed $p$-adic $L$-functions similar to the one-variable cyclotomic case studied by Kobayashi, Pollack et al. This generalizes the doubly signed  objects for elliptic curves that were previously studied by Kim  \cite{kimdoublysigned,kim11}, Loeffler  \cite{loeffler18}, Sprung \cite{sprung16} and the second-named author \cite{lei14}. We proved in \cite{BFSuper} one inclusion of the two-variable Iwasawa main conjecture using signed Euler systems of Beilinson--Flach elements. The starting point of our construction of these (integral) signed Euler systems was the works of Loeffler--Zerbes \cite{LZ1} and Kings--Loeffler--Zerbes \cite{KLZ2}. Recall that in \cite{LZ1}, the authors constructed families of Beilinson--Flach elements associated to the Rankin--Selberg convolution of two families of modular forms, where the families are parametrized by sufficiently small wide-open discs in the respective eigencurves. For our purposes in \cite{BFSuper}, where our sights were set to develop an integral theory, we slightly refined the construction of Beilinson--Flach elements drawing from \cite{KLZ2}, so that we can work over the full CM branch of the relevant Hida family (rather than a wide-open disc in the associated rigid-analytic space). We showed that the resulting classes are compatible with variation in both cyclotomic extensions and the tame conductor of $\psi$, where the latter property is crucial in the construction of norm-compatible classes over the ray class extensions of $K$. These classes, as in \cite{LZ1}, are still unbounded and therefore they are not apt for applications with the Euler-system machinery. Inspired by earlier works on one-variable cyclotomic signed Iwasawa theory \cite{kobayashi03,pollack03,sprung09,lei09,lei14,lei17,LLZ0,leiloefflerzerbes11,LLZ3}, we showed that these new two-variable classes can be decomposed into bounded classes by taking appropriate linear combinations, resulting in a pair of two-variable bounded Euler systems for $f_{/K}\otimes \chi$. This allowed us to apply the Euler-system machinery to deduce one inclusion of the signed main conjecture under a standard big image hypothesis (see Theorem~1.3 of op. cit. for the precise statement).

In the definite case, we may specialize to the anticyclotomic line to obtain one inclusion of the (one-variable) anticyclotomic counterpart of the signed Iwasawa main conjectures (see Theorem~1.5 of op. cit.). In the indefinite case, similar to the ordinary case, assuming $a_p(f)=0$, we showed that the signed anticyclotomic Selmer groups  have corank one over the corresponding Iwasawa algebra (see Theorem 1.8 of op. cit.). More generally, we showed that the anticyclotomic Pottharst-style Selmer groups of $f\times \theta(\chi)$ have rank one over an affinoid disc in the anticyclotomic weight space and proved that their torsion submodules are related to the derivatives of the corresponding two-variable $p$-adic $L$-functions in the cyclotomic direction, and that one inclusion in the BDP-type main conjecture holds (labeled Theorems~1.12 and 4.19 respectively in op. cit.), generalizing the results on the ordinary case  discussed in \S\ref{subsubsec_1_1_1_2_2021_09_07} and \S\ref{subsubsec_1_1_1_3_2021_09_07} above to the non-ordinary case. When the weight of $f$ is two, similar problems were treated in \cite{sprung16,CCSS,castellawan1607,wan16}.

In the present manuscript, we study the case where $f$ is ordinary at $p$ and $p$ is inert in $K$. In particular, the CM form $\theta(\chi\psi)$ is now non-ordinary at $p$. 
This essentially swaps the roles played by $f$ and $\theta(\chi\psi)$ in \cite{BFSuper} since the reduction types of the two modular forms are interchanged. {However, there are some diverging aspects that we would like emphasize here:
\begin{itemize}
    \item In the setting of \cite{BFSuper}, both families involved in the construction of Euler systems (namely, the Coleman family through our $p$-stabilized eigenform $f_\alpha$ 
    and the CM family that parametrizes the $p$-ordinary $p$-stabilizations of $\theta(\chi\psi)$) have bounded slope. In the setting of the present manuscript, this is no longer the case: Even though the family through $f^\alpha$ has slope-zero, the slopes of (either one of) the $p$-stabilizations of the $p$-non-ordinary CM forms $\theta(\chi\psi)$ grows linearly with weight. In other words, the locus of the universal deformation space that is determined by the collection $\{\theta(\chi\psi)\}_{\psi}$ as $\psi$ varies does not lift to a connected component of the eigencurve.
    \item Unlike \cite{BFSuper}, where we had constructed a pair of 2-variable Coleman maps for $f_\alpha\times \theta(\chi\psi)$, we define four one-variable cyclotomic Coleman maps for $f_\alpha\times \theta(\chi\psi)$ here. We further show in the present manuscript that two of these  maps can be deformed into two-variable maps as $f_\alpha$ varies in a Hida family $\f$. The extra  variable coming from the weight space of the Hida family  allows us to vary in our main result the Hecke character $\psi$ over those which are crystalline at $p$. This variation is crucial in our main objectives, which is to study of Iwasawa main conjectures for $\f_{/K}\otimes\chi$ over the $\Zp$-power extensions of $K$. To this end, it is of utmost importance that we keep track of delicate integrality properties of the Coleman maps as both $f_\alpha$ and $\psi$ vary.
    \item Our results in the present manuscript establish main conjectures for a $3$-parameter family. It is unclear to us how the methods of \cite{BFSuper} would lead to a similar result concerning the $3$-parameter family involved in op. cit. in an obvious way, which would amount to an interpolation of the main results in \cite{BFSuper} as the non-ordinary form $f^\alpha$ varies over the eigencurve.
    In the present work, it is in fact crucial that we work towards a main conjecture in $3$-variables in the first place: Variation in the Hida family $\f$ allows us to vary our results in $\psi$ (to ensure that the weight of $\theta(\chi\psi)$ is being dominated), and these results for each individual $\psi$ can then be packaged together within the universal deformation space of $\theta(\chi)$, over a locus that is naturally parametrized by the $\ZZ_p^2$-extension of $K$.
    \item In \cite{BFSuper}, the proofs of the results   towards the indefinite anticyclotomic conjecture, the BDP-type main conjectures played a crucial role. We remark that even to formulate a BDP-type main conjecture (e.g. to define the relevant Selmer group), we need to assume that the prime $p$ splits in $K/\QQ$. In other words, when $p$ is inert in $K$ (as is the case in the present work), the methods of \cite{BFSuper} to address indefinite anticyclotomic conjectures fall apart. 
\end{itemize}
}
\subsection{Iwasawa theory for the $\ZZ_p^2$-extension of $K$ ($\chi\neq \chi^c$, $v_p(\alpha)=0$ and $p$ is split)} 
\label{subsec_1_1_4_2021_09_07}
In \cite{BL_SplitOrd2020}, the present authors relaxed the $p$-distinguished condition imposed in \cite{BLForum}. Instead, we assume the weaker condition that $\chi\ne \chi^c$. Without the $p$-distinguished condition, the methods of \cite{LLZ2,BLForum} do not permit a construction of a full-fledged two-variable Euler system over $K$ associated to $f_{/K}\otimes\chi$ out of Beilinson--Flach elements. In \cite{BL_SplitOrd2020}, we were therefore forced to work with the Beilinson--Flach Euler system over $\QQ$ associated to $f_\alpha\times \theta(\chi\psi)$, where $\psi$ is a Hecke character of $K$ that is $p$-crystalline. We note that there are two potential difficulties in this setting:
\begin{itemize}
    \item The image of the associated Galois representations are much smaller as compared to the generic case (of the Rankin--Selberg convolutions of two non-CM and non-conjugate dual forms). This problem is also relevant to the present manuscript and we have proved in Appendix~\ref{appendix_big_images} that there is always an element in the image of the Galois representation of the sort required by the Euler system machinery.
    \item More importantly, in the setting of \cite{BL_SplitOrd2020}, where it may be possible that $\overline{\chi}=\overline{\chi}^c$, the residual representations attached to the Rankin--Selberg products $f_\alpha\times \theta(\chi\psi)$ are reducible (as $f_\alpha$ varies in a non-CM Hida family and $\theta(\chi\psi)$ varies in a CM Hida family). The bulk of \cite{BL_SplitOrd2020} is dedicated to resolve this technical obstacle, where we developed a locally restricted Euler system machinery in the residually reducible scenario. Such a consideration is not required in the present work. In this respect, among others that we highlight in the remainder of \S\ref{subsec_1_1_4_2021_09_07}, the methods of \cite{BL_SplitOrd2020} are significantly different from the present work.
\end{itemize}

In \cite{BL_SplitOrd2020}, there are two threads that we consider, that are a priori independent:  These two cases are determined by the condition that the weight of $f_\alpha$ is greater than that of $\theta(\chi\psi)$, or otherwise. In each of these two cases, there is a natural Selmer condition resulting in an  extended Selmer group over $K$, in the sense of Nekov\'a\v{r}. Using the locally restricted Euler-system machinery (that we extended in op. cit. to cover the residually reducible scenario) with the Beilinson--Flach elements as an input, we proved that the sizes of both Selmer groups are bounded by the indices of the Beilinson--Flach elements inside the corresponding local cohomology group with coefficients in the appropriate subquotients of the local Galois representation (c.f. Theorem~1.12  of op. cit.).

Our next task in \cite{BL_SplitOrd2020} was to show that the aforementioned bounds can be ``patched together'' to yield one inclusion in the two-variable and three-variable Iwasawa main conjectures (as described in Theorem~1.14 in op. cit.), as $f_\alpha$ varies in a Hida family $\f$ and $\psi$ varies over a set of Hecke characters, whose associated Galois characters form a dense subset of the set of characters of the Galois group of the $\Zp^2$-extension of $K$. Relying on Nekov\'a\v{r}'s descent formalism in both the definite and indefinite cases, we were able to prove one inclusion in the one- and two-variable anticyclotomic Iwasawa main conjectures of  $f_{/K}\otimes \chi$ and  $\f_{/K}\otimes\chi$ (see Theorems 1.16 and 1.17 of op. cit.). These results simultaneously generalize previous works we have summarized in \S\S\ref{subsec_1_1_1_2021_09_07}-\ref{subsec_1_1_3_2021_09_07} above.

As we have noted above, there are two different threads one can follow, asking that the $p$-adic $L$-functions and the Selmer groups in question verify an interpolation property (in terms of the $L$-values and the Bloch--Kato Selmer groups, respectively) over the locus of the weight space where specializations of $\f$ dominates over that of the CM family, or vice-versa. It turns out that the Iwasawa main conjectures concerning these two separate threads are equivalent to one another. In \cite{BL_SplitOrd2020}, we frequently made use of this equivalence to maneuver from one main conjecture to another (see also our discussion in the final paragraph of \S\ref{subsubsec_1_1_3_2_2021_09_07} related to this point, in relation to our results in \cite{BFSuper} towards the indefinite anticyclotomic Iwasawa theory). This duology is absent from the present work, highlighting another important difference between the current work and \cite{BL_SplitOrd2020}:  Since there is no connected component of the eigencurve that interpolates (the $p$-stabilizations of) the family $\{\theta(\chi\psi)\}_{\psi}$ of CM forms of positive slope, the associated Galois representation does not automatically admit a triangulation. This is one of the underlying challenges in the scenario when $p$ is inert in the imaginary quadratic field:  We presently do not have a candidate for a Selmer group which interpolates the Bloch--Kato Selmer groups for $f_\alpha\times \theta(\chi\psi)$ as $f_\alpha$ and $\psi$ vary with the restriction that the weight of $f_\alpha$ is smaller than that of $\theta(\chi\psi)$.

We remark that the main Iwasawa theoretic results in \cite{BL_SplitOrd2020} (c.f. Theorems 6.21, 6.37 and 6.38 in op. cit.) have an error term denoted by $\mathscr{C}(u)$, which arises from the  comparison of the natural lattices attached to $f_{/K}\otimes\chi\psi$ and $f\times\theta(\chi\psi)$ via Shapiro's lemma (see in particular the discussion in \S2.2 of op. cit.). It turns out that the error term $\mathscr{C}(u)$ is trivial if $\overline{\chi}\neq \overline{\chi}^c$, c.f. \cite[Proposition 2.19]{BL_SplitOrd2020}. This error term is not present in the current work since our running assumptions guarantee that $\overline{\chi}\neq \overline{\chi}^c$.

In the transition from the upper bounds of Selmer groups to inclusions of main conjectures in \cite{BL_SplitOrd2020}, the variation in the family $\f$ plays a crucial role, since it allows us to vary the weight of $\theta(\chi\psi)$ in a sufficiently large subset of the weight space. One of the key ingredients of the patching argument presented in \cite{BL_SplitOrd2020} is the control of the error term $t$ in Theorem~1.12 op. cit., independently of the choice of $f_\alpha$ and $\psi$ (with $t=0$ if $\overline{\chi}\ne\overline{\chi}^c$, i.e. when the Galois representation attached to $f_\alpha\times \theta(\chi\psi)$ is residually absolutely irreducible). This control on $t$ was achieved thanks to the locally restricted Euler system we have developed in \cite[Appendix A]{BL_SplitOrd2020} in the residually reducible scenarios.

While we employ a patching argument in both \cite{BL_SplitOrd2020} and the present manuscript, one of the most important diverging points of these two works is the reduction type of the CM form $\theta(\chi\psi)$, which is a consequence of the splitting behaviour of the prime $p$ in the imaginary quadratic field $K$. The fact that $p$ is assumed to be inert in $K$ in the current manuscript means that $\theta(\chi\psi)$ is non-ordinary at $p$ (as a matter of fact, things are even worse: the slopes of the corresponding $p$-stabilized eigenforms are unbounded as $\psi$ varies), which requires us to develop a construction of  integral Coleman maps with optimal integrality. We may interpolate these maps as $f_\alpha$ varies in a Hida family and $\psi$ varies over a set of Hecke characters, whose associated Galois characters form a dense subset of the set of characters of the Galois group of the $\Zp^2$-extension of $K$.

In the present work, the challenge concerning the Euler system argument is of different nature as compared to \cite{BL_SplitOrd2020}, and the difficulty is akin to our previous work \cite{BFSuper} described in \S\ref{subsubsec_1_1_3_2_2021_09_07} above: In this paper, we work with a family of semi-ordinary Rankin--Selberg products (of $p$-ordinary eigenforms $f_\alpha$ which vary in a Hida family and the non-ordinary CM form $\theta(\chi\psi)$ and as a result, the Beilinson--Flach elements in this setting have denominators. To run the Euler system machinery obtain bounds on the Selmer groups, one needs an (integral) Euler system, as optimal as possible so that one can patch these bounds as $\psi$ varies. Note that this is significantly different from \cite{BFSuper} in two ways: 
\begin{itemize}
    \item In op. cit., the non-CM form $f_\alpha$ was $p$-ordinary, whereas the CM forms varied in a $p$-ordinary Hida family. This allowed us to resort to the ideas of \cite{LLZ2} and construct an Euler system for $f_\alpha{}_{/K}\otimes \chi$ over $K$, by patching Hecke algebras of different levels. This is not possible in the current setting, since the CM forms in question cannot be interpolated in the eigencurve.
    \item In \cite{BFSuper}, for the reasons outlined in the previous paragraph, we did not have to vary the non-ordinary factor in a family to obtain results on the Iwasawa theory of  $f_\alpha{}_{/K}\otimes \chi$ over the $\Zp^2$-extension of $K$. 
\end{itemize}
The optimal construction of the integral $2$-parameter family of (signed) Beilinson--Flach elements (where one of the variables parametrizes the cyclotomic variation and the other parametetrizes the variation in $f_\alpha$ in a  Hida family) decomposing those non-integral classes given by the work of Loeffler and Zerbes~\cite{LZ1}, is one of the key novel constructions in this manuscript.  The integrality of the Coleman maps in the weight variable of the Hida family we mentioned above plays an indispensable role in our construction of integral  Euler systems (see \S\ref{sec:strategy} below for a more detailed discussion). We note that in \cite{BL_SplitOrd2020}, where both families in question were $p$-ordinary and as a result, the Euler system of Kings--Loeffler--Zerbes~\cite{KLZ2} is readily integral, and such considerations were not necessary. 
\vfill

\pagebreak

\begin{table}[h!]
\centering
\resizebox{\textwidth}{!}{
\begin{tabular}{|| m{8em}|| c c c c c c c c | m{4em} ||} 
 \hline
 &  &  &  &  &  &  &  &  &\\ 
{\bf Assumptions} & \ref{subsubsec_1_1_1_1_2021_09_07}& \ref{subsubsec_1_1_1_2_2021_09_07} & \ref{subsubsec_1_1_1_3_2021_09_07} 
  & \ref{subsubsec_1_1_2_1_2021_09_07}
  & \ref{subsubsec_1_1_2_2_2021_09_07}
  & \ref{subsubsec_1_1_3_1_2021_09_07}
  &\ref{subsubsec_1_1_3_2_2021_09_07}
  & \ref{subsec_1_1_4_2021_09_07}
  &  {\bf This paper}
  \\ [3.5ex] 
 \hline\hline
 &  &  &  &  &  &  &  & & \\
 {$p=\p$ is inert} &\checkmark  & $\bigtimes$ & $\bigtimes$ & $\bigtimes$ & $\bigtimes$ & $\bigtimes$ & $\bigtimes$ & $\bigtimes$ & \quad\,\checkmark \\[2.5ex]  
$p=\p\p^c$ is split & \checkmark & \checkmark & \checkmark & \checkmark & \checkmark & \checkmark & \checkmark & \checkmark & \quad\,$\bigtimes$\\[2.5ex]
$v_p(\alpha)=0$ & \checkmark & \checkmark & \checkmark & \checkmark & \checkmark & \checkmark & $\bigtimes$ & \checkmark & \quad\,\checkmark\\[2.5ex]
$v_p(\alpha)>0$  &\halfcheckmark & $\bigtimes$ &  \checkmark & $\bigtimes$ & $\bigtimes$ & $\bigtimes$ &  \checkmark & $\bigtimes$ &\quad\,$\bigtimes$\\[2.5ex]
$\chi=\mathds{1}$ &  \checkmark &  \checkmark &  $\bigtimes$ &  \checkmark &  \checkmark & $\bigtimes$ & $\bigtimes$ & $\bigtimes$ & \quad\,$\bigtimes$\\ [2.5ex]
$\chi\neq \chi^c$ (non-Eis.) & $\bigtimes$ & $\bigtimes$ & \checkmark & $\bigtimes$ & $\bigtimes$ & $\bigtimes$ & $\bigtimes$ & \checkmark &\quad\,\checkmark\\[2.5ex]
$\chi(\p)\not\equiv \chi(\p^c)$ \,\,\,\,\, ($p$-distinguished) & $\bigtimes$  & $\bigtimes$  & \checkmark & $\bigtimes$  & $\bigtimes$  & \checkmark & \checkmark & \checkmark & \quad\,$\bigtimes$ \\
[3.5ex]
 \hline
 \hline
  &  &  &  &  &  &  &  &&\\
 {\bf Results} &  &  &  &  &  &  &  &  &\\ 
 [2.5ex] 
 \hline
 \hline
 &  &  &  &  &  &  &  & &\\
 Bloch-Kato\quad 
 Conjectures & \checkmark & \checkmark & \checkmark & \checkmark & \checkmark & \checkmark & \checkmark & \checkmark & \quad\,\checkmark
 \\ [2.5ex] 
 \hline
 &  &  &  &  &  &  &  & &\\
 Definite
 anticyc.
 Iwasawa Theory & \checkmark & $\bigtimes$  & \checkmark  & \checkmark & \checkmark & \checkmark & \checkmark & \checkmark & \quad\,\checkmark
 \\ [2.5ex] 
 \hline
 &  &  &  &  &  &  &  & &\\
 Indefinite anticyc.
 Iwasawa Theory  & $\bigtimes$  & \checkmark & \checkmark  & $\bigtimes$  & \checkmark & \checkmark & \checkmark & \checkmark & \quad\,\checkmark
  \\ [2.5ex] 
 \hline
 &  &  &  &  &  &  &  & &\\
 Iwasawa Th. for the $\ZZ_p^2$-extension & $\bigtimes$  & $\bigtimes$  & $\bigtimes$  & \checkmark & \checkmark & \checkmark & \checkmark & \checkmark & \quad\,\checkmark \\ [2.5ex] 
 \hline
 &  &  &  &  &  &  &  & &\\
 Variation in $f_\alpha$ & $\bigtimes$  & \checkmark & $\bigtimes$  & \checkmark  & \checkmark & $\bigtimes$  & $\bigtimes$  & \checkmark & \quad\,\checkmark
 \\ [2.5ex] 
 \hline
 \hline
\end{tabular}
}
\caption{Summary of \S\ref{sec_previous_works} (Previous Works)}
\label{table_summary}
\end{table}
Note that the discussion in \S\ref{subsubsec_1_1_1_1_2021_09_07} that pertains to the case $v_p(\alpha)>0$ only covers the scenario where $a_p(f)=0$ or $p$ is split in $K$. That is why we put a half check mark ``\halfcheckmark'' (rather than the full check mark ``\checkmark'') as the entry in the first column of Table~\ref{table_summary} above corresponding to the case  $v_p(\alpha)> 0$.

\section{This manuscript: The strategy}\label{sec:strategy}

In this manuscript, we prove results towards a variety of Iwasawa main conjectures (\emph{with} $p$-adic $L$-functions) in the setting when the prime $p$ remains inert in $K/\QQ$, where there has been very limited progress as compared to when $p$ splits in $K/\QQ$: To the best of our knowledge, the works of Bertolini--Darmon and Chida--Hsieh that we have recalled in \S\ref{subsubsec_1_1_1_1_2021_09_07} are the only prior results in this direction (and they concern the ``definite'' anticyclotomic Iwasawa theory in the scenario when $\chi=\mathds{1}$). 

Before presenting our results in more precise form and reviewing our strategy, we recall the vital role  the hypotheses that $p$  splits in $K/\QQ$ has played in the works we have recorded in \S\ref{subsubsec_1_1_3_2_2021_09_07} and \S\ref{subsec_1_1_4_2021_09_07}, where analogous results to those in the current work have been obtained. We invite the reader to consult Table~\ref{table_summary}, where we summarize earlier results that concern the Bloch--Kato conjectures and Iwasawa theory for $f_{/K}\otimes \chi$, indicating the specific set of hypotheses required in each work. We recall that in this table, the labels in the first row indicate the section numbers where the corresponding result is summarized.

%When the prime $p$ splits in $K/\QQ$ and the Hecke character $\chi$ is $p$-distinguished, one may attach to the Rankin--Selberg product $f_{/K}\otimes\chi$ a full-fledged Euler system over all ray class extensions of $K$. This construction dwells on a ``patching'' argument that was first introduced and employed in \cite{LLZ2} when $f$ has weight $2$ and later extended in \cite{BLForum} to treat the case when $f$ is of higher weight. The hypotheses that $p$ splits in $K/\QQ$ and the Hecke character $\chi$ is $p$-distinguished are required to identify the irreducible component of the eigencurve that contains the $p$-ordinary stabilization of $\theta(\chi)$. Moreover, this irreducible component necessarily has CM by $K$ thanks to the $p$-distinguished hypothesis, by a result of Bella\"iche and Dimitrov, which interpolates the $p$-ordinary $p$-stabilizations of the theta-series of Hecke characters of $K$ with $p$-power conductor.

The challenge in the scenario when $p$ is inert in $K$ stems from the fact that the eigencurve cannot have components that have CM by the imaginary quadratic $K$; c.f.~\cite[Corollary 3.6]{CITJHC70}. Morally, this is due to the fact that slope of any non-ordinary CM form of weight $k$ is at least $(k-1)/2$, so the refinements of the corresponding theta-series cannot be contained in a finite slope family. We now explain (in very rough terms) how we circumvent this fundamental difficulty:
\begin{enumerate}[align=parleft, labelsep=.2cm,]
\item[\mylabel{item_intro_Step1}{{\bf Step~1}})]\ \  \ Extending \cite{BFSuper,BLLV}, we introduce Perrin-Riou functionals, the signed Coleman maps (in single cyclotomic variable) and signed Beilinson--Flach elements associated to Rankin--Selberg convolutions of the form $f_\alpha\otimes g$, where $f_\alpha$ has slope zero, whereas $g\in S_{k_g+2}(\Gamma_1(N_g),\varepsilon_g)$ is non-ordinary at $p$ with $p\nmid N_g$. We show that these constructions in fact interpolate as $f_\alpha$ varies in a Hida family $\f$. Our results in this direction are summarized in \S\ref{subsubsec_intro_optimizedPRandBF}. These constructions allow us to prove one divisibility in the Iwasawa--Greenberg main conjectures for $f_\alpha\otimes g$ as well as $\f\otimes g$ over the cyclotomic $\Zp$-extension of $\QQ$ (see Theorems~\ref{thm_cyclo_main_conj_fotimesg} and \ref{thm_cyclo_main_conj_ffotimesg} in the main body of the manuscript). 
\item[\mylabel{item_intro_Step2}{{\bf Step~2}})]\ \  \ \ We apply our constructions in \ref{item_intro_Step1} with the particular choice when $g$ is the theta-series of a Hecke character $\psi$ of our fixed imaginary quadratic field $K$. Employing the locally restricted Euler system machinery, we first prove results towards cyclotomic Iwasawa main conjectures. Moreover, our constructions of the Perrin-Riou functionals and signed Beilinson--Flach elements are integrally optimal and as such, they permit us to vary $f_\alpha$ in the Hida family $\f$ and to prove a result (Theorem~\ref{thm_cyclo_main_inert_ff_intro} below) towards two-variable main conjectures (where one of the variables parametrize $\f$ and the other accounts for the cyclotomic variation)  using a general patching criterion we establish in Appendix~\ref{Appendix_Regular_Rings_Divisibility}. 
\item[\mylabel{item_intro_Step3}{{\bf Step~3}})]\ \  \   We then vary $\psi$ to prove Theorem~\ref{thm_3var_main_inert_ff_intro}, which is a result towards  main conjectures in three variables (where one of the variables parametrize $\f$ and the other two the $\ZZ_p^2$-extension of $K$). We once again rely on the patching criterion we have noted in \ref{item_intro_Step2}. The integral optimality of the signed objects from \ref{item_intro_Step1} also plays a crucial role in this portion. We  remark that our results towards Iwasawa main conjectures for $f_\alpha\otimes g$ in \ref{item_intro_Step1} are restricted to the case where  $k_f>k_g$. Consequently, for a fixed $f_\alpha$, our results in \ref{item_intro_Step2} allow only the treatment of crystalline Hecke characters of fixed tame conductor and whose infinity types belong to a fixed finite set. Since  there are only finitely many such characters, this is not enough to obtain results on Iwasawa main conjectures over the $\ZZ_p^2$-extension of $K$. This problem is overcome by  varying $f_\alpha$ in a Hida family, which in turn permits us to vary $\psi$ in an infinite family. This is the main reason for our emphasis on the signed-splitting procedure for Beilinson--Flach elements for Hida families.
\item[\mylabel{item_intro_Step4}{{\bf Step~4}})]\,\,\,\,\,  We use Nekov\'a\v{r}'s (very general) descent formalism for his Selmer complexes to descend our results on the $\Zp^2$-extension of $K$ to the anticyclotomic $\Zp$-tower. This portion of our results is recorded as Theorem~\ref{thm_anticyc_main_inert_ff_definite_intro} (in the definite case) and Theorem~\ref{thm_anticyc_main_inert_ff_indefinite_intro} (in the indefinite case).
\end{enumerate}

Before introducing the necessary notation to state our main results in \S\ref{subsec_intro_summary_of_results}, we also note a separate thread of exciting progress in the Iwasawa theory over imaginary quadratic fields where the prime $p$ remains inert: Andreatta--Iovita in~\cite{AndreattaIovitaBDP} (also Kriz in \cite{Kriz2018}, using a different method) gave a construction of a Bertolini--Darmon--Prasanna-style (anticyclotomic) $p$-adic $L$-function, which interpolates the \emph{central critical values} of the Rankin--Selberg $L$-functions $L(f\times \theta(\psi),s)$ as the Hecke character $\psi$ varies. These $p$-adic $L$-functions are genuinely different from those we work with in the present manuscript, in that the $p$-adic $L$-functions of Andreatta--Iovita and Kriz interpolate along those $\psi$ where the (non-ordinary) theta-series $\theta(\psi)$ has higher weight than $f$  {(as opposed to those we work with in the present work, which concern the scenario where the $p$-ordinary form $f$ has higher weight)}. At present, we do not know how to formulate Iwasawa main conjectures for these $p$-adic $L$-functions, due to the absence of an appropriate triangulation on local cohomology at $p$ (which, as we have indicated above, is related with the non-existence of CM families with positive finite slope).

\section{The setting}
\label{subsec_setting} 
Throughout, we fix an odd prime $p$ and embeddings $\iota_\infty:\overline{\QQ}\hookrightarrow \mathbb{C}$\index{Embeddings $\iota_\infty,\iota_p$} and $\iota_p:\overline{\QQ}\hookrightarrow \mathbb{C}_p$. Let  $f$ and $g$ be two normalized, cuspidal eigen-newforms, of weights $k_f + 2, k_g + 2$, levels $N_f, N_g$, and nebentypes $\varepsilon_f$, $\varepsilon_g$ respectively. We assume $p \nmid N_f N_g$. In \cite{BLForum, BFSuper}, we have studied the Iwasawa theory for the Rankin--Selberg product of $f$ and a $p$-ordinary $g$  (with respect to the embeddings we fixed) as $g$ varies in a CM Hida family. 

In this manuscript, we assume that  $f$ is ordinary at $p$ and vary the ordinary $p$-stabilization of $f$ in a Hida family $\f$, but $g$ shall be non-ordinary at $p$. We give finer results than those given in \cite{BLForum, BFSuper}. We note that in~\cite{BLForum}, both $f$ and $g$ are assumed to be $p$-ordinary (see \S\ref{subsec_1_1_4_2021_09_07} for more details). So, our results in the present work do not overlap with those in op. cit. As discussed \S\ref{subsubsec_1_1_3_2_2021_09_07}, we studied  results towards Iwasawa main conjectures involving $p$-adic $L$-functions and Selmer groups for the Rankin--Selberg product of a non-$p$-ordinary eigenform and a CM Hida family in \cite{BFSuper}. While the results in the present work resemble those in op. cit., we are interested in objects for which the non-$p$-ordinary eigenform $g$ is dominated\footnote{By ``$g$ dominated by members of the family $\f$'', we mean that the $p$-adic $L$-function (resp., the Selmer group) involved interpolates the critical values of the Rankin--Selberg $L$-functions (resp., the Bloch--Kato Selmer groups) associated to $\f(\kappa)\otimes g$ where the classical specialization $\f(\kappa)$ has higher weight than $g$. Similarly, when the objects interpolate those where $g$ has higher weight than $f(\kappa)$, we say that $g$ dominates the members of $\f$.} by the members of the Hida family $\f$, whereas the main objects of interest in \cite{BFSuper} are those for which the non-$p$-ordinary eigenform dominates members of the Hida family.

Let $L/\Qp$ be a finite extension containing the coefficients of $f$ and $g$, the $N_fN_g$-th roots of unity as well as the roots of the Hecke polynomials of $f$ and $g$ at $p$. We shall write \index{$U_p$ eigenvalues $\alpha_?,\beta_?$} $\alpha_f$, $\beta_f$, $\alpha_g$ and $\beta_g$ for these roots; we assume throughout that $\alpha_f$ is a $p$-adic unit and $\alpha_g \ne \beta_g$. For $h\in\{f,g\}$ and $\lambda\in\{\alpha,\beta\}$, we write $h_\lambda$ for the $p$-stabilization of $h$ satisfying $U_ph_\lambda=\lambda_h h_\lambda$. Let $\f$ \index{Hida Theory! Non-CM Hida family $\f$} denote the Hida family passing through $f_\alpha$. The corresponding branch of the Hecke algebra is denoted by $\Lambda_\f$\index{Hida Theory! Non-CM branch $\LL_\f$}.

For each $h\in\{f,g\}$, let $V_h$ denote Deligne's $L$-linear continuous  cohomological $G_\QQ$-representation attached to $h$, so that its restriction to $G_{\QQ_p}$ has Hodge--Tate weights $0$ and $-1-k_h$ (with the convention that the Hodge--Tate weight of the cyclotomic character is $1$). 
Let $\cO$ denote the ring of integers of $L$.  {We fix a Galois-stable $\cO$-lattice $R_h$ inside $V_h$ so that its linear dual $R_h^*=\Hom(R_h,\cO)$ coincides with the Galois representation realized in the cohomology of the  the modular curve $Y_1(N_h)$ with coefficients in the integral sheaf $\TSym^{k_h}\mathscr{H}_{\Zp}(1)$ (see \cite[\S2.3]{KLZ2} and Definition~\ref{def_eigenvectors} below).}\index{Galois representations! Deligne's representations $R_h, R_h^*$}
We study the Iwasawa theory of $T_{f,g}:= R_f^*\otimes R_g^*=\Hom(R_f\otimes R_g,\cO)$ over the cyclotomic tower $\QQ(\mu_{p^\infty})$, as well as when $f$ and $g$ vary in a Hida family and CM forms over a fixed imaginary quadratic field where $p$ is inert respectively.

\section{Conjectures and Results}
\subsection{Main conjectures: The set up and statements}
Let us fix once and for all an imaginary quadratic field $K$\index{$K$: imaginary quadratic field} where the prime $p$ is inert. Let $D_K$ denote its discriminant. We also fix a ray class character $\chi$ of $K$ with conductor dividing $\ff p^\infty$ (where we assume that $\ff$ is coprime to $pD_K$) and order coprime to $p$. We will also work with an algebraic Hecke character\index{Hecke characters of $K$! $\psi$} $\psi$ with infinity type\footnote{The convention for the infinity types in the current manuscript is the same as the one utilized in \cite{bertolinidarmonprasanna13}. Namely, the infinity type of a Hecke character $\xi$ equals $(m,n)$ if it sends  $(\alpha)$ to $\alpha^m\overline\alpha^n$,  where $\alpha\in K$ is coprime to the conductor of $\xi$.} $(0,k_g+1)$ and conductor $\ff$\index{Hecke characters of $K$! $\ff$: conductor of $\psi$}, where $k_g$ is a positive integer. The character $\chi$ \index{Hecke characters of $K$! $\chi$ (branch character)} will be fixed throughout (and will take on the role of a branch character, in the sense of Hida) whereas $\psi$ will be allowed to vary. For any ray class character $\eta$ of $K$, we shall set $\eta^c:=\eta\circ c$,\index{Hecke characters of $K$! $\eta^c$} where $c$\index{$K$: imaginary quadratic field! $c$: generator of $\Gal(K/\QQ)$} is the generator of $\Gal(K/\QQ)$.

Let\index{$K$: imaginary quadratic field! $\Gamma_K,\Gamma_\cyc, \Gamma_\ac$} $\Gamma_\cyc=\Gal(\QQ(\mu_{p^\infty})/\QQ)\cong\Delta\times\Gamma_1$, where $\Delta\cong\ZZ/(p-1)\ZZ$ and $\Gamma_1\cong\Zp$\index{$K$: imaginary quadratic field! $\Gamma_1,\Delta$}. Let $K_\infty,K_\cyc$ and $K_\ac$ be the $\Zp^2$-extension, the cyclotomic $\Zp$-extension and the anticyclotomic $\Zp$-extension of $K$ respectively\index{$K$: imaginary quadratic field! $K_\infty,K_\cyc,K_\ac$}. We write $\Gamma_K=\Gal(K_\infty/K)$, $\Gamma_\ac=\Gal(K_\ac/K)$. We also identify $\Gamma_1$ and $\Gamma_\cyc$ with $\Gal(K_\cyc/K)$ and $\Gal(K(\mu_{p^\infty})/K)$ respectively.

For any profinite abelian group group $G$, we let\index{Completed group rings! $\LL(G), \LL_{\cO}(G)$} $\LL(G):=\ZZ_p[[G]]$ denote the completed group ring of $G$ with coefficients in $\ZZ_p$. We% let $\cO$ denote the ring of integers of a finite extension of $\QQ_p$ (which we shall enlarge as necessary throughout the article, e.g. to contain the values of $\chi$ and $\psi$) and
 put $\LL_{\cO}(G):=\LL(G)\otimes_{\ZZ_p}\cO$. We will mainly concern ourselves with the cases when $G=\Gamma_K, \Gamma_\cyc, \Gamma_1$ or $\Gamma_\ac$. Let us denote by \index{$K$: imaginary quadratic field! Universal characters $\Psi_?$}
$$\Psi:G_K\twoheadrightarrow\Gamma_K\xrightarrow{\gamma\mapsto \gamma^{-1}}\Gamma_K \hookrightarrow \LL_\cO(\Gamma_K)^\times$$ 
$$\Psi_{?}:G_K\twoheadrightarrow\Gamma_?\xrightarrow{\gamma\mapsto \gamma^{-1}}\Gamma_? \hookrightarrow \LL_\cO(\Gamma_1)^\times$$
the tautological characters, where $?=1,\ac$. We set $\bbchi:=\chi\Psi$ and similarly define the $\LL_\cO(\Gamma_1)$-valued character $\bbchi_\cyc:=\chi\Psi_1$ and the $\LL_\cO(\Gamma_\ac)$-valued character $\bbchi_\ac:=\chi\Psi_\ac$. \index{$K$: imaginary quadratic field! Universal characters $\bbchi_?$} We shall denote by \index{$K$: imaginary quadratic field! $\LL_\cO(\Gamma_?)^\iota$} $\LL_{\cO}(\Gamma_K)^{\iota}$ the free $\LL_{\cO}(\Gamma_K)$-module of rank one on which $G_K$ acts via $\Psi_K$ (and similarly, we define $\LL_{\cO}(\Gamma_?)^{\iota}$ for $?=1,\ac$). More generally, we put $M^\iota:=M\otimes_{\LL_{\cO}(\Gamma_?)} \LL_{\cO}(\Gamma_?)^{\iota}$\index{$K$: imaginary quadratic field! $M^\iota$ (for a $\LL(\Gamma_?)$-module $M$)} for any $\LL_{\cO}(\Gamma_?)$-module $M$ on which $G_K$ acts.

Throughout, we also fix a branch $\f\in \LL_\f[[q]]$\index{Hida Theory! Non-CM Hida family $\f$} of a primitive non-CM Hida family with tame conductor $N_f$\index{Hida Theory! Tame level $N_f$} and residually non-Eisenstein (in the sense that $\overline{\rho}_\f|_{G_{\QQ}}$ is absolutely irreducible)\index{Hida Theory! Residually non-Eisentein Hida family} and $p$-distinguished (in the sense that $\overline{\rho}_\f|_{G_{\Qp}}$ is not scalar)\index{Hida Theory! $p$-distinguished Hida family}, where $\LL_\f$ is an irreducible component of Hida's universal ordinary Hecke algebra (see Definition~\ref{defn_Hecke_algebra_intro} for further details)\index{Hida Theory! Non-CM branch $\LL_\f$}. We write $\kappa_0$ for the prime ideal of $\LL_\f$ which specializes $\f$ to $f$. We assume that $N_f$ is coprime to $D_K \mathbf{N}\ff$. We denote by $R_{\f}^*$\index{Hida Theory! Hida's Galois representation $R_\f^*$}  Hida's $G_\QQ$-representation attached to $\f$, which is  free of rank 2 over $\LL_\f$.  We will assume without loss of generality that $\LL_\f$ contains $\cO$ and put $\LL_\f(\Gamma_?):=\LL(\Gamma_?)\otimes_{\ZZ_p}\LL_\f$\index{Completed group rings! $\LL_\f(\Gamma_?)$}.

For any $\chi$ and $\psi$ as above, we define the $G_K$-representations\index{Galois representations! $\TT_{\f,\psi}^{\rm cyc}$}\index{Galois representations! $\TT_{\f,\psi}^{?}$, $?=K,\ac$}
$$\TT_{\f,\psi}^{\rm cyc}:=R_{\f}^*\otimes\psi\,\widehat{\otimes}\,\LL_{\cO}(\Gamma_1)^\iota \qquad \hbox{and } \qquad \TT_{\f,\chi}^{?}:=R_{\f}^*\otimes\chi\,\widehat{\otimes}\,\LL_{\cO}(\Gamma_?)^\iota$$ 
where $?=K,\ac$. We let $\widetilde{H}^2_{\rm f}(G_{K,\Sigma},\TT_{\f,\psi}^{\cyc};\Delta_{\Gr})$\index{Selmer complexes! Extended Greenberg Selmer group $\widetilde{H}^2_{\rm f}(G_{K,\Sigma},\TT_{\f,\psi}^{?};\Delta_{\Gr})$} denote the extended Greenberg Selmer group attached to the representation $\TT_{\f,\psi}^{\rm cyc}$ (c.f. Definition~\ref{defn_Selmer_complex_inert_ord}) and similarly define $\widetilde{H}^2_{\rm f}(G_{K,\Sigma},\TT_{\f,\chi}^{K};\Delta_{\Gr})$. Finally, we let 
$ L_p^{\rm RS}(\f_{/K}\otimes\psi)$ denote the Rankin--Selberg $p$-adic $L$-function given as in Definition~\ref{defn_geo_for_trivial_char}.\index{$p$-adic $L$-functions! $L_p^{\rm RS}(\f_{/K}\otimes\psi)$}

We are finally ready to state the first of a series of three conjectures, which concerns the cyclotomic Iwasawa theory of the family $\f\otimes \psi$.

\begin{conj}
\label{conjecture_cyclo_main_inert_ff_intro}
Suppose that $\overline{\rho}_\f$ is absolutely irreducible.  Then,
$$ L_p^{\rm RS}(\f_{/K}\otimes\psi)\cdot \LL_{\f}(\Gamma_1)\otimes_{\ZZ_p}\QQ_p=\Char_{\LL_\cO(\Gamma_1)}\left(\widetilde{H}^2_{\rm f}(G_{K,\Sigma},\TT_{\f,\psi}^{\cyc};\Delta_{\Gr})^\iota\right)\otimes_{\ZZ_p}\QQ_p\,.$$
\end{conj}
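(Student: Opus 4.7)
The conjectured equality (after inverting $p$) splits into two divisibilities, which I would address separately: the $\supseteq$ direction is accessible through the Euler system machinery developed in the earlier part of the paper, while the $\subseteq$ direction is the deeper side of the main conjecture and is the primary reason why the statement remains conjectural under current technology.

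For the $\supseteq$ (Euler system) direction, the plan is to deploy the signed Beilinson--Flach system attached to the Rankin--Selberg convolution $\f\otimes\theta(\psi)$. Since $\theta(\psi)$ is non-ordinary at the inert prime $p$ but is dominated by the members of the Hida family $\f$, the Perrin-Riou functionals, signed Coleman maps, and signed Beilinson--Flach elements constructed in Step~\ref{item_intro_Step1} produce an integral cyclotomic class together with an explicit reciprocity law identifying its image under the signed Coleman map with $L_p^{\rm RS}(\f_{/K}\otimes\psi)$. Feeding this class into the locally restricted Euler system machinery, combined with Poitou--Tate global duality and the absolute irreducibility of $\overline{\rho}_\f$ (which supplies the required big-image input for the Chebotarev-style arguments), yields
$$
\Char_{\LL_\cO(\Gamma_1)}\!\left(\widetilde{H}^2_{\rm f}(G_{K,\Sigma},\TT_{\f,\psi}^{\cyc};\Delta_{\Gr})^\iota\right)\otimes_{\ZZ_p}\QQ_p \;\supseteq\; L_p^{\rm RS}(\f_{/K}\otimes\psi)\cdot\LL_\f(\Gamma_1)\otimes_{\ZZ_p}\QQ_p.
$$
The integral optimality of the signed objects, emphasized throughout Step~\ref{item_intro_Step1}, will be decisive in ruling out any spurious $\mu$-factors at this stage.

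For the $\subseteq$ direction, I would proceed through specialization and patching. At arithmetic points $\kappa\in\mathrm{Spec}(\LL_\f)$ of sufficiently large weight, one should combine the known anticyclotomic main conjectures in the inert setting (Bertolini--Darmon~\ref{item_1}, Chida--Hsieh~\ref{item_2}) with Nekov\'a\v{r}'s Selmer complex descent (in the spirit of Step~\ref{item_intro_Step4}, run in reverse) to upgrade the anticyclotomic divisibilities into cyclotomic divisibilities at the individual specializations, and then apply the patching criterion of the Appendix to lift these pointwise statements to a divisibility over the full $\LL_\f(\Gamma_1)$. The uniform control of $\mu$-invariants along the family, secured by the optimized signed factorization, is once again crucial.

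The principal obstacle lies precisely in this second step: the known inert-case anticyclotomic results apply only to the Eisenstein case $\chi=\mathds{1}$ (and are frequently restricted to weight two), whereas the conjecture concerns arbitrary branch character $\chi$ and full Hida-family variation. To bridge this gap, one would require either an Eisenstein-congruence construction on a unitary group which survives the non-existence of CM components on the eigencurve at inert $p$ recalled in the introduction, or an alternative analytic input through the Andreatta--Iovita/Kriz-style $p$-adic $L$-functions once a compatible main-conjecture formulation on that side becomes available. Neither of these approaches is presently within reach, which is precisely why the full equality is only stated as a conjecture in this paper, with the authors proving the $\supseteq$ direction and leaving the $\subseteq$ direction open.
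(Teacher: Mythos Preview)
The statement in question is a \emph{conjecture}; the paper does not prove it, and there is no proof to compare your attempt against. What the paper actually establishes is one containment, namely Theorem~\ref{thm_cyclo_main_inert_ff_intro} (proved as Theorem~\ref{thm_cyclo_main_inert_ff}), which gives
\[
\varpi^{s(\psi)}L_p^{\rm RS}(\f_{/K}\otimes\psi)\in\Char_{\LL_\cO(\Gamma_1)}\left(\widetilde{H}^2_{\rm f}(G_{K,\Sigma},\TT_{\f,\psi}^{\cyc};\Delta_{\Gr})^\iota\right)
\]
under hypotheses strictly stronger than absolute irreducibility of $\overline{\rho}_\f$: one needs $p\ge 7$, the big-image condition \ref{item_fullness_main_body}, and the numerical constraints $k_g\ne p-1$, $p+1\nmid k_g+1$. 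Your sketch of the $\supseteq$ direction via signed Beilinson--Flach elements and the locally restricted Euler system machine is indeed the paper's approach, but your claim that absolute irreducibility alone ``supplies the required big-image input'' understates what is needed; see Appendix~\ref{appendix_big_images}.

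Your proposal for the $\subseteq$ direction contains a genuine misconception beyond the obstacles you flag. The results of Bertolini--Darmon and Chida--Hsieh (\ref{item_1}, \ref{item_2}) prove, via bipartite Euler systems and level-raising, the \emph{same} divisibility direction as the Euler-system bound---that is, $\Char(\mathrm{Sel})\mid L_p$ anticyclotomically---so they supply no input whatsoever toward the reverse containment even in the $\chi=\mathds{1}$ case. Moreover, Nekov\'a\v{r}'s descent formalism (Step~\ref{item_intro_Step4}) passes from $\LL_\f(\Gamma_K)$ down to $\LL_\f(\Gamma_\ac)$; there is no mechanism to ``run it in reverse'' and manufacture a cyclotomic divisibility from an anticyclotomic one, since the cyclotomic direction is lost upon specialization. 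The genuine route to the missing containment would be an Eisenstein-congruence argument on a suitable unitary group (as you mention at the end), and no such result is currently available when $p$ is inert in $K$; this is why the paper leaves the statement as a conjecture.
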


In Appendix~\ref{appendix_sec_padicRankinSelbergHida}, we introduce the ``semi-universal'' Rankin--Selberg $p$-adic $L$-function\index{$p$-adic $L$-functions! $L_p^{\rm RS}(\f_{/K}\otimes\bbchi)$ (``semi-universal'' Rankin--Selberg $p$-adic $L$-function)}
$$L_p^{\rm RS}(\f_{/K}\otimes\bbchi)\in \LL_\f(\Gamma_K)$$ in three variables, extending the work of Loeffler~\cite{Loeffler2020universalpadic} ever so slightly. With that, we can state the second conjecture, which concerns the $3$-parameter family $\f\otimes\bbchi$:

\begin{conj}[Main conjectures over $\LL_\f(\Gamma_K)$]
\label{conj_3var_main_inert_ff_intro}
Suppose $\chi$ is a ray class character as above and assume that $\overline{\rho}_\f$ is absolutely irreducible. Then
\[
    L_p^{\rm RS}(\f_{/K}\otimes\bbchi)\cdot \LL_\f(\Gamma_K)\otimes_{\ZZ_p}\QQ_p\,= \,\Char_{\LL_\f(\Gamma_K)}\left(\widetilde{H}^2_{\rm f}(G_{K,\Sigma},\TT_{\f,\chi}^{K};\Delta_{\Gr})^\iota\right)\otimes_{\ZZ_p}\QQ_p\,.
\]
\end{conj}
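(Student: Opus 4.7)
I would prove the two divisibilities in $\LL_\f(\Gamma_K)\otimes\QQ_p$ separately and assemble them via the standard divisor-theoretic argument in that UFD. Call them the Euler system divisibility
\[
L_p^{\rm RS}(\f_{/K}\otimes\bbchi)\mid \Char_{\LL_\f(\Gamma_K)}\left(\widetilde{H}^2_{\rm f}(G_{K,\Sigma},\TT_{\f,\chi}^{K};\Delta_{\Gr})^\iota\right)
\]
and the Eisenstein divisibility (the reverse one). These require genuinely different inputs: the former follows from the machinery developed in this paper, while the latter demands external analytic input.

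\textbf{Euler system divisibility.} Here I essentially follow Steps~\ref{item_intro_Step1}–\ref{item_intro_Step3} verbatim. Using the optimized signed Beilinson--Flach classes for $\f\otimes\theta(\chi\psi)$ from Step~\ref{item_intro_Step1}, I would assemble them into a locally restricted Euler system over ray class extensions of $K$ unramified outside $\Sigma$. The reciprocity law afforded by the signed Perrin-Riou functionals identifies the image of these classes under the appropriate signed Coleman map with $L_p^{\rm RS}(\f_{/K}\otimes\bbchi)$ up to an explicit unit. The locally restricted Euler system machinery then yields an upper bound on $\widetilde{H}^2_{\rm f}$, and the three-variable patching criterion of Appendix~\ref{Appendix_Regular_Rings_Divisibility} assembles the bounds for variable $\psi$ into the claimed divisibility over $\LL_\f(\Gamma_K)\otimes\QQ_p$; integral optimality of the signed objects is used here in an essential way to control $\mu$-invariants uniformly as $\psi$ varies.

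\textbf{Eisenstein divisibility.} I would construct a Klingen-type Eisenstein family on $\mathrm{GU}(2,2)_{/\QQ}$, induced along the Levi $\mathrm{GL}_2\times\mathrm{Res}_{K/\QQ}\mathrm{GL}_1$ from $\f\otimes\bbchi$, whose congruence ideal with a cuspidal $p$-adic family on the same group is governed by $L_p^{\rm RS}(\f_{/K}\otimes\bbchi)$. A Ribet--Urban--Wan-style lattice argument then manufactures non-trivial classes in a Selmer group for the Kummer-dual of $\TT_{\f,\chi}^{K}$, and Poitou--Tate global duality in the Nekov\'a\v{r} Selmer complex formalism translates these classes into a lower bound on $\Char_{\LL_\f(\Gamma_K)}\widetilde{H}^2_{\rm f}\bigl(\cdots\bigr)^\iota$ which matches $L_p^{\rm RS}(\f_{/K}\otimes\bbchi)$ modulo units and pseudo-null ideals. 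Combined with Step~\ref{item_intro_Step4} for control on exceptional contributions, this gives the reverse divisibility and, together with the first half, the conjectural equality.

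\textbf{Main obstacle.} The decisive difficulty is the Eisenstein divisibility in the inert setting. Since $p$ is inert in $K$, the unitary group $\mathrm{GU}(2,2)_{/\QQ}$ is non-split at $p$, and there is no ordinary Hida family on the associated Shimura variety that interpolates the relevant Eisenstein series — this is the analytic counterpart of the non-existence of positive finite slope CM components recalled via \cite[Corollary~3.6]{CITJHC70}. One must therefore develop a \emph{semi-ordinary} Eisenstein congruence apparatus, effectively a $p$-adic analytic mirror of the signed splitting of Step~\ref{item_intro_Step1}, and, crucially, prove non-vanishing of the appropriate Fourier--Jacobi coefficient of the Klingen family as an element of $\LL_\f(\Gamma_K)\otimes\QQ_p$. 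Extending Wan's unitary Eisenstein machinery of \cite{xinwanwanrankinselberg} to this inert, non-ordinary context is where all the new ideas would have to be concentrated; by contrast, the Euler system half rests on techniques either built in this paper or established in the literature.
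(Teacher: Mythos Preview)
This statement is a \emph{conjecture} in the paper, not a theorem; the paper offers no proof of the full equality. What is actually proved (Theorem~\ref{thm_3var_main_inert_ff}) is the single containment $L_p^{\rm RS}(\f_{/K}\otimes\bbchi)\in\Char_{\LL_\f(\Gamma_K)}\bigl(\widetilde{H}^2_{\rm f}(\cdots)^\iota\bigr)\otimes\QQ_p$, and only under the extra hypotheses $p\geq 7$, \ref{item_fullness_main_body}, and the uniform-boundedness condition \ref{item_bounded_spsi} (itself contingent on Conjecture~\ref{conj_ESrank2}). The reverse containment is left entirely open. Note too that you have labeled the two divisibilities backwards: an Euler system bounds Selmer from above and hence yields $L_p\in\Char$ (equivalently $\Char\mid L_p$), while the Eisenstein/lattice side constructs Selmer classes and gives the opposite containment. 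Your internal descriptions of the two arguments are consistent with the correct directions, so this is a labeling slip, but it should be fixed.

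Your account of the Euler-system half also diverges from the paper's argument in a substantive way. You propose to ``assemble [Beilinson--Flach classes] into a locally restricted Euler system over ray class extensions of $K$''; the introduction explains that this is exactly the route that \emph{fails} when $p$ is inert, since there is no finite-slope CM family through $\theta(\chi\psi)$ and the patching of \cite{LLZ2,BLForum} over ray class towers of $K$ is unavailable. Instead, the paper runs the locally restricted Euler system argument purely over the cyclotomic tower $\QQ(\mu_m)$ for each fixed crystalline $\psi$ (yielding a divisibility in $\LL_\f(\Gamma_1)$ via Theorem~\ref{thm_cyclo_main_inert_ff}), and only then varies $\psi$ and applies the patching criterion of Appendix~\ref{Appendix_Regular_Rings_Divisibility} to lift to $\LL_\f(\Gamma_K)$; no Euler system over $K$ is ever built. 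As for the Eisenstein half, your sketch is a reasonable research programme and you correctly isolate the core obstruction (non-split $\mathrm{GU}(2,2)$ at inert $p$, no ordinary Eisenstein family, unproved Fourier--Jacobi non-vanishing), but none of this is carried out in the paper and that direction of the conjecture remains genuinely open.
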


To state our third conjecture, which concerns the anticyclotomic Iwasawa theory of (families of) Rankin--Selberg products, we consider the restriction 
$$L_p^{\dagger}(\f_{/K}\otimes\bbchi_\ac)\in \LL_\f(\Gamma_\ac)$$ 
\index{$p$-adic $L$-functions! $L_p^{\dagger}(\f_{/K}\otimes\bbchi_\ac)$ (central critical Rankin--Selberg $p$-adic $L$-function)}of the twist $L_p^{\dagger}(\f_{/K}\otimes\bbchi)\in \LL_\f(\Gamma_K)$ of Loeffler's $p$-adic $L$-function to the anticyclotomic tower (c.f. Definition~\ref{defn_twff_map}(ii)). We also denote by $\TT_{\f,\bbchi_\ac}^\dagger$\index{Galois representations! $\TT_{\f,\bbchi_\ac}^\dagger$} the central critical twist of $\TT_{\f,\chi}^\ac$, which we introduce in Definition~\ref{defn_twff_map}(i). 

We assume in this portion that $\chi$ is a ring class character (so that $\chi^c=\chi^{-1}$) and that the nebentype character $\varepsilon_f$ of the Hida family $\f$ is trivial. As it is customary in the study of anticyclotomic Iwasawa theory, we shall write $N_f=N^-N^+$, where $N^+$ (resp. $N^-$) is a product of primes which are split (resp. inert) in $K/\QQ$.\index{$K$: imaginary quadratic field! $N^+,N^-$}

We denote by $\partial_\cyc\, L_p^{\dagger}(\f_{/K}\otimes\bbchi) \in \LL_\f(\Gamma_\ac)$\index{$p$-adic $L$-functions! $\partial_\cyc\, L_p^{\dagger}(\f_{/K}\otimes\bbchi)$} the restriction (to the anticyclotomic tower) of the derivative of $L_p^{\dagger}(\f_{/K}\otimes\bbchi)$ in the cyclotomic direction (c.f. Definition~\ref{defn_derivatives}). We set $$r(\f,\bbchi_\ac):={\rm rank}_{\LL_{\f}(\Gamma_\ac)}\,\widetilde{H}^2_{\rm f}(G_{K,\Sigma},\TT_{\f,\bbchi_\ac}^{\dagger};\Delta_{\Gr})$$\index{Selmer complexes! $r(\f,\bbchi_\ac)$ (generic algebraic rank)}
and call it the generic algebraic rank. Finally, we let ${\rm Reg}_{\f,\bbchi_\ac}\subset \LL_\f(\Gamma_\ac)$\index{Selmer complexes! ${\rm Reg}_{\f,\bbchi_\ac}$ ($\LL_\f(\Gamma_\ac)$-adic regulator)} denote the $\LL_\f(\Gamma_\ac)$-adic regulator given as in Definition~\ref{defn_height_regulators_Nek}. 

\begin{conj}[Anticyclotomic Main conjectures]
\label{conj_anticyclo_main_inert_ff_intro}
Suppose $\chi$ is a ring class character and $\overline{\rho}_\f$ is absolutely irreducible. Then,
\begin{equation*}
\label{conj_anticyclo_main_inert_ff_intro_0}
    L_p^{\dagger}(\f_{/K}\otimes\bbchi_\ac)\cdot\LL_\f(\Gamma_\ac)\otimes_{\ZZ_p}\QQ_p\,= \,\Char_{\LL_\f(\Gamma_\ac)}\left(\widetilde{H}^2_{\rm f}(G_{K,\Sigma},\TT_{\f,\bbchi_\ac}^{\dagger};\Delta_{\Gr})^\iota\right)\otimes_{\ZZ_p}\QQ_p\,.
\end{equation*}
\item[i)] \emph{(Definite case)} Suppose $N^-$ is a square-free product of odd number of primes. Then $L_p^{\dagger}(\f_{/K}\otimes\bbchi_\ac)\neq 0$; in particular, the $\LL_\f(\Gamma_\ac)$-module $\widetilde{H}^2_{\rm f}(G_{K,\Sigma},\TT_{\f,\bbchi_\ac}^{\dagger};\Delta_{\Gr})$ is torsion.
\item[ii)] \emph{(Indefinite case)} Suppose $N^-$ is a square-free product of even number of primes. Then $L_p^{\dagger}(\f_{/K}\otimes\bbchi_\ac)=0$ and 
$$r(\f,\bbchi_\ac)=1={\rm ord}_{\gamma_+-1}\, L_p^{\dagger}(\f_{/K}\otimes\bbchi)\,.$$
Moreover, 
\begin{align*} 
\partial_\cyc\, L_p^{\dagger}(\f_{/K}\otimes\bbchi)\cdot\LL_\f(\Gamma_\ac)&\,\otimes_{\ZZ_p}\QQ_p\,=\\
& \,{\rm Reg}_{\f,\bbchi_\ac}\cdot\Char_{\LL_\f(\Gamma_\ac)}\left(\widetilde{H}^2_{\rm f}(G_{K,\Sigma},\TT_{\f,\bbchi_\ac}^{\dagger};\Delta_{\Gr})_{\rm tor}^\iota\right)\otimes_{\ZZ_p}\QQ_p\,.
\end{align*}
\end{conj}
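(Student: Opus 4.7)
The plan is to combine the Euler-system divisibility that the paper's main architecture provides with an independent reverse divisibility of Eisenstein-congruence type, then upgrade to equality. First, I would descend the divisibility towards Conjecture~\ref{conj_3var_main_inert_ff_intro} (the content of \ref{item_intro_Step3}) to the anticyclotomic tower using Nekov\'a\v{r}'s Selmer-complex descent formalism, as indicated in \ref{item_intro_Step4}. In the definite case this produces directly the divisibility $L_p^{\dagger}(\f_{/K}\otimes\bbchi_\ac)\cdot\LL_\f(\Gamma_\ac)\otimes\QQ_p \supseteq \Char_{\LL_\f(\Gamma_\ac)}\widetilde{H}^2_{\rm f}(G_{K,\Sigma},\TT_{\f,\bbchi_\ac}^\dagger;\Delta_{\Gr})^\iota \otimes\QQ_p$, and in particular the non-vanishing of the $p$-adic $L$-function (hence the torsion assertion). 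In the indefinite case, the forced vanishing $L_p^{\dagger}(\f_{/K}\otimes\bbchi_\ac)=0$ comes from the parity of the global root number; passing to the cyclotomic derivative produces the derived $\Lambda_\f$-adic Beilinson--Flach class, whose Nekov\'a\v{r} height pairing descends to ${\rm Reg}_{\f,\bbchi_\ac}$. Together with the generic rank statement $r(\f,\bbchi_\ac)=1$, which follows from the non-triviality of the bottom derived class along the lines of Howard's work (item \ref{item_3}), this yields one divisibility against $\partial_\cyc L_p^{\dagger}(\f_{/K}\otimes\bbchi)$.

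For the reverse divisibility in the definite case, I would adapt the Bertolini--Darmon--Chida--Hsieh strategy of items \ref{item_1} and \ref{item_2}, realizing $L_p^{\dagger}(\f_{/K}\otimes\bbchi_\ac)$ via $p$-adic interpolation of Gross points on the definite quaternion algebra ramified at $N^-\infty$. The semi-ordinarity at the inert prime $p$ forces work with overconvergent quaternionic forms, but $p$-ordinarity of $\f$ allows a Hida-theoretic treatment of the $\f$-variable. Level-raising at auxiliary primes $\ell\nmid N_f\mathbf{N}\ff p$ satisfying the Ribet/Diamond--Taylor hypothesis for $\overline{\rho}_\f$, combined with Ihara's lemma and a Taylor--Wiles--Kisin patching system (available thanks to residual absolute irreducibility and $p$-distinguishedness), would produce Galois cohomology classes cutting the Selmer complex to within the size dictated by $L_p^{\dagger}$. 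The integral optimality of the signed Perrin-Riou functionals from \ref{item_intro_Step1} is essential here to propagate control of $\mu$-invariants across the Hida family. In the indefinite case, the analogue proceeds through $\LL_\f(\Gamma_\ac)$-adic generalized Heegner cycles twisted by $\chi$ and a semi-ordinary $p$-adic Gross--Zagier formula identifying their Bloch--Kato images with $\partial_\cyc L_p^{\dagger}(\f_{/K}\otimes\bbchi)$, followed by a Kolyvagin argument bounding the torsion part of the Selmer complex.

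The principal obstacle is this last ingredient: an inert-$p$, semi-ordinary $p$-adic Gross--Zagier formula. None of the existing formulas (Bertolini--Darmon--Prasanna, Castella--Hsieh in item \ref{item_8}, Andreatta--Iovita) directly applies, since those either require $p$ split in $K/\QQ$ or treat the opposite regime in which the theta-series dominates $f$ rather than being dominated by it. The absence of a CM component of the eigencurve passing through $\theta(\chi\psi)$ at inert $p$ (noted in the introduction via \cite{CITJHC70}) obstructs the $\LL_\f$-adic interpolation of Heegner-type classes exactly as it obstructed the Euler system construction in the split case. Overcoming this likely requires constructing generalized Heegner classes directly in the rigid cohomology of the Kuga--Sato tower for $\f$, together with a semi-ordinary comparison isomorphism to the rigid cohomology of the relevant false elliptic curves — a program that plausibly explains why the authors state the full equality in Conjecture~\ref{conj_anticyclo_main_inert_ff_intro} only conjecturally, rather than as a theorem of the paper.
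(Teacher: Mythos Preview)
The statement you are asked to prove is a \emph{conjecture}: the paper does not prove it and offers no proof to compare against. What the paper establishes are one-sided divisibilities toward this conjecture (Theorems~\ref{thm_anticyc_main_inert_ff_definite_intro} and~\ref{thm_anticyc_main_inert_ff_indefinite_intro}), obtained exactly as you describe in your first paragraph via descent from the three-variable divisibility. Your final paragraph correctly diagnoses why the full equality remains open: the reverse divisibility would require, in the indefinite case, an inert-$p$ semi-ordinary Gross--Zagier formula that does not currently exist, and in the definite case an inert-$p$ adaptation of Bertolini--Darmon--Chida--Hsieh that has not been carried out in this regime.

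That said, two points in your first paragraph are not quite right and do not match what the paper actually proves. First, in the definite case you write that the descended divisibility ``in particular'' yields the non-vanishing $L_p^{\dagger}(\f_{/K}\otimes\bbchi_\ac)\neq 0$. The implication goes the other way: a containment $L_p^{\dagger}\in\Char(\cdots)$ says nothing about $L_p^{\dagger}$ being nonzero. In the paper the non-vanishing is an independent input, namely Hung's generic non-vanishing theorem \cite[Theorem C]{HungNonVanishing}, and it is this non-vanishing that, combined with the divisibility, forces the Selmer group to be torsion. Second, in the indefinite case you assert that $r(\f,\bbchi_\ac)=1$ ``follows from the non-triviality of the bottom derived class along the lines of Howard's work''. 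The paper does \emph{not} prove $r(\f,\bbchi_\ac)=1$; this is part of the conjecture. The paper proves only $\ord_{(\gamma_+-1)}L_p^{\dagger}(\f_{/K}\otimes\bbchi)\geq 1$ (from root numbers), and remarks that the lower bound $r(\f,\bbchi_\ac)\geq 1$ should follow from an extension of Andreatta--Iovita, while the upper bound $r(\f,\bbchi_\ac)\leq 1$ would need a Greenberg-type non-vanishing result (Remark~\ref{remark_Hsieh_improve_indefinite_ff}). Howard's methods in \ref{item_3} assume $p$ split and do not transfer here for the same reason the Euler system patching fails: there is no CM Hida family when $p$ is inert.
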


Despite the wealth of results in the setting when $p$ is split in $K/\QQ$, the evidence for these conjectures when $p$ is inert have been extremely scant and  are limited to the ``definite'' anticyclotomic Iwasawa theory of Rankin--Selberg products of the form $f_{/K}\otimes \chi$, where $\chi$ has finite order (c.f., \cite{BertoliniDarmon2005,chidahsiehanticyclomainconjformodformscomposito}). As far as we are aware, there are no previous results in the indefinite case when $p$ remains inert, nor  results which allow variation in $f$, nor in the case when $\psi$ has infinite order. To the best of our knowledge, the only results on the variation of certain Iwasawa invariants as $f$ varies are due to Emerton--Pollack--Weston \cite{EPW}, Kim \cite{kim17} and Castella--Kim--Longo \cite{CKL}. More speicially, in the definite case, if $p$ splits in $K$ and $f$ is $p$-ordinary, then one may relate  the $\mu$- and $\lambda$-invariants of both the $p$-adic $L$-function and the Selmer group of $f$ over the anticyclotomic $\Zp$-extension of $K$ as $f$ varies in a Hida family. However, the aforementioned results do not have an immediate bearing on the one-sided divisibilities in the Iwasawa main conjectures that we establish here.

 Our treatment in this manuscript, which builds on fundamentally different techniques from those utilized in op. cit., will hopefully shed some light to these cases of main conjectures, which previously seemed inaccessible. We illustrate our results towards these conjectures in Theorems~\ref{thm_cyclo_main_inert_ff_intro} (cyclotomic case), \ref{thm_3var_main_inert_ff_intro} ($3$-variable case), \ref{thm_anticyc_main_inert_ff_definite_intro} (definite anticyclotomic  case) and \ref{thm_anticyc_main_inert_ff_indefinite_intro} (indefinite anticyclotomic case) below.

%Before we move on to discuss our results in the present article, we underline the fundamental difference between the two cases when $p$ is inert vs split in $K/\QQ$. In the split case, one makes crucial use of Hida families that has CM by $K$ to interpolate the Beilinson--Flach elements to obtain a collection of cohomology classes over all ray class extensions of $K$; c.f. \cite{LLZ2, BLForum, BFSuper}. In contrast, the eigencurve cannot have components that have CM by $K$ when $p$ is inert; c.f.~\cite[Corollary 3.6]{CITJHC70}. 

\section{A summary of our results}
\label{subsec_intro_summary_of_results}
We will discuss our results in two parts: In \S\ref{subsubsec_intro_optimizedPRandBF}, we summarize our results concerning the (optimized) signed splitting of Beilinson--Flach elements and Perrin-Riou maps. In the second part (\S\ref{subsubsec_results_intro}), we illustrate applications of these results towards Conjectures~\ref{conjecture_cyclo_main_inert_ff_intro}, \ref{conj_3var_main_inert_ff_intro} and \ref{conj_anticyclo_main_inert_ff_intro}.  
\subsection{Optimized Perrin-Riou maps and Beilinson--Flach elements for semi-ordinary families}
\label{subsubsec_intro_optimizedPRandBF}
The first result we record in the introduction concerns the construction of certain Perrin-Riou functionals, which are eigen-projections of two-variate Perrin-Riou maps. For each choice of $\lambda,\mu\in\{\alpha,\beta\}$, we choose a $\vp$-eigenvector of $\Dcris(T_{f,g})\otimes_\cO L$ associated to the eigenvalue $(\lambda_f\mu_g)^{-1}$. Given a finite unramified extension $F/\Qp$, we write 
$$\cL_{F,f,g}^{(\lambda,\mu)}:\HIw(F(\mu_{p^\infty}),T_{f,g})\rightarrow \cH_{\ord_p(\lambda_f\mu_g)}(\Gamma_\cyc)\otimes F$$ 
\index{Perrin-Riou maps! $\cL_{F,f,g}^{(\lambda,\mu)}$} for the Perrin-Riou map on $\HIw(F(\mu_{p^\infty}),T_{f,g})$ projecting to the chosen  $\vp$-eigenvector.

We take additional care with the choices of our $\vp$-eigenvectors to normalize these functionals, which in turn allow us to keep track of the crucial integrality properties of the functionals (with respect to the natural lattices inside the Galois representations we work with). We also note that we are not directly working with the affinoid-valued Perrin-Riou maps of \cite{LZ1}, but rather introduce $\Lambda_\f$-adic maps (as part of our proof of Theorem~\ref{thm:PRHida} below).% which take values in rings that are amenable to the signed factorization procedure.
\begin{theorem}[Theorem~\ref{thm:PRHida}, optimized Perrin-Riou functionals for semi-ordinary families]
\label{thm_PRHida_intro}
 For $\mu\in\{\alpha,\beta\}$ and a finite unramified extension $F/\Qp$,  there exists a $\Lambda_\f(\Gamma_\cyc)$-morphism
\[
\cL_{F,\f,g, \mu}:\HIw(F(\mu_{p^\infty}),T_{\f,g})\lra \Lambda_\f\,\widehat{\otimes}\, \cH_{\ord_p(\mu_g)}(\Gamma_\cyc)\otimes F
\]
\index{Perrin-Riou maps! $\cL_{F,\f,g, \mu}$}
whose specialization at $\kappa_0$ equals, up to a $p$-adic unit,  $$\frac{1}{\lambda_{N_f}(f)\left(1-\frac{\beta_f}{p\alpha_f}\right)\left(1-\frac{\beta_f}{\alpha_f}\right)}\cL_{F,f_\alpha,g}^{(\alpha,\mu)},$$ where  $\cL_{F,f_\alpha,g}^{(\alpha,\mu)}$\index{Atkin--Lehner pseudo-eigenvalue $\lambda_{N_f}(f)$} is  the $p$-stabilization of $\cL_{F,f,g}^{(\alpha,\mu)}$ (see Definition~\ref{defn:stabliziedmaps} below) and  $\lambda_{N_f}(f)$ is the pseudo-eigenvalue of the Atkin--Lehner operator of level $N_f$ (which is given by the identity $W_{N_f}f=\lambda_{N_f}(f)f^*$).  
\end{theorem}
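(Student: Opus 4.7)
The plan is to construct $\cL_{F,\f,g,\mu}$ as the projection, onto the $\mu_g^{-1}$-eigenline of $\vp$ on $\Dcris(T_g)$, of a $\LL_\f(\Gamma_\cyc)$-adic Perrin-Riou functional attached to $R_\f^*\otimes R_g^*$. Once that map is in place, the computation of its specialization at $\kappa_0$ reduces to comparing $\vp$-eigenvectors in the Dieudonn\'e module, and the discrepancy factor is then the standard Euler-type correction relating the Perrin-Riou functional of a newform to that of its ordinary $p$-stabilization, multiplied by the Atkin--Lehner normalization that relates Hida's $\LL_\f$-adic representation to Deligne's.

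\textbf{Step~1: The $\LL_\f$-adic Perrin-Riou map along the ordinary variable.} Because $\f$ is $p$-ordinary, the restriction of $R_\f^*$ to $G_{\Qp}$ fits into a short exact sequence of $\LL_\f[G_{\Qp}]$-modules
\[
0 \lra R_\f^{*,+} \lra R_\f^* \lra R_\f^{*,-} \lra 0
\]
with $R_\f^{*,-}$ an unramified-by-cyclotomic line. This triangulation allows one to attach to $R_\f^*$, in the style of \cite{LZ1} (and already used in \cite{BLForum}), a $\LL_\f(\Gamma_\cyc)$-morphism that interpolates, at each classical arithmetic specialization of $\f$, the unbounded Perrin-Riou map projecting onto the ordinary refinement.

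\textbf{Step~2: Tensor with $g$ and project onto a $\vp$-eigenline.} The hypothesis $\alpha_g\ne\beta_g$ ensures that $\Dcris(T_g)\otimes_\cO L$ decomposes into one-dimensional $\vp$-eigenspaces for $\alpha_g^{-1}$ and $\beta_g^{-1}$. Tensoring the morphism of Step~1 with $\Dcris(T_g)$ and projecting onto the $\mu_g^{-1}$-eigenvector yields
\[
\cL_{F,\f,g,\mu}:\HIw(F(\mu_{p^\infty}),T_{\f,g})\lra \LL_\f\,\widehat{\otimes}\,\cH_{\ord_p(\mu_g)}(\Gamma_\cyc)\otimes F.
\]
That the image lies in the distribution algebra of order exactly $\ord_p(\mu_g)$ is forced by the fact that the $\LL_\f$-adic variable contributes bounded growth along the ordinary triangulation, while only the non-ordinary refinement $\mu_g$ of $g$ contributes nontrivial slope.

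\textbf{Step~3: Specialization at $\kappa_0$ and the expected obstacle.} By the interpolation property guaranteed in Step~1, the specialization of $\cL_{F,\f,g,\mu}$ at $\kappa_0$ must be proportional to $\cL_{F,f_\alpha,g}^{(\alpha,\mu)}$; the task is to identify the proportionality constant. The factor $\lambda_{N_f}(f)^{-1}$ arises because Hida's representation $R_\f^*$ is canonically identified with the Galois representation of the ``$f^*$-family'', so the comparison $R_\f^*\otimes_{\LL_\f,\kappa_0}\cO\simeq R_f^*$ picks up the Atkin--Lehner pseudo-eigenvalue. The factor $\left(1-\tfrac{\beta_f}{\alpha_f}\right)\left(1-\tfrac{\beta_f}{p\alpha_f}\right)$ in the denominator is the standard correction appearing when the $\vp$-eigenvector of $\Dcris(R_{f_\alpha}^*)$ corresponding to the refinement $\alpha$ is compared with a $\vp$-eigenvector of $\Dcris(R_f^*)$; this is the content of passing between $\cL_{F,f,g}^{(\alpha,\mu)}$ and its $p$-stabilization $\cL_{F,f_\alpha,g}^{(\alpha,\mu)}$. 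The principal technical difficulty will be the precise bookkeeping of integral $\vp$-eigenvectors across specialization: on the $\LL_\f$-side the natural eigenvector lives in the unramified quotient $R_\f^{*,-}\otimes\Dcris(T_g)$, whereas on the $(f_\alpha,g)$-side it is a prescribed element of $\Dcris(T_{f,g})$; reconciling these two descriptions, together with tracking how Hida's normalization interacts with the Atkin--Lehner involution at $\kappa_0$, is what produces the asserted normalizing factor.
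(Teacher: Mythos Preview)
Your overall strategy matches the paper's: one projects to the unramified quotient $\cF^-R_\f^*\otimes R_g^*$, builds a $\LL_\f$-adic Perrin--Riou map there (the paper does this concretely via Wach modules, using Lemma~\ref{lem:Hida-Wach}), and then pairs with a suitable $\vp$-eigenvector in $\DD(\Qp,T_{\f,g}^{-\emptyset})$. Steps~1 and~2 are essentially the paper's construction.

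Step~3, however, is imprecise on the crucial technical point, and the attribution of the correction factors is not quite right. The $\LL_\f$-adic $\vp$-eigenvector one pairs with is not some abstract choice but specifically $H_\f\cdot\eta_\f\otimes v_{g,\mu}$, where $\eta_\f$ is the $\LL_\f$-morphism of \cite[Proposition~10.1.1(2)]{KLZ2} and $H_\f$ is Hida's congruence divisor. The factor $\lambda_{N_f}(f)^{-1}\bigl(1-\tfrac{\beta_f}{p\alpha_f}\bigr)^{-1}\bigl(1-\tfrac{\beta_f}{\alpha_f}\bigr)^{-1}$ then comes \emph{directly} from the specialization formula for $H_\f\cdot\eta_\f$ given in that same proposition: at $\kappa_0$ it specializes to that factor times $\sC_f\cdot(\Pr^\alpha)^*(\eta_f')$, where $\sC_f$ is the cohomological congruence number. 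It does \emph{not} arise from ``passing between $\cL_{F,f,g}^{(\alpha,\mu)}$ and its $p$-stabilization'', since by Definition~\ref{defn:stabliziedmaps} the stabilized map is \emph{defined} as $\cL_{F,f,g}^{(\alpha,\mu)}\circ(\Pr^\alpha)_*$, so no correction appears there. The remaining step, which your sketch does not address, is Corollary~\ref{cor:e-vec-cong-no}: one must know that the eigenvector $v_{f,\alpha}$ used to define $\cL_{F,f,g}^{(\alpha,\mu)}$ agrees with $\sC_f\cdot\eta_f'$ modulo $\Fil^1$ up to a $p$-adic unit. It is precisely this identity that cancels the congruence number $\sC_f$ coming from the specialization of $H_\f\cdot\eta_\f$, leaving only the stated factor.
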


We may then factorize these multivariate Perrin-Riou functionals into doubly-signed $\LL_\f(\Gamma_\cyc)$-adic Coleman maps. 

\begin{theorem}[Theorem~\ref{thm_decompose_PR_SSf}, optimized factorization of Perrin-Riou maps for semi-ordinary families]
\label{thm_decompose_PR_SSf_intro}
Suppose that $g$ satisfies  either the Fontaine-Laffaille condition $p>k_g+1$ or  $a_p(g)=0$. There exist a logarithmic matrix\index{Coleman maps! $Q_g^{-1}M_g$ (Logarithmic Matrix)}
$$Q_g^{-1}M_g\in M_{2\times2}(\cH(\Gamma_1))$$ 
and a pair of $\Lambda_\f(\Gamma_\cyc)$-morphisms
\[
\col_{F,\f,g,\#},\ \col_{F,\f,g,\flat}:\HIw(F(\mu_{p^\infty}),T_{\f,g})\lra \Lambda_\f(\Gamma_\cyc)\otimes\cO_F
\]
that verify the factorization
\[
\begin{pmatrix}
\cL_{F,\f,g, \alpha}\\\\ \cL_{F,\f,g, \beta}
\end{pmatrix}=Q_g^{-1}M_g\begin{pmatrix}
\col_{F,\f,g,\#}\\\\ \col_{F,\f,g,\flat}
\end{pmatrix}.
\] 
\index{Coleman maps! $\col_{F,\f,g,\bullet}$}
\end{theorem}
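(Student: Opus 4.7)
The plan is to lift the signed-splitting construction of Lei--Loeffler--Zerbes (in the Fontaine--Laffaille regime $p>k_g+1$) and Sprung/Lei (in the case $a_p(g)=0$) from the single-eigenform setting to the Hida-theoretic setting for $\f\otimes g$. The starting point is that, for $g$ alone, these works produce a logarithmic matrix $Q_g^{-1}M_g\in M_{2\times 2}(\cH(\Gamma_1))$, built from the Wach module and the $\vp$-eigenvectors of $\Dcris(R_g^*)$, together with a pair of integral $\Lambda(\Gamma_\cyc)$-valued signed Coleman maps whose image under $Q_g^{-1}M_g$ is precisely the column of the two Perrin-Riou eigenprojections attached to $\alpha_g$ and $\beta_g$.

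The first step is to upgrade this to the tensor product with the ordinary form $f_\alpha$. Since $\alpha_f\in\cO^\times$, the $\vp$-eigenvectors of $\Dcris(R_f^*)$ contribute no further denominators, and the tensor-product Wach module of $T_{f,g}$ inherits no new poles from the $f$-side. A direct inspection then shows that the two functionals $\cL_{F,f_\alpha,g}^{(\alpha,\alpha)}, \cL_{F,f_\alpha,g}^{(\alpha,\beta)}$ factor as $Q_g^{-1}M_g$ applied to a column of integral $\Lambda(\Gamma_\cyc)\otimes\cO_F$-valued Coleman maps, with the \emph{same} matrix as in the $g$-only case; this is the single-weight prototype of the theorem we wish to prove.

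Next I would promote this pointwise factorization to the full Hida family. The morphisms $\cL_{F,\f,g,\mu}$ from Theorem~\ref{thm_PRHida_intro} interpolate normalized single-weight maps at classical specializations of $\Lambda_\f$, so it is natural to \emph{define} $\col_{F,\f,g,\#}$ and $\col_{F,\f,g,\flat}$ by formally inverting $Q_g^{-1}M_g$ over the fraction field of $\cH(\Gamma_1)$ and applying the inverse to the column formed by the two $\cL_{F,\f,g,\mu}$. With this definition the factorization identity of the theorem holds tautologically, but a priori only in $\Lambda_\f\,\widehat{\otimes}\,\cH(\Gamma_1)\otimes F$.

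The main obstacle is then the \emph{integrality} assertion: showing that $\col_{F,\f,g,\bullet}$ in fact lands in $\Lambda_\f(\Gamma_\cyc)\otimes\cO_F$. I would establish this by verifying integrality on a Zariski-dense set of arithmetic primes $\kappa\in\mathrm{Spec}\,\Lambda_\f$. At each such $\kappa$, the single-weight factorization from the first step already provides integral Coleman maps for $\f(\kappa)_\alpha\otimes g$, and the specific normalization in Theorem~\ref{thm_PRHida_intro} (which absorbs the Atkin--Lehner pseudo-eigenvalue and the Euler-like factors $(1-\beta_f/\alpha_f)$ and $(1-\beta_f/p\alpha_f)$) is chosen precisely so that the specialization of each $\col_{F,\f,g,\bullet}$ at $\kappa$ agrees, up to a $p$-adic unit, with these integral single-weight Coleman maps. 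A standard Hida-theoretic control argument (density of classical arithmetic points and flatness over $\Lambda_\f$), together with the fact that $Q_g^{-1}M_g$ is independent of the Hida variable and thus specializes trivially, then lifts pointwise integrality to the desired $\Lambda_\f(\Gamma_\cyc)\otimes\cO_F$-integrality.
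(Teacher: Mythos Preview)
Your proposal takes a genuinely different route from the paper. You define the signed Coleman maps \emph{a posteriori} by formally inverting $Q_g^{-1}M_g$ and then attempt to recover integrality via specialization at a Zariski-dense set of classical weights. The paper instead constructs the maps \emph{directly} at the integral level. The $\Lambda_\f$-adic Perrin-Riou map built in the proof of Theorem~\ref{thm:PRHida} already factors through $\Lambda_\f(\alpha_\f^{-1})\,\widehat\otimes_\cO\,\vp^*\NN(R_g^*)^{\psi=0}\otimes\cO_F$, and by \cite[Theorem~3.5]{LLZ0} the pair $\{(1+\pi)\vp(n_{g,1}),(1+\pi)\vp(n_{g,2})\}$ is a $\Lambda_\cO(\Gamma_\cyc)$-basis of $\vp^*\NN(R_g^*)^{\psi=0}$. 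The two Coleman maps are then simply the coordinate projections with respect to this integral Wach-module basis, paired with $H_\f\cdot\eta_\f$ on the $\Lambda_\f$-factor; the factorization by $Q_g^{-1}M_g$ is nothing more than the change of basis from $\{(1+\pi)\vp(n_{g,i})\}$ to the $\vp$-eigenbasis $\{v_{g,\alpha}^*,v_{g,\beta}^*\}$. Integrality is therefore automatic and no specialization or control argument is needed at all.

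There is a genuine gap in your integrality step. Pointwise integrality at classical $\kappa$ does not by itself force a map valued a priori in $\Lambda_\f\,\widehat\otimes\,\mathrm{Frac}(\cH(\Gamma_1))\otimes F$ to land in $\Lambda_\f(\Gamma_\cyc)\otimes\cO_F$: one must separately control both the growth order (boundedness in the $\cH$-variable) and the $\varpi$-power denominators \emph{uniformly} in $\kappa$ before density can be invoked, and the phrase ``standard Hida-theoretic control argument'' does not supply either. The paper's direct Wach-module construction sidesteps this entirely, which is precisely why it is the optimized factorization advertised in the title of the theorem.
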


Note that in the case where $a_p(g)=0$, the matrix $Q_g^{-1}M_g$ is essentially given by Pollack's half logarithms introduced in \cite{pollack03}, which is the reason why we call  $Q_g^{-1}M_g$ a logarithmic matrix. Note that in the main body of the manuscript, we deal with the two hypotheses on $g$ separately. In the case where $a_p(g)=0$, the logarithmic matrix  is denoted by $Q_g^{-1}M_g'$ instead of $Q_g^{-1}M_g$ and the symbols $\#$ and $\flat$ are replaced by $+$ and $-$ respectively. See also Proposition~\ref{prop:PRpm} for a variant of Theorem~\ref{thm_decompose_PR_SSf_intro}, where we concern ourselves only with individual Rankin--Selberg products. We further remark that the factorization result in Theorem~\ref{thm_decompose_PR_SSf_intro} is akin to that in \cite[Theorem 1.1]{BFSuper}, where a CM branch of a Hida family has played the role of $\f$.

We next turn our attention to distribution-valued (non-integral) Beilinson--Flach elements, with the aim to produce their integral counterparts, so as to be able to employ with them the Euler system argument.  Perrin-Riou's philosophy of higher rank Euler systems leads one to predict that these (non-integral) Beilinson--Flach elements also admit a signed factorization (c.f. the discussion in \S\ref{subsubsec_ESrank2} which concerns this point of view), to give rise to signed Beilinson--Flach elements with better integrality properties. We verify this prediction in Theorem~\ref{thm_BF_factorization_for_families_intro} below.  This result can be regarded as a refinement (and an extension of, in that it also allows variation in families) of \cite{BLLV} in this semi-ordinary setting. 

We further note that the non-integral Beilinson--Flach elements are not exactly those constructed in \cite{LZ1}, but rather a slight variant as per the coefficients of the modules where they take values in (c.f. Definition~\ref{defn:BF-Hida} below). This slight alteration is necessary in order to apply our signed factorization procedure.

\begin{theorem}[Theorem~\ref{thm_BF_factorization_for_families}, optimized factorization of Beilinson--Flach elements for semi-ordinary families]
\label{thm_BF_factorization_for_families_intro}Suppose that either $p>k_g+1$ or $a_p(g)=0$. Let $\cN$ be the set of positive square-free integers that are coprime to $6pN_fN_g$. For $m\in\cN$, there exists a pair of cohomology classes\index{Beilinson--Flach elements! $\BF_{\f,g,\#,m},\BF_{\f,g,\flat,m}$}\index{Beilinson--Flach elements! $\BF_{\f,g,\alpha,m},\BF_{\f,g,\beta,m}$}
$$\BF_{\f,g,\#,m},\BF_{\f,g,\flat,m}\in \varpi^{-s(g)}H^1\left(\QQ(\mu_m),R_\f^*\otimes R_g^*\widehat{\otimes}\Lambda_\cO(\Gamma_\cyc)^\iota\right)$$ 
such that
\[
\begin{pmatrix}
\BF_{\f,g,\alpha,m}\\\\ \BF_{\f,g,\beta,m}
\end{pmatrix}
=Q_g^{-1}M_g
\begin{pmatrix}
\BF_{\f,g,\#,m}\\\\ \BF_{\f,g,\flat,m}
\end{pmatrix}.
\]
Here, $s(g)$ is a natural number that depends on $k_g$ but is independent of $m$.\index{Beilinson--Flach elements! $s(g)$}
\end{theorem}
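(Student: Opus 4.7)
The plan is to define $\BF_{\f,g,\#,m}$ and $\BF_{\f,g,\flat,m}$ by formally inverting the logarithmic matrix $Q_g^{-1}M_g$ from Theorem~\ref{thm_decompose_PR_SSf_intro}, and then to establish the asserted integrality by comparing the resulting classes with the signed Coleman images of their $(\alpha,\beta)$-counterparts. First one constructs, along the lines of \cite{LZ1} and following Definition~\ref{defn:BF-Hida} below, a pair of \emph{a priori} non-integral classes
$$\BF_{\f,g,\alpha,m},\ \BF_{\f,g,\beta,m}\in H^1\left(\QQ(\mu_m),R_\f^*\otimes R_g^*\,\widehat{\otimes}\,\cH(\Gamma_\cyc)^\iota\right),$$
obtained by pairing the geometric Beilinson--Flach Euler system with the ordinary projector in the $\f$-direction and with the $\mu$-eigenprojection of the Perrin-Riou functional in the $g$-direction (this is where the distribution coefficients enter, since $g$ is non-ordinary). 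Setting
$$\begin{pmatrix}\BF_{\f,g,\#,m}\\ \BF_{\f,g,\flat,m}\end{pmatrix}:=(Q_g^{-1}M_g)^{-1}\begin{pmatrix}\BF_{\f,g,\alpha,m}\\ \BF_{\f,g,\beta,m}\end{pmatrix}$$
then yields classes valued in $\Frac(\cH(\Gamma_\cyc))^\iota$, and the whole task reduces to showing that $\varpi^{s(g)}$ times these classes land inside the Iwasawa cohomology with $\Lambda_\cO(\Gamma_\cyc)^\iota$-coefficients, with $s(g)$ depending only on $k_g$.

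The central input for integrality is the explicit structure of $Q_g^{-1}M_g$. When $a_p(g)=0$ this matrix is essentially built from Pollack's half-logarithms, whose denominators inside $\cH(\Gamma_\cyc)/\Lambda_\cO(\Gamma_\cyc)$ are bounded by an explicit constant in terms of $k_g$; in the Fontaine--Laffaille range $p>k_g+1$ an analogous boundedness follows from the Wach-module description that underlies $\col_{F,\f,g,\#}$ and $\col_{F,\f,g,\flat}$. Thus $(Q_g^{-1}M_g)^{-1}$ has entries in $\varpi^{-s(g)}\Lambda_\cO(\Gamma_\cyc)$ after localisation. To upgrade this to a genuine integrality statement for the global classes, I would apply the Perrin-Riou functionals: by explicit reciprocity the images of $(\BF_{\f,g,\alpha,m},\BF_{\f,g,\beta,m})$ under $(\cL_{F,\f,g,\alpha},\cL_{F,\f,g,\beta})$ recover (a $\varpi^{-s(g)}$-multiple of) the Rankin--Selberg $p$-adic $L$-functions, and then Theorem~\ref{thm_decompose_PR_SSf_intro} identifies $\col_{F,\f,g,\bullet}(\loc_p\BF_{\f,g,\bullet,m})$ with the corresponding $\#,\flat$-$p$-adic $L$-functions, which belong to $\Lambda_\f(\Gamma_\cyc)\otimes\cO_F$. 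Feeding this local integrality into the long exact cohomology sequence attached to
$$0\lra R_\f^*\otimes R_g^*\,\widehat{\otimes}\,\Lambda_\cO(\Gamma_\cyc)^\iota\lra R_\f^*\otimes R_g^*\,\widehat{\otimes}\,\cH(\Gamma_\cyc)^\iota\lra Q\lra 0,$$
and using that the obstruction in $H^1(\QQ(\mu_m),Q)$ is trivial away from $p$ (because $Q$ is unramified-divisible there) while at $p$ it is governed by the signed Coleman maps we have just controlled, lifts the $\varpi^{s(g)}$-multiples of $\BF_{\f,g,\#,m}$ and $\BF_{\f,g,\flat,m}$ to the integral global cohomology.

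The main obstacle, and the delicate point, is making $s(g)$ \emph{uniform in $m$}. The $(\alpha,\beta)$-stabilisations introduce Euler factors at each prime $\ell\mid m$, and, before multiplying by $(Q_g^{-1}M_g)^{-1}$, these factors carry $\varpi$-denominators that depend on $\ell$; after the matrix inversion, one must verify that no new denominators beyond $\varpi^{s(g)}$ survive. The strategy is to chase the horizontal norm-compatibility of the Loeffler--Zerbes Euler system through the inversion: because $Q_g^{-1}M_g$ is independent of $m$, the signed Euler factors that appear for $\BF_{\f,g,\#,m\ell}\mapsto \BF_{\f,g,\#,m}$ are the images of the classical $\ell$-Euler factors under matrix multiplication, which a direct computation (modelled on \cite{BLLV,BFSuper} and adapted to the semi-ordinary setting via the Atkin--Lehner normalization $\lambda_{N_f}(f)$ from Theorem~\ref{thm_PRHida_intro}) shows already lie in $\Lambda_\f(\Gamma_\cyc)$. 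Once this uniformity is in hand, the $s(g)$ needed at level $m=1$ suffices for every $m\in\cN$, completing the proof.
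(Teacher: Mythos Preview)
Your approach has a genuine gap at the very first step. The inverse matrix $(Q_g^{-1}M_g)^{-1}=M_g^{-1}Q_g$ does \emph{not} have entries in $\varpi^{-s(g)}\Lambda_\cO(\Gamma_\cyc)$: since $\det M_g$ equals $\log_{p,k_g+1}/p^{k_g+1}\delta_{k_g+1}$ up to a unit (see Remark~\ref{rk:imagePR}(iii)), the entries of $M_g^{-1}$ carry denominators of the form $\log_{p,k_g+1}$, which are unbounded distributions, not just powers of $\varpi$. So defining the signed classes by na\"ive matrix inversion produces elements with coefficients in $\Frac(\cH(\Gamma_\cyc))$, and the boundedness you seek is a genuine \emph{cancellation} phenomenon between the growth of $\BF_{\f,g,\mu,m}$ and that of $M_g^{-1}$, not a consequence of any bound on the matrix alone.

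Your attempt to recover integrality via the Coleman maps also fails: knowing that $\col_{F,\f,g,\bullet}(\loc_p\BF_{\f,g,\bullet,m})$ is integral only controls the image of the local class under two linear functionals, which is far from controlling the global class in $H^1(\QQ(\mu_m),R_\f^*\otimes R_g^*\widehat\otimes\Lambda_\cO(\Gamma_\cyc)^\iota)$; the obstruction in your quotient $Q=\cH(\Gamma_\cyc)/\Lambda_\cO(\Gamma_\cyc)$ is not detected by these maps. The paper instead works at \emph{finite level}: it lifts $\mu_g^n\x^{\f,\mu}_{mp^n,i}$ to cocycles $c_{n,i}^{\f,\mu}$, bounds $\left\|p^{-jn}\sum_i(-1)^i\binom{j}{i}\Tw^{-i}c_{n,i}^{\f,\mu}\right\|$ via the interpolation relations among Rankin--Iwasawa classes (this is \cite[Corollary~3.4]{BFSuper}), and then applies a general factorisation result (\cite[Proposition~2.10]{BFSuper}) to extract bounded classes $\x_{mp^n}^{\f,\#},\x_{mp^n}^{\f,\flat}$ satisfying the matrix relation modulo $\omega_{n-1,k_g+1}$. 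In the $a_p(g)=0$ case the key extra step is showing that $c_n^{\f,\alpha}\pm c_n^{\f,\beta}$ is divisible by $\Phi_{m,k_g+1}$ for the appropriate parity of $m$. One then passes to the limit in $n$. The uniformity in $m$ is automatic in this scheme because the finite-level bounds come from the same source for every $m$, so no separate norm-compatibility analysis is needed.
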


We can bound the exponent $s(g)$ in the denominator of the Beilinson--Flach elements uniformly for the family $\f$ and integers $m\in \cN$. This is crucial for our purposes. We conjecture that the exponents $s(g)$ in the statement of Theorem~\ref{thm_BF_factorization_for_families_intro} are bounded independently of $g$. In Corollary~\ref{cor_s_g_is_uniformly_bounded} below, we verify this conjecture granted the (conjectural) existence of a rank--$2$ Euler system (c.f. Conjecture~\ref{conj_ESrank2}), whose existence is predicted by the Perrin-Riou philosophy; see \cite{pr-es,LZ0}.

We next study the $p$-local properties of the signed Beilinson--Flach elements and prove that they form a locally restricted Euler system. We also analyze the images of the Beilinson--Flach elements under the signed Coleman maps (which give rise to what we shall call doubly-signed Rankin--Selberg $p$-adic $L$-functions), to compare them to the Loeffler--Zerbes geometric $p$-adic $L$-functions. Thanks to the careful choices of the normalizations concerning the eigen-projections of the multivariate Perrin-Riou maps, this comparison we prove is sufficiently precise and allows us to keep track of delicate integrality questions.

\begin{theorem}[Proposition~\ref{prop:values-of-col-BF}]
\label{thm_values-of-col-BF_intro}
Suppose that  either $p>k_g+1$ or $a_p(g)=0$, and let $\bullet\in\{\#,\flat\}$. We have
\[
\col_{\Qp,\f,g,\bullet}\circ\loc_p(\BF_{\f,g,\bullet,1})=0.
\]
Furthermore, 
\[
\col_{\Qp,\f,g,\#}\circ\loc_p(\BF_{\f,g,\flat,1})=-\col_{\f,g,\flat}\circ\loc_p(\BF_{\f,g,\#,1})=D_{g}\delta_{k_g+1}L_{p}^{\rm geo}(\f,g),
\]
where {$L_{p}^{\rm geo}(\f,g)$ is the Loeffler--Zerbes geometric $p$-adic $L$-function introduced in Definition~\ref{defn:geopadicL}},  $\loc_p$ is the localization map at $p$, $D_{g}$ is a unit in $\Lambda_\cO(\Gamma_1)$ and $\delta_{k_g+1}$ is some explicit elements in $\LL_\cO(\Gamma_1)$ (see \S\ref{sec:Iw}).\index{Perrin-Riou maps! $D_g$}\index{Perrin-Riou maps! $\delta_{m}$}\index{$p$-adic $L$-functions! $L_{p}^{\rm geo}(\f,g)$ (Loeffler--Zerbes ``geometric'' $p$-adic $L$-function)}
\end{theorem}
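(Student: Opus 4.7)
The plan is to exploit that both factorizations (Theorems~\ref{thm_decompose_PR_SSf_intro} and~\ref{thm_BF_factorization_for_families_intro}) are governed by the same logarithmic matrix $\mathbf{M} := Q_g^{-1}M_g$, and then apply the Kings--Loeffler--Zerbes explicit reciprocity law (lifted to the $\Lambda_\f$-adic semi-ordinary setting via Theorem~\ref{thm_PRHida_intro}) to reduce the statement to a purely algebraic identity for $2 \times 2$ matrices.

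Concretely, form the $2 \times 2$ matrices
$$X := \bigl(\col_{\Qp,\f,g,i}\circ\loc_p(\BF_{\f,g,j,1})\bigr)_{i,j \in \{\#,\flat\}}, \qquad Y := \bigl(\cL_{\Qp,\f,g,\lambda}\circ\loc_p(\BF_{\f,g,\mu,1})\bigr)_{\lambda,\mu \in \{\alpha,\beta\}}.$$
Plugging the row decomposition of $(\cL_\alpha,\cL_\beta)^t$ in terms of $(\col_\#,\col_\flat)^t$ from Theorem~\ref{thm_decompose_PR_SSf_intro} and the column decomposition of $(\BF_\alpha,\BF_\beta)^t$ in terms of $(\BF_\#,\BF_\flat)^t$ from Theorem~\ref{thm_BF_factorization_for_families_intro} into one another yields the matrix identity $Y = \mathbf{M}\, X\, \mathbf{M}^t$.

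The central step is to compute $Y$ directly. The classical explicit reciprocity law for Beilinson--Flach classes, lifted to $\Lambda_\f(\Gamma_\cyc)$ through Theorem~\ref{thm_PRHida_intro} and with careful bookkeeping of the Atkin--Lehner scalar $\lambda_{N_f}(f)$ and the Euler factors $(1-\beta_f/(p\alpha_f))(1-\beta_f/\alpha_f)$ appearing there, computes each entry of $Y$ as an explicit multiple of $L_p^{\rm geo}(\f,g)$. The upshot is that
$$Y = D_g\,\delta_{k_g+1}\,\det(\mathbf{M})\,L_p^{\rm geo}(\f,g) \cdot J, \qquad J := \begin{pmatrix} 0 & 1 \\ -1 & 0 \end{pmatrix},$$
where the vanishing of the diagonal of $Y$ reflects the fact that each $p$-stabilized class $\BF_{\f,g,\lambda,1}$ is annihilated by the eigen-projection used to define $\cL_{\Qp,\f,g,\lambda}$, while the antisymmetry of the off-diagonal entries is encoded by an $(\alpha_g-\beta_g)$-factor in the reciprocity formula.

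The final step is purely algebraic: for any invertible $2 \times 2$ matrix $\mathbf{M}$, a direct computation yields $\mathbf{M}\,J\,\mathbf{M}^t = \det(\mathbf{M})\,J$, equivalently $\mathbf{M}^{-1} J (\mathbf{M}^{-1})^t = \det(\mathbf{M})^{-1}\,J$. Inverting the matrix identity of the previous paragraph therefore gives
$$X = \mathbf{M}^{-1}\,Y\,(\mathbf{M}^t)^{-1} = D_g\,\delta_{k_g+1}\,L_p^{\rm geo}(\f,g) \cdot J,$$
and reading off the four entries yields precisely the claimed identities. The main obstacle is in establishing the reciprocity law in the central step with the correct $\Lambda_\f$-adic normalization: one must match the Atkin--Lehner and Euler-factor corrections from Theorem~\ref{thm_PRHida_intro} against Loeffler's construction of $L_p^{\rm geo}(\f,g)$ in Definition~\ref{defn:geopadicL}, and verify that the factors $D_g$ and $\delta_{k_g+1}$ appearing in \S\ref{sec:Iw} enter with the correct exponents after restriction to the cyclotomic tower.
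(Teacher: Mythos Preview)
Your proposal is correct and follows essentially the same approach as the paper: both form the $2\times 2$ matrices $X$ and $Y$, relate them via $Y = \mathbf{M}\,X\,\mathbf{M}^t$, observe that $Y$ is antisymmetric (hence a scalar multiple of $J$), and invert using the identity $\mathbf{M}^{-1}J(\mathbf{M}^{-1})^t = \det(\mathbf{M})^{-1}J$.

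One remark on your ``main obstacle'': the paper sidesteps the bookkeeping of the Atkin--Lehner scalar and the Euler factors from Theorem~\ref{thm_PRHida_intro} entirely. Rather than lifting the reciprocity law by tracking these corrections, the paper (a)~proves the antisymmetry and vanishing of the diagonal of $Y$ directly at the $\Lambda_\f$-level via a Zariski-density argument over crystalline specializations (Lemma~\ref{lem:geom-Hida}), and (b)~\emph{defines} $L_p^{\rm geo}(\f,g)$ directly at the $\Lambda_\f$-level through the identity $\cL_{\f,g,\mu}(\BF_{\f,g,\mu',1}) = \frac{\log_{p,k_g+1}}{\mu'_g-\mu_g}L_p^{\rm geo}(\f,g)$ (Definition~\ref{defn:geopadicL}(ii)), so that no matching against a separate construction is required. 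The constant $D_g\,\delta_{k_g+1}$ then falls out at the end from the known formula $\det M_g \sim \log_{p,k_g+1}/(p^{k_g+1}\delta_{k_g+1})$ up to a unit in $\Lambda_\cO(\Gamma_1)$, rather than being identified inside $Y$ as you propose. This is cleaner than the direct bookkeeping you outline, though both routes lead to the same place.
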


See also Remark~\ref{rk:special} for a variant that concerns individual Rankin--Selberg convolutions, but also covers more ground in that case.

\subsection{Main conjectures: Results}
\label{subsubsec_results_intro}

We will now summarize the applications of our constructions and results in \S\ref{subsubsec_intro_optimizedPRandBF} towards the Conjectures~\ref{conjecture_cyclo_main_inert_ff_intro}, \ref{conj_3var_main_inert_ff_intro} and \ref{conj_anticyclo_main_inert_ff_intro} above. 

\begin{theorem}[Theorem~\ref{thm_cyclo_main_inert_ff}, cyclotomic main conjectures for $\f_{/K}\otimes\psi$]
\label{thm_cyclo_main_inert_ff_intro}
Suppose that $p\geq 7$ and ${\rm SL}_2(\FF_p)\subset\overline{\rho}_\f(G_{\QQ(\mu_{p^\infty})})$ as well as that $k_g\neq p-1$ and $p+1\nmid k_g+1$. Then,
$$ \varpi^{s(\psi)} L_p^{\rm RS}(\f_{/K}\otimes\psi)\,\in \,\Char_{\LL_\cO(\Gamma_1)}\left(\widetilde{H}^2_{\rm f}(G_{K,\Sigma},\TT_{\f,\psi}^{\cyc};\Delta_{\Gr})^\iota\right)$$
where $s(\psi)=s(\theta(\psi))$ is given as in Theorem~\ref{thm_BF_factorization_for_families_intro}.\index{Beilinson--Flach elements! $s(\psi)$}
\end{theorem}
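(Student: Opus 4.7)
The plan is to apply the locally restricted Euler system machinery to the signed Beilinson--Flach classes of Theorem~\ref{thm_BF_factorization_for_families_intro}, taking for the non-ordinary auxiliary form $g := \theta(\psi)$. Because $p$ is inert in $K/\QQ$ and $\psi$ has conductor coprime to $p$, one has $a_p(\theta(\psi)) = 0$, so we are in the $a_p(g)=0$ branch of Theorems~\ref{thm_decompose_PR_SSf_intro} and~\ref{thm_BF_factorization_for_families_intro}; the logarithmic matrix is $Q_g^{-1}M_g'$ and we write $\col_{\Qp,\f,g,\pm}$ and $\BF_{\f,g,\pm,m}$ for the corresponding signed Coleman maps and Beilinson--Flach classes. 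Shapiro's lemma identifies $R_\f^*\otimes R_g^* \,\widehat\otimes\, \LL_\cO(\Gamma_1)^\iota$ with $\Ind_{G_K}^{G_\QQ}\TT_{\f,\psi}^{\cyc}$, so cyclotomic cohomology on the $\QQ$-side translates to cohomology of $\TT_{\f,\psi}^{\cyc}$ on the $K$-side; under this translation, the Greenberg local condition $\Delta_{\Gr}$ at the unique prime of $K$ above $p$ corresponds, via the factorization of Theorem~\ref{thm_decompose_PR_SSf_intro}, to the vanishing of exactly one of the signed Coleman maps --- say $\col_{\Qp,\f,g,+}$.

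By the first identity of Theorem~\ref{thm_values-of-col-BF_intro}, $\col_{\Qp,\f,g,+}\circ\loc_p(\BF_{\f,g,+,1})=0$, so the norm-compatible family $\{\BF_{\f,g,+,m}\}_{m\in\cN}$ defines a \emph{locally restricted} Euler system for the Selmer structure matched to $\Delta_{\Gr}$. The big-image hypothesis ${\rm SL}_2(\FF_p)\subset\overline{\rho}_\f(G_{\QQ(\mu_{p^\infty})})$ together with $p\geq 7$ supplies the standard Kolyvagin-derivative input; the exclusions $k_g\neq p-1$ and $p+1\nmid k_g+1$ ensure that the logarithmic matrix $Q_g^{-1}M_g'$ and the element $\delta_{k_g+1}\in \LL_\cO(\Gamma_1)$ of Theorem~\ref{thm_values-of-col-BF_intro} are each units up to a pseudo-null error, so the signed factorization induces no extra loss of characteristic-ideal content. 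The locally restricted Euler system bound (a semi-ordinary and $\LL_\f$-adic refinement of the argument in \cite{BLLV}) then yields
\[
\Char_{\LL_\cO(\Gamma_1)}\!\left(\widetilde{H}^2_{\rm f}(G_{K,\Sigma},\TT_{\f,\psi}^{\cyc};\Delta_{\Gr})^\iota\right) \;\ni\; \varpi^{s(\psi)}\cdot \col_{\Qp,\f,g,-}\circ\loc_p\!\left(\BF_{\f,g,+,1}\right),
\]
the power of $\varpi$ absorbing the denominators of Theorem~\ref{thm_BF_factorization_for_families_intro}.

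To conclude, we rewrite the right-hand side via the second identity of Theorem~\ref{thm_values-of-col-BF_intro} as $-D_g\delta_{k_g+1}L_{p}^{\rm geo}(\f,g)$ (a unit times the geometric $p$-adic $L$-function), and invoke a standard interpolation comparison between the Loeffler--Zerbes geometric $p$-adic $L$-function for $\f\times\theta(\psi)$ and the Rankin--Selberg $p$-adic $L$-function $L_p^{\rm RS}(\f_{/K}\otimes\psi)$ --- the latter being essentially the Shapiro/restriction of the former to the cyclotomic tower of $K$. Combining these identifications produces the asserted containment $\varpi^{s(\psi)}L_p^{\rm RS}(\f_{/K}\otimes\psi)\in\Char_{\LL_\cO(\Gamma_1)}\!\left(\widetilde{H}^2_{\rm f}(G_{K,\Sigma},\TT_{\f,\psi}^{\cyc};\Delta_{\Gr})^\iota\right)$.

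I expect the principal obstacle to lie in the second step: verifying that the locally restricted Euler system bound really controls the \emph{Greenberg} Selmer module appearing in the theorem, as opposed to a slightly different signed Selmer module defined directly through the vanishing of $\col_{\Qp,\f,g,+}$. This requires a clean comparison of Selmer conditions through the factorization of Theorem~\ref{thm_decompose_PR_SSf_intro}, a demonstration that any discrepancy is pseudo-null (or is absorbed by a bounded power of $\varpi$), and careful bookkeeping of the Atkin--Lehner and $\vp$-eigenvector normalizations of Theorem~\ref{thm_PRHida_intro} as they propagate through the computation --- this is exactly where the optimality emphasized in \S\ref{subsubsec_intro_optimizedPRandBF} is indispensable.
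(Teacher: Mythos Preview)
Your approach diverges from the paper's in a fundamental way, and along the way you misstate two of the key technical inputs.

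\textbf{The paper's route is patching, not a direct $\LL_\f$-adic Euler system.} The paper does \emph{not} run the locally restricted Euler system machinery over the two-variable ring $\LL_\f(\Gamma_1)$. Instead it first proves the one-variable divisibility for each crystalline specialization $f$ of $\f$ (Theorem~\ref{thm_cyclo_main_conj_fotimesg}, reformulated as Theorem~\ref{thm_cyclo_main_inert_f}), where the Euler system argument lives over $\LL_\cO(\Gamma_1)$ and is standard. It then patches these specialized divisibilities using the Chevalley-type criterion of Appendix~\ref{Appendix_Regular_Rings_Divisibility} together with the control theorem for Nekov\'a\v{r}'s Selmer complexes. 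The integral optimality of the signed constructions is used precisely here, to ensure uniform $\mu$-control across specializations so that the patching criterion applies. Your proposal of ``a $\LL_\f$-adic refinement of the argument in \cite{BLLV}'' is not what the paper does, and you give no justification that the Kolyvagin-derivative machinery extends to this two-variable base; that is a nontrivial undertaking which the paper deliberately bypasses.

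\textbf{Two factual errors.} First, the Greenberg local condition does \emph{not} correspond to the kernel of a single signed Coleman map. By Lemma~\ref{lemma_somesignedSelmergroupsareGreenberg} (and Corollary~\ref{cor_somesignedSelmergroupsareGreenberg}), the image of $\HIw(\Qp(\mu_{p^\infty}),\cF^+R_\f^*\otimes R_g^*)$ equals the \emph{intersection} $\ker(\col_{\f,g,+})\cap\ker(\col_{\f,g,-})$; the doubly-signed Selmer group $\Sel_{\fC_\omega}$ built from both is the Greenberg Selmer group. The locally restricted Euler system uses one signed class $\BF_{+,m}\in\ker(\col_{+,m})$ with leading term $\col_-(\BF_{+,1})$ to bound this doubly-signed Selmer group; your picture of the local condition is off. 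Second, the exclusions $k_g\neq p-1$ and $p+1\nmid k_g+1$ have nothing to do with $\delta_{k_g+1}$ or the logarithmic matrix being units. They guarantee that one of \ref{item_Irr1}--\ref{item_Irr3} holds, hence that $\overline{\rho}_g\vert_{G_{\QQ_p}}$ is absolutely irreducible (Remark~\ref{remark_HnA_trivial}); this is the non-anomaly input $H^0(\QQ_p,\overline T_{f,g})=0=H^2(\QQ_p,\overline T_{f,g})$ needed for the Euler system bound at the level of individual $f$.
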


Theorem~\ref{thm_cyclo_main_inert_ff_intro} is in fact the special case of a more general result we prove (Theorem~\ref{thm_cyclo_main_conj_ffotimesg} below), where we pick $g=\theta(\psi)$ the theta-series of the Hecke character $\psi$ of our fixed imaginary quadratic field $K$. We also remark that Theorem~\ref{thm_cyclo_main_inert_ff} relies heavily on Appendix~\ref{appendix_big_images}, where we study the images of Galois representations associated automorphic forms on $\GL2_{/\QQ}\times {\rm Res}_{K/\QQ}\GL_1$.

We deduce Theorem~\ref{thm_cyclo_main_conj_ffotimesg} from Theorem~\ref{thm_cyclo_main_conj_fotimesg}, which concerns individual (as opposed to families of) Rankin--Selberg products $f\otimes g$, via a patching criterion we establish in Appendix~\ref{Appendix_Regular_Rings_Divisibility}. This criterion rests on \cite[Lemma 7]{Chevalley1943} (which characterizes the $\mathfrak{m}$-adic topology of complete local regular rings); we thank T. Ochiai for bringing this lemma of Chevalley to our attention. Also crucial for our patching argument is to have a uniform control over the $\mu$-invariants. This is achieved through our optimized constructions of the signed Beilinson--Flach elements and the optimized factorizations of Perrin-Riou functionals into signed Coleman maps (which we have summarized in \S\ref{subsubsec_intro_optimizedPRandBF}).   

In order to prove Theorem~\ref{thm_cyclo_main_conj_fotimesg}, we utilize a locally restricted Euler system argument. The input is the signed Beilinson--Flach classes we produce via Theorem~\ref{thm_BF_factorization_for_families_intro}. Here, the adjective ``locally restricted'' refers to the $p$-local properties of the signed Beilinson--Flach elements, c.f. Theorem~\ref{thm_values-of-col-BF_intro}. This argument a priori yields bounds on the signed Selmer groups (c.f. Definition~\ref{defn:signedSel}) in terms of signed $p$-adic $L$-functions (c.f. Definition~\ref{defn:padicL}). It turns out that these signed Selmer groups can be identified with the classical Greenberg Selmer groups (see Corollary~\ref{cor_somesignedSelmergroupsareGreenberg} below). Likewise, we compare the signed $p$-adic $L$-functions with the Rankin--Selberg $p$-adic $L$-functions in Remark~\ref{remark_signed_geometric_padicL}(ii). This explicit comparison is also crucial for our patching argument.

The following is our main result towards the Iwasawa main conjectures over $\LL_\f(\Gamma_K)$ (Conjecture~\ref{conj_3var_main_inert_ff_intro}) for an imaginary quadratic field $K$ where $p$ remains inert.

\begin{theorem}[Theorem~\ref{thm_3var_main_inert_ff}]
\label{thm_3var_main_inert_ff_intro}
Suppose $\chi$ is a ray class character as above and assume that $p\geq 7$ as well as that ${\rm SL}_2(\FF_p)\subset \overline{\rho}_\f(G_{\QQ(\mu_{p^\infty})})$. Assume also that the uniform boundedness condition \ref{item_bounded_spsi}  on the variation of $s(\psi)$ holds true\index{Beilinson--Flach elements! ${\mathbf{(Bdd_{s(\psi)})}}$}. We then have the following containment in the Iwasawa main conjecture for the family $\f_{/K}\otimes\bbchi$ of Rankin--Selberg products:
\[
    L_p^{\rm RS}(\f_{/K}\otimes\bbchi)\,\in \,\Char_{\LL_\f(\Gamma_K)}\left(\widetilde{H}^2_{\rm f}(G_{K,\Sigma},\TT_{\f,\chi}^{K};\Delta_{\Gr})^\iota\right)\otimes_{\ZZ_p}\QQ_p\,.
\]
\end{theorem}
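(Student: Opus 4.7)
The plan is to deduce Theorem~\ref{thm_3var_main_inert_ff_intro} from its single-variable cyclotomic counterpart Theorem~\ref{thm_cyclo_main_inert_ff_intro} by specialization in the $\bbchi$-variable and patching via the criterion developed in Appendix~\ref{Appendix_Regular_Rings_Divisibility}, following Step~3 of the blueprint in the introduction. First, I would identify a Zariski-dense set of height-one primes of $\LL_\f(\Gamma_K)$ at which Theorem~\ref{thm_cyclo_main_inert_ff_intro} applies: concretely, primes corresponding to algebraic Hecke characters $\psi$ of $K$ with $p$-power conductor coprime to $\ff$ and appropriate infinity type, so that the restriction of $\bbchi$ to the cyclotomic line specializes to $\chi\psi$. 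For each such $\psi$, the semi-universal $p$-adic $L$-function $L_p^{\rm RS}(\f_{/K}\otimes\bbchi)$ recovers $L_p^{\rm RS}(\f_{/K}\otimes \chi\psi)$ (up to explicit interpolation factors coming from the constructions of Appendix~\ref{appendix_sec_padicRankinSelbergHida}), and the extended Greenberg Selmer complex for $\TT_{\f,\chi}^{K}$ specializes to that for $\TT_{\f,\chi\psi}^{\cyc}$ thanks to a control theorem within Nekov\'a\v{r}'s Selmer-complex formalism.

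Second, applying Theorem~\ref{thm_cyclo_main_inert_ff_intro} at each of these specializations yields the cyclotomic divisibility up to a denominator $\varpi^{s(\chi\psi)}$. The uniform boundedness condition \ref{item_bounded_spsi} guarantees that $s(\chi\psi)$ stays bounded as $\psi$ varies through the chosen infinite family; consequently, after tensoring with $\QQ_p$, a single (bounded) power of $\varpi$ suffices to absorb all these denominators simultaneously, and the resulting rational containments are consistent across the specialization family. At this stage one has produced, for every prime in a dense subset, a divisibility relation between the specialization of $L_p^{\rm RS}(\f_{/K}\otimes\bbchi)$ and the characteristic ideal of the specialization of the Selmer complex, with a uniformly bounded obstruction.

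Third, I would invoke the patching criterion of Appendix~\ref{Appendix_Regular_Rings_Divisibility}, which rests on Chevalley's characterization of the $\m$-adic topology of a complete regular local ring and promotes a sufficiently rich family of divisibilities at height-one primes to a divisibility in the three-variable ring $\LL_\f(\Gamma_K)\otimes_{\ZZ_p}\QQ_p$. Its input hypotheses are (a) compatibility of specialization with the characteristic ideal of $\widetilde{H}^2_{\rm f}(G_{K,\Sigma},\TT_{\f,\chi}^{K};\Delta_{\Gr})^\iota$ (which follows from the perfectness and freeness properties of the Selmer complex established in the main body), and (b) uniform control of $\mu$-invariants, which is secured by the optimality of the signed Beilinson--Flach classes and of the factorization of Perrin-Riou functionals provided by Theorems~\ref{thm_PRHida_intro}--\ref{thm_values-of-col-BF_intro}.

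The central obstacle will be precisely the uniform control of $\mu$-invariants on both the analytic and algebraic sides as $\psi$ ranges over the infinite interpolating family: without it, the powers of $\varpi$ appearing through the comparisons among $p$-adic $L$-functions (signed versus Rankin--Selberg versus semi-universal) and through the identification of signed with Greenberg Selmer groups could in principle grow unboundedly and destroy the divisibility in the limit. This is exactly why so much care has been invested in the \emph{optimized} integral constructions of \S\ref{subsubsec_intro_optimizedPRandBF}. A secondary technical point is to verify the specialization behaviour of the extended Selmer complex at primes where the ordinary/Greenberg filtration on the $\f$-factor interacts non-trivially with the cyclotomic variable; this is handled by combining the freeness of $R_\f^*$ over $\LL_\f$ with standard base-change properties of Nekov\'a\v{r}'s complexes.
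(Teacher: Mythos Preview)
Your proposal is correct and follows essentially the same route as the paper's proof of Theorem~\ref{thm_3var_main_inert_ff}(ii): specialize $\bbchi$ at the height-one primes $X_{\widehat{\chi\psi_i}}$ corresponding to the crystalline Hecke characters $\psi_i$ supplied by \ref{item_bounded_spsi}, use Nekov\'a\v{r}'s control theorem (\cite[Corollary~8.10.2]{nekovar06}) on the algebraic side and the interpolation property of $L_p^{\rm RS}(\f_{/K}\otimes\bbchi)$ on the analytic side, apply Theorem~\ref{thm_cyclo_main_inert_ff} at each $\psi_i$, strip off the $\mu$-invariants (which is where the uniform bound $s=\sup_i s(\psi_i)$ enters), and then invoke Proposition~\ref{prop_appendix_regular_divisibility} with $R=\LL_\f(\Gamma_K)$ and $R_0=\cO[[\gamma_0-1]]$ to patch. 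One small inaccuracy: under the specialization $x_{\widehat{\chi\psi}}$ the representation $\TT_{\f,\chi}^K$ goes to $\TT_{\f,\psi}^{\cyc}$ and the $p$-adic $L$-function to $L_p^{\rm RS}(\f_{/K}\otimes\psi)$ (not $\chi\psi$; see \eqref{eqn_specialize_bbchi_at_chipsi}), but this does not affect the argument.
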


As we have remarked in the paragraph following the statement of Theorem~\ref{thm_BF_factorization_for_families}, the condition on the growth of $s(\psi_i)$ can be verified granted the existence of a rank-$2$ Euler system predicted by Conjecture~\ref{conj_ESrank2}.

We deduce Theorem~\ref{thm_3var_main_inert_ff_intro} from Theorem~\ref{thm_cyclo_main_inert_ff_intro} by varying $\psi $ and using the patching criterion (in Appendix~\ref{Appendix_Regular_Rings_Divisibility}) alluded to above. We note that it is crucial that we work with the Hida family $\f$ (as opposed to an individual eigenform $f$); see Remark~\ref{rem_s_g_uniform_bound_required_but_not_enough} where we discuss this technical point.

Let us assume in Theorems~\ref{thm_anticyc_main_inert_ff_definite_intro} and \ref{thm_anticyc_main_inert_ff_indefinite_intro} below that the nebentype $\varepsilon_f$ is trivial and we put $N_f=N^+N^-$ as before, where $N^+$ (resp. $N^-$) is a product of primes which are split (resp. inert) in $K/\QQ$. Utilizing the descent formalism of \cite[\S5.3.1]{BL_SplitOrd2020} (which relies crucially on the work of Nekov\'a\v{r}) applied together with Theorem~\ref{thm_3var_main_inert_ff_intro}, we shall obtain divisibilities in both ``definite'' and ``indefinite'' anticyclotomic Iwasawa main conjectures (see Conjecture~\ref{conj_anticyclo_main_inert_ff_intro}). We underline the importance to  work with Nekov\'a\v{r}'s Selmer complexes (rather than classical Selmer groups): Among other things, it allows us to by-pass any issues that may stem from the potential existence of nonzero pseudo-null submodules of Selmer groups. 

\begin{theorem}[Theorem~\ref{thm_anticyc_main_inert_ff_definite},  anticyclotomic main conjectures in the ``definite'' case]
\label{thm_anticyc_main_inert_ff_definite_intro}
Suppose $\chi$ is a ring class character such that $\widehat{\chi}_{\vert_{G_{\QQ_{p^2}}}}\neq \widehat{\chi^c}_{\vert_{G_{\QQ_{p^2}}}}$ and $N^-$ is a square-free product of odd number of primes. Assume that the following conditions hold true: 
\begin{itemize}
    \item $p\geq 7$ and ${\rm SL}_2(\FF_p)\subset\overline{\rho}_\f(G_{\QQ(\mu_{p^\infty})})$.
    \item \ref{item_bounded_spsi} is valid.
\end{itemize}
Then the $\LL_\f(\Gamma_\ac)$-module $\widetilde{H}^2_{\rm f}(G_{K,\Sigma},\TT_{\f,\bbchi_\ac}^{\dagger};\Delta_{\Gr})$ is torsion and the following containment in the anticyclotomic Iwasawa main conjecture for the family $\f_{/K}\otimes\bbchi_{\rm ac}$ holds:
\begin{equation*}
    L_p^{\dagger}(\f_{/K}\otimes\bbchi_\ac)\,\in \,\Char_{\LL_\f(\Gamma_\ac)}\left(\widetilde{H}^2_{\rm f}(G_{K,\Sigma},\TT_{\f,\bbchi_\ac}^{\dagger};\Delta_{\Gr})^\iota\right)\otimes_{\ZZ_p}\QQ_p\,.
\end{equation*}
\end{theorem}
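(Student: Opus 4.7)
The plan is to deduce Theorem~\ref{thm_anticyc_main_inert_ff_definite_intro} from the three-variable Theorem~\ref{thm_3var_main_inert_ff_intro} by Nekov\'a\v{r}-style descent from the $\ZZ_p^2$-tower to the anticyclotomic line, following the paradigm of \cite[\S5.3.1]{BL_SplitOrd2020}. The central critical twist $\TT_{\f,\bbchi_\ac}^{\dagger}$ and the restricted $p$-adic $L$-function $L_p^{\dagger}(\f_{/K}\otimes\bbchi_\ac)$ are, by the very constructions recorded in Definition~\ref{defn_twff_map}, the images of $\TT_{\f,\chi}^{K}$ and $L_p^{\rm RS}(\f_{/K}\otimes\bbchi)$ under the composition of the central critical twist with the restriction map $\LL_\f(\Gamma_K)\twoheadrightarrow \LL_\f(\Gamma_\ac)$. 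The task therefore splits into (a) commuting characteristic ideals with this specialization, and (b) verifying the non-vanishing $L_p^{\dagger}(\f_{/K}\otimes\bbchi_\ac)\neq 0$.

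For step (a), I would invoke the base-change formalism for Nekov\'a\v{r}'s Selmer complexes, which compares the twisted specialization $\widetilde{H}^2_{\rm f}(G_{K,\Sigma},\TT_{\f,\chi}^{K};\Delta_{\Gr})^\iota \otimes_{\LL_\f(\Gamma_K)} \LL_\f(\Gamma_\ac)$ with $\widetilde{H}^2_{\rm f}(G_{K,\Sigma},\TT_{\f,\bbchi_\ac}^{\dagger};\Delta_{\Gr})^\iota$ via an exact triangle. Working with Selmer complexes (rather than classical Selmer groups) is essential here because it delivers a clean control statement on characteristic ideals, unobstructed by the pseudo-null submodules that naturally appear in multi-variable Iwasawa theory. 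Note that the Greenberg filtration $\Delta_{\Gr}$ is defined via the ordinary subrepresentation of $R_\f^*$ and is preserved by the abelian twist, so the local conditions at $p$ transform in the expected way. The outcome is that the specialization of Theorem~\ref{thm_3var_main_inert_ff_intro} yields the announced containment in $\LL_\f(\Gamma_\ac)\otimes_{\ZZ_p}\QQ_p$.

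For step (b), the hypothesis that $N^-$ is a square-free product of an odd number of primes (with trivial nebentype $\varepsilon_\f$) forces the global root number of the self-dual family to be $+1$, so that $L_p^{\dagger}(\f_{/K}\otimes\bbchi_\ac)$ interpolates central critical values of the complex Rankin--Selberg $L$-functions attached to arithmetic members of $\f_{/K}\otimes\bbchi_\ac$ along a Zariski-dense locus. Non-vanishing of at least one --- hence, of a dense set of --- such central values is supplied by the mod-$p$ non-vanishing results of Chida--Hsieh~\cite{chidahsiehanticyclomainconjformodformscomposito} and their extensions; the technical hypothesis $\widehat{\chi}_{\vert_{G_{\QQ_{p^2}}}}\neq \widehat{\chi^c}_{\vert_{G_{\QQ_{p^2}}}}$ is what ensures that the $p$-adic Euler factor in the interpolation formula does not vanish identically at the arithmetic points in question. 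Combining (a) and (b), the $\LL_\f(\Gamma_\ac)$-module $\widetilde{H}^2_{\rm f}(G_{K,\Sigma},\TT_{\f,\bbchi_\ac}^{\dagger};\Delta_{\Gr})$ must be torsion, and the asserted divisibility of ideals follows at once.

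The principal obstacle is step (a), since propagating divisibilities through a specialization in several variables is in general delicate; the saving grace is Nekov\'a\v{r}'s base-change exact triangle for Selmer complexes, which absorbs the error terms that would otherwise obstruct a naive comparison of classical Selmer groups modulo pseudo-null submodules. The mechanics of this descent have already been carried out in a closely parallel split-prime setting in \cite[\S5.3.1]{BL_SplitOrd2020}, so the bulk of the technical effort here is confirming that the argument transfers without essential change to the present inert regime, where the only new input required is the non-vanishing fact of step (b) --- itself standard in the definite anticyclotomic setting.
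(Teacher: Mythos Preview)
Your overall strategy matches the paper's proof: descend Theorem~\ref{thm_3var_main_inert_ff_intro} to the anticyclotomic line via Nekov\'a\v{r}'s base-change formalism for Selmer complexes, and use a non-vanishing result for the central-critical anticyclotomic $p$-adic $L$-function to conclude torsion-ness. Two points deserve sharpening.

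First, the non-vanishing input in the paper is \cite[Theorem~C]{HungNonVanishing} (Hung), not Chida--Hsieh; the latter concerns main conjectures rather than generic non-vanishing, and in any case the family-theoretic statement you need (for varying weight and anticyclotomic character) is precisely what Hung supplies in the inert definite setting.

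Second, and more substantively, your step~(a) is slightly too breezy. The clean descent identity
\[
\pi_\ac\,\Char_{\LL_\f(\Gamma_K)}\left(\widetilde{H}^2_{\rm f}(G_{K,\Sigma},\TT_{\f,\bbchi}^{\dagger};\Delta_{\Gr})^\iota \right)=\Char_{\LL_\f(\Gamma_\ac)}\left( \widetilde{H}^2_{\rm f}(G_{K,\Sigma},\TT_{\f,\bbchi_\ac}^{\dagger};\Delta_{\Gr})^\iota\right)
\]
coming from \cite[Proposition~5.24]{BL_SplitOrd2020} (i.e., Nekov\'a\v{r}~\cite[\S11.7.11]{nekovar06}) requires \emph{a priori} that the anticyclotomic $\widetilde{H}^1$ and $\widetilde{H}^2$ be $\LL_\f(\Gamma_\ac)$-torsion (so that the height pairing is trivially non-degenerate). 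The paper therefore argues in two passes: the raw control theorem \cite[Corollary~8.10.2]{nekovar06} first gives a containment polluted by the factor $\Char\bigl(\widetilde{H}^2_{\rm f}(\cdots,\TT_{\f,\bbchi}^{\dagger};\cdots)^\iota[\gamma_\cyc-1]\bigr)$; Hung's non-vanishing then shows $(\gamma_+-1)\nmid \Char(\widetilde{H}^2)$ at the three-variable level, forcing this error term to be nonzero and the anticyclotomic $\widetilde{H}^2$ to be torsion. Only \emph{after} this is established does the refined descent apply to remove the error term and yield the asserted containment. Your write-up should make this logical order explicit, since invoking the clean descent formula before torsion-ness is known is circular.
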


\begin{theorem}[Theorem~\ref{thm_anticyc_main_inert_ff_indefinite}, anticyclotomic main conjectures in the ``indefinite'' case]
\label{thm_anticyc_main_inert_ff_indefinite_intro}
Suppose $\chi$ is a ring class character such that $\widehat{\chi}_{\vert_{G_{\QQ_p}}}\neq \widehat{\chi^c}_{\vert_{G_{\QQ_p}}}$ and $N^-$ is a square-free product of even number of primes. Assume that the following conditions hold true. 
\begin{itemize}
    \item $p\geq 7$ and ${\rm SL}_2(\FF_p)\subset\overline{\rho}_\f(G_{\QQ(\mu_{p^\infty})})$.
    \item \ref{item_bounded_spsi} is valid.
\end{itemize}
Then: 
\item[i)] $\ord_{(\gamma_+-1)}\, L_p^{\dagger}(\f_{/K}\otimes\bbchi)\geq 1$.

\item[ii)] The following containment \emph{(partial $\LL_\f(\Gamma_\ac)$-adic BSD formula for the family $\f_{/K}\otimes\bbchi_{\rm ac}$)} is valid:
\begin{equation*}
    \partial_\cyc^{r(\f,\bbchi_\ac)} L_p^{\dagger}(\f_{/K}\otimes\bbchi)\,\in \,{\rm Reg}_{\f,\bbchi_\ac}\cdot\Char_{\LL_\f(\Gamma_\ac)}\left(\widetilde{H}^2_{\rm f}(G_{K,\Sigma},\TT_{\f,\bbchi_\ac}^{\dagger};\Delta_{\Gr})_{\rm tor}^\iota\right)\otimes_{\ZZ_p}\QQ_p\,.
\end{equation*}
\end{theorem}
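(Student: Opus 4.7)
The plan is to derive both parts of the theorem from Theorem~\ref{thm_3var_main_inert_ff_intro} by descending the three-variable divisibility over $\LL_\f(\Gamma_K)$ to the anticyclotomic quotient $\LL_\f(\Gamma_\ac)$ via Nekov\'a\v{r}'s Selmer complex formalism, following the blueprint of \cite[\S5.3.1]{BL_SplitOrd2020}. Applying the central critical twist of Definition~\ref{defn_twff_map} to Theorem~\ref{thm_3var_main_inert_ff_intro} yields
$$L_p^{\dagger}(\f_{/K}\otimes\bbchi)\,\in\,\Char_{\LL_\f(\Gamma_K)}\bigl(\widetilde{H}^2_{\rm f}(G_{K,\Sigma},\TT_{\f,\bbchi}^{\dagger};\Delta_{\Gr})^\iota\bigr)\otimes_{\ZZ_p}\QQ_p,$$
which I will read through the decomposition $\LL_\f(\Gamma_K)\cong\LL_\f(\Gamma_\ac)[[\gamma_+-1]]$ with $\gamma_+-1$ the cyclotomic uniformizer.

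For part (i), in the indefinite case ($N^-$ a product of an even number of primes) the sign of the global functional equation for every classical specialization $\f(\kappa)_{/K}\otimes\bbchi_\ac(\nu)$ within the central critical family is $-1$, forcing all central critical values to vanish. Since these classical points form a Zariski dense subset of the anticyclotomic weight space, the interpolated vanishing yields $L_p^{\dagger}(\f_{/K}\otimes\bbchi_\ac)=0$; equivalently, $(\gamma_+-1)$ divides $L_p^{\dagger}(\f_{/K}\otimes\bbchi)$, giving $\ord_{(\gamma_+-1)}\, L_p^{\dagger}(\f_{/K}\otimes\bbchi)\geq 1$.

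For part (ii), feed the three-variable divisibility into Nekov\'a\v{r}'s descent machinery for Selmer complexes. Under the running hypotheses (in particular, ${\rm SL}_2(\FF_p)\subset\overline{\rho}_\f(G_{\QQ(\mu_{p^\infty})})$ and the semi-ordinary Greenberg triangulation $\Delta_{\Gr}$), Nekov\'a\v{r}'s exact control triangle supplies a morphism comparing $\widetilde{H}^\bullet_{\rm f}(G_{K,\Sigma},\TT_{\f,\bbchi}^\dagger;\Delta_{\Gr})$ with $\widetilde{H}^\bullet_{\rm f}(G_{K,\Sigma},\TT_{\f,\bbchi_\ac}^\dagger;\Delta_{\Gr})$, whose connecting morphism is governed by the $\LL_\f(\Gamma_\ac)$-adic cyclotomic $p$-adic height pairing with determinant ${\rm Reg}_{\f,\bbchi_\ac}$. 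Passing to characteristic ideals, the free $\LL_\f(\Gamma_\ac)$-part of rank $r(\f,\bbchi_\ac)$ of $\widetilde{H}^2_{\rm f}(G_{K,\Sigma},\TT_{\f,\bbchi_\ac}^\dagger;\Delta_{\Gr})$ on the algebraic side matches a divisibility by $(\gamma_+-1)^{r(\f,\bbchi_\ac)}$ of $L_p^\dagger(\f_{/K}\otimes\bbchi)$ on the analytic side, and the leading term $\partial_\cyc^{r(\f,\bbchi_\ac)}\, L_p^\dagger(\f_{/K}\otimes\bbchi)$ lands in the product of ${\rm Reg}_{\f,\bbchi_\ac}$ with the characteristic ideal of the torsion submodule of $\widetilde{H}^2_{\rm f}(G_{K,\Sigma},\TT_{\f,\bbchi_\ac}^\dagger;\Delta_{\Gr})^\iota$ (up to $\QQ_p$-denominators), which is the claimed partial BSD-type containment.

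The main obstacle is the sharpness of the descent: one must ensure that no pseudo-null submodules of the Selmer modules dilute the comparison of characteristic ideals, and that the leading term on the analytic side is divisible by \emph{precisely} ${\rm Reg}_{\f,\bbchi_\ac}\cdot\Char(\widetilde{H}^2_{\rm f}(G_{K,\Sigma},\TT_{\f,\bbchi_\ac}^\dagger;\Delta_{\Gr})_{\rm tor}^\iota)$ rather than a strictly larger ideal. Passing to Nekov\'a\v{r}'s Selmer complexes (as opposed to naive Selmer groups) is exactly what bypasses the first issue. The second demands a careful compatibility between Nekov\'a\v{r}'s $p$-adic height pairing and the cyclotomic derivative of $L_p^\dagger$; in our inert semi-ordinary setting this compatibility is underwritten by the integral optimality of the Perrin-Riou functionals and signed Beilinson--Flach classes recorded in \S\ref{subsubsec_intro_optimizedPRandBF}, which keeps all sides precisely normalized and prevents spurious $\mu$-invariant contributions from corrupting the leading-term computation.
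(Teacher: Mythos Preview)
Your overall strategy is correct and matches the paper's: part (i) follows from the sign of the functional equation at a Zariski-dense set of specializations, and part (ii) descends Theorem~\ref{thm_3var_main_inert_ff_intro} (in its twisted form) through Nekov\'a\v{r}'s formalism as in \cite[\S5.3.1]{BL_SplitOrd2020}. But your account of what makes the descent in part (ii) sharp is off, and you have missed the key dichotomy that the paper exploits.

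The paper does \emph{not} invoke the integral optimality of the Perrin-Riou functionals or signed Beilinson--Flach classes at this stage; those were needed upstream to prove the three-variable containment itself, and since the conclusion here is already $\otimes_{\ZZ_p}\QQ_p$, $\mu$-invariant issues do not arise. What actually drives the leading-term identity is a case split on whether $r(\f,\bbchi)=r(\f,\bbchi_\ac)$ or $r(\f,\bbchi)>r(\f,\bbchi_\ac)$ (Proposition~\ref{prop_semisimplicity_Selmer}). In the strict-inequality case both $\partial_\cyc^{r(\f,\bbchi_\ac)}\Char(\widetilde{H}^2)$ and ${\rm Reg}_{\f,\bbchi_\ac}$ vanish, so the containment is trivial. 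In the equality case, Proposition~\ref{prop_semisimplicity_Selmer} shows the height pairing $\mathfrak{h}_{\f,\bbchi_\ac}^{\rm Nek}$ is non-degenerate, and \emph{this} is the hypothesis that unlocks \cite[Proposition~5.24]{BL_SplitOrd2020} (i.e., Nekov\'a\v{r}'s \cite[\S11.7.11]{nekovar06}), yielding the precise descent formula. The paper also checks three further hypotheses needed for that proposition: vanishing of the Tamagawa factors (via \cite[Corollary~8.9.7.4]{nekovar06}), vanishing of $\widetilde{H}^i$ for $i=0,3$ (from residual irreducibility), and torsionness of $\widetilde{H}^1,\widetilde{H}^2$ over $\LL_\f(\Gamma_K)$ (Corollary~\ref{cor_torsion_H2_generic}). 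Your sketch omits all of these verifications.
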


\begin{remark}
We show in Corollary~\ref{cor_s_g_is_uniformly_bounded} that the uniform boundedness condition \ref{item_bounded_spsi} on the variation of the exponents $s(\psi)$ holds true granted the existence of a rank-$2$ Euler system that the Perrin-Riou philosophy predicts. \index{Beilinson--Flach elements! ${\mathbf{(Bdd_{s(\psi)})}}$}
\end{remark}

In the situation of Theorem~\ref{thm_anticyc_main_inert_ff_indefinite_intro}, note that Conjecture~\ref{conj_anticyclo_main_inert_ff_intro}(ii) predicts in addition that 
$${\rm ord}_{(\gamma_+-1)}\,L_p^{\dagger}(\f_{/K}\otimes\bbchi)\stackrel{?}{=}1\stackrel{?}{=}r(\f,\bbchi_\ac)\,.$$ 
In the setting where the prime $p$ splits in $K/\QQ$, the second expected equality follows from \cite[Theorem 3.15]{BLForum}. Still when the prime $p$ splits in $K/\QQ$, one could also utilize \cite[Theorem 3.30]{BLForum} to show that the first expected equality holds if and only if ${\rm Reg}_{\f,\bbchi_\ac}\neq 0$.

Based on the recent work of Andreatta and Iovita~\cite{AndreattaIovitaBDP}, it seems very likely that the inequality $r(\f,\bbchi_\ac)\geq 1$ will be within reach very soon, see Remark~\ref{remark_andreatta_iovita} for a detailed discussion concerning this point. Note that granted this lower bound on the generic algebraic rank, the statement of Theorem~\ref{thm_anticyc_main_inert_ff_indefinite_intro}(ii) can be recast in the following form (c.f. Corollary~\ref{cor_thm_anticyc_main_inert_ff_indefinite_improvement_1}), which is further in line with the $\LL_\f(\Gamma_\ac)$-adic Birch and Swinnerton-Dyer formula predicted by Conjecture~\ref{conj_anticyclo_main_inert_ff_intro}(ii):
$$\partial_\cyc\, L_p^{\dagger}(\f_{/K}\otimes\bbchi)\,\in \,{\rm Reg}_{\f,\bbchi_\ac}\cdot\Char_{\LL_\f(\Gamma_\ac)}\left(\widetilde{H}^2_{\rm f}(G_{K,\Sigma},\TT_{\f,\bbchi_\ac}^{\dagger};\Delta_{\Gr})_{\rm tor}^\iota\right)\otimes_{\ZZ_p}\QQ_p\,.$$

Still in the situation of Theorem~\ref{thm_anticyc_main_inert_ff_indefinite_intro}, we note that a natural extension of Hsieh's generic non-vanishing result~\cite[Theorem C]{hsiehnonvanishing} combined with our results in the present work yields the upper bound $r(\f,\bbchi_\ac)\leq 1$ on the generic algebraic rank. We explain this in details in Remark~\ref{remark_Hsieh_improve_indefinite_ff} below.

\section{Layout}
We begin our discussion in \S\ref{S:Coleman} with an analysis of the integrality properties of the relevant Dieudonn\'e modules (which, among other things, require to keep track of Hida's cohomological congruence numbers). This involves the determination of their bases which are integral with respect to the natural lattices that arise when we realize the Galois representations in question inside the \'etale cohomology of modular curves. Such an analysis is inevitable since the integrality properties of the Beilinson--Flach elements are relative to the same lattices, and since our methods require a good control on the $\mu$-invariants as our Galois representations vary in $p$-adic families.

With the choice of an integral basis, we explain in \S\ref{sec_PR_1var} how to factor cyclotomic one-variable Perrin-Riou functionals (which are the eigen-projections of the Perrin-Riou map, normalized according to a good choice of an integral eigen-basis) to signed Coleman maps. We use this input in \S\ref{subsec_twovarPRandColeman}  to do the same for the two-variable Perrin-Riou functionals (With the second variable coming from a Hida family), which interpolate those constructed in \S\ref{sec_PR_1var} in a precise manner. In \S\ref{subsec_ap_is_0}, we consider the special case when $a_p(g)=0$, which allows us to relax the Fontaine--Laffaille condition on $g$. This alteration is crucial for scenarios where we would like to allow variation in $g$. For our eventual goals towards Conjectures~\ref{conj_3var_main_inert_ff_intro} and \ref{conj_anticyclo_main_inert_ff_intro}, we shall take  $g$ to be the theta-series of a Hecke character of the imaginary quadratic field $K$ where $p$ remains inert (and vary this Hecke character); note that $a_p(g)=0$ for such $g$.

In \S\ref{subsec_review_BF_elements}, we establish the analogous factorization results for the distribution-valued (non-integral) Beilinson--Flach elements into signed Beilinson--Flach elements with better integral properties. In \S\ref{subsec_padicL}, we study the $p$-local properties of the Beilinson--Flach elements (signed or otherwise) and recall their relation with Rankin--Selberg $p$-adic $L$-functions. 

We introduce the doubly-signed Selmer groups in \S\ref{subsec_def_Selmer_classical}, which naturally arise when we employ the locally restricted Euler system machinery with signed Beilinson--Flach elements, and we compare them to their classical counterparts (i.e., the Greenberg Selmer groups). Note that we eventually (in Section~\ref{subsubsec_setting_fKpsi}) introduce and work with yet another group of Selmer groups (i.e., Nekov\'a\v{r}'s extended Selmer groups), which have better base-change properties. In \S\ref{subsec_MC_signed_classical}, we formulate signed main conjectures (which are accessible via the locally restricted Euler system signed Beilinson--Flach classes) and compare them to their classical variants (which are amenable to $p$-adic variation). An important aspect in our comparison is the integrally-optimal determination of the images of the Perrin-Riou functionals. 

In \S\ref{subsec_fotimesg}  (resp., in \S\ref{subsec_ffotimesg}), we discuss the consequences of our constructions in \S\S\ref{S:Coleman}--\ref{sec_BF_and_padicL} towards the (cyclotomic) main conjectures for Rankin--Selberg convolutions $f\otimes g$ (resp., for the family of Rankin--Selberg convolutions $\f\otimes g$). In \S\ref{subsec_IMCimagquadinert} (more particularly, in \S\ref{subsubsec_setting_fKpsi} and \S\ref{subsubsec_cyclo_main_inert}), we recast our results in \S\ref{subsec_fotimesg} and \S\ref{subsec_ffotimesg} in the special case when $g=\theta(\psi)$ is the theta-series of an algebraic Hecke character $\psi$ of the imaginary quadratic field $K$ (where $p$ remains inert), and in terms of Nekov\'a\v{r}'s Selmer complexes. Relying on the main results of \S\ref{subsubsec_cyclo_main_inert} (which concern a fixed choice of $\psi$) and the patching criterion we establish in Appendix~\ref{Appendix_Regular_Rings_Divisibility}, we prove in \S\ref{subsubsec_3var_inert_ord} our main results towards the $\LL_\f(\Gamma_K)$-adic main conjectures in three variables. In \S\S\ref{subsubsec_anticyclo_1}--\ref{subsubsec_anticyclo_3}, we descend to the anticyclotomic tower dwelling on the general descent formalism Nekov\'a\v{r} has established (see also \cite[\S5.3.1]{BL_SplitOrd2020} where his results have been simplified to cover a limited scope of Galois representations that we concern ourselves with). This concludes the main body of the manuscript.

In the first of the three appendices (Appendix~\ref{Appendix_Regular_Rings_Divisibility}), we prove a divisibility criterion in regular rings, which plays a crucial role to patch our results for individual Rankin--Selberg convolutions $f\otimes g$ to obtain results towards main conjectures in three-variables. The criterion we prove is based on a lemma of Chevalley; we thank Tadashi Ochiai for bringing this observation of Chevalley to our attention. In Appendix~\ref{appendix_sec_padicRankinSelbergHida}, we revisit Loeffler's recent work~\cite[\S4]{Loeffler2020universalpadic} where he constructs $p$-adic Rankin--Selberg $L$-functions. We extend it very slightly to cover the case of minimally ramified universal deformations. This input is important even in the formulation of Conjectures~\ref{conj_3var_main_inert_ff_intro} and \ref{conj_anticyclo_main_inert_ff_intro}. In Appendix~\ref{appendix_big_images}, we study the images of the Galois representations we are interested in. We record a number of sufficient conditions to ensure the validity of the image-related hypotheses in our divisibility results towards main conjectures.

\section{Notation}
We conclude our introduction with some further notation we shall rely on in the main body of our manuscript.
\begin{defn}[The weight space and Hida's universal Hecke algebra]
\label{defn_Hecke_algebra_intro}
\item[i)]\index{Hida Theory! $[\,.\,]$} Let us put $[\,\cdot\,]: \ZZ_p^\times \hookrightarrow \LL(\ZZ_p^\times)^\times$ for the natural injection. The universal weight character $\bbkappa$ is defined as the composite map\index{Hida Theory! Universal weight character $\bbkappa$}
$$\bbkappa: G_\QQ\stackrel{\chi_\cyc}{\lra}\ZZ_p^\times\hookrightarrow \LL(\ZZ_p^\times)^\times,$$  
where $\chi_\cyc$ is the $p$-adic cyclotomic character. A ring homomorphism 
$$\kappa: \LL_{\rm wt}:=\LL(\Zp^\times) \lra \cO$$ 
is called an arithmetic specialization of weight $k+2 \in \ZZ$ if the compositum 
$$G_\QQ\stackrel{\bbkappa}{\lra} \LL_{\rm wt}^\times\stackrel{\kappa}{\lra}\cO$$
agrees with $\chi_\cyc^{k}$ on an open subgroup of $G_\QQ$.\index{Hida Theory! Arithmetic specialization}
\item[ii)] Given an eigenform $f$ as above, let us define $$\LL(\ZZ_p^\times)^{(f)}\cong\LL(1+p\ZZ_p)$$ 
as the component that is determined by the weight $k_f+2$, in the sense that the map $\LL(\ZZ_p^\times)^\times \stackrel{\langle k_f \rangle}{\lra} \ZZ_p$ factors through $\LL(\ZZ_p^\times)^{(f)}$. Here, for a given integer $k$, we have written $\langle k \rangle:\LL(\ZZ_p^\times)\rightarrow \ZZ_p$ to denote the group homomorphism induced from the map $[x]\mapsto x^k$.\index{Hida Theory! $\LL(\ZZ_p^\times)^{(f)}$}
\item[iii)] We let $\f=\sum_{n=1}^{\infty} \mathbb{a}_{n}(\f)q^n \in \LL_\f[[q]]$\index{Hida Theory! Non-CM Hida family $\f$} denote the (non-CM) branch of the primitive Hida family of tame conductor $N_f$, which admits $f_{\alpha}$ as a weight $k_f+2$ specialization. Here, $\LL_\f$\index{Hida Theory! Non-CM branch $\LL_\f$} is the branch (i.e., the irreducible component) of the Hida's universal ordinary Hecke algebra determined by $f_{\alpha}$. It is finite flat over $\LL(\ZZ_p^\times)^{(f)}$ and the universal weight character $\bbkappa$ restricts to a character (also denoted by $\bbkappa$)
$$\bbkappa:  G_\QQ\lra \LL_{\rm wt}^\times \twoheadrightarrow \LL(\ZZ_p^\times)^{(f),\times}\lra \LL_\f^\times\,.$$
\item[iv)] We set $\LL_{\f}(\Gamma):=\LL_\f\,\widehat{\otimes}\,\Lambda_\cO(\Gamma)$ for any profinite group $\Gamma$.\index{Completed group rings! $\LL_\f(\Gamma_?)$}
\end{defn}

\subsection{Iwasawa algebras and distribution algebras}\label{sec:Iw}
We fix a topological generator $\gamma$ of $\Gamma_1$ and identify   $\Lambda_\cO(\Gamma_\cyc)$ and  $\Lambda_\cO(\Gamma_1)$ with the power series rings $\cO[\Delta]\lb X\rb$ and $\cO\lb X\rb$ respectively, where $X=\gamma-1$. 

We also consider $\Lambda_\cO(\Gamma_\cyc)$ and $\Lambda_\cO(\Gamma_1)$ as  subrings of  $\cH(\Gamma_\cyc)$ and $\cH(\Gamma_1)$\index{$\cH(\Gamma_?)$} respectively, which are  the rings of power series $F \in L[\Delta]\lb X \rb$ (respectively $F \in L\lb X \rb$) which converge on the open unit disc $|X| < 1$ {in $\Cp$, where $|\ |$ denotes the $p$-adic norm on $\Cp$ normalized by $|p|=p^{-1}$}. 

For any real number $r\ge 0$, we write  $\cH_{r}(\Gamma_\cyc)$ and $\cH_{r}(\Gamma_1)$\index{$\cH_r(\Gamma_?)$}  for the set of power series $F$ in $\cH(\Gamma_\cyc)$ and $\cH(\Gamma_1)$ respectively satisfying $\sup_{t} p^{-tr}\Vert F\Vert_{\rho_t}<\infty$,
where $\rho_t=p^{-1/p^{t-1}(p-1)}$ {and $\Vert F\Vert_{\rho_t}=\sup_{|z|\le \rho_t}|F(z)|$}. It is common to write $F=O(\log_p^r)$ when $F$ satisfies this condition.% When $r=0$, we write $\Lambda_L(\Gamma_\cyc)=\cH_0(\Gamma_\cyc)=\Lambda_\cO(\Gamma_\cyc)\otimes_{\cO}L$ and $\Lambda_{L}(\Gamma_1)=\cH_{0}(\Gamma_1)=\Lambda_\cO(\Gamma_1)\otimes_{\cO}L$.

%We also define $\Lambda_\f(\Gamma_\cyc)$ (resp. $\Lambda_\f(\Gamma_1)$) as $\Lambda_\f\,\widehat{\otimes}\,\Lambda_\cO(\Gamma_\cyc)$ (resp. $\Lambda_\f\,\widehat{\otimes}\,\Lambda_\cO(\Gamma_1)$).

For each integer $n\ge0$, we put $\omega_{n}(X):=(1+X)^{p^{n}}-1$\index{Cyclotomic polynomials and half-logarithms! $\omega_{n}(X)$}. We set $\Phi_0(X) = X$, and $\Phi_n(X) = \omega_n(X)/\omega_{n-1}(X)$ for $n \ge 1$.\index{Cyclotomic polynomials and half-logarithms! $\Phi_n(X)$} Let $\chi_\cyc$ denote the $p$-adic cyclotomic character.\index{$\chi_\cyc$}

We write $\Tw$\index{$\Tw$} for the ring automorphism of $\cH(\Gamma_\cyc)$ defined by $\sigma \mapsto \chi_\cyc(\sigma)\sigma$ for $\sigma\in \Gamma$.
If we set $u:=\chi_\cyc(\gamma)$, then $\Tw$ maps $X$ to $u(1 + X) - 1$. If $m\ge 1$ is an integer, we define
\begin{align*}\index{Cyclotomic polynomials and half-logarithms! $\omega_{n,m}(X)$}
 \omega_{n,m}(X)=\prod_{i=0}^{m-1}\Tw^{-i}\left(\omega_{n}(X)\right);\,\,\,\,\,\,
 \Phi_{n,m}(X)=\prod_{i=0}^{m-1}\Tw^{-i}\left(\Phi_{n}(X)\right)\,.
\end{align*}\index{Cyclotomic polynomials and half-logarithms!  $\Phi_{n,m}(X)$}
We let $\log_p \in \cH(\Gamma_1)$ denote the $p$-adic logarithm. We also define 
\[
\log_{p,m}=\prod_{i=0}^{m-1}\Tw^{-i}\left(\log_p\right).
\]\index{Cyclotomic polynomials and half-logarithms! $\log_{p,m}$}
We also define Pollack's half logarithms
\begin{align*}
\log_{p,m}^+&=\prod_{i=0}^{m-1}\Tw^{-i}\left(\prod_{m=1}^\infty \frac{\Phi_{2m}(X)}{p}\right),  \\  
\log_{p,m}^-&=\prod_{i=0}^{m-1}\Tw^{-i}\left(\prod_{m=1}^\infty \frac{\Phi_{2m-1}(X)}{p}\right).
\end{align*}\index{Cyclotomic polynomials and half-logarithms! $\log_{p,m}^\pm$}
Finally, we set
\[
\delta_m=\prod_{i=0}^{m-1}\Tw^{-i}X.
\]\index{Perrin-Riou maps! $\delta_{m}$}
%%%%
%%%%
\subsection*{Acknowledgements} The authors thank Francesc Castella, Ming-Lun Hsieh, Jackie Lang, David Loeffler and Sarah Zerbes for illuminating discussions during the preparation of the manuscript. We would also like to thank the anonymous referees for their helpful comments and suggestions. The second named author’s research is supported by the NSERC  Discovery Grants Program (RGPIN-2020-04259 and RGPAS-2020-00096).
%%%%%%%%%%%
%%%%%%%%%%%

%-----------------------------------------------------------------------
% End of chap1
%-----------------------------------------------------------------------

\chapter{Construction of the optimized Coleman maps}\label{S:Coleman}
Throughout this chapter, we fix a pair of eigenforms $f$ and $g$ as in the introduction. Our goal in this chapter is to define the appropriate Coleman maps for $T_{f,g}$. These maps will be used to formulate  Iwasawa main conjectures later on. We will slightly alter the construction of Coleman maps in \cite{BLLV} where the Rankin--Selberg products of two non-ordinary forms were studied. We will show that two of these Coleman maps behave well as $f$ varies in a Hida family, giving rise to $\Lambda_\f(\Gamma_\cyc)$-adic  Coleman maps, which in turn allow us to formulate (two-variable) signed Iwasawa main conjectures which also incorporates variation in the Hida families.

We begin our discussion by fixing some notation.
\section{Dieudonn\'e modules and $\vp$-eigenvectors}
\begin{defn}
 {Let us identify $M_{\rm dR}(h)\otimes_{\ZZ}\ZZ_p$ with $\Dcris(V_h)$ via the comparison isomorphism for each $h\in\{f,g\}$, where $M_{\rm dR}(h)$ is the de Rham realization of the Scholl motive associated to $h$}. Let $h^\star$ denotes the conjugate form of $h$ (denoted by $h^*$  in \cite[Definition~ 6.1.2]{KLZ1}).

We recall (following the notation of \cite[\S2.8]{KLZ2}) that $V_h$ denotes the $h$-isotypic subspace of $H^1_{\textup{\'et}}(Y_1(N_f)_{\overline{\QQ}},{\rm Sym}^k(\mathscr{H}_{\ZZ_p}^\vee))\otimes_{\ZZ_p}L$, whereas $V_h^*$ is the $h$-isotypic quotient of $H^1_{\textup{\'et}}(Y_1(N_f)_{\overline{\QQ}},{\rm Tsym}^k(\mathscr{H}_{\ZZ_p})(1))\otimes_{\ZZ_p}L$  and contains the natural lattice $R_h^*$, which is generated by the image of $H^1_{\textup{\'et}}(Y_1(N_f)_{\overline{\QQ}},{\rm Tsym}^k(\mathscr{H}_{\ZZ_p})(1))\otimes_{\ZZ_p}\cO$. We let $R_{h^\star}\subset V_{h^\star}$ denote the lattice that maps isomorphically to $R_h^*(-1-k_h)$ under the isomorphism $V_{h^\star}\xrightarrow{\sim}V_h^*(-1-k_h)$.\index{Galois representations! Deligne's representations $R_h, R_h^*$}

We write $\omega_h\in \Fil^1\Dcris(V_h)$\index{Galois representations! $\omega_h \in \Fil^1\Dcris(V_h)$} to be the basis vector as defined in \cite[\S6.1]{KLZ1} and write $\omega_{h^\star}\in \Fil^1\Dcris(V_{h^\star})$ for the corresponding element for the conjugate form $h^\star$, which we regard as an element of $\Fil^0\Dcris(V_h^*)$ via the natural isomorphism $V_{h^\star}\xrightarrow{\sim}V_h^*(-1-k_h)$. 
\end{defn}
In order to define our Coleman maps, we will have to fix certain $\vp$-eigenvectors of the Dieudonn\'e modules. Let $h\in\{f,g\}$. We recall that the $\vp$-eigenvalues on $\Dcris(V_h)$ are $\alpha_h$ and $\beta_h$, whereas those on $\Dcris(V_h^*)$ are $\alpha_h^{-1}$ and $\beta_h^{-1}$. Throughout, we assume $\alpha_h\ne \beta_h$. Let $$\langle-,-\rangle: \Dcris(V_h)\otimes \Dcris({V_{h}^*})\lra \Dcris(L)= L$$ denote the natural pairing. We have the following orthogonality relation:
\begin{lemma}\label{lem:orth}
Let $h\in\{f,g\}$ and $\lambda\in\{\alpha,\beta\}$. If $\lambda'$ is the unique element of $\{\alpha,\beta\}\setminus\{\lambda\}$, then 
\[
\langle u,v\rangle=0
\]
for all $u\in \Dcris(V_h)^{\vp=\lambda_h}$ and $v\in \Dcris(V_h^*)^{\vp=(\lambda_h')^{-1}}$.
\end{lemma}
\begin{proof}
  The lemma follows from 
  \begin{align*}
      \langle u,v\rangle &=\vp(\langle u,v\rangle)\\
      &=\langle \vp(u),\vp(v)\rangle\\
      &=\frac{\lambda_h}{\lambda_h'}\langle u,v\rangle.
  \end{align*}
  \end{proof}
  This allows us to give the following definition:

\begin{defn}
\label{def_eigenvectors}
\item[i)]For  $h\in\{f,g\}$ and $\lambda\in\{\alpha,\beta\}$, let $v_{h,\lambda}\in \Dcris(V_h)^{\vp=\lambda_h}$  be an $\cO$-basis of
\[
\Hom_\cO\left(\Dcris(R_h^*)/\Dcris(R_h^*)^{\vp=(\lambda'_h)^{-1}},\cO\right),
\]\index{Galois representations! $v_{g,\lambda}\in \Dcris(V_g)$}where $\lambda'$ is the unique element of $\{\alpha,\beta\}\setminus\{\lambda\}$ and we have identified $\Dcris(V_h)$ with $\Hom_L(\Dcris(V_h^*),\cO)$ via $\langle-,-\rangle$.

\item[ii)] We define $\{v_{h,\alpha}^*,v_{h,\beta}^*\}$ to be the dual basis of $\{v_{h,\alpha},v_{h,\beta}\}$. 
\item[iii)] Given a pair $\lambda,\mu\in\{\alpha,\beta\}$, we set $v_{\lambda,\mu}:=v_{f,\lambda}^*\otimes v_{g,\mu}^*$. \index{Galois representations! $v_{\lambda,\mu}$}
\end{defn}

\begin{remark}\label{rk:e-vecs}
%It is clear that $v_{h,\lambda}^*\in \Dcris(V_h^*)^{\vp=\lambda_h^{-1}}$, which is also an $\cO$-basis of $\Dcris(R_h^*)/\Dcris(R_h^*)^{\vp=(\lambda_h')^{-1}}$. \red{(If we are taking $h=g$ in the remainder, we should do so from the beginning.)}
Suppose that $R_g^*$ satisfies the Fontaine--Laffaille condition $p>k_g+1$. As $g$ is non-ordinary at $p$, it follows from \cite[Lemma~3.1]{LLZ3} that $\omega_{g^\star}$ and $\vp(\omega_{g^\star})$ form an $\cO$-basis of $\Dcris(R_g^*)$. Since $\Dcris(V_g^*)^{\vp=(\lambda'_g)^{-1}}$ is generated by $\omega_{g^\star}-\lambda_g\vp(\omega_{g^\star})$, the $\cO$-module $\Dcris(R_g^*)/\Dcris(R_g^*)^{\vp=(\lambda_g')^{-1}}$ is generated by $\lambda_g\vp(\omega_{g^\star})$. We may (and will) take
\[
v_{g,\lambda}^*=\frac{\lambda_g}{\lambda_g-\lambda_g'}\left(\omega_{g^\star}-\lambda'_g\vp(\omega_{g^\star})\right).
\]
A direct computation using the fact that 
$$\langle\omega_g,\omega_{g^\star}\rangle=\langle\vp(\omega_g),\vp(\omega_{g^\star})\rangle=0$$ 
and the duality $\langle \vp(\omega_g),\omega_{g^\star}\rangle=\langle \omega_g,\vp^{-1}(\omega_{g^\star})\rangle$ shows that
\[
v_{g,\lambda}=\frac{1}{\langle\vp(\omega_g),\omega_{g^\star}\rangle}\left(\vp(\omega_g)-\lambda'_g\omega_g\right) .\]
\end{remark}  

\begin{remark}\label{rk:f-basis}
    
We discuss the choice of $v_{f,\lambda}^*$ under the Fontaine-Laffaille condition  $p>k_f+1$.

\noindent
{\textbf{Case 1: $V_f^*$ is not locally split at $p$.}} We recall from \cite[\S5]{LZWach} that the Wach module $\NN(R_f^*)$ admits an $\AA_L^+$-basis $\{n_1,\varpi^d n_2\}$  with respect to which  the matrix of $\vp$ is of the form
\[
\begin{pmatrix}
\left(\frac{\pi}{\vp(\pi)}\right)^{k_f+1}\varepsilon_f(p)^{-1}\alpha_f&\varpi^d\pi^{k_f+1}x_f\\
0&\alpha_f^{-1}
\end{pmatrix},
\]
where $d\ge0$ is an integer, $\varpi$ is an uniformizer of $L$ and $x_f\in\AA_L^+$ satisfies $t^{k_f+1}x_f=k_f!+O(t^{k_f+2})$.
We  recall here for the convenience of the reader that $\AA_L^+=\cO[[\pi]]$,  which  is equipped with an $\cO$-linear actions by $\vp$ and $\Gamma$ given by $\vp(\pi)=(1+\pi)^p-1$ and $\sigma\cdot \pi=(1+\pi)^{\chi(\sigma)}-1$) for $\sigma\in\Gamma$.

Following \cite[Théorème~III.4.4]{berger04}, we identify $\Dcris(R_f^*)$ with $\NN(R_f^*)/\pi$. Let us write $\nu_1$ and $\nu_2$ for the images of $\varpi^dn_2$ and $k_f!n_1$ in $\Dcris(R_f^*)$ respectively. Then $\{\nu_1,\nu_2\}$ is an $\cO$-basis of $\Dcris(R_f^*)$ since $k_f!$ is a $p$-adic unit. Furthermore, it follows from \cite[Proposition~5.3 and Corollary~5.5]{LZWach} that $\nu_2$ is a $\vp$-eigenvector with eigenvalue $\beta^{-1}$ and that $\nu_1$ generates $\Fil^0\Dcris(R_f^*)$ $\cO$-linearly. The matrix of $\vp$ with respect to the basis $\{\nu_1,\nu_2\}$ is given by
\[
\begin{pmatrix}
\alpha_f^{-1}&0\\
\varpi^d&\beta_f^{-1}
\end{pmatrix}.
\]

\noindent
\textbf{Case 2: $V_f^*$ is locally split at $p$.} In this case, it is clear that $\Dcris(R_f^*)$ admits an $\cO$-basis consisting of $\vp$-eigenvectors $\{\nu_1,\nu_2\}$ such that $\nu_1$ generates $\Fil^0\Dcris(R_f^*)$ $\cO$-linearly and that the matrix of $\vp$ with respect to this basis is equal to
\[
\begin{pmatrix}
\alpha_f^{-1}&0\\
0&\beta_f^{-1}
\end{pmatrix}.
\]

In both cases, we may assume $\nu_1=\omega_{f^\star}=$  and $v_{f,\beta}^*=\nu_2$. As for $v_{f,\alpha}^*$, we may set it to be   either $\nu_1+\frac{\varpi^d\alpha_f\beta_f}{\beta_f-\alpha_f}\nu_2=\frac{\alpha_f}{\alpha_f-\beta_f}(\omega_{f^\star}-\beta_f\vp(\omega_{f^\star}))$ (which has the same form as $v_{g,\lambda}^*$) or $\nu_1=\omega_{f^\star}$ depending on whether $V_f^*$ is locally split at $p$ or not.\end{remark}

For a finite unramified extension $F$ of $\Qp$, we have a Perrin-Riou map
\[
\cL_{F,f,g}:\HIw(F(\mu_{p^\infty}),T)\rightarrow \cH_{k_f+k_g+2}(\Gamma_\cyc)\otimes_{\Zp}\Dcris(T)\otimes_{\Zp}F
\]\index{Perrin-Riou maps! $\cL_{F,f,g}$}
 as defined in  \cite[\S3.1]{leiloefflerzerbes11} and \cite[Appendix B]{LZ0}.
For any $\lambda,\mu\in\{\alpha,\beta\}$, our choice of $\vp$-eigenvectors  in Definition~\ref{def_eigenvectors} gives rise to a map 
\begin{equation}
    \cL_{F,f,g}^{(\lambda,\mu)}:\HIw(F(\mu_{p^\infty}),T)\rightarrow \cH_{\ord_p(\lambda_f\mu_g)}(\Gamma_\cyc)\otimes_{\Zp} F
\label{eq:projectingPR}
\end{equation}\index{Perrin-Riou maps! $\cL_{F,f,g}^{(\lambda,\mu)}$}
on projecting $\cL_{F,f,g}$ to the $(\lambda_f\mu_g)^{-1}$-eigenvector $v_{\lambda,\mu} \in \Dcris(T)\otimes_{\ZZ_p}\Qp$.
When $F=\Qp$, we omit $F$ from the notation and simply write $\cL_{f,g}$ and $\cL^{(\lambda,\mu)}_{f,g}$ in place of $\cL_{\Qp,f,g}$ and $\cL^{(\lambda,\mu)}_{\Qp,f,g}$, respectively. \index{Perrin-Riou maps! $\cL_{f,g}$} \index{Perrin-Riou maps! $ \cL^{(\lambda,\mu)}_{f,g}$}
                                     
\section{Factorization of one-variable Perrin-Riou maps}\label{sec_PR_1var}
In \cite[\S4]{BLLV}, under the hypothesis that both $f$ and $g$ are non-ordinary at $p$, we have constructed a logarithmic matrix attached to an $\cO$-basis $\cB=\{v_i:i=1,\ldots 4\}$ of $\Dcris(T_{f,g})$, where we recall that $T_{f,g}=R_f^*\otimes R_g^*$\index{Galois representations! $T_{f,g}$}, using the theory of Wach modules. This matrix  is in turn used to define the bounded Coleman maps in the context of op. cit. We explain here that one may still carry out a similar construction even when $f$ is ordinary at $p$. The main ingredient in the construction of the logarithmic matrix in loc. cit. is Berger's result \cite[proof of Proposition~V.2.3]{berger04}, which allows one to lift a basis of $\Dcris(T_{f,g})$ to an $\AA_L^+$-basis of the Wach module $\NN(T_{f,g})$\index{Galois representations! $\NN(T_{f,g})$ (Wach module)}. Berger's result applies whenever the following three conditions hold:
\begin{itemize}
     \item[\mylabel{item_FL}{{\bf (FL$_{f,g}$)}}] $T_{f,g}$ verifies the Fontaine--Laffaille condition $p>k_f+k_g+2$;
     \item[\mylabel{item_Slo}{{\bf (Slo)}}] The slope of $\vp$ on $\Dcris(T_{f,g})$ does not attain $-k_f-k_g-2$ and $0$ simultaneously;
     \item[\mylabel{item_Fil}{{\bf (Fil)}}] The basis {$\cB$} of $\Dcris(T_{f,g})$ is  a filtered basis, in the sense that it gives rise to bases for the graded pieces of $\Dcris(T_{f,g})$.
\end{itemize}

 \begin{remark}{We have chosen in \cite[\S4.1]{BLLV} the basis $$\left\{\ofs\otimes\ogs,\vp(\ofs)\otimes\ogs,\ofs\otimes\vp(\ogs),\vp(\ofs)\otimes\vp(\ogs)\right\},$$
 which verifies the hypothesis \ref{item_Fil} under the assumption that $k_f\le k_g$. 
We note that there is a typo in the labelling of $v_2$ and $v_3$ in \cite{BLLV}. There is another typo in the formula of the matrix of $\vp$ on $\Dcris(T_h^*)$ (labelled $A_h$ in loc. cit.), where $\varepsilon_h(p)$ should have been $\varepsilon_h(p)^{-1}$. The formula for $A_0$ and $Q$ should also be  altered accordingly. These typos do not affect subsequent calculations. For completeness, the correct formula for $A_0$ reads
 \[
 A_0=\begin{pmatrix}
 0&0&0&(\varepsilon_f(p)\varepsilon_g(p))^{-1}\\
 0&0&\varepsilon_g(p)^{-1}&-\varepsilon_g(p)^{-1}a_p(f)\\
 0&-\varepsilon_f(p)^{-1}&0&-\varepsilon_f(p)^{-1}a_p(g)\\
 1&a_p(f)&a_p(f)&a_p(f)a_p(g)
 \end{pmatrix}.
 \] When  $k_f\ge k_g$, we may swap the ordering of $\vp(\ofs)\otimes\ogs$ and $\ofs\otimes\vp(\ogs)$. This then gives a basis satisfying the condition \ref{item_Fil}.} 
\end{remark}

In our current setting, the fact that $g$ is non-ordinary at $p$ ensures that \ref{item_Slo} holds even when $f$ is $p$-ordinary. Therefore, granted the Fontaine--Laffaille condition \ref{item_FL}, we are left to determine a basis of $\Dcris(T_{f,g})$ that verifies the condition \ref{item_Fil} in order to apply Berger's result. We recall from Remarks~\ref{rk:e-vecs} and \ref{rk:f-basis} that $\omega_{g^\star}$ and $\vp(\omega_{g^\star})$ form an $\cO$-basis of $\Dcris(R_g^*)$ whereas $\omega_{f^\star}$ and $v_{f,\beta}^*$ form an $\cO$-basis of $\Dcris(R_f^*)$. This leads us to pick the following $\cO$-basis of $\Dcris(T_{f,g})$:

\begin{defn}
We let $\{v_1,v_2,v_3,v_4\}\subset \Dcris(T_{f,g})$ denote the $\cO$-basis given by 
$$v_1=\ofs\otimes\ogs, \,\,\,\,\, v_2=\ofs\otimes \vp(\ogs)\,\,\,\,\, v_3=\etaf\otimes\ogs,\,\,\,\,\, v_4=\etaf\otimes\vp(\ogs).$$
\end{defn}

Note that if  $k_g\le k_f$, this basis satisfies \ref{item_Fil}; otherwise, the basis $\{v_1,v_3,v_2,v_4\}$ does. 
{
\begin{defn}
\item[i)] Suppose $k_g\le k_f$. We let $\{n_1,n_2,n_3,n_4\}$ denote the basis of the Wach-module $\NN(T_{f,g})$ given by \cite[Proof of Proposition~V.2.3]{berger04}, that lifts the ordered basis $\{v_1,v_2,v_3,v_4\}$ of $\Dcris(T_{f,g})$.
\item[ii)] Suppose $k_g> k_f$. We similarly define $\{n_1,n_2,n_3,n_4\}$ as the ordered basis of $\NN(T_{f,g})$ lifting the ordered basis $\{v_1,v_3,v_2,v_4\}$ of $\Dcris(T_{f,g})$.
\item[iii)] Set $q:=\vp(\pi)/\pi\in\AA_L^+$ and $\xi=p/(q-\pi^{p-1})\in (\AA_L^+)^\times$. 
\end{defn}
\begin{lemma}\label{lem:matrices}
Let  $A_{f,g,0}\in\GL_4(\cO)$ so that the matrix $A_{f,g}$ of $\vp_{\vert _{\Dcris(T_{f,g})}}$ with respect to the ordered basis $\{v_1,v_2,v_3,v_4\}$ verifies
\[
A_{f,g}=A_{f,g,0}\cdot\begin{pmatrix}
1&&&\\
&1/p^{k_g+1}&&\\
&&1/p^{k_f+1}&\\
&&&1/p^{k_f+k_g+2}
\end{pmatrix}\,.
\]
Then the matrix of $\vp_{\vert_{\NN(T_{f,g})}}$ with respect to the ordered basis $\{n_1,n_2,n_3,n_4\}$  is given by
\[
P_{f,g}=A_{f,g,0}\cdot \begin{pmatrix}
\xi^{k_f+k_g+2}&&&\\
&\xi^{k_f+1}/q^{k_g+1}&&\\
&&\xi^{k_g+1}/q^{k_f+1}&\\
&&&1/q^{k_f+k_g+2}
\end{pmatrix}.
\]
\end{lemma}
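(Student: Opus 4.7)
My plan is to derive the factorisation of $P_{f,g}$ by reducing to the single-variable Wach-module computations for $R_f^*$ and $R_g^*$ separately, and then invoking the compatibility of Wach modules with tensor products. The slope condition \ref{item_Slo} is automatic because $g$ is non-ordinary at $p$, and the Fontaine--Laffaille hypothesis \ref{item_FL} for $T_{f,g}$ implies its analogues for each factor, so Berger's \cite[Proposition V.2.3]{berger04} is available in each of the three settings.

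First I would treat the two factors individually. Proposition~\ref{prop:ordbasis} supplies the filtered $\cO$-basis $\{\omega_{f^\star},\eta_f\}$ of $\Dcris(R_f^*)$, in which the matrix of $\vp$ factors as $A_f=A_{f,0}\cdot\operatorname{diag}(1,1/p^{k_f+1})$ with $A_{f,0}\in\GL_2(\cO)$, as recorded in the remark preceding the definition of $\{v_i\}$. Analogously, the non-ordinarity of $g$ (via \cite[Lemma~3.1]{LLZ3}) provides the filtered basis $\{\omega_{g^\star},\vp(\omega_{g^\star})\}$ of $\Dcris(R_g^*)$, together with a factorisation $A_g=A_{g,0}\cdot\operatorname{diag}(1,1/p^{k_g+1})$ for some $A_{g,0}\in\GL_2(\cO)$. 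Applying Berger's lifting recipe to each filtered basis produces $\AA_L^+$-bases of $\NN(R_f^*)$ and $\NN(R_g^*)$ whose matrices of $\vp$ take the form
\[
P_h \;=\; A_{h,0}\cdot\operatorname{diag}\bigl(\xi^{k_h+1},\,1/q^{k_h+1}\bigr), \qquad h\in\{f,g\}.
\]
This two-by-two analogue of the claim is essentially contained in the computations of \cite{BLLV}.

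I would then invoke the tensor compatibility $\NN(T_{f,g})\simeq\NN(R_f^*)\otimes_{\AA_L^+}\NN(R_g^*)$, a standard consequence of the equivalence of categories between crystalline $G_{\Qp}$-representations of bounded Hodge--Tate weights and Wach modules. The tensor-product basis lifts the ordered basis $\{v_1,v_2,v_3,v_4\}$ of $\Dcris(T_{f,g})$ (after swapping the two middle indices in the case $k_g>k_f$), and it is a filtered basis of the tensor Wach module because the filtration on a tensor product of Wach modules is the convolution of the individual filtrations. The matrix of $\vp$ on this tensor basis is the Kronecker product $P_f\otimes P_g$, and the expansion
\[
P_f\otimes P_g \;=\; (A_{f,0}\otimes A_{g,0})\cdot\bigl(\operatorname{diag}(\xi^{k_f+1},1/q^{k_f+1})\otimes\operatorname{diag}(\xi^{k_g+1},1/q^{k_g+1})\bigr)
\]
delivers exactly the formula claimed for $P_{f,g}$, once one observes that $A_{f,g}=A_f\otimes A_g$ together with the parallel factorisation $\operatorname{diag}(1,1/p^{k_g+1},1/p^{k_f+1},1/p^{k_f+k_g+2})=\operatorname{diag}(1,1/p^{k_f+1})\otimes\operatorname{diag}(1,1/p^{k_g+1})$ force $A_{f,g,0}=A_{f,0}\otimes A_{g,0}$, in particular placing $A_{f,g,0}$ in $\GL_4(\cO)$.

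The one subtlety requiring care is the identification of the tensor-product basis with the specific basis $\{n_1,\ldots,n_4\}$ that Berger's construction attaches to $T_{f,g}$. Since Berger's lift is characterised by the pair of properties that it reduces to the prescribed filtered basis of $\Dcris$ modulo $\pi$ and that it respects the filtration on the Wach module, and since both properties are preserved under tensor products of crystalline representations, the tensor basis satisfies these same characterising properties; hence the two bases agree up to a change of basis in $\GL_4(\cO)$ that can be absorbed into the definition of $A_{f,g,0}$ without affecting the shape of the factorisation.
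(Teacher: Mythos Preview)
Your approach has a genuine gap at the first step. You claim that Berger's \cite[Proposition~V.2.3]{berger04} is available ``in each of the three settings'', i.e.\ for $R_f^*$, $R_g^*$, and $T_{f,g}$. This is false for $R_f^*$: since $f$ is $p$-ordinary, the slopes of $\vp$ on $\Dcris(R_f^*)$ are exactly $0$ and $-(k_f+1)$, so the slope condition \ref{item_Slo} fails for the single factor $R_f^*$. The paper makes this point explicitly later (see \S\ref{subsec_ap_is_0}: ``neither \cite[Proof of Proposition~V.2.3]{berger04} nor the construction of Wach modules in \cite{bergerlizhu04} applies to $R_f^*$''). Your formula $P_f=A_{f,0}\cdot\operatorname{diag}(\xi^{k_f+1},1/q^{k_f+1})$ therefore has no justification; the Wach-module basis produced for $R_f^*$ in Proposition~\ref{prop:ordbasis} via \cite{LZWach} gives a visibly different matrix (with top-left entry $\alpha_f^{-1}$, not $\alpha_f^{-1}\xi^{k_f+1}$). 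The reference to \cite{BLLV} does not help either, since that paper treats the case where both forms are non-ordinary.

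Your handling of the second subtlety is also not quite right: two $\AA_L^+$-bases of $\NN(T_{f,g})$ that both reduce to $\{v_i\}$ modulo $\pi$ differ by an element of $1+\pi\,M_4(\AA_L^+)$, not by an element of $\GL_4(\cO)$. Conjugating by such a matrix does not preserve the factorised shape $A_{f,g,0}\cdot\operatorname{diag}(\cdots)$ with the \emph{same} $A_{f,g,0}$ coming from $\Dcris$, so you cannot simply ``absorb'' the discrepancy.

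The paper's proof sidesteps both problems by working directly with $T_{f,g}$, where the non-ordinarity of $g$ ensures that \ref{item_Slo} holds. One twists by $-(k_f+k_g+2)$ so that Berger's result applies on the nose, reads off that the matrix of $\vp$ on $\NN(T_{f,g}(-k_f-k_g-2))$ is obtained from that on $\Dcris(T_{f,g}(-k_f-k_g-2))$ by replacing each $p$ in the diagonal factor with $\xi q$, and then untwists using $\vp(\pi)=q\pi$. No decomposition into the two rank-$2$ factors is needed.
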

\begin{proof}
We note that  \cite[Proposition~V.2.3]{berger04} applies to the representation $T_{f,g}(-k_f-k_g-2)$. We have the identifications 
\begin{align*}
    \NN(T_{f,g})&\xrightarrow{\times\,\pi^{k_f+ k_g+2}} \NN(T_{f,g}(-k_f-k_g-2)),\\
     \Dcris(T_{f,g})&\xrightarrow{\times\, t^{k_f+ k_g+2} } \Dcris(T_{f,g}(-k_f-k_g-2)),
\end{align*}
where $t=\log(1+\pi)$. Let us put $n_i'=\pi^{ k_f+k_g+2}\cdot n_i$ and  $v_i'= t^{ k_f+k_g+2}\cdot v_i$ for $i=1,\ldots, 4$.
The matrix  of $\vp_{\vert_{\Dcris(T_{f,g}(-k_f-k_g-2))}}$ with respect to  $\{v_i'\}_{
i=1}^4$ equals
\[
A_{f,g,0}\cdot\begin{pmatrix}
p^{k_f+k_g+2}&&&\\
&p^{k_f+1}&&\\
&&p^{k_g+1}&\\
&&&1
\end{pmatrix},
\]
then matrix of $\vp_{\vert_{\NN(T_{f,g}(-k_f-k_g-2))}}$ with respect to $\{n_i'\}_{
i=1}^4$  is given by
\[
A_{f,g,0}\cdot\begin{pmatrix}
(\xi q)^{k_f+k_g+2}&&&\\
&(\xi q)^{k_f+1}&&\\
&&(\xi q)^{k_g+1}&\\
&&&1
\end{pmatrix}.
\]
We  then obtain the matrix of $\vp|_{\NN(T_{f,g})}$ with respect to the basis $\{n_i\}_{
i=1}^4$ using the identity $\vp(\pi)=q\pi$.
\end{proof}}

\begin{defn}
We write $M_{f,g}$ for the resulting logarithmic matrix with respect to the basis $\{n_1,n_2,n_3,n_4\}$, defined as in \cite[\S4.2]{BLLV}. In more explicit terms,\index{Coleman maps! The matrix $M_{f,g}$}
$$M_{f,g}:=\fM^{-1}\left((1+\pi)A_{f,g}\vp(M)\right) \in {\rm Mat}_{4\times 4}(\cH_{k_f+k_g+1}(\Gamma_1)),$$ 
where $M$ is the change of basis matrix as given in (4.2.2) of op. cit. {and $\mathfrak{m}$ is the Mellin transform}.
\end{defn}

Concretely, $M_{f,g}$ is the change of basis matrix satisfying
\[
\begin{pmatrix}
(1+\pi)\vp(n_1)&(1+\pi)\vp(n_2)&(1+\pi)\vp(n_3)&(1+\pi)\vp(n_4)
\end{pmatrix}
=\begin{pmatrix}
v_1&v_2&v_3&v_4
\end{pmatrix}M_{f,g}.
\]

\begin{defn}\label{defn:Mg}
Let $A_g$ be the matrix of $\vp$ with respect to the basis $\{\ogs,\vp(\ogs)\}$, which we  factor as
\[
A_g=A_{g,0}\begin{pmatrix}
1&\\
&1/p^{k_g+1}
\end{pmatrix},
\]
where $A_{g,0}\in \GL_2(\cO)$.  We write $M_g \in {\rm Mat}_{2\times 2}(\cH_{k_g+1,1})$ for the resulting $2\times 2$ logarithmic matrix,  given as in \cite[\S2]{BFSuper} (note that we may define such a matrix as long as $p>k_g+1$).

Similarly, let \[
A_{f}=\begin{pmatrix}
\alpha_f^{-1}&0\\
\delta_f&\beta_f^{-1}
\end{pmatrix}=A_{f,0}\begin{pmatrix}
1&\\
&1/p^{k_f+1}
\end{pmatrix}
\]
be the matrix of $\vp$ with respect to $\{\omega_{f^\star},\etaf\}$, where $\delta_f=\varpi^d$ or $0$ depending on whether $V_f^*$ is locally split at $p$ or not (as given in Remark~\ref{rk:f-basis}), and $A_{f,0}=\begin{pmatrix}
\alpha_f^{-1}&0\\
\delta_f&p^{k_f+1}\beta_f^{-1}
\end{pmatrix}\in\GL_2(\cO)$.
\end{defn}
We now describe the relation between the logarithmic matrices $M_{f,g}$ and $M_g$.

\begin{proposition}\label{prop:semiordlog}
There exist elements $u_f\in \Lambda_\cO(\Gamma_1)^\times$ and $\ell_f\in \cH_{k_f+1}(\Gamma_1)$ such that
\[
M_{f,g}=\left(
\begin{array}{c|c}
    u_f M_g & 0 \\\hline
     * &  \ell_f \Tw_{k_f+1}M_g
\end{array}
\right).
\]
Moreover, $\ell_f=\frac{\log_{p,k_f+1}}{\delta_{k_f+1}}$ up to {multiplication by} a unit of {$\Lambda_\cO(\Gamma_1)$}.
\end{proposition}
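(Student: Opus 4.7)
The approach is to leverage the lower-triangular shape of $A_f$ in the basis $\{\ofs,\etaf\}$, recorded in the remark following Corollary~\ref{cor:e-vec-cong-no}, which is a direct consequence of the ordinarity of $f$ at $p$. Taking Kronecker products with $A_g$, the matrix of $\vp$ on $\Dcris(T_{f,g})$ in the ordered basis $\{v_1,v_2,v_3,v_4\}$ acquires the block lower-triangular form
\[
A_{f,g} = \begin{pmatrix} \alpha_f^{-1} A_g & 0 \\ -A_g & \beta_f^{-1} A_g \end{pmatrix},
\]
and the same structure passes to the tensor factorization $A_{f,g,0} = A_{f,0}\otimes A_{g,0}$. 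The first structural step is to verify that Berger's iterative construction \cite[Proposition~V.2.3]{berger04} can be carried out compatibly with this block decomposition: the $\vp$-stable $\cO$-line $\cO\cdot\etaf\subset \Dcris(R_f^*)$ lifts (as in the proof of Proposition~\ref{prop:ordbasis}) to a rank-one sub-Wach-module of $\NN(R_f^*)$, and tensoring with $\NN(R_g^*)$ produces a rank-two sub-Wach-module of $\NN(T_{f,g})$ which contains $n_3,n_4$. Consequently, $P_{f,g}$ has the block lower-triangular shape $\left(\begin{smallmatrix} P_{f,g}^{(1,1)} & 0 \\ * & P_{f,g}^{(2,2)}\end{smallmatrix}\right)$.

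A direct computation using Lemma~\ref{lem:matrices} and $A_{f,g,0}=A_{f,0}\otimes A_{g,0}$ then identifies
\[
P_{f,g}^{(1,1)} = \alpha_f^{-1}\,\xi^{k_f+1}\, P_g, \qquad P_{f,g}^{(2,2)} = \bigl(p^{k_f+1}\beta_f^{-1}\bigr)\, q^{-(k_f+1)}\, P_g,
\]
where $P_g = A_{g,0}\cdot\mathrm{diag}(\xi^{k_g+1},\,q^{-(k_g+1)})$ is the Wach matrix of $\NN(R_g^*)$ (cf.\ Definition~\ref{defn:Mg}). Note that $p^{k_f+1}\beta_f^{-1} = \alpha_f\varepsilon_f(p)^{-1}\in\cO^\times$, via the Hecke relation $\alpha_f\beta_f = \varepsilon_f(p)p^{k_f+1}$. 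Passing next to the logarithmic matrix via the formalism of \cite[\S4.2]{BLLV} (which is built iteratively from $\vp$-translates and hence preserves block lower-triangularity), one obtains $M_{f,g}$ with diagonal blocks given by the one-variable logarithmic matrices attached to $P_{f,g}^{(1,1)}$ and $P_{f,g}^{(2,2)}$. Since $\xi\in(\AA_L^+)^\times$, the Mellin transform of $\alpha_f^{-1}\xi^{k_f+1}$ is a unit $u_f\in\Lambda_\cO(\Gamma_1)^\times$, so $M_{f,g}^{(1,1)}= u_f\,M_g$. For the bottom block, the extra factor $q^{-(k_f+1)}=(\pi/\vp(\pi))^{k_f+1}$ shifts the $\vp$-data of $P_g$ by $\chi_\cyc^{k_f+1}$, which after Mellin transform corresponds to applying $\Tw_{k_f+1}$ to $M_g$, while simultaneously contributing a scalar logarithmic factor $\ell_f\in\cH_{k_f+1}(\Gamma_1)$; hence $M_{f,g}^{(2,2)} = \ell_f\cdot\Tw_{k_f+1}M_g$.

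The main technical task is then to identify $\ell_f$ with $\log_{p,k_f+1}/\delta_{k_f+1}$ up to a unit of $\Lambda_\cO(\Gamma_1)$. The essential input is the classical Perrin-Riou computation, which identifies the Mellin transform of the infinite product $\prod_{i\ge 0}\vp^i(q^{-1})$ with $\log_p/X$ modulo units of $\Lambda_\cO(\Gamma_1)$ (this is the prototype case of the one-dimensional cyclotomic logarithmic factor; compare the analogous calculations in \cite[\S2]{BFSuper}). Iterating this identification to extract the $(k_f+1)$-th power and carefully tracking how the operator $\Tw$ interacts with the definitions
\[
\log_{p,k_f+1}=\prod_{i=0}^{k_f}\Tw^{-i}(\log_p), \qquad \delta_{k_f+1}=\prod_{i=0}^{k_f}\Tw^{-i}(X)
\]
from \S\ref{sec:Iw} yields the claimed formula. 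The main obstacle is the careful bookkeeping of these cumulative twist factors against the logarithmic contribution produced by $q^{-(k_f+1)}$; once this accounting is carried out, the proof is complete.
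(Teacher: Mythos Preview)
Your approach is essentially the paper's: exploit the block lower-triangular shape of $A_{f,g}$ coming from ordinarity of $f$, identify the diagonal blocks of $P_{f,g}$, and track the logarithmic-matrix construction through to the diagonal blocks of $M_{f,g}$. One remark: the sub-Wach-module argument you present as the ``first structural step'' is unnecessary and not obviously justified as stated---Berger's iterative procedure applied directly to $T_{f,g}$ need not factor as a tensor product of the Wach module bases for $R_f^*$ and $R_g^*$ (the filtration on $\Dcris(T_{f,g})$ is not the tensor filtration), so it is not clear a~priori that $n_3,n_4$ span a sub-Wach-module. Fortunately this is moot: block lower-triangularity of $P_{f,g}$ is immediate from Lemma~\ref{lem:matrices}, since $A_{f,g,0}$ is block lower-triangular and one multiplies by a diagonal matrix. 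The paper proceeds directly this way, then makes the $\ell_f$ identification precise by writing out the finite product $A_{f,g}^{n+1}\vp^n(P_{f,g}^{-1})\cdots\vp(P_{f,g}^{-1})$ explicitly and invoking the Mellin-transform formulae of \cite[Theorem~5.4]{LLZ0} and \cite[Theorem~2.1]{LLZ3} to handle the $\vp^i(q^{k_f+1})$ factors before passing to the limit.
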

\begin{proof}
{Thanks to the choice of the basis $\{v_1,v_2,v_3,v_4\}$, the matrix $A_{f,g}$ is of the form
$\left(\begin{array}{c|c}
     \alpha_f^{-1}A_{g}& 0 \\\hline
   \delta_fA_g & \beta_f^{-1}A_g
\end{array}
\right)$.
Consequently, 
\[
A_{f,g,0}=\left(\begin{array}{c|c}
     \alpha_f^{-1}A_{g,0}& 0 \\\hline
   \delta_fA_{g,0} & p^{k_f+1}\beta_f^{-1}A_{g,0}
\end{array}
\right),
\]
which in turn shows
\begin{align*}
P_{f,g}&=A_{f,g,0}\begin{pmatrix}
\xi^{k_f+k_g+2}\\
&\xi^{k_f+1}/q^{k_g+1}\\
&&\xi^{k_g+1}/q^{k_f+1}\\
&&&1/q^{k_f+k_g+2}
\end{pmatrix}\\\\
&=\left(\begin{array}{c|c}
   \alpha_f^{-1}\xi^{k_f+1} P_g & 0 \\\\\hline\\
    \delta_f\xi^{k_f+1} P_g  &p^{k_f+1}(\beta_fq^{k_f+1})^{-1}P_g
\end{array}\right),
\end{align*}
where $P_g$ is the matrix of {$\vp_{\vert_{\NN(R_g)}}$} with respect to the Wach-module basis of {$\NN(R_g)$} obtained by lifting the basis $\{\omega_g,\vp(\omega_g)\}$ via \cite[Proof of Proposition~V.2.3]{berger04}.
We therefore have
\[
P_{f,g}^{-1}=\left(\begin{array}{c|c}
   \alpha_f\xi^{-k_f-1} P_g^{-1} & 0 \\\\\hline\\
    \delta_f\varepsilon_f(p) q^{k_f+1}P_g^{-1}  &p^{-k_f-1}\beta_fq^{k_f+1}P_g^{-1}
\end{array}\right),
\]
which yields
\[
A_{f,g}^{n+1}\vp^n(P_{f,g}^{-1})\cdots \vp(P_{f,g}^{-1})=\left(\begin{array}{c|c}
   u_{f,n}\Xi_n & 0 \\\\\hline\\
    v_{f,n}\Xi_n  & \beta_f^{-1}\frac{\vp(q^{k_f+1})}{p^{k_f+1}}\cdots\frac{\vp^n(q^{k_f+1})}{p^{k_f+1}}\Xi_n 
\end{array}\right),
\]
where $u_{f,n}$ is a unit, $v_{f,n}$ is some linear combination of $\frac{\vp(q^{k_f+1})}{p^{k_f+1}},\ldots,\frac{\vp^n(q^{k_f+1})}{p^{k_f+1}}$ and $\Xi_n$ is given by the product $A_g^{n+1}\vp^n(P_g^{-1}) \cdots \vp(P_g^{-1}) $. }

{In what follows, we write $F\sim G$ if $F$ and $G$ are two elements of $\cH(\Gamma_1)$ such that $F=uG$ for some $u\in \Lambda_\cO(\Gamma_1)^\times$.} By  \cite[Theorem~5.4]{LLZ0} and \cite[Theorem~2.1]{LLZ3},
\begin{align*}
    &\fM^{-1}\left((1+\pi)\left(\prod_{i=1}^n\vp^i(q^{k_f+1})\right)\Xi_n\right)\\
    \sim& A_g^{n+1}\prod_{i=1}^n\fM^{-1}\left((1+\pi)\vp^{i}(q^{k_f+1})\vp^i(P_g^{-1})\right)\\
\sim &A_g^{n+1}\prod_{i=1}^n\left(\prod_{j=0}^{k_f}\Tw_j\Phi_i\right)\Tw_{k_f+1}\fM^{-1}\left((1+\pi)\vp^i(P_g^{-1})\right)\\
\sim& \left(\prod_{i=1}^n\prod_{j=0}^{k_f}\Tw_j\Phi_i\right)\Tw_{k_f+1}\fM^{-1}\left((1+\pi)A_g^{n+1}\prod_{i=1}^n\vp^i(P_g^{-1})\right)
\end{align*}
since the entries of $\vp^i(P_g^{-1})$ are, up to units in $\AA_L^+$, either $\vp^{i}(q^{k_g+1})$ or constants.  The result follows from \cite[(4.2.5)]{BLLV} on passing to limit in $n$. 
\end{proof}

In order to define Coleman maps using the logarithmic matrix $M_{f,g}$ to decompose the Perrin-Riou maps $\cL_{F,f,g}^{(\lambda,\mu)}$, we  introduce the following change of basis matrices, which will allow us to pass between our chosen eigenvector basis $\{v_{\lambda,\mu}\}$ and the integral basis of $\Dcris(T_{f,g})$ studied above.
\begin{defn}\label{defn:Q}
\item[i)]{Let $Q_{f}=\begin{pmatrix}
1&0\\
\frac{\delta_f\alpha_f\beta_f}{\beta_f-\alpha_f}&1
\end{pmatrix}$} be the change of basis matrix from $\{v_{f,\alpha}^*,v_{f,\beta}^*\}$ to $\{\ofs,\etaf\}$, so that we have $Q_{f}^{-1}A_{f}Q_{f}=\begin{pmatrix}
\alpha_f^{-1}&\\&\beta_{f}^{-1}
\end{pmatrix}$.

\item[ii)] We set $
Q_g=\frac{1}{\alpha_g-\beta_g}\begin{pmatrix}
\alpha_g&-\beta_g\\
-\alpha_g\beta_g&\alpha_g\beta_g
\end{pmatrix}$, which is the change of basis matrix from  $\{v_{g,\alpha}^*,v_{g,\beta}^*\}$ to $\{\ogs,\vp(\ogs)\}$, so that we have $Q_g^{-1}A_gQ_g=\begin{pmatrix}
\alpha_g^{-1}&\\&\beta_g^{-1}
\end{pmatrix}$.

\item[iii)] We denote by $Q_{f,g}$ the change of basis matrix from $\{v_{\lambda,\mu}:\lambda,\mu\in\{\alpha,\beta\}\}$ to $\{v_1,v_2,v_3,v_4\}$, so that we have $$Q_{f,g}^{-1}A_{f,g}Q_{f,g}=\begin{pmatrix}
(\alpha_f\alpha_g)^{-1}\\
&(\alpha_f\beta_g)^{-1}\\
&&(\beta_f\alpha_g)^{-1}\\
&&&(\beta_f\beta_g)^{-1}
\end{pmatrix}.$$
\end{defn}

\begin{remark}\label{rk:semiordQ}
Observe that we have
{\[
Q_{f,g}=\left(\begin{array}{c|c}
   Q_g   & 0 \\\hline
    \frac{\delta_f\alpha_f\beta_f}{\alpha_f-\beta_f}Q_g &Q_g
\end{array}\right).
\]}
Combining this observation with Proposition~\ref{prop:semiordlog}, we deduce that\index{Coleman maps! $Q_{f,g}^{-1}M_{f,g}$ (Logarithmic Matrix)}
\[
Q_{f,g}^{-1}M_{f,g}=\left(\begin{array}{c|c}
   u_fQ_g^{-1}M_g   & 0 \\\hline
   * &*
\end{array}\right).
\]
\end{remark}

\begin{proposition}\label{prop:semiordPR}
If $F$ is a finite unramified extension of $\Qp$, there exists a quadruple of bounded Coleman maps\index{Coleman maps! $\col_{F,f,g}^{(\circ,\bullet)}$}
$$\col_{F,f,g}^{(\omega,\#)},\quad \col_{F,f,g}^{(\omega,\flat)},\quad \col_{F,f,g}^{(\eta,\#)},\quad\col_{F,f,g}^{(\eta,\flat)}\quad:\quad \HIw(F(\mu_{p^\infty}),T_{f,g})\lra \Lambda_\cO(\Gamma_\cyc)\otimes \cO_F,$$ which satisfies
\[\begin{pmatrix}
\cL_{F,f,g}^{(\alpha,\alpha)}\\\\\cL_{F,f,g}^{(\alpha,\beta)}\\\\\cL_{F,f,g}^{(\beta,\alpha)}\\\\\cL_{F,f,g}^{(\beta,\beta)}
\end{pmatrix}
=
Q_{f,g}^{-1}M_{f,g}
\begin{pmatrix}
\col_{F,f,g}^{(\omega,\#)}\\\\\col_{F,f,g}^{(\omega,\flat)}\\\\\col_{F,f,g}^{(\eta,\#)}\\\\\col_{F,f,g}^{(\eta,\flat)}
\end{pmatrix}.
\]
In particular,\index{Coleman maps! $Q_g^{-1}M_g$ (Logarithmic Matrix)}
\[\begin{pmatrix}
\cL_{F,f,g}^{(\alpha,\alpha)}\\\\\cL_{F,f,g}^{(\alpha,\beta)}
\end{pmatrix}
=
u_fQ_{g}^{-1}M_{g}
\begin{pmatrix}
\col_{F,f,g}^{(\omega,\#)}\\\\\col_{F,f,g}^{(\omega,\flat)}
\end{pmatrix}.
\]
\end{proposition}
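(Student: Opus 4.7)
The plan is to transpose the factorization procedure of \cite[\S4.2]{BLLV} to the present semi-ordinary setting, using the logarithmic matrix $M_{f,g}$ constructed above in place of the one from op.\ cit. First, the general machinery of loc.\ cit.\ (which ultimately rests on Berger's description of the Wach module combined with Perrin-Riou's big logarithm) takes the Wach basis $\{n_1,n_2,n_3,n_4\}$ and the $\cO$-basis $\{v_1,v_2,v_3,v_4\}$ of $\Dcris(T_{f,g})$ and produces a quadruple of bounded Coleman maps
\[
\col_{F,f,g}^{(\omega,\#)},\ \col_{F,f,g}^{(\omega,\flat)},\ \col_{F,f,g}^{(\eta,\#)},\ \col_{F,f,g}^{(\eta,\flat)} : \HIw(F(\mu_{p^\infty}),T)\lra \Lambda_\cO(\Gamma_\cyc)\otimes \cO_F\,,
\]
characterized by the property that, writing $\cL_{F,f,g}=\sum_{i=1}^4 \cL_i^v\, v_i$ with each $\cL_i^v \in \cH_{k_f+k_g+2}(\Gamma_\cyc)\otimes F$, one has the identity
\[
(\cL_1^v,\cL_2^v,\cL_3^v,\cL_4^v)^{\mathrm{tr}} \,=\, M_{f,g}\cdot (\col_{F,f,g}^{(\omega,\#)},\col_{F,f,g}^{(\omega,\flat)},\col_{F,f,g}^{(\eta,\#)},\col_{F,f,g}^{(\eta,\flat)})^{\mathrm{tr}}\,.
\]

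To pass from this integral expansion to the eigen-projections, I would then apply the change of basis matrix $Q_{f,g}$ of Definition~\ref{defn:Q}(iii), which by construction intertwines the eigenvector basis $\{v_{\lambda,\mu}\}$ with the integral basis $\{v_i\}$. This yields the identity $(\cL_{F,f,g}^{(\alpha,\alpha)},\cL_{F,f,g}^{(\alpha,\beta)},\cL_{F,f,g}^{(\beta,\alpha)},\cL_{F,f,g}^{(\beta,\beta)})^{\mathrm{tr}} = Q_{f,g}^{-1}(\cL_1^v,\cL_2^v,\cL_3^v,\cL_4^v)^{\mathrm{tr}}$, and composing it with the previous display gives the asserted factorization via $Q_{f,g}^{-1}M_{f,g}$. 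For the ``In particular'' statement, I would then invoke Remark~\ref{rk:semiordQ}: the upper-left $2\times 2$ block of $Q_{f,g}^{-1}M_{f,g}$ equals $u_f\, Q_g^{-1}M_g$ while its upper-right $2\times 2$ block vanishes, so the first two rows of the matrix identity collapse to the claimed factorization involving only $\col_{F,f,g}^{(\omega,\#)}$ and $\col_{F,f,g}^{(\omega,\flat)}$ together with $u_f\, Q_g^{-1}M_g$.

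The main obstacle is the first step: verifying that the four Coleman maps extracted from the Wach module really take values in the bounded ring $\Lambda_\cO(\Gamma_\cyc)\otimes\cO_F$, rather than merely in some $\cH_r(\Gamma_\cyc)$. This is exactly where the Fontaine--Laffaille hypothesis \ref{item_FL}, the slope condition \ref{item_Slo} (automatic in our semi-ordinary setting since $g$ is non-ordinary), the filtered-basis condition \ref{item_Fil} (which dictated our ordering of $\{v_1,v_2,v_3,v_4\}$), and the non-$\theta$-critical hypothesis \ref{item_theta} invoked to secure the $\cO$-basis $\{\omega_{f^\star},\eta_f\}$ of Proposition~\ref{prop:ordbasis} enter together: they are precisely what ensures that the matrix $P_{f,g}$ of Lemma~\ref{lem:matrices} has the correct integrality and $\xi,q$-divisibility properties, making $M_{f,g}$ a valid logarithmic matrix in the sense demanded by the Perrin-Riou--Wach-module factorization theorem of \cite{BLLV}.
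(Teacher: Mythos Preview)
Your proposal is correct and follows essentially the same route as the paper. The paper's own proof is terser: it cites \cite[Theorem~3.5]{LLZ0} (together with the discussion in \cite[\S5.1]{BLLV}) for the existence of the four bounded Coleman maps and the first factorization, and then invokes Remark~\ref{rk:semiordQ} for the ``In particular'' part --- exactly the two steps you lay out, just with \cite[Theorem~3.5]{LLZ0} in place of your reference to \cite[\S4.2]{BLLV} for the core Wach-module factorization machinery.
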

\begin{proof}
The first assertion follows from \cite[Theorem~3.5]{LLZ0}; see also the discussion in \cite[\S5.1]{BLLV}. The second follows from the first, combined with Remark~\ref{rk:semiordQ}.
\end{proof}

We conclude this subsection with the following definition of the semi-local Coleman maps, which we will later use to define local conditions.
\begin{defn}\label{def:semiloc}
Let $\cN$ be the set of positive square-free integers that are coprime to $6pN_fN_g$. For positive integers $m\in\cN$, we let $\QQ(m)/\QQ$ to be the maximal $p$-extension contained in $\QQ(\mu_m)$. For $\circ\in\{\omega,\eta\}$ and $\bullet\in\{\#,\flat\}$, we define the semi-local Coleman maps $\col^{(\circ,\bullet)}_{f,g,m}=\bigoplus_{v|p}\col_{\QQ(m)_v,f,g}^{(\circ,\bullet)}$. When $m=1$, we will simply write $\col_{f,g}^{(\circ,\bullet)}$ in place of $\col_{f,g,1}^{(\circ,\bullet)}$.\index{Coleman maps! $\col_{f,g}^{(\circ,\bullet)}$}\index{Coleman maps! $\col_{f,g,m}^{(\circ,\bullet)}$ (Semi-local Coleman maps)}
\end{defn}

\section{{Coleman maps for a rank-two quotient}}
In this section, we consider the construction of Coleman maps for a rank-two quotient of $T_{f,g}$ under the following weaker version of the hypothesis \ref{item_FL}.
\begin{itemize}
     \item[\mylabel{item_FLg}{{\bf (FL$_{g}$)}}] $R_g^*$ verifies the Fontaine--Laffaille condition $p>k_g+1$.
\end{itemize}

Since $f$ is ordinary at $p$, we may define the following objects:
\begin{defn}
\item[i)]We write $\cF^-R_f^*$ for the unique rank-one quotient  of $R_f^*$, which is an unramified $G_{\Qp}$-representation.
\item[ii)]Let $T_{f,g}^{-\emptyset}$ denote the tensor product $\cF^-R_f^*\otimes R_g^*$ of $G_{\Qp}$-representations.
\end{defn} The advantage of working with the rank-two quotient $T_{f,g}^{-\emptyset}$ is that we do not have to find an integral basis of $\Dcris(R_f^*)$ in order to define our Coleman maps. This  allows us to relax the Fontaine--Laffaille condition \ref{item_FL} to \ref{item_FLg}. Consequently, we may deform these Coleman maps in Hida families, resulting in two-variable version of these maps. We will carry out the construction of these two-variable maps  in  \S\ref{subsec_twovarPRandColeman}.

\begin{remark}
   \item[i)] The Hodge--Tate weights of the rank-two $G_{\Qp}$-representation $T_{f,g}^{-\emptyset}$ are  $0$ and $k_g+1$.
   \item[ii)]$\vp$ acts on $\Dcris(\cF^-R_f^*)$ as multiplication-by-$\alpha_f^{-1}$ and we may identify $\Dcris(\cF^-R_f^*)$ with $\Dcris(R_f^*)/\Dcris(R_f^*)^{\vp=\beta_f^{-1}}$. In particular, the vector $v_{f,\alpha}^*$ in Definition~\ref{def_eigenvectors} gives an $\cO$-basis of $\Dcris(\cF^-R_f^*)$.
   \item[iii)]  If we write $V_{f,g}^{-\emptyset}=\cF^-V_f^*\otimes V_g^*$, where $\cF^-V_f^*=\cF^-R_f^*\otimes\Qp$, it can be seen that $v_{\alpha,\alpha}$ and $v_{\alpha,\beta}$ form a $\vp$-eigenbasis  of $\Dcris(V_{f,g}^{-\emptyset})$. 
   \item[iv)] For $\mu\in\{\alpha,\beta\}$, the projections of the Perrin-Riou map $\cL_{F,f,g}^{(\alpha,\mu)}$  defined in \eqref{eq:projectingPR} factor through $\HIw(F(\mu_{p^\infty}),T_{f,g}^{-\emptyset})$. By an abuse of notation, we will denote the maps on this group by the same symbols.
\end{remark}

Since $g$ is non-ordinary at $p$, the analogous condition of \ref{item_Slo} holds for $T_{f,g}^{-\emptyset}$, meaning that the slope of $\vp$ on $\Dcris(T_{f,g}^{-\emptyset})$ does not attain $-k_g-1$ and $0$ simultaneously. Furthermore, we have the filtered basis formed by $v_{f,\alpha}^*\otimes \omega_{g^\star}$ and $v_{f,\alpha}^*\otimes\vp( \omega_{g^\star})$. 
Together with \ref{item_FLg}, we may apply \cite[proof of Proposition~V.2.3]{berger04} to lift this basis to a basis of the Wach module $\NN(T_{f,g}^{-\emptyset})$. Consequently, we have the following analogue of Proposition~\ref{prop:semiordPR}: 
\begin{proposition}
For a finite unramified extension $F/\Qp$, there exist Coleman maps $$\col_{F,f,g}^{(\omega,\#)},\quad \col_{f,g}^{(\omega,\flat)}:\quad \HIw(F(\mu_{p^\infty},T_{f,g}^{-\emptyset})\longrightarrow \Lambda_\cO(\Gamma_\cyc)\otimes\cO_F$$ 
such that
\[\begin{pmatrix}
\cL_{F,f,g}^{(\alpha,\alpha)}\\\\\cL_{F,f,g}^{(\alpha,\beta)}
\end{pmatrix}
=
u_fQ_{g}^{-1}M_{g}
\begin{pmatrix}
\col_{F,f,g}^{(\omega,\#)}\\\\\col_{F,f,g}^{(\omega,\flat)}
\end{pmatrix},
\]
where $u_f\in\Lambda_\cO(\Gamma_1)^\times$. 
\end{proposition} 
\begin{remark}
    The Coleman maps defined here coincide with two of the four Coleman maps  defined in Proposition~\ref{prop:semiordPR} after composing with the natural projection map
    \[
    \HIw(F(\mu_{p^\infty},T_{f,g})\longrightarrow\HIw(F(\mu_{p^\infty},T_{f,g}^{-\emptyset}).
    \]
    By an abuse of notation, we shall use the same symbols to denote these maps.
\end{remark}

As in Definition~\ref{def:semiloc}, for positive integers $m\in\cN$ and $\bullet\in\{\#,\flat\}$, we have the semi-local Coleman maps $\col^{(\omega,\bullet)}_{f,g,m}=\bigoplus_{v|p}\col_{\QQ(m)_v,f,g}^{(\omega,\bullet)}$ for $m\in \cN$.

\section{Two-variable Perrin-Riou maps and Coleman maps}
\label{subsec_twovarPRandColeman}
Recall from the introduction that $\f$ denotes  a Hida family passing through $f_\alpha$  (we remark that our Hida families are what \cite{KLZ2} calls branches, as explained in Section 7.5 of op. cit.). 
To the best of the authors' knowledge,  a theory of Wach modules for families, which would allow one to decompose a three-variable Perrin-Riou map (where both $f$ and $g$ vary in families), is currently unavailable. {We shall content ourselves with studying  two-variable $\Lambda_\f$-adic Perrin-Riou maps interpolating $\cL_{F,f,g}^{(\alpha,\alpha)}$ and $\cL_{F,f,g}^{(\alpha,\beta)}$ as $f_\alpha$ varies. We will show that these maps can be decomposed into bounded Coleman maps, deforming $\col_{F,f,g}^{(\omega,\#)}$ and $\col_{F,f,g}^{(\omega,\flat)}$ under \ref{item_FLg}.
}

%\begin{remark}
%One advantage of working with only two of the Perrin-Riou maps $\cL_{F,f,g}^{(\alpha,\alpha)}$ and $\cL_{F,f,g}^{(\alpha,\beta)}$ and two of the Coleman maps $\col_{F,f,g}^{(\omega,\#)}$ and $\col_{F,f,g}^{(\omega,\flat)}$  is that we only need an integral basis for the Dieudonn\'e module of a rank-two quotient of $T_{f,g}$, rather than the whole of $T_{f,g}$. In particular, not only can we weaken  hypothesis \ref{item_FL} to \ref{item_FLg}, but  we may also discard the hypothesis \ref{item_theta}.
%\end{remark}

We now introduce a number of objects attached to the family $\f$. Throughout this subsection, $g$ is a fixed non-$p$-ordinary cuspidal eigenform as in the introduction.

\begin{defn}
\item[i)]The prime ideal of $\Lambda_\f$ that corresponds to the specialization of $\f$ to $f_\alpha$ (as well as the specialization itself) will be denoted by $\kappa_0$.
\item[ii)] We write $\alpha_\f$ for the $U_p$-eigenvalue of $\f$.\index{Hida Theory! $\alpha_\f, \alpha_\kappa$} Given a specialization $\kappa$ of $\Lambda_\f$, we write $\f_\kappa$ and $\alpha_\kappa$ for the respective specializations of $\f$ and $\alpha_\f$ at $\kappa$.

\item[iii)] Let $R_{\f}^*$ be the $\Lambda_\f$-adic $G_\QQ$-representation attached to $\f$ as  given in \cite[Definition~7.2.5]{KLZ2}  and let {$\cF^- R_\f^*$}\index{Galois representations!  $\cF^- R_\f^*$} denote the free rank-one $\Lambda_\f$-module given as in Theorem~7.2.8 of op. cit. 
\item[iv)] We put ${T_{\f,g}}:=R_\f^*\otimes R_g^*$\index{Galois representations! $T_{\f,g}$} and $T_{\f,g}^{-\emptyset} :=\cF^-R_\f^*\otimes R_g^*$\index{Galois representations! $T_{\f,g}^{-\emptyset}$}.
\end{defn}

{\begin{remark}
\label{rem_pralphasupstar}
\item[i)] As explained in \cite[Theorem~7.2.3(iii)]{KLZ2}, the $G_{\Qp}$-representation $\cF^- R_\f^*\vert_{G_{\QQ_p}}$ is unramified.
\item[ii)] The specialization of $R_\f^*$ at $\kappa_0$ coincides with the natural $\cO$-lattice $R_{f_\alpha}^*$ contained in Deligne's representation attached to the $p$-stabilized eigenform $f_\alpha$, which is realized as the $f_\alpha$-isotypic Hecke eigenspace inside $H^1_{\textup{\'et}}(Y_1(pN_f)_{\overline{\QQ}},{\rm TSym}^{k_f}(\mathscr{H}_{\ZZ_p})(1))$. We let $R_{f_\alpha}$\index{Galois representations! $R_{f_\alpha}$} denote the lattice we have introduced in Definition~\ref{def_eigenvectors}(i). This is a Galois-stable lattice contained in the $f_\alpha$-isotypic Hecke eigenspace
$$H^1_{\textup{\'et}}(Y_1(pN_f)_{\overline{\QQ}},{\rm Sym}^{k_f}(\mathscr{H}_{\QQ_p}^\vee))[f_\alpha]\subset H^1_{\textup{\'et}}(Y_1(pN_f)_{\overline{\QQ}},{\rm Sym}^{k_f}(\mathscr{H}_{\QQ_p}^\vee))\,.$$ 
\item[iii)] We recall that the lattice $R_f$ is defined in a similar manner, which is realized in the cohomology of $Y_1(N_f)$. The lattices $R_f$ and $R_{f_\alpha}$ are related via a morphism  $(\Pr^\alpha)^*:R_{f}\rightarrow R_{f_\alpha}$, where $(\Pr^\alpha)^*=(\Pr_1)^*-\frac{\beta_f}{p^{k_f+1}}(\Pr_2)^*$ is the map dual to $(\Pr^\alpha)_*:R_{f_\alpha}^*\rightarrow R_f^*$, which was studied in \cite[\S7.3]{KLZ2}. Note that the map $(\Pr^\alpha)^*$ is necessarily injective, since it induces an isomorphism $R_{f}[1/p]\xrightarrow{\sim} R_{f_\alpha}[1/p]$.\index{Galois representations! $(\Pr^\alpha)^*, (\Pr^\alpha)_*$}
\end{remark}}

We now introduce the following $p$-stabilized version of the Perrin-Riou maps for the representation $R_{f_\alpha}^*\otimes R_g^*$, which can be defined under the hypothesis \ref{item_FLg}.
\begin{defn}\label{defn:stabliziedmaps} 
Let $F/\Qp$ a finite unramified extension.
For {$\mu\in\{\alpha,\beta\}$},  we define the $\Lambda_\cO(\Gamma_\cyc)$-morphism
\[
{\cL_{F,f_\alpha,g}^{(\alpha,\mu)}}:\HIw(F(\mu_{p^\infty}),R_{f_\alpha}^*\otimes R_g^*)\longrightarrow {\cH_{\ord_p(\mu_g)}(\Gamma_\cyc)\otimes_{\Zp}F}
\]
as the compositum 
$$\HIw(F(\mu_{p^\infty}),R_{f_\alpha}^*\otimes R_g^*)\xrightarrow{(\Pr^\alpha)_*} \HIw(F(\mu_{p^\infty}),R_{f}^*\otimes R_g^*) \xrightarrow{\cL_{F,f,g}^{(\alpha,\mu)}} \cH_{\ord_p(\mu_g)}(\Gamma_\cyc)\otimes_{\Zp}F$$ 
where the morphism $\cL_{F,f,g}^{(\alpha,\mu)}$ is given in \eqref{eq:projectingPR}.
%\item[ii)]For $\circ\in\{\omega,\eta\}$ and $\bullet\in\{\#,\flat\}$, We define the bounded Coleman maps
%$$\col_{F,f_\alpha,g}^{(\circ,\bullet)}: \HIw(F(\mu_{p^\infty}),R_{f_\alpha}^*\otimes R_g^*)\rightarrow\Lambda_\cO(\Gamma_\cyc)\otimes \cO_F$$
%via the corresponding morphisms defined in Proposition~\ref{prop:semiordPR} similarly.
\end{defn}

{\begin{lemma}
\label{lemma_prsupstar_explicit}
Let $(\Pr^\alpha)^*: R_f \hookrightarrow R_{f_\alpha}$ be the morphism given  as in Remark~\ref{rem_pralphasupstar}. Then $(\Pr^\alpha)^*$ is an isomorphism if at least one of the following conditions holds true: {\rm i)} $k_f>0$, {\rm ii)} $v_p(\alpha_f-\beta_f/p)=0$, {\rm iii)} $\overline{\rho}_f$ is absolutely irreducible. 
\end{lemma}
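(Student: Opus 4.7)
The plan is to exploit the fact that $(\Pr^\alpha)^*\colon R_f\hookrightarrow R_{f_\alpha}$ is injective with finite cokernel (both sides are free $\cO$-modules of rank $2$ becoming isomorphic after inverting $p$), so that the statement reduces to showing the cokernel is trivial. First I would observe that $\beta_f/p^{k_f+1}=\varepsilon_f(p)/\alpha_f$ is a unit of $\cO$ (since $\alpha_f\in\cO^\times$ and $\alpha_f\beta_f=p^{k_f+1}\varepsilon_f(p)$), so that $(\Pr^\alpha)^* = (\Pr_1)^* - (\varepsilon_f(p)/\alpha_f)(\Pr_2)^*$ is a genuinely integral $\cO$-linear combination of the two degeneracy pullbacks; in particular it is well-defined to ask whether it is nonzero modulo a uniformizer $\varpi$.

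For case (iii), I would invoke the Ribet--Mazur lattice uniqueness lemma: absolute irreducibility of $\overline{\rho}_f$ forces any two $G_\QQ$-stable $\cO$-lattices inside the common ambient $L$-representation to be homothetic. Hence $(\Pr^\alpha)^*(R_f)=\varpi^{n}R_{f_\alpha}$ for some $n\ge 0$, and it suffices to produce an element of $R_f$ whose image is not in $\varpi R_{f_\alpha}$. This should follow from the surjectivity of $(\Pr_1)^*$ modulo $\varpi$ on ordinary eigencomponents (which in turn follows from the control theorem realizing $R_{f_\alpha}^*$ as a specialization of Hida theory) together with the observation that $(\varepsilon_f(p)/\alpha_f)(\Pr_2)^*$ factors through the Verschiebung on $q$-expansions and so cannot cancel $(\Pr_1)^*$ on the whole lattice modulo $\varpi$.

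For cases (i) and (ii) I would instead compute the composition $(\Pr^\alpha)_*\circ(\Pr^\alpha)^*\colon R_f\to R_f$, set up using the Poincar\'e duality identification to match variances. Expanding this composition and substituting the standard Hecke-theoretic identities for the degeneracy maps (namely $(\Pr_i)_*(\Pr_i)^* \in \{p,p+1\}$ and $(\Pr_1)_*(\Pr_2)^*=T_p$ on the $f$-isotypic piece), together with $T_p f = (\alpha_f+\beta_f)f$, produces a scalar endomorphism proportional to an explicit unit times $\alpha_f-\beta_f/p$. Condition (ii) is precisely the requirement that this scalar is a unit, in which case the composition is an automorphism of $R_f$ and hence $(\Pr^\alpha)^*$ is surjective. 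Case (i) then reduces to (ii): when $k_f>0$ one has $v_p(\beta_f/p)=k_f>0$, so $v_p(\alpha_f-\beta_f/p)=v_p(\alpha_f)=0$ automatically.

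The main obstacle I expect is the bookkeeping in the composition computation for (i) and (ii): pinning down all the Atkin--Lehner and Weil-pairing normalizations from \cite{KLZ2} so that the resulting eigenvalue comes out exactly to $\alpha_f-\beta_f/p$ (up to a transparent unit), rather than a nearby cousin such as $1-\beta_f^2/(p^{k_f+2}\varepsilon_f(p))$. The irreducibility case is morally simpler but still requires the mod-$\varpi$ check to rule out $n\ge 1$.
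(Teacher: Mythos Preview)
Your proposal is correct and follows the same overall strategy as the paper for all three cases: compose with a suitable ``reverse'' map to get a scalar, then check that scalar is a unit. The paper makes one simplification worth noting for (i)--(ii): rather than computing $(\Pr^\alpha)_*\circ(\Pr^\alpha)^*$ (which requires the Poincar\'e duality matching and all four identities $(\pr_i)_*(\pr_j)^*$), it composes with the single degeneracy pushforward $(\pr_2)_*$ alone. Then only two identities are needed, $(\pr_2)_*(\pr_1)^*=T_p$ and $(\pr_2)_*(\pr_2)^*=p^{k_f}(p+1)$, and the computation lands cleanly on
\[
(\pr_2)_*\circ(\Pr^\alpha)^*=a_p(f)-\frac{\beta_f(p+1)}{p}=\alpha_f-\beta_f/p,
\]
with no normalization ambiguity. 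This exactly dissolves the bookkeeping obstacle you anticipated. For (iii), the paper likewise just cites the dual of \cite[Prop.~4.3.6]{LLZ2}, which is the lattice-homothety argument you sketched.
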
}

\begin{proof}
{In the situation of (i)--(ii), this is essentially \cite[Prop. 7.3.1]{KLZ2} transposed, whereas the case (iii) is \cite[Prop. 4.3.6]{LLZ2} transposed. We go over the details in the setting of (i)--(ii). We shall apply the (``dual'') argument in the proof of \cite[Prop. 7.3.1]{KLZ2} to show that the cokernel of $(\Pr^\alpha)^*$ is annihilated by $\alpha_f-\frac{\beta_f}{p}$. This will suffice to conclude the proof of our lemma when (i) or (ii) holds true.}

{In fact, we shall prove that the map $({\rm pr}_2)_*\circ (\Pr^\alpha)^*: R_{f} \to R_{f}$ is given by multiplication by $\alpha_f-\frac{\beta_f}{p}$. To see that, we directly compute 
\begin{align*}
    ({\rm pr}_2)_*\circ ({\rm Pr}^\alpha)^*&=({\rm pr}_2)_*\circ {\rm pr}_1^*-\frac{\beta_f}{p^{k+1}}({\rm pr}_2)_*\circ {\rm pr}_2^*\\
    &=a_p(f)-\frac{\beta_f(p+1)}{p}=\alpha_f-\beta_f/p
\end{align*}
where the second equality is because $({\rm pr}_2)_*\circ {\rm pr}_1^*=T_p$, where $T_p$ is the Hecke operator at $p$ and $({\rm pr}_2)_*\circ {\rm pr}_2^*=p^{k_f}(p+1)$; whereas the final equality is because $a_p(f)=\alpha_f+\beta_f$. The proof of our lemma is now complete in the situation of (i)--(ii). The case (iii) can be treated following the proof of  \cite[Prop. 4.3.6]{LLZ2} but ``dualizing'' the argument in op. cit. as above. } 
\end{proof}

The $\Lambda_\f$-adic objects we introduce in Definition~\ref{defn_lambdaadicDmodules} below is akin to the constructions of \cite[\S8.2]{KLZ2}.

\begin{defn}
\label{defn_lambdaadicDmodules}
Let $F$ be a finite unramified extension of $\Qp$.
 \item[i)] We define  $\DD(F,\cF^- R_\f^*)=(\cF^- R_\f^*\otimes_{\Zp}\hat\ZZ_p^{\mathrm ur})^{G_F}$, which is a $\Lambda_\f$-module equipped with an operator  $\vp$ induced by the arithmetic Frobenius action on $\ZZ_p^{\mathrm ur}$ and define the $\AA_L^+$-module $\NN(F,\cF^- R_\f^*)=\DD(F,\cF^- R_\f^*)[[\pi]]$, with $\vp$ sending $\pi$ to $(1+\pi)^p-1$. There is a left inverse $\psi$ of $\vp$ induced by the trace operator on $\Zp[[\pi]]\rightarrow\Zp[[\pi]]$.
 \item[ii)] We define  the corresponding objects for $T_{\f,g}^{-\emptyset} $ on setting  $\DD(F,T_{\f,g}^{-\emptyset} )=\DD(F,\cF^- R_\f^*)\otimes_\cO\Dcris(R_g^*) $ and $\NN(F,T_{\f,g}^{-\emptyset} )=\NN(F,\cF^- R_\f^*)\otimes_{\AA_L^+}\NN(R_g^*)$, respectively.
\end{defn}

\begin{remark}\label{rk:explicit}
Explicitly, $\DD(F,\cF^- R_\f^*)=\Lambda_\f(\alpha_\f^{-1})\otimes_{\Zp}\cO_F$, where $\Lambda_\f(\alpha_\f^{-1})$ is the free $\Lambda_\f$-module of rank one on which $\vp$ acts  via the scalar $\alpha_\f^{-1}$.  Consequently, 
\begin{align*}
    \DD(F,T_{\f,g}^{-\emptyset} )&=\Lambda_\f(\alpha_\f^{-1})\otimes_{\cO} \Dcris(F,R_g^*)=\Lambda_\f(\alpha_\f^{-1})\otimes_{\cO} \Dcris(R_g^*)\otimes_{\Zp}\cO_F;\\
    \NN(F,T_{\f,g}^{-\emptyset} )&=\Lambda_\f(\alpha_\f^{-1})\,\widehat{\otimes}_{\cO}\NN(F,R_g^*)=\Lambda_\f(\alpha_\f^{-1})\,\widehat{\otimes}_{\cO}\NN(R_g^*)\otimes_{\Zp}\cO_F.
\end{align*}
\end{remark}

\begin{lemma}\label{lem:Hida-Wach}
There is an isomorphism of $\Lambda_\f(\Gamma_\cyc)$-modules
\[
\HIw(F(\mu_{p_\infty}),T_{\f,g}^{-\emptyset} )\cong \NN(F,T_{\f,g}^{-\emptyset} )^{\psi=1}.
\]
\end{lemma}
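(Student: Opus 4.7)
The plan is to deduce the asserted isomorphism from its classical counterpart for $R_g^*$ and then extend scalars along the unramified $\Lambda_\f$-adic coefficient $\cF^-R_\f^*$. First, since $R_g^*$ is crystalline and satisfies the Fontaine--Laffaille hypothesis~\ref{item_FLg}, Fontaine's theorem---in the form proved by Berger for crystalline representations via Wach modules, combined with the Cherbonnier--Colmez descent to $\psi=1$-invariants---yields a $\Lambda_\cO(\Gamma_\cyc)$-linear isomorphism
\[
\HIw(F(\mu_{p^\infty}),R_g^*)\;\cong\;\NN(F,R_g^*)^{\psi=1},
\]
where $\NN(F,R_g^*)=\cO_F\,\widehat{\otimes}_{\Zp}\NN(R_g^*)$. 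The isomorphism is realised via the Fontaine--Herr complex $[\psi-1]$: a norm-compatible family of cohomology classes corresponds to an element $n\in\NN(F,R_g^*)$ with $\psi(n)=n$.

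Next, I would incorporate the coefficient $\cF^-R_\f^*$. By Remark~\ref{rem_pralphasupstar}(i), $\cF^-R_\f^*$ is a free rank-one $\Lambda_\f$-module with unramified $G_{\QQ_p}$-action; since $F(\mu_{p^\infty})/F$ is totally ramified at $p$, the group $\Gal(F(\mu_{p^\infty})/F)$ acts trivially on $\cF^-R_\f^*$, and the residual $G_F$-action is encoded by multiplication of arithmetic Frobenius by $\alpha_\f^{-1}$. Correspondingly (Remark~\ref{rk:explicit}), the Wach module $\NN(F,\cF^-R_\f^*)=\Lambda_\f(\alpha_\f^{-1})\otimes_{\Zp}\cO_F[[\pi]]$ is the free $\AA_L^+$-module of rank one twisted by $\alpha_\f^{-1}$ on $\vp$, with trivial $\Gamma_\cyc$-action. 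Compatibility of Wach modules with tensor products in the crystalline category, together with flatness of $\cO_F$ over $\cO$, then gives
\[
\NN(F,T_{\f,g}^{-\emptyset})\;=\;\Lambda_\f(\alpha_\f^{-1})\,\widehat{\otimes}_{\cO}\,\NN(F,R_g^*),
\]
on which $\psi$ acts as $\alpha_\f\otimes\psi_{R_g^*}$.

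The proof then proceeds by tensoring the Fontaine--Herr quasi-isomorphism above with the free rank-one twist $\Lambda_\f(\alpha_\f^{-1})$ over $\cO$. On the Wach-module side, exactness of the completed tensor product against a flat $\Lambda_\f$-module yields
\[
\NN(F,T_{\f,g}^{-\emptyset})^{\psi=1}\;=\;\Lambda_\f(\alpha_\f^{-1})\,\widehat{\otimes}_{\cO}\,\NN(F,R_g^*)^{\psi=1}.
\]
On the Iwasawa-cohomology side, trivial inertia action on $\cF^-R_\f^*$ together with the Frobenius twist identifies $H^1(F(\mu_{p^n}),T_{\f,g}^{-\emptyset})$ with the corresponding unramified $\Lambda_\f(\alpha_\f^{-1})$-twist of $H^1(F(\mu_{p^n}),R_g^*)$, in a manner compatible with corestriction in the tower. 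Taking inverse limits yields
\[
\HIw(F(\mu_{p^\infty}),T_{\f,g}^{-\emptyset})\;\cong\;\Lambda_\f(\alpha_\f^{-1})\,\widehat{\otimes}_{\cO}\HIw(F(\mu_{p^\infty}),R_g^*),
\]
and combining this with the base-case isomorphism produces the claim.

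The principal obstacle is making the last compatibility rigorous at the level of inverse limits with $\Lambda_\f$-coefficients: one must verify that the Iwasawa-cohomology functor genuinely commutes with the unramified $\Lambda_\f$-adic twist. The cleanest way is to work throughout with the Fontaine--Herr complex with $\Lambda_\f$-coefficients (which is perfect, so tensor products commute with $H^1$), entirely analogously to the constructions of~\cite[\S8.2]{KLZ2}; alternatively, one may specialise at the arithmetic primes $\kappa\in\Spec\Lambda_\f$, apply the classical Fontaine--Berger isomorphism to $\f_\kappa\otimes g$, and combine these with a control/base-change theorem to descend the resulting compatible system to a $\Lambda_\f$-adic isomorphism.
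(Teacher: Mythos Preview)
Your proposal contains a genuine gap. The two displayed tensor decompositions
\[
\NN(F,T_{\f,g}^{-\emptyset})^{\psi=1}=\Lambda_\f(\alpha_\f^{-1})\,\widehat{\otimes}_{\cO}\,\NN(F,R_g^*)^{\psi=1}
\]
and
\[
\HIw(F(\mu_{p^\infty}),T_{\f,g}^{-\emptyset})\cong\Lambda_\f(\alpha_\f^{-1})\,\widehat{\otimes}_{\cO}\HIw(F(\mu_{p^\infty}),R_g^*)
\]
are not correct as stated. You correctly observe that on the tensor product the $\psi$-operator is the \emph{twisted} one, acting as $\alpha_\f\cdot(1\otimes\psi)$; but then flatness of $\Lambda_\f(\alpha_\f^{-1})\widehat{\otimes}_\cO(-)$ only commutes with the kernel of $(1\otimes\psi)-1$, not with the kernel of $\alpha_\f\cdot(1\otimes\psi)-1$. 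These are genuinely different conditions since $\alpha_\f\ne 1$ in $\Lambda_\f^\times$. The same obstruction arises on the cohomology side: the absolute Galois group of $F(\mu_{p^n})$ still contains a Frobenius lift, which acts non-trivially on $\cF^-R_\f^*$, so $H^1(F(\mu_{p^n}),\cF^-R_\f^*\otimes R_g^*)$ is not a naive scalar extension of $H^1(F(\mu_{p^n}),R_g^*)$. ``Exactness of the completed tensor product'' does not repair this, because the map whose kernel you are taking has itself changed.

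The paper proceeds by a different reduction which avoids untangling the twist: since both sides commute with inverse limits, one replaces $\cF^-R_\f^*$ by $\cF^-R_\f^*/I$ for $I$ running over ideals of cofinite length, i.e.\ by a finitely generated $\Zp$-module $M$ with unramified $G_{\Qp}$-action. Then $M\otimes R_g^*$ is an honest crystalline $\Zp$-representation and Berger's theorem \cite[Th\'eor\`eme~A.3]{berger03} applies directly to the whole tensor product, with no twist to separate. This is also where a second issue you omit enters: Berger's result gives an isomorphism with $(\pi^{-1}\NN)^{\psi=1}$, not $\NN^{\psi=1}$, and one must argue that the factor $\pi^{-1}$ can be dropped. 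The paper does this by noting that $M\otimes R_g^*$ has non-negative Hodge--Tate weights and, since $R_g^*$ is non-ordinary, admits no subquotient isomorphic to the trivial representation. Your closing remarks (Fontaine--Herr with $\Lambda_\f$-coefficients, or specialise and patch) gesture in the right direction, but the inverse-limit reduction to finitely generated $\Zp$-modules is precisely the clean execution of that idea.
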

\begin{proof}
Our proof follows closely part of the  proof of  \cite[Theorem~8.2.3]{KLZ2}. Since all the modules above commute with inverse limits, we may replace $\cF^- R_\f^*$ by a finitely generated $\Zp$-module $M$ equipped with an unramified action by $G_{\Qp}$.

Since $\NN(F,M\otimes R_f^*)$ is now the usual Wach module of $M\otimes R_f^*$ over $F$,  \cite[Theorem~A.3]{berger03} tells us that 
\[
\HIw(F(\mu_{p^\infty}),M\otimes R_g^*)\cong\left(\pi^{-1}\NN(F,M\otimes R_g^*)\right)^{\psi=1}.
\]
As the Hodge--Tate weights of $M\otimes R_g^*$ are non-negative and $R_g^*$ is non-ordinary, it follows  that $M\otimes R_g^*$ does not  admit a subquotient isomorphic to the trivial representation. Thus, we may  eliminate $\pi^{-1}$ from $\pi^{-1}\NN(M\otimes R_g^*)$ in the isomorphism above, proving the lemma.
\end{proof}
Equipped with the isomorphism given by Lemma~\ref{lem:Hida-Wach}, we may mimic the strategy of \cite{LZ0} and \cite{KLZ2} to define a two-variable Perrin-Riou map on $\HIw(F(\mu_{p_\infty}),T_{\f,g}^{-\emptyset} )$. This is done in the proof of Theorem~\ref{thm:PRHida} below. Pairing with appropriate $\vp$-eigenvectors, which we introduce below, we may deform the maps $\cL_{F,f_\alpha,g}^{(\alpha,\mu)}$ as $f_\alpha$ varies in a Hida family.
\begin{defn}
\item[i)] Let $\mathscr{C}_{f_\alpha}$\index{Congruence number $\mathscr{C}_{f_\alpha}$} denote the cohomological congruence number associated to the eigenform $f_\alpha$. We refer the readers to \cite[(0.3)]{Hida81Congruences} for its definition; see also \cite[Corollary 4.19]{DDT} for an elaboration in the particular case $k_f=0$.
 \item[ii)]Let $I_\f$ be the congruence ideal\index{Hida Theory! Congruence ideal $I_\f$} of $\f$ and let $H_\f\in I_\f$ denote Hida's congruence divisor. We remark that, since we assume that $\f$ is residually non-Eisenstein and $p$-distinguished, the Hecke algebra of $\f$ is Gorenstein by \cite[Corollary~2 of Theorem~2.1]{wiles95}. Thence, $I_\f$ is indeed a principal ideal thanks to \cite[Theorem~0.1]{Hida88AJM}.\index{Hida Theory! Congruence divisor $H_\f$} 
 \item[iii)] Let $\eta_\f:\DD(\Qp,\cF^- R_\f^*)\rightarrow \frac{1}{H_{\f}}\LL_\f$\index{Galois representations! $\eta_\f, \eta_\kappa$} be the $\Lambda_\f$-morphism given by \cite[Proposition~10.1.1(2)]{KLZ2}. Its specialization under $\kappa$ will be denoted by $\eta_\kappa$, which we identify as an element of $\Dcris(V_{\f_\kappa})$ via duality.
\end{defn}

\begin{lemma}\label{lem:e-vec-cong-no}
Let $\eta_f'\in \Dcris({V_{f}})/\Fil^1$ be the unique vector that verifies
\begin{equation}
    \label{eqn_poincare_1}
\langle\eta_f',{\omega_{f}}\rangle_{ Y_1(N_f)}=G(\varepsilon_f^{-1}),
\end{equation}
where $\langle-,-\rangle_{ Y_1(N_f)}$ is a pairing induced by Poincar\'e duality (see the discussion in \cite{KLZ1} just before the statement of Proposition 6.1.3) and $G(\varepsilon_f^{-1})$ denotes the Gauss sum for $\varepsilon_{f}^{-1}$.  Then,
\[
    v_{f,\alpha}\equiv\sC_{f_\alpha}\eta_f'\mod \Fil^1\Dcris({V_{f}})
\]
up to $p$-adic units.
\end{lemma}
\begin{proof}
This fact is implicitly used in \cite[\S11.6]{KLZ2}; we review its proof for completeness\footnote{We thank D. Loeffler for a helpful exchange concerning this point.}.

 Given an $\cO$-basis $\{v_0,\omega_{f}\}$ of $\Dcris(R_{f})$,  the cohomological congruence number $\sC_f$ is given (up to a $p$-adic unit) via the following chain of identities:
\begin{align}
    \label{eqn_poincare_2}
    \notag\langle v_0,\omega_{f^\star}\rangle_{Y_1(N_f)}\cdot\cO &=\langle v_0,W_{N_f}\omega_{f}\rangle_{Y_1(N_f)}\cdot\cO\\
    &=\mathscr{C}_f\cdot \cO\,.
\end{align}
Here, $W_{N_f}$ is the Atkin--Lehner operator and the first equality holds since $W_{N_f}f=\lambda_{N_f}f^*$ and the Atkin--Lehner pseudo-eigenvalue $\lambda_{N_f}$ is a $p$-adic unit, whereas the second equality is one of the definitions of the cohomological congruence number $\mathscr{C}_f$. Note that $G(\varepsilon_f^{-1})$ is a $p$-adic unit since $p$ does not divide the conductor of $\varepsilon_f^{-1}$.  
 Combining \eqref{eqn_poincare_1} and \eqref{eqn_poincare_2}, it follows that $ v_0\equiv\sC_f\eta_f'\mod \Fil^1\Dcris(V_h)$ up to $p$-adic units.
 
 Since ${\rm span}_{\cO}\{\omega_f\}={\rm Fil}^1\Dcris(R_{f})$, we have reduced to showing that
 $$\Dcris(R_{f})=\Dcris(R_{f})^{\varphi=\alpha}\,+\,{\rm Fil}^1\Dcris(R_{f})\,.$$ 
 Since $({\rm Pr}^\alpha)^*$ induces an isomorphism $R_{f}\xrightarrow{\sim} R_{f_\alpha}$ and $({\rm Pr}^\alpha)^*(\omega_{f})=\omega_{f_\alpha}$ (see \cite[Proposition 10.1.1]{KLZ2}), this is equivalent to the requirement that 
 $$\Dcris(R_{f_\alpha})=\Dcris(R_{f_\alpha})^{\varphi=\alpha}\,+\,{\rm Fil}^1\Dcris(R_{f_\alpha})\,.$$
 In alternative wording, it suffices to prove that the natural map
 \begin{equation}
     \label{eqn_Ohta_surjection}
     {\rm Fil}^1\Dcris(R_{f_\alpha})\lra \Dcris(R_{f_\alpha})/\Dcris(R_{f_\alpha})^{\varphi=\alpha}
 \end{equation}
 is surjective. On taking $k=k_f$ and $r=1$ in \cite[Theorem 9.5.2]{KLZ2} (the higher weight generalization of Ohta's integral Eichler--Shimura map) and passing to the $f_\alpha$-isotypic direct summand in both sides of the lower isomorphism (modulo the Eisenstein subspace) in the diagram in \cite[Theorem 9.5.2]{KLZ2}, we deduce that the map \eqref{eqn_Ohta_surjection} is indeed an isomorphism (since $f_\alpha$ is cuspidal) . This concludes our proof.
\end{proof}
\begin{theorem}\label{thm:PRHida}
 For $\mu\in\{\alpha,\beta\}$ and a finite unramified extension $F/\Qp$,  there exists a $\Lambda_\f(\Gamma_\cyc)$-morphism\index{Perrin-Riou maps! $\cL_{F,\f,g, \mu}$} 
\[
\cL_{F,\f,g, \mu}:\HIw(F(\mu_{p^\infty}),T_{\f,g})\rightarrow \Lambda_\f\,\widehat{\otimes}\, \cH_{\ord_p(\mu_g)}(\Gamma_\cyc)\otimes F
\]
whose specialization at $\kappa_0$ equals, up to a $p$-adic unit,  $$\frac{1}{\lambda_{N_f}(f)\left(1-\frac{\beta_f}{p\alpha_f}\right)\left(1-\frac{\beta_f}{\alpha_f}\right)}\cL_{F,f_\alpha,g}^{(\alpha,\mu)},$$ where  $\cL_{F,f_\alpha,g}^{(\alpha,\mu)}$ is  given as in Definition~\ref{defn:stabliziedmaps} and  $\lambda_{N_f}(f)$ is the pseudo-eigenvalue of the Atkin--Lehner operator of level $N_f$ (which is given by the identity $W_{N_f}f=\lambda_{N_f}(f)f^*$).  In particular, the specialization of $\cL_{F,\f,g, \mu}$ at $\kappa_0$ agrees with $\cL_{F,f_\alpha,g, \mu}$ up to multiplication by a $p$-adic unit, if at least one of the following conditions holds true: {\rm i)} $k_f>0$, {\rm ii)} $v_p(\alpha_f-\beta_f/p)=0$, {\rm iii)} $\overline{\rho}_f$ is absolutely irreducible.
\end{theorem}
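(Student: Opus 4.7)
The plan is to construct $\cL_{F,\f,g,\mu}$ by first building a $\Lambda_\f(\Gamma_\cyc)$-adic Perrin-Riou map on the quotient $T_{\f,g}^{-\emptyset}=\cF^- R_\f^*\otimes R_g^*$, then trivializing the $\Lambda_\f$-factor via Hida's morphism $\eta_\f$, and finally pre-composing with the quotient map $T_{\f,g}\twoheadrightarrow T_{\f,g}^{-\emptyset}$ induced by the filtration on $R_\f^*$. Concretely, starting from
\[
\HIw(F(\mu_{p^\infty}),T_{\f,g})\lra \HIw(F(\mu_{p^\infty}),T_{\f,g}^{-\emptyset})\xrightarrow{\sim}\NN(F,T_{\f,g}^{-\emptyset})^{\psi=1}
\]
(using Lemma~\ref{lem:Hida-Wach}), I would apply $1-\vp$ to land in $\varphi^*\NN(F,T_{\f,g}^{-\emptyset})^{\psi=0}$, which by Remark~\ref{rk:explicit} is $\Lambda_\f(\alpha_\f^{-1})\,\widehat\otimes_{\cO}\varphi^*\NN(R_g^*)^{\psi=0}\otimes_{\Zp}\cO_F$. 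The second tensor factor receives the usual Loeffler--Zerbes Perrin-Riou map: applying the Mellin transform and pairing with the $\vp$-eigenvector $v_{g,\mu}^*\in\Dcris(R_g^*)$ produces a morphism into $\cH_{\ord_p(\mu_g)}(\Gamma_\cyc)\otimes F$ (this is the step in which the hypothesis \ref{item_FLg} is used, through Berger's lifting of bases to Wach modules). Finally, applying $\eta_\f$ trivialises $\Lambda_\f(\alpha_\f^{-1})$ into $\frac{1}{H_\f}\Lambda_\f$; I would then observe, as in \cite[\S10.1]{KLZ2}, that the local Wach-module component in fact lies in the ``$H_\f$-integral'' part, so that the composite takes values in $\Lambda_\f\,\widehat\otimes\,\cH_{\ord_p(\mu_g)}(\Gamma_\cyc)\otimes F$ as required.

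For the interpolation claim at $\kappa_0$, the plan is a direct comparison: the $\kappa_0$-specialization of the filtration quotient $\cF^- R_\f^*$ is the unramified rank-one quotient of $R_{f_\alpha}^*|_{G_{\Qp}}$ on which $\vp$ acts by $\alpha_f^{-1}$, and under this identification the eigen-projection to $v_{f,\alpha}^*\otimes v_{g,\mu}^*$ in the definition of $\cL_{F,f_\alpha,g}^{(\alpha,\mu)}$ factors through the same quotient. The proof then reduces to tracing how $\eta_\f$ specializes: by the construction of $\eta_\f$ via the Ohta/Hida duality, its value at $\kappa_0$ is, up to $p$-adic units, the image of $v_{f,\alpha}$ under the Atkin--Lehner normalised Poincar\'e pairing. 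By Corollary~\ref{cor:e-vec-cong-no}, this evaluation differs from pairing against $\eta'_f$ by the factor $\sC_f$, and $\eta_{\kappa_0}$ carries an additional factor $1/\lambda_{N_f}(f)$ coming from the identity $W_{N_f}f=\lambda_{N_f}(f)f^\star$. The two Euler factors $\bigl(1-\tfrac{\beta_f}{\alpha_f}\bigr)\bigl(1-\tfrac{\beta_f}{p\alpha_f}\bigr)$ then arise from comparing the $\vp$-eigen-projection to $v_{f,\alpha}^*$ used in $\cL_{F,f,g}^{(\alpha,\mu)}$ with the projection to the unramified quotient: one factor appears in computing $(1-\vp)$ on the image of the $\alpha$-eigenvector, while the other comes from the standard modification term $(\Pr^\alpha)_*$ encodes when $(\Pr^\alpha)^*$ fails to be an isomorphism.

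The ``In particular'' clause is then essentially formal: under any of the hypotheses (i)--(iii), Lemma~\ref{lemma_prsupstar_explicit} asserts that $(\Pr^\alpha)^*$ is an isomorphism, which dualises to the statement that $(\Pr^\alpha)_*:R_{f_\alpha}^*\to R_f^*$ is an isomorphism up to multiplication by the unit $\alpha_f-\beta_f/p$; hence the map $\cL_{F,f_\alpha,g}^{(\alpha,\mu)}$ constructed in Definition~\ref{defn:stabliziedmaps} agrees with $\cL_{F,f_\alpha,g,\mu}$ up to a $p$-adic unit, and the prefactor $\bigl(\lambda_{N_f}(f)(1-\beta_f/p\alpha_f)(1-\beta_f/\alpha_f)\bigr)^{-1}$ can be absorbed into the $p$-adic unit ambiguity.

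The main obstacle will be the delicate bookkeeping in the specialization computation: the morphism $\eta_\f$ is defined via Hida's congruence pairing, and identifying its $\kappa_0$-specialization precisely as the Atkin--Lehner-normalised dual to $v_{f,\alpha}$ (so that the Euler factors come out to the stated product) demands unwinding the conventions of \cite[\S10.1]{KLZ2} against those we have fixed in Definition~\ref{def_eigenvectors} and Corollary~\ref{cor:e-vec-cong-no}. The rest of the argument is a $\Lambda_\f$-adic repackaging of standard Wach-module calculations, but aligning signs, normalisations of Gauss sums, and the factor $\sC_f$ versus $\lambda_{N_f}(f)$ so as to recover the precise prefactor is where the technical weight of the proof lies.
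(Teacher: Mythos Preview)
Your construction of $\cL_{F,\f,g,\mu}$ matches the paper's: project to $T_{\f,g}^{-\emptyset}$, invoke Lemma~\ref{lem:Hida-Wach} and apply $1-\vp$, then pair with $H_\f\cdot\eta_\f\otimes v_{g,\mu}$. (The paper multiplies by $H_\f$ at the outset rather than arguing an a posteriori integrality, but this is cosmetic.)

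The specialization argument, however, misidentifies the source of the Euler factors. In the paper, both $(1-\beta_f/\alpha_f)$ and $(1-\beta_f/p\alpha_f)$, together with $\lambda_{N_f}(f)^{-1}$ and $\sC_f$, arrive \emph{as a single package} from \cite[Proposition~10.1.1(2)(b)]{KLZ2}, which computes the specialization of $H_\f\cdot\eta_\f$ at $\kappa_0$ as
\[
\frac{\sC_f}{\lambda_{N_f}(f)\left(1-\frac{\beta_f}{p\alpha_f}\right)\left(1-\frac{\beta_f}{\alpha_f}\right)}\cdot(\Pr{}^\alpha)^*(\eta_f').
\]
Neither factor comes from ``computing $(1-\vp)$ on the $\alpha$-eigenvector'' (the $1-\vp$ step is the same on both sides of the comparison and contributes nothing to the discrepancy), nor from $(\Pr^\alpha)_*$ failing to be an isomorphism (the factor $(1-\beta_f/p\alpha_f)$ is present in the formula regardless of whether $(\Pr^\alpha)^*$ is an isomorphism). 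The paper's comparison instead hinges on the adjunction identity
\[
\cL_{F,f_\alpha,g}^{(\alpha,\mu)}(z)=\langle (\Pr{}^\alpha)_*\cL_{F,f_\alpha,g}(z),\,v_{f,\alpha}\otimes v_{g,\mu}\rangle_{(N_f,N_g)}=\langle \cL_{F,f_\alpha,g}(z),\,(\Pr{}^\alpha)^*(v_{f,\alpha})\otimes v_{g,\mu}\rangle_{(N_fp,N_g)},
\]
so one must match $(\Pr^\alpha)^*(v_{f,\alpha})$ against the displayed specialization of $H_\f\cdot\eta_\f$; Corollary~\ref{cor:e-vec-cong-no} supplies the link via $v_{f,\alpha}\equiv\sC_f\,\eta_f'\pmod{\Fil^1}$, and the $\sC_f$'s cancel.

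A smaller point: the hypothesis \ref{item_FLg} is \emph{not} used in this theorem --- the paper says so explicitly just before Theorem~\ref{thm_decompose_PR_SSf}. The Perrin-Riou map for $R_g^*$ exists unconditionally; \ref{item_FLg} enters only later, in the factorization into signed Coleman maps.
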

\begin{proof}
Let us write $\vp^*\NN(F,T_{\f,g}^{-\emptyset} )$ for the $\AA_L^+$-module generated by the image of $\NN(F,T_{\f,g}^{-\emptyset} )$ under $\vp$ and define $\vp^*\NN(M)$ similarly if $M$ is a $G_{\Qp}$-representation whose Hodge--Tate weights are non-negative. The Perrin-Riou map $\cL_{F,M}$ of  $M$ over $F$ is given as the composite map
\begin{align*}
    \HIw(F(\mu_{p^\infty}),M)\xrightarrow{\sim}\NN(F,M)^{\psi=1}&\xrightarrow{1-\vp}\vp^*\NN(F,M)^{\psi=0}\\
    &\hookrightarrow\Dcris(M)\otimes( \Brig)^{\psi=0}\otimes F\\ 
    &\xrightarrow{1\otimes \mathfrak{m}^{-1}\otimes 1} \Dcris(M)\otimes\cH(\Gamma_\cyc)\otimes F,
\end{align*}
where $\Brig$ denotes the set of power series in $L[[\pi]]$ that converge on the open unit disk.

Since $\vp$ acts invertibly on $\Lambda_\f(\alpha_\f^{-1})$, the description of $\NN(F,T_{\f,g}^{-\emptyset} )$ in Remark~\ref{rk:explicit} tells us that
\[
\vp^*\NN(F,T_{\f,g}^{-\emptyset} )=\Lambda_\f(\alpha_\f^{-1})\, \widehat{\otimes}_{\cO}   \vp^*\NN(F,R_g^*).
\]
Thanks to Lemma~\ref{lem:Hida-Wach}, we may define the Perrin-Riou map $\cL_{F,T_{\f,g}^{-\emptyset} }$ for $T_{\f,g}^{-\emptyset} $ over $F$ as the composite map
\begin{align}
\label{eq:comp}
\begin{aligned}
  \HIw(F(\mu_{p^\infty}),T_{\f,g}^{-\emptyset} )\cong \NN(F,T_{\f,g}^{-\emptyset} )^{\psi=1} &\xrightarrow{1-\vp}\vp^*\NN(F,M_{\f.g})^{\psi=0}\\
  &=\Lambda_\f(\alpha_\f^{-1})\, \widehat{\otimes}_{\Zp}   \vp^*\NN(F,R_g^*)\\
&\hookrightarrow\Lambda_\f(\alpha_\f^{-1})\, \widehat{\otimes}_{\Zp} \Dcris(M)\otimes( \Brig)^{\psi=0}\otimes F\\
&=\Dcris(T_{\f,g}^{-\emptyset} )\otimes( \Brig)^{\psi=0} \otimes F\\ 
&\xrightarrow{1\otimes \mathfrak{m}^{-1}\otimes 1} \DD(\Qp,T_{\f,g}^{-\emptyset} )\,\widehat{\otimes}\,\cH(\Gamma_\cyc)\otimes F.    
\end{aligned}
\end{align}

 We now define $\cL_{F,\f,g,\mu}$ by the compositum of the following arrows
\begin{align*}
\HIw(F(\mu_{p^\infty}),T_{\f,g})\lra\HIw(F(\mu_{p^\infty}),T_{\f,g}^{-\emptyset} )\lra &\cH(\Gamma_\cyc)\,\widehat{\otimes}\,\DD(F,T_{\f,g}^{-\emptyset} )\otimes F\\
&\lra  \Lambda_\f\,\widehat{\otimes}\, \cH_{\ord_p(\mu_g)}(\Gamma_\cyc)\otimes F,
\end{align*}
where the first arrow is induced by the natural projection, the second arrow is given by $\cL_{F,T_{\f,g}^{-\emptyset} }$ and the final arrow is the pairing with {${H_\f}\cdot \eta_\f\otimes v_{g,\mu}$}.

{Given an arbitrary element $z\in \HIw(F(\mu_{p^\infty}),R_{f_\alpha}^*\otimes R_g^*)$, set $z_\alpha:=({\rm Pr}^\alpha)_*(z)\in \HIw(F(\mu_{p^\infty}),R_{f}^*\otimes R_g^*)$. Observe then that
\begin{equation}
    \begin{aligned}
   \cL_{F,f_\alpha,g}^{(\alpha,\mu)}(z)&:= \left\langle (\cL_{F,f,g}(z_\alpha), v_{f,\alpha}\otimes v_{g,\mu} 
    \right\rangle_{(N_f,N_g)}\\
   \label{eqn_PRmap_underPralpha} &= \left\langle ({\rm Pr}^\alpha)_*\circ\cL_{F,f_\alpha,g}(z),  v_{f,\alpha}\otimes v_{g,\mu} 
    \right\rangle_{(N_f,N_g)}\\
 &= \left\langle \cL_{F,f_\alpha,g}(z),  ({\rm Pr}^\alpha)^*(v_{f,\alpha})\otimes v_{g,\mu} 
    \right\rangle_{(N_fp,N_g)}
\end{aligned} 
\end{equation}
  where the first equality follows from the functoriality of the construction of the Perrin-Riou map and the second using the fact that $({\rm Pr}^\alpha)_*$ is the transpose of $({\rm Pr}^\alpha)^*$. We further note that the subscript $(N_f,N_g)$ (resp., $(N_fp,N_g)$) signifies the levels of the pair of modular curves where one realizes $f$ and $g$ (resp., $f_\alpha$ and $g$), on which the Poincar\'e duality induces the indicated pairing.}

{Let $\eta_f'\in \Dcris(V_f)/\Fil^1$ be the vector defined in Lemma~\ref{lem:e-vec-cong-no}.
It follows from \cite[Proposition~10.1.1(2)(b)]{KLZ2}  that the   specialization of $H_\f\cdot \eta_\f$ at $\kappa$ is  given by $$\frac{\sC_{f_\alpha}}{\lambda_{N_f}(f)\left(1-\frac{\beta_f}{p\alpha_f}\right)\left(1-\frac{\beta_f}{\alpha_f}\right)}\cdot ({\Pr}^\alpha)^*(\eta_f')\,.$$
The first assertion concerning the specialization of $\cL_{F,\f,g,\mu}$ at $\kappa_0$ now follows on combining this fact with  Lemma~\ref{lem:e-vec-cong-no} and \eqref{eqn_PRmap_underPralpha}.}

The last assertion of the theorem follows from Lemma~\ref{lemma_prsupstar_explicit}, since the constant $\lambda_{N_f}(f)\left(1-\frac{\beta_f}{p\alpha_f}\right)\left(1-\frac{\beta_f}{\alpha_f}\right)$ is a $p$-adic unit under the stated list of hypotheses.
\end{proof}

The following theorem can be considered as a deformed version of  the second half of Proposition~\ref{prop:semiordPR}.
\begin{theorem}\label{thm_decompPRLamdaf}
Suppose that \ref{item_FLg} holds. There exist $\Lambda_\f(\Gamma_\cyc)$-morphisms\index{Coleman maps! $\col_{F,\f,g,\bullet}$}
\[
\col_{F,\f,g,\#},\col_{F,\f,g,\flat}:\HIw(F(\mu_{p^\infty}),T_{\f,g})\lra \Lambda_\f(\Gamma_\cyc)\otimes_{\Zp} \cO_F
\]
such that\index{Coleman maps! $Q_g^{-1}M_g$ (Logarithmic Matrix)}
\[
\begin{pmatrix}
\cL_{F,\f,g, \alpha}\\ \\ \cL_{F,\f,g, \beta}
\end{pmatrix}=Q_g^{-1}M_g\begin{pmatrix}
\col_{F,\f,g,\#}\\ \\ \col_{F,\f,g,\flat}
\end{pmatrix}.
\]
%Furthermore, for each $\bullet\in\{\#,\flat\}$, the specialization of $\col_{F,\f,g,\bullet}$ at the member $f_\alpha$ of the Hida family $\f$ coincides with the map $u_fC_{k_f+2}\col_{F,f,g}^{(\omega,\bullet)}$ given in Proposition~\ref{prop:semiordPR}.
\end{theorem}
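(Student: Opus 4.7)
The plan is to construct the Coleman maps $\col_{F,\f,g,\bullet}$ directly by exploiting the tensor factorization
\[
\NN(F, T_{\f,g}^{-\emptyset}) = \Lambda_\f(\alpha_\f^{-1}) \,\widehat{\otimes}_\cO\, \NN(R_g^*) \otimes_{\Zp} \cO_F
\]
established in Remark~\ref{rk:explicit}, which reduces the problem to the factorization statement already available for the Perrin-Riou map attached to the single non-ordinary eigenform $g$. First I would unpack the composition \eqref{eq:comp} defining $\cL_{F, T_{\f,g}^{-\emptyset}}$: since $\cF^- R_\f^*\vert_{G_{\Qp}}$ is unramified and $\vp$ acts on $\Lambda_\f(\alpha_\f^{-1})$ invertibly by $\alpha_\f^{-1}$, the operators $1-\vp$ and the inclusion $\vp^*\NN \hookrightarrow \Dcris \otimes (\Brig)^{\psi=0}$ are $\Lambda_\f$-linear on the first tensor factor and coincide with their analogues for $R_g^*$ on the second. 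Thus $\cL_{F,\f,g,\mu}$ factors through the $\Lambda_\f$-linear extension of the usual one-variable Perrin-Riou map $\cL_{F,g}$ of $R_g^*$, followed by pairing with $H_\f\cdot\eta_\f \otimes v_{g,\mu}$.

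Next, I would invoke the logarithmic matrix factorization for $g$ as in \cite[\S2]{BFSuper} (valid under the hypothesis \ref{item_FLg} via Berger's theorem applied to the Wach module $\NN(R_g^*)$): there exist bounded Coleman maps $\col_{F,g,\#}, \col_{F,g,\flat}: \HIw(F(\mu_{p^\infty}), R_g^*) \to \Lambda_\cO(\Gamma_\cyc)\otimes\cO_F$ satisfying
\[
\begin{pmatrix} \cL_{F,g,\alpha} \\ \cL_{F,g,\beta} \end{pmatrix} = Q_g^{-1} M_g \begin{pmatrix} \col_{F,g,\#} \\ \col_{F,g,\flat} \end{pmatrix},
\]
where $Q_g$ is the change-of-basis matrix from eigenvectors to $\{\omega_{g^\star}, \vp(\omega_{g^\star})\}$ (Definition~\ref{defn:Q}). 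Since the entries of $Q_g^{-1}M_g$ are elements of $\cH(\Gamma_1)$ that are independent of the $\f$-variable, the above identity extends $\Lambda_\f$-linearly.

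The third step is to set
\[
\col_{F,\f,g,\bullet} := \left\langle H_\f\cdot\eta_\f,\,\cdot\,\right\rangle \circ \bigl(\id_{\Lambda_\f(\alpha_\f^{-1})} \widehat{\otimes} \col_{F,g,\bullet}\bigr) \circ \pr_*,
\]
where $\pr_*$ is induced by the natural projection $T_{\f,g} \twoheadrightarrow T_{\f,g}^{-\emptyset}$ and the $\widehat{\otimes}$ uses the tensor decomposition above. Because $\col_{F,g,\bullet}$ lands in the bounded ring $\Lambda_\cO(\Gamma_\cyc)\otimes \cO_F$ and $H_\f\cdot\eta_\f$ takes values in $\Lambda_\f$ by construction, the resulting maps $\col_{F,\f,g,\bullet}$ land in $\Lambda_\f(\Gamma_\cyc)\otimes_{\Zp}\cO_F$. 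Applying the same projection-and-pairing construction to the factorization displayed above — with the eigenvector projection $v_{g,\mu}$ on the $g$-side and the pairing against $H_\f\cdot\eta_\f$ on the $\f$-side — yields exactly
\[
\begin{pmatrix} \cL_{F,\f,g,\alpha} \\ \cL_{F,\f,g,\beta} \end{pmatrix} = Q_g^{-1} M_g \begin{pmatrix} \col_{F,\f,g,\#} \\ \col_{F,\f,g,\flat} \end{pmatrix},
\]
since $Q_g^{-1}M_g$ is independent of $\f$ and commutes with the $\Lambda_\f$-linear pairing.

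The main obstacle I anticipate is the bookkeeping required to verify that the $\Lambda_\f$-linear extension $\id \,\widehat{\otimes}\, \col_{F,g,\bullet}$ is genuinely well-defined and bounded on all of $\HIw(F(\mu_{p^\infty}), T_{\f,g}^{-\emptyset})$, and matches the Perrin-Riou construction at the level of Wach modules. This amounts to checking that the $\Lambda_\f$-adic Wach-module formalism underlying Lemma~\ref{lem:Hida-Wach} is compatible with the construction of $M_g$ in \cite[\S2]{BFSuper} after base change by $\Lambda_\f(\alpha_\f^{-1})$; concretely, that the $(1-\vp)$-image and the Mellin inversion step in \eqref{eq:comp} preserve the tensor decomposition. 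Granted this compatibility, the rest is a formal manipulation, and the specialization at $\kappa_0$ will automatically recover (up to a unit accounting for the normalization in Theorem~\ref{thm:PRHida}) the one-variable factorization supplied by Proposition~\ref{prop:semiordPR}.
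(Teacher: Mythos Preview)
Your proposal is correct and follows essentially the same strategy as the paper: exploit the tensor decomposition $\NN(F,T_{\f,g}^{-\emptyset})=\Lambda_\f(\alpha_\f^{-1})\,\widehat\otimes_\cO\,\NN(R_g^*)\otimes_{\Zp}\cO_F$, use the Wach-module factorization available for $R_g^*$ under \ref{item_FLg}, and pair with $H_\f\cdot\eta_\f$ on the $\f$-side. The only difference is organizational: the paper defines $\col_{F,\f,g,\bullet}$ directly as the composite of \eqref{eq:comp} (which lands in $\Lambda_\f(\alpha_\f^{-1})\,\widehat\otimes\,\vp^*\NN(R_g^*)^{\psi=0}\otimes\cO_F$) with $H_\f\cdot\eta_\f\otimes\pr_\bullet\otimes 1$, where $\pr_\#,\pr_\flat$ are the coordinate projections with respect to the $\Lambda_\cO(\Gamma_\cyc)$-basis $\{(1+\pi)\vp(n_{g,1}),(1+\pi)\vp(n_{g,2})\}$ of $\vp^*\NN(R_g^*)^{\psi=0}$; the factorization then drops out of the change-of-basis identity $\begin{pmatrix}(1+\pi)\vp(n_{g,1})&(1+\pi)\vp(n_{g,2})\end{pmatrix}=\begin{pmatrix}v_{g,\alpha}^*&v_{g,\beta}^*\end{pmatrix}Q_g^{-1}M_g$. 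This sidesteps the ``obstacle'' you flag: by working after $(1-\vp)$ at the level of $\vp^*\NN^{\psi=0}$ (where the tensor decomposition is clean, since $\vp$ is invertible on $\Lambda_\f(\alpha_\f^{-1})$) rather than trying to make sense of $\id\,\widehat\otimes\,\col_{F,g,\bullet}$ on $\NN^{\psi=1}$, no separate compatibility check is required.
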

\begin{proof}
  Let $\{n_{g,1},n_{g,2}\}$ be the Wach module basis lifting $\{\omega_{g^\star},\vp(\omega_{g^\star})\}$, given by Berger in the proof of \cite[Proposition V.2.3]{berger04}. Then $\{(1+\pi)\vp(n_{g,1}),(1+\pi)\vp(n_{g,2})\}$ is a $\Lambda_\cO(\Gamma_\cyc)$-basis of $\vp^*\NN(R_g^*)^{\psi=0}$ by \cite[Theorem~3.5]{LLZ0}.  Let us define $\pr_\#$ and $\pr_\flat$ to be the projection maps $\vp^*\NN(R_g^*)^{\psi=0}\rightarrow \Lambda_\cO(\Gamma_\cyc)$ given by this basis. 
  We define for $\bullet\in\{\#,\flat\}$ the map $\col_{F,\f,g,\bullet}$ to be the compositum
 \begin{align*}
     \HIw(F,T_{\f,g})&\lra\Lambda_\f(\alpha_\f^{-1})\,\widehat{\otimes}_{\Zp}\vp^*\NN(R_g^*)^{\psi=0}\otimes_{\Zp}\cO_F\\
     &\qquad\qquad\xrightarrow{{H_\f}\cdot {\eta_\f}\otimes \pr_\bullet\otimes 1}\Lambda_\f\,\widehat{\otimes}_{\cO}\Lambda_\cO(\Gamma_\cyc)\otimes_{\Zp}\cO_F=\Lambda_\f(\Gamma_\cyc)\otimes_{\Zp}\cO_F,
 \end{align*}
 where the first arrow is given by \eqref{eq:comp}. The theorem follows from the fact that 
 \[
 \begin{pmatrix}
 (1+\pi)\vp(n_{g,1})&(1+\pi)\vp(n_{g,n})\end{pmatrix}=
 \begin{pmatrix}
 \omega_{g^\star}&\vp(\omega_{g^\star})
 \end{pmatrix}M_g= \begin{pmatrix}
 v_{g,\alpha}^*&v_{g,\beta}^*
 \end{pmatrix}Q_g^{-1}M_g.
 \]
 \end{proof}

\begin{convention}
Instead of $\cL_{\Qp,\f,g,\mu}$ and $\col_{\Qp,\f,g,\bullet}$, we shall  write $\cL_{\f,g,\mu}$ for $\mu\in\{\alpha,\beta\}$ and $\col_{\f,g,\bullet}$ for $\bullet\in\{\#,\flat\}$, respectively.\index{Perrin-Riou maps! $\cL_{\f,g,\mu}$}\index{Coleman maps!  $\col_{\f,g,\bullet}$}
\end{convention}

\begin{defn}[Semi-local $\Lambda_\f(\Gamma_\cyc)$-adic Coleman maps]
For each $m\in\cN$ and $\bullet\in\{\#,\flat\}$, we define the semi-local Coleman maps on setting
$$\col_{\f,g,\bullet,m}:=\bigoplus_{v|p}\col_{\QQ(m)_v,\f,g,\bullet}.$$ 
When $m=1$, we will simply write $\col_{\f,g,\bullet}$ in place of $\col_{\f,g,\bullet,1}$.\index{Coleman maps!  $\col_{\f,g,\bullet,m}$ (Semi-local Coleman maps for families)}
\end{defn}

\section{The case $a_p(g)=0$}
\label{subsec_ap_is_0}
{In the final portion of our paper, we will study in detail the case where $g$ is the $\theta$-series of a Hecke character of an imaginary quadratic field where $p$ remains inert.} In particular,  $a_p(g)$ will vanish. The goal of this section is to investigate the construction of Coleman maps under this condition.  For the rest of the current section, we replace the hypothesis \ref{item_FL} by the following.
\begin{itemize}
     \item[\mylabel{item_SS}{{\bf (FL$_f$-S$_g$)}}]  $p>k_f+1$ and $a_p(g)=0$.
\end{itemize}

Under \ref{item_SS},  we have a very explicit description of $M_g$ in terms of Pollack's plus and minus logarithms. Furthermore, we may allow $g$ to be outside the Fontaine--Laffaille range.
The corresponding objects in Definition~\ref{defn:Mg} are available under \ref{item_SS}, even if $p\le k_g+1$, thanks to the work of Berger--Li-Zhu \cite{bergerlizhu04}.
\begin{defn}
We let $\{n_+,n_-\}$ denote the basis of $\NN(R_g^*)$ constructed in \cite[\S3.1]{bergerlizhu04} (which we have to twist by $\varepsilon_g^{-1/2}$ as in \S4.2 in op. cit., since the authors constructed a $p$-adic representation with trivial central character in \S3.1 of op. cit.). Let $\{v_+,v_-\}$ be the basis of $\Dcris(T_g^*)$ given by $n_\pm\mod \pi$.
\end{defn}
As in the proof of Lemma~\ref{lem:matrices}, we note that the representation considered in op. cit. is in fact $R_g^*(-k_g-1)$. We have the identifications
\begin{align*}
    \NN(R_{g}^*)&\xrightarrow{\times\,\pi^{k_g+1}} \NN(R_{g}^*(-k_g-1)),\\
     \Dcris(R_{g}^*)&\xrightarrow{\times\,t^{ k_g+1}} \Dcris(R_{g}^*(-k_g-1))\,.
\end{align*}
The matrix of $\vp_{|\Dcris(R_g^*)}$ with respect to the basis $\{v_+,v_-\}$ is given by
\[
\begin{pmatrix}
0&\frac{-\eta}{p^{k_g+1}}\\
\eta&0
\end{pmatrix},
\]
where $\eta=\varepsilon_g(p)^{-1/2}$, whereas the matrix of $\vp_{|\NN(R_g^*)}$ with respect to $\{n_+,n_-\}$ is given by
\[
\begin{pmatrix}
0&\frac{-\eta}{q^{k_g+1}}\\
\eta &0
\end{pmatrix}.
\]
Note that $v_+$ generates $\Fil^0\Dcris(R_g^*)$. Therefore, upon multiplying $n_+$ by a suitable unit in $\cO$, we may arrange that $\omega_{g^\star}=v_+$. 

\begin{defn}
We set $n_{g,1}:=u\cdot n_+$, where $u\in\cO^\times$ is chosen so that $n_{g,1}\equiv \omega_{g^\star}\mod \pi$. Furthermore, we put $n_{g,2}:=u\eta\cdot n_-$ and also define $v_{g,i}:=n_{g,i}\mod \pi$ for $i=1,2$.
\item[i)] We let $A_g'$ and $P_g'$ denote the matrices of $\vp_{|\Dcris(R_g^*)}$ and  $\vp_{|\NN(R_g^*)}$ with respect to the bases $\{v_{g,1},v_{g,2}\}$ and $\{n_{g,1}, n_{g,2}\}$, respectively. More explicitly, we have
\[
A_g'=\begin{pmatrix}
0&\frac{-1}{\varepsilon_g(p)p^{k_g+1}}\\
1&0
\end{pmatrix} \quad\text{and}\quad P_g'=\begin{pmatrix}
0&\frac{-1}{\varepsilon_g(p)q^{k_g+1}}\\
1 &0
\end{pmatrix}.
\]
\item[ii)] We finally define the logarithmic matrix associated to the basis $\{n_{g,1},n_{g,1}\}$:
\[
M'_g=\mathfrak{m}^{-1}\left(\lim_{n\rightarrow\infty}(1+\pi)(A_g')^{n+1}\vp^n(P_g')^{-1}\cdots \vp(P_g')^{-1}\right).
\]
\end{defn}

\begin{remark}\label{rk:matrices-ap=0}
    \item[i)]It follows from the calculations in  \cite[\S5.2.1]{LLZ0} that
    \[
    M_g'=\begin{pmatrix}
    0&\frac{a_g^-}{p^{k_g+1}}\log_{p,k_g+1}^-\\
   a_g^+\log_{p,k_g+1}^+&0
    \end{pmatrix},
    \]
    where $a_g^\pm\in\Lambda_\cO(\Gamma_1)^\times$.
    \item[ii)] Note that $A_g'$ coincides with $A_g$ given in Definition~\ref{defn:Mg}. However, the matrix $P_g'$ differs from $P_g$ given in the proof of Proposition~\ref{prop:semiordlog} slightly (one of the entries would differ by the scalar $\xi^{k_g+1}$). The resulting matrix $M_g'$ is therefore  slightly different from $M_g$. But it can be shown under the hypothesis \ref{item_SS} that the matrix {$M_g$} also has the form $$M_g=\begin{pmatrix}
    0&\frac{b_g^-}{p^{k_g+1}}\log_{p,k_g+1}^-\\
   b_g^+\log_{p,k_g+1}^+&0
    \end{pmatrix},$$ 
    where $b_g^\pm\in \Lambda_\cO(\Gamma_1)^\times$.
\item[iii)] Under the hypothesis \ref{item_SS}, the matrix $Q_g$ simplifies to
\[
\frac{1}{2}\begin{pmatrix}
1&1\\
\alpha_g&-\alpha_g
\end{pmatrix}\,.
\]
 \end{remark}

We now turn our attention to the representation attached to $f$. Even though we have assumed that $f$ itself satisfies the Fontaine--Laffaille condition in \ref{item_SS},  neither \cite[Proof of Proposition~V.2.3]{berger04} nor the construction of Wach modules in \cite{bergerlizhu04} applies to $R_f^*$.  We content ourselves with the following non-canonical Wach-module basis for $\NN(R_f^*)$.

\begin{defn}
Let $\{n_{f,1},n_{f,2}\}$ be an $\AA_L^+$-basis of $\NN(R_f^*)$ lifting the basis $\{\omega_{f^\star},\eta_{f^\star}\}$ as in the proof of \cite[Proposition~3.36]{LLZ0}. We note that $\Dcris(R_f^*)^{\vp=\beta_f^{-1}}$ corresponds to the one-dimensional $G_{\Qp}$-stable subspace in $\Dcris(R_f^*(-k_f-1))$ considered in \S3.6 of op. cit.

On multiplying the basis elements $n_{f,1}$ and $n_{f,2}$ by units if necessary, the matrix of $\vp_{|_{\NN(R_f^*)}}$  in this basis can be chosen so as to satisfy
\[
 P_f'=\begin{pmatrix}
\alpha_f^{-1}&0\\
* &\frac{*}{q^{k_f+1}}
\end{pmatrix}.
\]
\end{defn}

These bases then give rise to the following bases for the representation $T_{f,g}=R_f^*\otimes R_g^*$.

\begin{defn}
We define an $\AA_L^+$-basis $\{n_i': i=1,\cdots,4\}$ of $\NN(T_{f,g})$ by setting
\[
n_1'=n_{f,1}\otimes n_{g,1},\quad n_2'=n_{f,1}\otimes n_{g,1},\quad n_3'=n_{f,2}\otimes n_{g,1},\quad n_4'=n_{f,2}\otimes n_{g,2}.
\]
We {similarly} define the vectors $v_1',v_2',v_3'$ and $v_4'$, to give a basis of $\Dcris(T_{f,g})$.

We let $A_{f,g}'$ and $P_{f,g}'$ denote the matrices of $\vp_{|_{\Dcris(R_{f,g}^*)}}$ and  $\vp_{|_{\NN(R_{f,g}^*)}}$  with respect to these bases and set
\[
M'_{f,g}:=\fm^{-1}\left(\lim_{n\rightarrow\infty}(1+\pi)(A_{f,g}')^{n+1}\vp^n(P_{f,g}')^{-1}\cdots \vp(P_{f,g}')^{-1}\right).
\]
\end{defn}

Similar to Proposition~\ref{prop:semiordlog}, it can be shown that there exist elements $u_f'\in \Lambda_\cO(\Gamma_1)^\times$ and $\ell_f'\in \cH_{k_f+1}(\Gamma_1)$ such that\index{Coleman maps! The matrix $M_{f,g}'$}
\begin{equation}
\label{eqn_Mfgprime}
 M_{f,g}'=\left(
\begin{array}{c|c}
    u_f' M_g' & 0 \\\hline
     * &  \ell_f' \Tw_{k_f+1}M_g'
\end{array}
\right),   
\end{equation}
where $\ell_f'$ is equal to $\frac{\log_{p,k_f+1}}{\delta_{k_f+1}(\gamma_0)}$ up to {multiplication by} a unit in {$\Lambda_\cO(\Gamma_1)$}.

Let $Q_{f,g}$ be the change of basis given as in Definition~\ref{defn:Q}(iii). As in Proposition~\ref{prop:semiordPR}, the basis $\{n'_i:i=1,\cdots,4\}$ determines the following Coleman maps.

\begin{proposition}\label{prop:PRpm}
If $F$ is a finite unramified extension of $\Qp$, there exist bounded Coleman maps $\col_{F,f,g}^{(\omega,+)}$, $\col_{F,f,g}^{(\omega,-)}$, $\col_{F,f,g}^{(\eta,+)}$ and $\col_{F,f,g}^{(\eta,-)}$ 
with source $\HIw(F(\mu_{p^\infty}),T)$ and target $\Lambda_\cO(\Gamma_\cyc)\otimes{\cO_F}$, which satisfy
\index{Coleman maps! $Q_{f,g}^{-1}M_{f,g}'$}
\[\begin{pmatrix}
\cL_{F,f,g}^{(\alpha,\alpha)}\\\\\cL_{F,f,g}^{(\alpha,\beta)}\\\\\cL_{F,f,g}^{(\beta,\alpha)}\\\\\cL_{F,f,g}^{(\beta,\beta)}
\end{pmatrix}
=
Q_{f,g}^{-1}M_{f,g}'
\begin{pmatrix}
\col_{F,f,g}^{(\omega,+)}\\\\\col_{F,f,g}^{(\omega,-)}\\\\\col_{F,f,g}^{(\eta,+)}\\\\\col_{F,f,g}^{(\eta,-)}
\end{pmatrix}.
\]
In particular,\index{Coleman maps! $Q_g^{-1}M_g'$}
\[\begin{pmatrix}
\cL_{F,f,g}^{(\alpha,\alpha)}\\\\\cL_{F,f,g}^{(\alpha,\beta)}
\end{pmatrix}
=
u'_fQ_{g}^{-1}M_{g}'
\begin{pmatrix}
\col_{F,f,g}^{(\omega,+)}\\\\\col_{F,f,g}^{(\omega,-)}
\end{pmatrix}.
\]
\end{proposition}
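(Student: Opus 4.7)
The plan is to mimic the argument of Proposition~\ref{prop:semiordPR} verbatim, with the basis $\{n_i'\}_{i=1}^4$ replacing $\{n_i\}_{i=1}^4$ and the matrices $P_{f,g}'$, $M_{f,g}'$ replacing their unprimed counterparts. First, I would invoke \cite[Theorem~3.5]{LLZ0} (in the same way this is done in the proof of Theorem~\ref{thm_decompPRLamdaf}) to conclude that $\{(1+\pi)\vp(n_i'): i=1,\ldots,4\}$ is a $\Lambda_\cO(\Gamma_\cyc)$-basis of $\vp^*\NN(T_{f,g})^{\psi=0}$. Projecting the Perrin-Riou map $\cL_{F,f,g}$ onto each of these four basis vectors then produces a quadruple of bounded $\Lambda_\cO(\Gamma_\cyc)\otimes\cO_F$-valued maps on $\HIw(F(\mu_{p^\infty}),T)$; these will be the candidates for $\col_{F,f,g}^{(\omega,\pm)}$ and $\col_{F,f,g}^{(\eta,\pm)}$.

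Next, expressing $\cL_{F,f,g}$ in terms of these four Coleman maps requires only bookkeeping: by construction of the logarithmic matrix $M_{f,g}'$ (compare \cite[\S4]{BLLV}), passing from the Wach-module basis $\{(1+\pi)\vp(n_i')\}$ to the Dieudonn\'e basis $\{v_1',\ldots,v_4'\}$ is effected by $M_{f,g}'$. Subsequently, the change-of-basis matrix between $\{v_1',\ldots,v_4'\}$ and the $\vp$-eigenvector basis $\{v_{\lambda,\mu}\}$ is exactly $Q_{f,g}$ (compatibly with Definition~\ref{defn:Q}(iii), since our modified bases $\{n_{f,i}\}$ and $\{n_{g,i}\}$ still lift $\{\omega_{f^\star},\etaf\}$ and $\{\omega_{g^\star},\vp(\omega_{g^\star})\}$ up to units). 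Combining these two identifications yields the first displayed equality of the proposition.

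For the ``in particular'' assertion, I would exploit the block lower-triangular shape \eqref{eqn_Mfgprime} of $M_{f,g}'$, combined with the (exact analogue for the primed setting of) the description of $Q_{f,g}$ given in Remark~\ref{rk:semiordQ}. This yields
\[
Q_{f,g}^{-1}M_{f,g}'=\left(\begin{array}{c|c} u_f' Q_g^{-1}M_g' & 0 \\\hline * & * \end{array}\right),
\]
so projecting the quadruple identity onto its first two rows recovers the second displayed equality, with the scalar $u_f'\in\Lambda_\cO(\Gamma_1)^\times$ absorbed into the Coleman maps $\col_{F,f,g}^{(\omega,\pm)}$.

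The only point where genuine care is required—and hence the main (though mild) obstacle—is verifying that \cite[Theorem~3.5]{LLZ0} still applies outside the Fontaine--Laffaille range for $g$: one must check that the Berger--Li-Zhu construction of $\{n_{g,1},n_{g,2}\}$ under the hypothesis $a_p(g)=0$ produces a Wach module basis with the required properties (namely that the associated logarithmic matrix has entries in $\cH_{(k_g+1)/2}(\Gamma_1)$, as in Remark~\ref{rk:matrices-ap=0}(i)), and that the hybrid basis $\{n_{f,i}\otimes n_{g,j}\}$ nonetheless lifts the Dieudonn\'e basis $\{v_i'\}$ in a manner compatible with Berger's formalism. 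Once this compatibility is confirmed, the proof reduces entirely to the same matrix manipulation that underlies Proposition~\ref{prop:semiordPR}.
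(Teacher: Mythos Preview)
Your approach is correct and mirrors the paper's: the paper gives no separate proof for this proposition, simply noting in the sentence preceding the statement that ``as in Proposition~\ref{prop:semiordPR}, the basis $\{n'_i:i=1,\cdots,4\}$ determines the following Coleman maps,'' i.e., the identical argument via \cite[Theorem~3.5]{LLZ0} and the block shape \eqref{eqn_Mfgprime} carries over verbatim. One small slip: in your final step you speak of absorbing $u_f'$ into the Coleman maps, but the stated ``in particular'' keeps $u_f'$ explicit (exactly as in Proposition~\ref{prop:semiordPR}), so no absorption is needed---the second displayed equality is literally the top $2\times 2$ block of the first.
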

{
We note that the construction of the $\Lambda_\f(\Gamma_\cyc)$-adic Perrin-Riou maps in Theorem~\ref{thm:PRHida} does not rely on the hypothesis \ref{item_FLg} and can be carried over under  the following weaker version of \ref{item_SS}:
\begin{itemize}
     \item[\mylabel{item_SSf}{{\bf (S$_g$)}}] $g$ satisfies $a_p(g)=0$,
\end{itemize}}
yielding the maps $\cL_{F,\f,g,\alpha}$ and $\cL_{F,\f,g,\beta}$. The proof of Theorem~\ref{thm_decompPRLamdaf} also generalizes easily to yield the following:
\begin{theorem}
\label{thm_decompose_PR_SSf}
Suppose \ref{item_SSf} holds. There exist a pair of $\Lambda_\f(\Gamma_\cyc)$-morphisms
\[
\col_{F,\f,g,\pm}:\HIw(F(\mu_{p^\infty}),T_{\f,g})\lra \Lambda_\f(\Gamma_\cyc)\otimes\cO_F
\]\index{Coleman maps! $\col_{F,\f,g,\bullet}$}
that verify the factorization
\[
\begin{pmatrix}
\cL_{F,\f,g, \alpha}\\\\ \cL_{F,\f,g, \beta}
\end{pmatrix}=Q_g^{-1}M_g'\begin{pmatrix}
\col_{F,\f,g,+}\\\\ \col_{F,\f,g,-}
\end{pmatrix}.
\]
\end{theorem}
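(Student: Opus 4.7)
The plan is to follow essentially the same blueprint as in the proof of Theorem~\ref{thm_decompPRLamdaf}, with the sole modification that Berger's Wach-module basis coming from \cite[Proof of Proposition~V.2.3]{berger04} (which required the Fontaine--Laffaille condition \ref{item_FLg}) is replaced by the Berger--Li-Zhu basis $\{n_{g,1},n_{g,2}\}$ of $\NN(R_g^*)$ introduced in \S\ref{subsec_ap_is_0}. Since this basis is available purely under the hypothesis \ref{item_SSf} (in particular without any restriction on $k_g$ relative to $p$), the resulting construction of signed Coleman maps will hold in the broader range required for the theorem.

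More concretely, the first step is to verify that $\{(1+\pi)\vp(n_{g,1}),(1+\pi)\vp(n_{g,2})\}$ is a $\Lambda_\cO(\Gamma_\cyc)$-basis of $\vp^*\NN(R_g^*)^{\psi=0}$. This will follow from \cite[Theorem~3.5]{LLZ0} applied to $\NN(R_g^*)$ (whose hypotheses are satisfied, as $R_g^*$ has non-negative Hodge--Tate weights and, being non-ordinary, admits no trivial subquotient). The second step is to define projection maps
\[
\pr_+,\pr_-:\vp^*\NN(R_g^*)^{\psi=0}\lra \Lambda_\cO(\Gamma_\cyc)
\]
relative to this basis, and then to set $\col_{F,\f,g,\pm}$ to be the composite
\begin{align*}
\HIw(F(\mu_{p^\infty}),T_{\f,g})&\lra \Lambda_\f(\alpha_\f^{-1})\,\widehat{\otimes}_{\Zp}\,\vp^*\NN(R_g^*)^{\psi=0}\otimes_{\Zp}\cO_F\\
&\xrightarrow{\,H_\f\cdot\eta_\f\,\otimes\,\pr_\pm\,\otimes\, 1\,}\Lambda_\f(\Gamma_\cyc)\otimes_{\Zp}\cO_F,
\end{align*}
exactly as in the construction of $\col_{F,\f,g,\bullet}$ in the proof of Theorem~\ref{thm_decompPRLamdaf} (using the two-variable Perrin-Riou map \eqref{eq:comp} for the first arrow, which was built in Theorem~\ref{thm:PRHida} without reference to \ref{item_FLg}).

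Finally, the claimed factorization will be an immediate consequence of the definition of the logarithmic matrix $M_g'$: since $M_g'$ records the change of basis from $\{(1+\pi)\vp(n_{g,1}),(1+\pi)\vp(n_{g,2})\}$ to $\{v_{g,1},v_{g,2}\}$, and $Q_g$ records the change of basis from $\{v_{g,\alpha}^*,v_{g,\beta}^*\}$ to $\{v_{g,1},v_{g,2}\}$ (under \ref{item_SSf}, these two bases coincide with $\{\omega_{g^\star},\vp(\omega_{g^\star})\}$ up to the simplification recorded in Remark~\ref{rk:matrices-ap=0}(iii)), we obtain the matrix identity
\[
\begin{pmatrix}(1+\pi)\vp(n_{g,1}) & (1+\pi)\vp(n_{g,2})\end{pmatrix}
=\begin{pmatrix}v_{g,\alpha}^* & v_{g,\beta}^*\end{pmatrix}Q_g^{-1}M_g',
\]
and pairing with $H_\f\cdot\eta_\f$ against this identity converts the definition of $\cL_{F,\f,g,\mu}$ (as the pairing of the two-variable Perrin-Riou map with $H_\f\cdot\eta_\f\otimes v_{g,\mu}$) into the desired factorization. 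No essential obstacle is anticipated; the only point requiring a modicum of care is to confirm that the Berger--Li-Zhu basis yields a genuine $\Lambda_\cO(\Gamma_\cyc)$-basis of $\vp^*\NN(R_g^*)^{\psi=0}$ in the range $p\leq k_g+1$, which is precisely the setting for which this construction was devised in \cite{bergerlizhu04}.
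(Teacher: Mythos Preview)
Your proposal is correct and follows precisely the approach the paper indicates: the paper does not give a separate proof of this theorem but simply notes that ``the proof of Theorem~\ref{thm_decompPRLamdaf} also generalizes easily,'' and you have spelled out exactly that generalization, substituting the Berger--Li--Zhu basis $\{n_{g,1},n_{g,2}\}$ for Berger's Fontaine--Laffaille basis. Your verification that $v_{g,2}=\vp(\omega_{g^\star})$ (which follows from $A_g'=A_g$ and $v_{g,1}=\omega_{g^\star}$) ensures that $Q_g$ is indeed the relevant change-of-basis matrix, so the factorization goes through as stated.
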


%%%%%%%%%%%
%%%%%%%%%%%

%-----------------------------------------------------------------------
% End of chap2
%-----------------------------------------------------------------------

\chapter{Beilinson--Flach elements and $p$-adic $L$-functions}
\label{sec_BF_and_padicL}
In this chapter, we recall the unbounded Beilinson--Flach elements for the representations $T_{\f,g}=R_\f^*\otimes R_g^*$ and explain how they can be decomposed into bounded elements via the logarithmic matrices defined in Chapter\ \ref{S:Coleman}. Furthermore, we study how these elements are  related to various $p$-adic $L$-functions via the Perrin--Riou maps and Coleman maps given in the previous chapter.

\section{Beilinson--Flach elements}
\label{subsec_review_BF_elements}
We begin this section by  explaining how to obtain $\Lambda_\f$-adic Beilinson--Flach elements for $T_{\f,g}$ utilizing results in \cite{KLZ2,LZ1}. Our arguments follow very closely those presented in \cite[\S3]{BFSuper} (with the role of $f$ and $\g$ in op. cit. played by $g$ and $\f$ respectively).

\begin{defn}
Suppose $\mu\in\{\alpha,\beta\}$ and set $N=\mathrm{lcm}(N_f,N_g)$. Let $M\ge1$ be an integer such that $c$ is coprime to $6pMN$. 
\item[i)] We write\index{Beilinson--Flach elements! $z_{M,j}^{\f,\mu}$}  $z_{M,j}^{\f,\mu}\in H^1(\QQ(\mu_{M}),R_\f^*\otimes R_{g_\mu}^*(-j))$ for the image of the Rankin--Iwasawa class $\mathcal{RI}_{M,pMN,1}$ of \cite[Definiton~3.2.1]{LZ1} under the compositum of the arrows
\begin{align*}
&H^3_{\et}\left(Y(M,pMN)^2,\Lambda(\mathscr{H}_{\Zp}\langle t\rangle)^{[j,j]}(2-j)\right)\\
&\longrightarrow H^1\left(\QQ(\mu_M), H^1_{{\et}}\left(\overline{Y_1(pN)},\Lambda(\mathscr{H}_{\Zp}\langle t\rangle)^{[j]}\right)^{\boxtimes 2}(2-j)\right)\\
&\longrightarrow H^1\left(\QQ(\mu_M),H^1_{{\et}}\left(\overline{Y_1(pN)},\Lambda(\mathscr{H}_{\Zp}\langle t\rangle)(1)\right) \boxtimes H^1_{{\et}}\left(\overline{Y_1(pN)},{\rm TSym}^{k_g}\mathscr{H}_{\Zp}(1)\right) (-j)\right)\\
&\longrightarrow H^1\left(\QQ(\mu_M),  H^1_{\ord}(Np^\infty)^{[k_g]}\otimes R_{g_\mu}^*(-j)\right)\\
&\longrightarrow H^1\left(\QQ(\mu_M),  R_{\f}^{*,[k_g]}\otimes R_{g_\mu}^*(-j)\right),
\end{align*}
which are given as in \cite[(17)]{BFSuper}, composed with the projection 
$$H^1_{\ord}(Np^\infty)^{[k_g]}\lra R_{\f}^{*,[k_g]}\,,$$ where $H^1_{\ord}(Np^\infty)^{[k_g]}$ is defined as in \S3.1 of op. cit. Here, the superscript $[k_g]$ means that we have twisted the representation $R_{\f}^*$ by the $k_g^{\rm{th}}$ power of the weight character. We also note that  we have dropped the left-subscript $c$ from the notation (see  Remark~3.1 of op. cit.).

\item[ii)]When $M=mp^n$ where $n\ge1$, we define\index{Beilinson--Flach elements! $\x_{M,j}^{\f,\mu}$}  $\x_{M,j}^{\f,\mu}\in H^1(\QQ(\mu_{M}),R_f^{*,[k_g]}\otimes W_{g_\mu}^*(-j))$ to be the image of 
\[
\frac{{(U_p')^{-n}\times (U_p')^{-n}}}{(-1)^j j!\binom{k_g}{j}^2}\mathcal{RI}_{M,pMN,1} 
\]
under the same series of maps in (i). 
\end{defn}

\begin{theorem}
\label{thm_interpolate_cyc_BF}
Let $m\in\cN$ and $\mu\in\{\alpha,\beta\}$. There exists a unique element
\[
\cBF_m^{\f,\mu}\in H^1\left(\QQ(\mu_m),R_\f^{*,[k_g]}\otimes R_{g_\mu}^*\widehat{\otimes}\cH_{\ord(\mu_g)}(\Gamma_\cyc)^\iota\right)
\]
such that its image in $H^1\left(\QQ(\mu_{mp^n}),R_\f^{*,[k_g]}\otimes W_{g_\mu}^*(-j)\right)$ equals $\x^{\f,\mu}_{mp^n,j}$ for all $0\le j\le k_g$ and $n\ge1$.
\end{theorem}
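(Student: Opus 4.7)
The plan is to adapt the strategy of \cite[\S3]{BFSuper}, which itself builds on \cite{LZ1,KLZ2}, to the present semi-ordinary setting, in which $\f$ varies in an ordinary Hida family whereas $g$ is a fixed non-$p$-ordinary eigenform with $p$-stabilization $g_\mu$. The target distribution algebra $\cH_{\ord(\mu)}(\Gamma_\cyc)^\iota$ (rather than $\Lambda_\cO(\Gamma_\cyc)^\iota$) reflects the fact that the $U_p$-eigenvalue $\mu_g$ of $g_\mu$ is a non-unit of valuation $\ord(\mu)>0$, while the ordinary factor $\f$ contributes no additional growth.

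The first step is to establish the norm-compatibility
\[
\cor_{\QQ(\mu_{mp^{n+1}})/\QQ(\mu_{mp^n})}\!\left(\x^{\f,\mu}_{mp^{n+1},j}\right)=\x^{\f,\mu}_{mp^n,j}\qquad (n\ge 1,\ 0\le j\le k_g).
\]
This would follow from the Euler-system norm relations satisfied by the Rankin--Iwasawa classes $\mathcal{RI}_{M,pMN,1}$ (cf.\ \cite[Theorem~3.3.1]{LZ1}): corestricting one step up the cyclotomic tower introduces a factor of $U_p\times U_p$, which becomes invertible once we have projected to the $\f$-branch (on which $U_p$ acts by the unit $\alpha_\f$) and to the $g_\mu$-isotypic piece (on which $U_p$ acts by $\mu_g$). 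The $(U_p')^{-n}\times(U_p')^{-n}$ renormalization in the definition of $\x^{\f,\mu}_{mp^n,j}$ is tailored precisely to cancel these factors.

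Next, the norm-compatible families should be assembled, as $j$ varies, into a single Iwasawa-theoretic class by means of the moment-map formalism of Loeffler--Zerbes (\cite[\S5.2]{LZ1} and its $\Lambda_\f$-adic analogue in \cite[\S3]{BFSuper}). More precisely, one exhibits a class $\cBF_m^{\f,\mu}$ in cohomology valued in $R_\f^{*,[k_g]}\otimes R_{g_\mu}^*\,\widehat\otimes\,\mathcal{D}(\Gamma_\cyc)^\iota$ for a suitable distribution module $\mathcal{D}(\Gamma_\cyc)$, characterised by the requirement that its image under the map ``take the $j$-th moment, then evaluate at the character $\chi$ of $\Gamma_\cyc$ of conductor $p^n$'' recovers (the twist of) $\x^{\f,\mu}_{mp^n,j}$ for every pair $(j,n)$ with $0\le j\le k_g$ and $n\ge 1$.

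The main obstacle will be the growth estimate needed to show that the resulting distributions genuinely lie in $\cH_{\ord(\mu)}(\Gamma_\cyc)^\iota$ and not in some larger algebra. This is the semi-ordinary analogue of \cite[Proposition~5.2.3]{LZ1}, and it reduces to the assertion that Hida's ordinary projector acting on the $\f$-component contributes no growth, so that only the slope $\ord_p(\mu_g)$ of $g_\mu$ enters. Once this is in hand, uniqueness of $\cBF_m^{\f,\mu}$ is automatic: any element of $\cH_r(\Gamma_\cyc)$ is determined by its values against the characters $\chi\cdot\sigma^j$, where $j\in\ZZ_{\ge 0}$ and $\chi$ runs over finite-order characters of $\Gamma_\cyc$, and these are precisely the specializations under which we have prescribed the image.
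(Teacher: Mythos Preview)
Your approach is essentially the same as the paper's, which simply refers to \cite[Theorem~3.2]{BFSuper} (whose proof in turn follows the Loeffler--Zerbes moment-map formalism you outline). The one point the paper singles out, and which is missing from your sketch, is the vanishing
\[
H^0\!\left(\QQ(\mu_{mp^\infty}),\,R_{\f}^{*,[k_g]}\otimes R_{g_\mu}^*(-j)\right)=0,
\]
which is the hypothesis labelled (18) in \cite{BFSuper}. Your uniqueness argument treats elements of $\cH_{\ord(\mu)}(\Gamma_\cyc)$ as determined by their specializations at characters; but the class $\cBF_m^{\f,\mu}$ lives in $H^1$ with coefficients in a distribution module, and passing from ``the underlying distribution is determined'' to ``the cohomology class is determined'' requires injectivity of the specialization maps on cohomology. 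This is exactly what the $H^0$-vanishing provides. The paper notes that it holds here because $g$ is non-ordinary at $p$ while $\f$ is a $p$-ordinary family, so the tensor product cannot contain a trivial subrepresentation over $\QQ(\mu_{mp^\infty})$.
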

\begin{proof}
This follows from the same proof as \cite[Theorem~3.2]{BFSuper}. Note that the requirement in the equation (18) of op. cit. translates to 
$$H^0(\QQ(\mu_{mp^\infty}),R_{\f}^{*,[k_g]}\otimes R_{g_\mu}^*(-j))=0,$$ which follows as a consequence of the fact that $g$ is non-ordinary at $p$, whereas $\f$ is a $p$-ordinary family.
\end{proof}

\begin{defn}\label{defn:BF-Hida}
We define 
$$\BF_{\f,g,\mu,m}\in H^1\left(\QQ(\mu_m),R_\f^{*}\otimes R_{g_\mu}^*\widehat{\otimes}\cH_{\ord(\mu_g)}(\Gamma_\cyc)^\iota\right)$$ 
\index{Beilinson--Flach elements! $\BF_{\f,g,\mu,m}$} 
as the image of $\cBF_m^{\f,\mu}$ given in Theorem~\ref{thm_interpolate_cyc_BF} under the weight-twisting morphism 
\begin{equation*}
 \resizebox{\displaywidth}{!}{
$H^1\left(\QQ(\mu_m),R_\f^{*,[k_g]}\otimes R_{g^\mu}^*\widehat{\otimes}\cH_{\ord(\mu_g)}(\Gamma_\cyc)^\iota\right)\lra H^1\left(\QQ(\mu_m),R_\f^{*}\otimes R_{g_\mu}^*\widehat{\otimes}\cH_{\ord(\mu_g)}(\Gamma_\cyc)^\iota\right).$}
\end{equation*}
\end{defn}

We recall from \cite[Theorem~5.4.1]{BLLV} that the logarithmic matrix  constructed using Berger's Wach module basis when both $f$ and $g$  are $p$-non-ordinary leads to a partial splitting of unbounded Beilinson--Flach elements associated to the Rankin--Selberg products. 
The reason why we fell short of establishing the full splitting Conjecture~5.3.1 in op. cit. is the lack of information on the Beilinson--Flach elements associated to the twists $T_{f,g}(-j)$ for $j>\max(k_f,k_g)$). When $f$ is $p$-ordinary, we may allow $f_\alpha$ to vary in the Hida family $\f$ to by-pass this restriction imposed by the weights and obtain a full decomposition of the unbounded Beilinson--Flach elements $\{\BF_{\f,g,\mu,m}\}_{m\in\cN}$, $\mu\in\{\alpha,\beta\}$ into bounded ones: 

\begin{theorem}
\label{thm_BF_factorization_for_families}
\item[i)]Suppose that \ref{item_FLg} holds. For $m\in\cN$, there exists a pair of cohomology classes 
$$\BF_{\f,g,\#,m},\BF_{\f,g,\flat,m}\in \varpi^{-s(g)}H^1\left(\QQ(\mu_m),R_\f^*\otimes R_g^*\widehat{\otimes}\Lambda_\cO(\Gamma_\cyc)^\iota\right)$$ 
such that\index{Beilinson--Flach elements! $\BF_{\f,g,\#,m},\BF_{\f,g,\flat,m}$} 

\[
\begin{pmatrix}
\BF_{\f,g,\alpha,m}\\\\ \BF_{\f,g,\beta,m}
\end{pmatrix}
=Q_g^{-1}M_g
\begin{pmatrix}
\BF_{\f,g,\#,m}\\\\ \BF_{\f,g,\flat,m}
\end{pmatrix},
\]
where $Q_g$ and $M_g$ are the matrices given in Definitions~\ref{defn:Q} and \ref{defn:Mg}, respectively.
Here, $s(g)$ is a natural number that depends on $k_g$ but is independent of $m$.
\item[ii)]{Suppose that \ref{item_SSf} holds.} Then the statement holds if we replace $M_g$, $\#$ and $\flat$ by $M_g'$, $+$ and $-$ respectively.
\end{theorem}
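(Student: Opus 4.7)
My approach follows the Wach-module strategy of \cite[Theorem 5.4.1]{BLLV}, adapted to the current $\Lambda_\f$-adic setting in which the role played by the non-ordinary form $f$ of loc.\ cit.\ is taken over by the $p$-ordinary Hida family $\f$. Because the $\f$-variable contributes no half-logarithmic factors (the $\Lambda_\f$-adic Perrin-Riou maps $\cL_{F,\f,g,\mu}$ already factor through individual bounded Coleman maps $\col_{F,\f,g,\bullet}$, by Theorem~\ref{thm_decompPRLamdaf}), only the $g$-variable logarithmic matrix $M_g$ (respectively $M_g'$ in case (ii)) appears in the factorization. The intuition is that the unboundedness of $\BF_{\f,g,\mu,m}$ is entirely accounted for by the projection to the $\vp$-eigenvector basis $\{v_{g,\alpha}^*,v_{g,\beta}^*\}$ of $\Dcris(R_g^*)$; re-expressing these classes in the integral Wach-module basis $\{n_{g,1},n_{g,2}\}$ (respectively $\{n_+,n_-\}$) via the change-of-basis encoded by $Q_g^{-1}M_g$ should recover (up to a bounded denominator) bounded classes.

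Concretely, I would define
\[
\begin{pmatrix}\BF_{\f,g,\#,m}\\ \BF_{\f,g,\flat,m}\end{pmatrix}:=(Q_g^{-1}M_g)^{-1}\begin{pmatrix}\BF_{\f,g,\alpha,m}\\ \BF_{\f,g,\beta,m}\end{pmatrix},
\]
so that the factorization identity is formally automatic. These candidates a priori lie in $H^1(\QQ(\mu_m),R_\f^*\otimes R_g^*\widehat\otimes\Frac\,\cH(\Gamma_\cyc)^\iota)$. To upgrade them to $\varpi^{-s(g)}\Lambda_\cO(\Gamma_\cyc)^\iota$-valued classes, I would match the above with a direct Wach-module construction at $p$: using the interpolation property of the Rankin--Iwasawa classes $\cBF_{mp^n}^{\f,\mu}$ from Theorem~\ref{thm_interpolate_cyc_BF}, together with Berger's Mellin-inverse identification $\mathfrak{m}^{-1}$ of $\NN(R_g^*)$-coefficients with Iwasawa-theoretic power series, the coordinates of $\loc_p(\cBF_{mp^n}^{\f,\mu})$ expressed in the Wach-module basis are bounded. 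Passing to the projective limit identifies $\loc_p(\BF_{\f,g,\bullet,m})$ with bounded Wach-module coordinates at each prime $v\mid p$. Away from $p$, the Rankin--Iwasawa classes are themselves integral \'etale cohomology classes, so patching the $p$-local picture with the away-from-$p$ picture via a standard localization/cone argument places $\BF_{\f,g,\bullet,m}$ globally in the claimed integral cohomology group.

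The denominator $\varpi^{-s(g)}$ accounts for two distinct failures of integrality: (a)~the Berger--Li-Zhu Wach-module basis need not be integral when $p\le k_g+1$, which is relevant only for case (ii) (in case (i), the hypothesis \ref{item_FLg} is precisely what makes \cite[Proof of Proposition~V.2.3]{berger04} directly supply an integral lift); and (b)~the base change $Q_g$ involves $(\alpha_g-\beta_g)^{-1}$, which need not be a $p$-adic unit. Both contributions depend only on $g$, so $s(g)$ will be independent of $m\in\cN$. The main obstacle will be verifying this $m$-uniformity rigorously, and more generally, ensuring that the candidate classes descend to genuine global cohomology classes (rather than merely distribution-valued ones) over the varying cyclotomic fields $\QQ(\mu_m)$: the key input for both points is that Theorem~\ref{thm_decompPRLamdaf} (respectively Theorem~\ref{thm_decompose_PR_SSf}) produces the Coleman maps $\col_{F,\f,g,\bullet}$ with values in $\Lambda_\f(\Gamma_\cyc)\otimes\cO_F$ without any growth in $m$, thereby providing the required uniformity when transferred to the Beilinson--Flach elements through the explicit reciprocity compatibility at $p$.
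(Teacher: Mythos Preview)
Your formal definition of the signed classes via $(Q_g^{-1}M_g)^{-1}$ is fine and makes the factorization identity automatic, but the heart of the theorem is precisely the boundedness, and here your argument has a genuine gap. The classes $\BF_{\f,g,\mu,m}$ live in $H^1(\QQ(\mu_m),T_{\f,g}\,\widehat\otimes\,\cH_{\ord(\mu)}(\Gamma_\cyc)^\iota)$: the unboundedness is in the \emph{coefficient ring} $\cH_{\ord(\mu)}(\Gamma_\cyc)$, not in any local condition. Your dichotomy ``bounded at $p$ via Wach modules'' versus ``integral away from $p$'' therefore does not address the problem. Knowing that the image of $\loc_p(\BF_{\f,g,\bullet,m})$ under the Coleman maps is bounded (which is indeed what Theorem~\ref{thm_decompPRLamdaf} gives) tells you nothing about whether the global class itself has $\Lambda_\cO(\Gamma_\cyc)$-coefficients---the kernel of $\loc_p$ can be large, and your ``localization/cone argument'' is never made precise. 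Moreover, since $\det(Q_g^{-1}M_g)$ is proportional to $\log_{p,k_g+1}/\delta_{k_g+1}$, the inverse matrix has genuine poles at all characters of the form $\chi_\cyc^j\theta$ with $0\le j\le k_g$; showing that the column $(\BF_{\f,g,\alpha,m},\BF_{\f,g,\beta,m})^t$ lies in the image of $Q_g^{-1}M_g$ acting on $\Lambda$-valued classes is exactly the content to be proved.

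The paper's argument is quite different and more hands-on. Following \cite[Theorem~3.7]{BFSuper}, it works at each finite level $n$ with explicit cocycles $c_{n,i}^{\f,\mu}$ lifting $\res(\mu_g^n\x_{mp^n,i}^{\f,\mu})$ for $0\le i\le k_g$, uses a growth estimate on their twisted finite differences to produce a single bounded cocycle $c_n^{\f,\mu}$ modulo $\omega_{n-1,k_g+1}$ (this costs a denominator $\varpi^{s_0(g)}$), and then invokes the interpolative relations among Rankin--Iwasawa classes at different twists (\cite[Corollary~3.4]{BFSuper}) together with a combinatorial decomposition lemma (\cite[Proposition~2.10]{BFSuper}) to factor through the truncated logarithmic matrix $C_{n-1}$ at level $n$ (costing a further $\varpi^{s_1(g)}$). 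Only then does one pass to the limit. In case (ii), the paper exploits the special feature $\alpha_g=-\beta_g$ to show that $c_n^{\f,\alpha}\pm c_n^{\f,\beta}$ is divisible by the appropriate product of cyclotomic polynomials $\Phi_{m,k_g+1}$, yielding the plus/minus splitting directly. Your account of the source of $s(g)$ (Wach-module basis integrality and $(\alpha_g-\beta_g)^{-1}$) does not match: the actual denominators arise from the two finite-level steps just described and depend only on $k_g$, which is what gives the $m$-independence.
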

\begin{proof}
Let us first consider part (i). It is enough to show that  $\cBF_m^{\f,\alpha}$ and $\cBF_m^{\f,\beta}$ verify the analogous factorization. In more precise wording, it suffices to show that there exists a pair 
$$\cBF_m^{\f,\#},\cBF_m^{\f,\flat}\in \varpi^{-s(g)}H^1\left(\QQ(\mu_m),R_\f^{*,[k_g]}\otimes R_g^*\widehat{\otimes}\Lambda_\cO(\Gamma_\cyc)^\iota\right)$$ 
such that
\[
\begin{pmatrix}
\cBF_m^{\f,\alpha}\\\\ \cBF_m^{\f,\beta}
\end{pmatrix}
=Q_g^{-1}M_g
\begin{pmatrix}
\cBF_m^{\f,\#}\\\\ \cBF_m^{\f,\flat}
\end{pmatrix}.
\]
This follows from the same proof as \cite[Theorem~3.7]{BFSuper}, where the Hida family there is assumed to be CM, but the CM condition in fact plays no role in the argument. We overview the main steps of the proof of \cite[Theorem~3.7]{BFSuper}.  For $i=0,\ldots, k_g$ and $\mu\in\{\alpha,\beta\}$,   consider the cocycle 
\[
c_{n,i}^{\f,\mu}\in Z^1\left(G_{\QQ(\mu_{mp^\infty})},R_\f^{*,[k_g]}\otimes R_g^*\right)\otimes \cO[\Gamma/\Gamma^{p^{n-1}}]
\]
lifting $\res\left(\mu_g^n \x_{mp^n,i}^{\f,\mu}\right)$  as in op. cit., which satisfies
\begin{equation}\label{eq:boundedtwists}
    \left|\left|p^{-jn}\sum_{i=0}^j(-1)^i\binom{j}{i}1\otimes\Tw^{-i} c_{n,i}^{\lambda,\g}\right|\right|\le 1.
\end{equation}
Using Lemma~2.3 and Remark~2.4 in op. cit., we obtain the bounded cocycle 
\[
c_{n}^{\f,\mu}\in Z^1\left(G_{\QQ(\mu_{mp^\infty})},R_\f^{*,[k_g]}\otimes R_g^*\right)\otimes \varpi^{-s_0(g)}\Lambda_\cO(\Gamma_\cyc)/\omega_{n-1,k_g+1}
\]
whose image modulo $\Tw^{-i}\omega_{n-1}$ is {$c_{n,i}^{\f,\mu}$}. This in turn gives rise to a pair of cohomology classes
\[
\varpi^{s_0(g)}{\mu^n\x_n^{\f,\mu}}\in H^1(\QQ(\mu_{mp^\infty}),R_\f^{*,[k_g]}\otimes R_g^* )\otimes \Lambda_\cO(\Gamma_\cyc)/\omega_{n-1,k_g+1}\,,\hspace{.6cm}\mu\in \{\alpha,\beta\}\,.
\]

Let $C_{n-1}$ denote the matrix
\[
\mathfrak{m}^{-1}\left((1+\pi)\vp^{n-1}(P_g^{-1})\cdots \vp(P_g^{-1})\right).
\]
The interpolative properties of the Rankin--Iwasawa classes  given by  Corollary~3.4  in op. cit. allow us to apply \cite[Proposition~2.10]{BFSuper} to obtain a pair of bounded classes 
$$\x_{\#,mp^n},\x_{\flat,mp^n}\in H^1\left(\QQ(\mu_{mp^\infty}),R_\f^{*,[k_g]}\otimes R_{g^\mu}^*\otimes \varpi^{-s_1(g)}\Lambda_\cO(\Gamma_\cyc)^{\oplus 2}/\ker h_{n-1}\right)$$ 
(where $h_{n-1}$ denotes the map $\Lambda_\cO(\Gamma_\cyc)^{\oplus 2}\rightarrow \Lambda_\cO(\Gamma_\cyc)^{\oplus 2}$ given by the matrix $C_{n-1}$), satisfying
\[
\begin{pmatrix}
\alpha_g^n\x_{mp^n}^{\f,\alpha}\\\\\beta_g^n\x_{mp^n}^{\f,\beta}
\end{pmatrix}=Q_g^{-1}C_{n-1}\begin{pmatrix}
\x_{mp^n}^{\f,\#}\\\\ \x_{mp^n}^{\f,\flat}
\end{pmatrix},
\]
The result then follows on passing to limit in $n$ and setting $\cBF_m^{\f,\bullet}:=\lim_n \x_{mp^n}^{\f,\bullet}$ for $\bullet\in \{\#,\flat\}$ with $s(g)=s_0(g)+s_1(g)$.

We now consider part (ii). As in the first portion, for each $\mu\in\{\alpha,\beta\}$, we have bounded  {cocycles} ${c_n^{\f,\mu}}$ interpolating $\res\left(\mu_g^n\x^{\f,\mu}_{mp^n,i}\right)$  since the inequality \eqref{eq:boundedtwists} is still valid. Observe in addition that \cite[Corollary~3.4]{BFSuper} also holds without the hypothesis \ref{item_FLg}. Notice that we now have $\alpha_g=-\beta_g$. This tells us that for $1\le m\le n-1$, the cocycle
\[
c_n^{\f,\alpha}\pm c_n^{\f,\beta}  \in Z^1\left(G_{\QQ(\mu_{mp^\infty})},R_\f^{*,[k_g]}\otimes R_g^*\right)\otimes \varpi^{-s_0(g)}\Lambda_\cO(\Gamma_\cyc)/\omega_{n-1,k_g+1}
\]
is divisible by the polynomial $\Phi_{m,k_g+1}$, where the parity of $m$ determines the sign above. This gives rise to a pair of bounded cocycles $c_n^{\f,\pm}$ satisfying 
\begin{equation}\label{eq:pm-cocycles}
    \begin{pmatrix}
1&-1\\ \\ 1&1
\end{pmatrix}
\begin{pmatrix}
c_n^{\f,\alpha}\\ \\ c_n^{\f,\beta}
\end{pmatrix}=
\begin{pmatrix}
\omega_{n,k_g+1}^+c_n^{\f,+}\\ \\ \omega_{n,k_g+1}^-c_n^{\f,-}
\end{pmatrix},
\end{equation}
where $\omega_{n,k_g+1}^\pm=\prod\Phi_{n,k_g+1}$ with the products running over all even (respectively odd) integers between 1 and $n-1$ for the sign $+$ (respectively $-$).

It follows from Remark~\ref{rk:matrices-ap=0} that
\[
Q_g^{-1}M_g'\equiv \frac{1}{\alpha_g^n}\begin{pmatrix}
\sigma_n^+\omega_{n,k_g+1}^+&\sigma_n^-\omega_{n,k_g+1}^+\\\\
-\sigma_n^+\delta_n\omega_{n,k_g+1}^+&\sigma_n^-\omega_{n,k_g+1}^+
\end{pmatrix}\mod \omega_{n-1,k_g+1}\,,
\]
where $\sigma_n^\pm$ are bounded polynomials. Combining this with \eqref{eq:pm-cocycles} gives
\[
\begin{pmatrix}
c_n^{\f,\alpha}\\ \\ c_n^{\f,\beta}
\end{pmatrix}=
Q_g^{-1}M_g'
\begin{pmatrix}
d_n^{\f,+}\\ \\ d_n^{\f,-}
\end{pmatrix},
\]
where $d_n^{\f,\pm}$ are bounded cocycles. This in turn produces a pair of bounded cohomology classes $\x_{mp^n}^{\f,\pm}$ represented by $d_{mp^n}^{\f,\pm}$ which verify
\[
\begin{pmatrix}
\x_{mp^n}^{\f,\alpha}\\ \\\x_{mp^n}^{\f,\beta}
\end{pmatrix}=Q_g^{-1}M_g'\begin{pmatrix}
\x_{mp^n}^{\f,+}\\\\ \x_{mp^n}^{\f,-}
\end{pmatrix}.
\]
The result now follows on setting $\BF_{\f,g,\pm,m}$ as the limit of $\x_{mp^n}^{\f,\pm}$ as $n$ tends to infinity.
\end{proof}

\begin{defn}\label{defn:boundedBF}
Suppose $\bullet\in\{\#,\flat\}$ under the hypothesis \ref{item_FLg} or else $\bullet\in\{+,-\}$ under \ref{item_SSf}. We shall write $\BF^{(\alpha,\bullet)}_{f,g,m}$ for the specialization of the element $\BF_{\f,g,\bullet,m}$\index{Beilinson--Flach elements! $\BF^{(\alpha,\bullet)}_{f,g,m}$} 
under $\kappa_{f_\alpha}: \LL_\f\to \cO$ associated to the member $f_\alpha$ of the Hida family $\f$. Similarly, for $\lambda\in\{\alpha,\beta\}$, we write $\BF^{(\alpha,\lambda)}_{f,g,m}$ for the corresponding specialization of $\BF_{\f,g,\lambda,m}$.
\end{defn}
We remark that we then automatically deduce from Theorem~\ref{thm_BF_factorization_for_families} the factorizations
\begin{equation}
    \label{eq:BFdecomp}
    \begin{pmatrix}
\BF_{f,g,m}^{(\alpha,\alpha)}\\\\ \BF_{f,g,m}^{(\alpha,\beta)}
\end{pmatrix}
=Q_g^{-1}M_g
\begin{pmatrix}
\BF_{f,g,m}^{(\alpha,\#)}\\\\ \BF_{f,g,m}^{(\alpha,\flat)}
\end{pmatrix}, \quad 
\begin{pmatrix}
\BF_{f,g,m}^{(\alpha,\alpha)}\\\\ \BF_{f,g,m}^{(\alpha,\beta)}
\end{pmatrix}
=Q_g^{-1}M_g'
\begin{pmatrix}
\BF_{f,g,m}^{(\alpha,+)}\\
\\ \BF_{f,g,m}^{(\alpha,-)}
\end{pmatrix}
\end{equation}
for every specialization $f_\alpha$ of the Hida family $\f$ and integer $m\in \cN$. %Analogous factorizations hold true for the $\pm$ classes, under the hypothesis \ref{item_SS}.

\section{Euler systems of rank $2$ and uniform integrality}
\label{subsubsec_ESrank2}
The goal of the current section is to analyze the variation of exponent $s(g)$ with the eigenform $g$. Our main result in this vein is Corollary~\ref{cor_s_g_is_uniformly_bounded} below.

This analysis is relevant to our attempt towards the Iwasawa main conjectures for $f_{/K}\otimes\psi$, where $K$ is an imaginary quadratic field where $p$ is inert, $f_{/K}$ is the ``base-change'' of $f$ to $K$ and $\psi$ is a Hecke character of $K$. See \S\ref{subsec_IMCimagquadinert} for a detailed discussion, more particularly Remark~\ref{rem_s_g_uniform_bound_required} and \S\S\ref{subsubsec_3var_inert_ord}--\ref{subsubsec_anticyclo_3} for the relevance of our discussion in \S\ref{subsubsec_ESrank2}.

The following conjecture, which is in line with Perrin-Riou's philosophy~\cite{pr-es}, is the $\LL_\f$-adic version of \cite[Conjecture 3.5.1]{BLLV}. We retain the notation of \S\ref{subsec_twovarPRandColeman} and the present \S\ref{subsec_review_BF_elements}.

\begin{conj}
\label{conj_ESrank2}
Suppose $m\in \cN$. There exists a unique element
\[
\mathbb{BF}^{\f,g}_m\in {\bigwedge}^2\,H^1\left(\QQ(\mu_m),T_{\f,g}\widehat{\otimes}\Lambda_\cO(\Gamma_\cyc)^\iota\right)
\]
such that for $\mu=\alpha,\beta$, we have
$$j_\mu\circ \cL_{\QQ(\mu_m)_p,\f,g,\mu}\left(\mathbb{BF}^{\f,g}_m\right)=\BF_{\f,g,\mu,m}\,.$$
Here, we have regarded the functional $\cL_{\QQ(\mu_m)_p,\f,g,\mu}$ as a map
\begin{align*}
    {\bigwedge}^2\,H^1&\left(\QQ(\mu_m),T_{\f,g}\widehat{\otimes}\Lambda_\cO(\Gamma_\cyc)^\iota\right)\xrightarrow{
    \cL_{\QQ(\mu_m)_p,\f,g,\mu}}  \\
    &\quad\qquad\qquad\qquad H^1\left(\QQ(\mu_m),T_{\f,g}\widehat{\otimes}\Lambda_\cO(\Gamma_\cyc)^\iota\right){\otimes}_{\Lambda_\cO(\Gamma_\cyc)}\cH_{\ord(\mu_g)}(\Gamma_\cyc)
\end{align*}
$$\cL_{\QQ(\mu_m)_p,\f,g,\mu}\left(x\otimes y \right):=\cL_{\QQ(\mu_m)_p,\f,g,\mu}(\res_p(x))\cdot y-\cL_{\QQ(\mu_m)_p,\f,g,\mu}(\res_p(y))\cdot x$$
and $j_\mu$ is stands for the compositum of the arrows 
\begin{align*}
    H^1\left(\QQ(\mu_m),T_{\f,g}\widehat{\otimes}\Lambda_\cO(\Gamma_\cyc)^\iota\right)&\widehat{\otimes}_{\Lambda_\cO(\Gamma_\cyc)}\cH_{\ord(\mu_g)}(\Gamma_\cyc)\lra\\
    & H^1\left(\QQ(\mu_m),T_{\f,g}\widehat{\otimes}\cH_{\ord(\mu_g)}(\Gamma_\cyc)^\iota\right)  \xrightarrow{{\rm id}_\f\times ({\rm pr}^\mu)^*}\\
    &\qquad\qquad\qquad \qquad H^1\left(\QQ(\mu_m),T_{\f,g_\mu}\widehat{\otimes}\cH_{\ord(\mu_g)}(\Gamma_\cyc)^\iota\right)\,.
\end{align*}
\end{conj}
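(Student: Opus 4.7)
The plan is to take the exterior product of the bounded signed Beilinson--Flach classes produced in Theorem~\ref{thm_BF_factorization_for_families} as a candidate for $\mathbb{BF}_m^{\f,g}$, and then to verify the required Perrin-Riou interpolation identity by combining the factorization of $\cL_{\f,g,\mu}$ from Theorem~\ref{thm_decompPRLamdaf} with the local vanishing relations established in Theorem~\ref{thm_values-of-col-BF_intro}. Under hypothesis~\ref{item_FLg}, I would first consider
\[
\widetilde{\mathbb{BF}}_m^{\f,g}:=\BF_{\f,g,\#,m}\wedge \BF_{\f,g,\flat,m}\ \in\ \bigwedge\nolimits^2 H^1\!\left(\QQ(\mu_m),T_{\f,g}\widehat{\otimes}\Lambda_\cO(\Gamma_\cyc)^\iota\right)\otimes_{\ZZ_p}\QQ_p;
\]
under hypothesis~\ref{item_SSf}, one uses $(+,-)$ in place of $(\#,\flat)$ throughout.

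The next step is to unfold the contraction $\cL_{\f,g,\alpha}(\widetilde{\mathbb{BF}}_m^{\f,g})$ by inserting the decomposition
\[
\cL_{\f,g,\alpha}=(Q_g^{-1}M_g)_{1,1}\,\col_{\f,g,\#}+(Q_g^{-1}M_g)_{1,2}\,\col_{\f,g,\flat}
\]
supplied by Theorem~\ref{thm_decompPRLamdaf}. Together with the relations $\col_{\f,g,\bullet}\circ\loc_p(\BF_{\f,g,\bullet,m})=0$ (which Theorem~\ref{thm_values-of-col-BF_intro} provides for $m=1$, and which I would extend to all $m\in\cN$ via the compatibility of the semi-local Coleman maps with corestriction), together with the comparison $\col_{\f,g,\#}\circ\loc_p(\BF_{\f,g,\flat,m})=-\col_{\f,g,\flat}\circ\loc_p(\BF_{\f,g,\#,m})=D_g\,\delta_{k_g+1}\,L_p^{\rm geo}(\f,g)$ and the expansion of $\BF_{\f,g,\alpha,m}$ from Theorem~\ref{thm_BF_factorization_for_families}, a short bookkeeping produces the identity
\[
\cL_{\f,g,\alpha}(\widetilde{\mathbb{BF}}_m^{\f,g})\ =\ \bigl(D_g\,\delta_{k_g+1}\,L_p^{\rm geo}(\f,g)\bigr)\cdot \BF_{\f,g,\alpha,m},
\]
and its analogue with $\alpha$ replaced by $\beta$. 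Setting
\[
\mathbb{BF}_m^{\f,g}\ :=\ \bigl(D_g\,\delta_{k_g+1}\,L_p^{\rm geo}(\f,g)\bigr)^{-1}\cdot\widetilde{\mathbb{BF}}_m^{\f,g}
\]
then yields the required class, provided that the division can be carried out inside the integral lattice $\bigwedge^2 H^1_{\Iw}$. Uniqueness would follow from the non-vanishing of $\det(Q_g^{-1}M_g)$ (which is non-zero by Remark~\ref{rk:matrices-ap=0}, for instance) combined with the absence of $\LL_\f(\Gamma_\cyc)$-pseudo-null submodules in $H^1_{\Iw}$ under the usual big-image assumptions on $\overline{\rho}_\f\otimes\overline{\rho}_g$.

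The hard part, and the true content of the conjecture, is the integrality assertion: one must prove that $\widetilde{\mathbb{BF}}_m^{\f,g}$ is divisible by the non-unit factor $D_g\,\delta_{k_g+1}\,L_p^{\rm geo}(\f,g)$ inside $\bigwedge^2 H^1_{\Iw}$, uniformly in $m\in\cN$, and furthermore that the residual denominators $\varpi^{-2s(g)}$ introduced by Theorem~\ref{thm_BF_factorization_for_families} can be cleared. This divisibility lies at the heart of Perrin-Riou's conjectural framework of higher-rank Euler systems~\cite{pr-es} and is emphatically not formal. A conceivable line of attack would be to mimic the Bockstein/cocycle-lifting procedure underlying \cite[Theorem 5.4.1]{BLLV}, applied here uniformly in $m$ and in the semi-ordinary regime where one of the Coleman maps is bounded; a more ambitious alternative would be a direct motivic construction of a rank-$2$ Euler system whose contraction with the chosen $\vp$-eigenvectors recovers the Beilinson--Flach classes. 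Both routes appear to be out of reach with presently available technology, which is why the statement is recorded only as a conjecture.
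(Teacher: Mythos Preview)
The statement you are addressing is a \emph{conjecture}, and the paper offers no proof of it; immediately after stating it, the authors only refer to \cite{BL_ESrank2_Sym, KBOchiai_2} for partial evidence and then proceed to \emph{assume} its validity in Definition~\ref{defn_signed_BF_ff_bis} and Proposition~\ref{prop_when_ESrank2_exists_BF_factorization}. You correctly recognise this in your final paragraph, so in that sense your ``proof proposal'' is better read as a heuristic plausibility argument rather than a proof attempt.

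That said, there are two genuine issues with your outline. First, a concrete error: for general $m\in\cN$, the scalar $\col_{\f,g,\#,m}\circ\loc_p(\BF_{\f,g,\flat,m})$ is \emph{not} equal to $D_g\,\delta_{k_g+1}\,L_p^{\rm geo}(\f,g)$. Proposition~\ref{prop:values-of-col-BF} shows that for arbitrary $m$ this quantity equals $\frac{\alpha_g\beta_g}{(\alpha_g-\beta_g)\det M_g}\,\cL_{\f,g,\alpha,m}(\BF_{\f,g,\beta,m})$, an element of $\Lambda_\f(\Gamma_\cyc)\otimes_{\ZZ_p}\QQ(\mu_m)_p$ depending on $m$; the simplification to $D_g\,\delta_{k_g+1}\,L_p^{\rm geo}(\f,g)$ is stated only for $m=1$. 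Your candidate $\mathbb{BF}_m^{\f,g}$ would therefore require dividing by a different, $m$-dependent non-unit each time, and there is no Euler-system-type compatibility linking these scalars that would let you do this coherently.

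Second, and more fundamentally, your construction runs in the wrong direction relative to the conjecture's content. The paper's logic is that Conjecture~\ref{conj_ESrank2} posits an \emph{a priori integral} rank-$2$ class, from which one then \emph{derives} integral signed rank-$1$ classes via Definition~\ref{defn_signed_BF_ff_bis}; Corollary~\ref{cor_s_g_is_uniformly_bounded} then concludes $s(g)=0$. Your proposal instead builds the rank-$2$ class out of the signed rank-$1$ classes of Theorem~\ref{thm_BF_factorization_for_families}, which already carry the denominator $\varpi^{-s(g)}$, and then demands a further division by a non-unit $p$-adic $L$-value. Even if that division could be performed, the result would establish nothing beyond what Theorem~\ref{thm_BF_factorization_for_families} already gives; the integrality you need to \emph{input} is precisely what the conjecture is meant to \emph{output}.
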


See \cite{BL_ESrank2_Sym, KBOchiai_2} for results in support of this conjecture. 

Assuming the validity of Conjecture~\ref{conj_ESrank2}, one may define the following pair of signed Euler systems without going through the technical difficulties one needs to overcome in the proof of Theorem~\ref{thm_BF_factorization_for_families}.

\begin{defn}
\label{defn_signed_BF_ff_bis}
We assume the truth of Conjecture~\ref{conj_ESrank2}.
\item[i)] Suppose that \ref{item_FLg} holds and $m\in \cN$. For the pair of signed Coleman maps $\col_{\QQ(\mu_m)_p,\f,g,?}$ (with $?=\#,\flat$) given as in Theorem~\ref{thm_decompPRLamdaf}, we define the element 
$$\mathbb{BF}^{\f,g,?}_m:=\col_{\QQ(\mu_m)_p,\f,g,?}\left(\mathbb{BF}^{\f,g}_m\right) \in H^1\left(\QQ(\mu_m),T_{\f,g}\widehat{\otimes}\Lambda_\cO(\Gamma_\cyc)^\iota\right)$$
\item[ii)] Suppose that \ref{item_SSf} holds and $m\in \cN$. For the pair of signed Coleman maps $\col_{\QQ(\mu_m)_p,\f,g,?}$ (with $?=+,-$) given as in Theorem~\ref{thm_decompose_PR_SSf}, we define the element 
$$\mathbb{BF}^{\f,g,?}_m:=\col_{\QQ(\mu_m)_p,\f,g,?}\left(\mathbb{BF}^{\f,g}_m\right) \in H^1\left(\QQ(\mu_m),T_{\f,g}\widehat{\otimes}\Lambda_\cO(\Gamma_\cyc)^\iota\right)\,.$$
\end{defn}

One may use Theorem~\ref{thm_decompPRLamdaf} and  Theorem~\ref{thm_decompose_PR_SSf} to easily prove that the elements $\{\mathbb{BF}^{\f,g,?}_m\}$ verify the following factorization statement (which should be compared to the conclusions of Theorem~\ref{thm_BF_factorization_for_families}):

\begin{proposition}
\label{prop_when_ESrank2_exists_BF_factorization}
We assume the truth of Conjecture~\ref{conj_ESrank2}.
\item[i)]Suppose that \ref{item_FLg} holds. For every $m\in\cN$, 
\[
\begin{pmatrix}
\BF_{\f,g,\alpha,m}\\\\ \BF_{\f,g,\beta,m}
\end{pmatrix}
=Q_g^{-1}M_g
\begin{pmatrix}
\mathbb{BF}^{\f,g,\#}_m\\\\ \mathbb{BF}^{\f,g,\flat}_m
\end{pmatrix}.
\]
\item[ii)]Suppose that \ref{item_SSf} holds. For every $m\in\cN$, we have
\[
\begin{pmatrix}
\BF_{\f,g,\alpha,m}\\\\ \BF_{\f,g,\beta,m}
\end{pmatrix}
=Q_g^{-1}M_g'
\begin{pmatrix}
\mathbb{BF}^{\f,g,+}_m\\\\ \mathbb{BF}^{\f,g,-}_m
\end{pmatrix}.
\]
\end{proposition}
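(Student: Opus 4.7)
The plan is to deduce both factorizations formally from three inputs: the factorizations of Perrin-Riou functionals into Coleman maps (Theorems~\ref{thm_decompPRLamdaf} and~\ref{thm_decompose_PR_SSf}), the definition of the signed classes $\mathbb{BF}^{\f,g,\bullet}_m$ (Definition~\ref{defn_signed_BF_ff_bis}), and the characterizing property of the rank-$2$ Euler system $\mathbb{BF}^{\f,g}_m$ supplied by Conjecture~\ref{conj_ESrank2}. The key algebraic observation I will rely on is that the determinant-style construction
$$\phi \,\longmapsto\, \bigl(x\wedge y \,\mapsto\, \phi(\res_p(x))\cdot y - \phi(\res_p(y))\cdot x\bigr),$$
which promotes a $\cH(\Gamma_\cyc)$-valued functional on Iwasawa cohomology to a map defined on $\bigwedge^2 H^1$, is itself $\cH(\Gamma_\cyc)$-linear in $\phi$. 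Consequently, any $\cH(\Gamma_\cyc)$-linear matrix identity among such functionals yields, \emph{verbatim}, a matrix identity among the corresponding wedge-extended maps.

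With this in hand, I would apply the factorization provided by Theorem~\ref{thm_decompPRLamdaf} at the level of wedge-extensions and evaluate both sides at $\mathbb{BF}^{\f,g}_m$, to obtain
$$\begin{pmatrix}
    \cL_{\QQ(\mu_m)_p,\f,g,\alpha}\bigl(\mathbb{BF}^{\f,g}_m\bigr)\\[1ex]
    \cL_{\QQ(\mu_m)_p,\f,g,\beta}\bigl(\mathbb{BF}^{\f,g}_m\bigr)
\end{pmatrix}
\,=\, Q_g^{-1}M_g
\begin{pmatrix}
    \col_{\QQ(\mu_m)_p,\f,g,\#}\bigl(\mathbb{BF}^{\f,g}_m\bigr)\\[1ex]
    \col_{\QQ(\mu_m)_p,\f,g,\flat}\bigl(\mathbb{BF}^{\f,g}_m\bigr)
\end{pmatrix}.$$
By Definition~\ref{defn_signed_BF_ff_bis}(i), the two entries on the right-hand side are precisely $\mathbb{BF}^{\f,g,\#}_m$ and $\mathbb{BF}^{\f,g,\flat}_m$. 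On the left-hand side, I would then apply $j_\alpha$ and $j_\beta$ componentwise and invoke the identity $j_\mu\circ\cL_{\QQ(\mu_m)_p,\f,g,\mu}\bigl(\mathbb{BF}^{\f,g}_m\bigr) = \BF_{\f,g,\mu,m}$ from Conjecture~\ref{conj_ESrank2}, noting that $j_\mu$ is $\cH_{\ord(\mu)}(\Gamma_\cyc)$-linear and hence commutes with the scalar entries of $Q_g^{-1}M_g$. This yields the factorization claimed in~(i). Part~(ii) follows by the identical argument, with Theorem~\ref{thm_decompose_PR_SSf} and the matrix $M_g'$ replacing Theorem~\ref{thm_decompPRLamdaf} and $M_g$.

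The only delicate point is the bookkeeping of coefficient modules: as stated, the identity in the proposition is to be understood after applying $j_\mu$ to each row, so that the coefficient system on the left-hand side, namely $R_\f^*\otimes R_{g_\mu}^*\widehat\otimes \cH_{\ord(\mu)}(\Gamma_\cyc)^\iota$, matches that on the right, namely $T_{\f,g}\widehat\otimes\Lambda_\cO(\Gamma_\cyc)^\iota$ scaled by the entries of $Q_g^{-1}M_g\in\Mat_{2\times 2}(\cH(\Gamma_\cyc))$. This is the same tacit convention already used in the statement of Theorem~\ref{thm_BF_factorization_for_families}, so it introduces no new difficulty. Beyond this, there is no substantive obstacle in the argument — the proof is entirely formal, which is precisely the point of introducing $\mathbb{BF}^{\f,g}_m$: its hypothetical existence sidesteps the delicate integrality analysis that was necessary in the unconditional Theorem~\ref{thm_BF_factorization_for_families}.
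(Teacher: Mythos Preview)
Your proposal is correct and is exactly the approach the paper intends: the paper does not supply a proof but simply remarks that the factorization follows easily from Theorems~\ref{thm_decompPRLamdaf} and~\ref{thm_decompose_PR_SSf}, which is precisely what you have unpacked. Your observation about the linearity of the wedge-extension in $\phi$ and the bookkeeping of $j_\mu$ is the only content that needs checking, and you have handled it correctly.
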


\begin{corollary}
\label{cor_s_g_is_uniformly_bounded}
Assume the truth of Conjecture~\ref{conj_ESrank2}. Then,
$$\BF_{\f,g,?,m}=\mathbb{BF}^{\f,g,?}_m\qquad ?=\#,\flat,+,-\quad\hbox{and } \quad m\in \cN\,.$$
In particular, the exponents $s(g)$ in the statement of Theorem~\ref{thm_BF_factorization_for_families} are bounded independently of $g$.
\end{corollary}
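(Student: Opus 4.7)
The plan is to deduce the equality $\BF_{\f,g,?,m}=\mathbb{BF}^{\f,g,?}_m$ from a uniqueness statement for the factorization through the logarithmic matrix, after which the uniform bound on $s(g)$ will follow at once from the observation that the right-hand side is integral by construction.

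First I would combine the two available factorizations of the unbounded Beilinson--Flach classes: Theorem~\ref{thm_BF_factorization_for_families}(i) gives
\[
\begin{pmatrix}\BF_{\f,g,\alpha,m}\\ \BF_{\f,g,\beta,m}\end{pmatrix}=Q_g^{-1}M_g\begin{pmatrix}\BF_{\f,g,\#,m}\\ \BF_{\f,g,\flat,m}\end{pmatrix},
\]
while Proposition~\ref{prop_when_ESrank2_exists_BF_factorization}(i) (which relies on Conjecture~\ref{conj_ESrank2}) provides
\[
\begin{pmatrix}\BF_{\f,g,\alpha,m}\\ \BF_{\f,g,\beta,m}\end{pmatrix}=Q_g^{-1}M_g\begin{pmatrix}\mathbb{BF}^{\f,g,\#}_m\\ \mathbb{BF}^{\f,g,\flat}_m\end{pmatrix},
\]
with the analogous statements using $M_g'$ and $\{+,-\}$ under the hypothesis \ref{item_SSf}. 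Subtracting yields $Q_g^{-1}M_g\cdot \vec{\Delta}=0$, where $\vec{\Delta}$ denotes the coordinatewise difference of the two bounded pairs. Since $\det(Q_g^{-1}M_g)$ is a nonzero element of $\cH(\Gamma_\cyc)\,\widehat{\otimes}\,\LL_\f$ --- explicitly, a scalar multiple of a product of the half-logarithm type entries that appear in Proposition~\ref{prop:semiordlog} or Remark~\ref{rk:matrices-ap=0} --- Cramer's rule yields $\det(Q_g^{-1}M_g)\cdot \Delta_\bullet=0$ for each coordinate.

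The decisive step is to conclude from this that $\Delta_\bullet=0$. Each $\Delta_\bullet$ lives in the $\LL_\f(\Gamma_\cyc)$-module $\varpi^{-s(g)}H^1(\QQ(\mu_m),T_{\f,g}\,\widehat{\otimes}\,\LL_\cO(\Gamma_\cyc)^\iota)$, and after extending scalars along $\LL_\f(\Gamma_\cyc)\hookrightarrow \cH(\Gamma_\cyc)\,\widehat{\otimes}\,\LL_\f$ the element $\det(Q_g^{-1}M_g)$ becomes a non-zero-divisor. I expect the main obstacle to be precisely here: one must rule out any pseudo-null $\LL_\f(\Gamma_\cyc)$-submodule of Iwasawa cohomology on which $\det(Q_g^{-1}M_g)$ could act by zero. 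Under the standing hypotheses on $\f$ and $g$ (absolute irreducibility of $\overline{\rho}_\f$ and the non-ordinarity of $g$, which already ensured $H^0(\QQ(\mu_{mp^\infty}),T_{\f,g})=0$ in the proof of Theorem~\ref{thm_interpolate_cyc_BF}), the module $H^1_{\mathrm{Iw}}$ is torsion-free as a $\LL_\f(\Gamma_\cyc)$-module, which is exactly what is needed to cancel $\det(Q_g^{-1}M_g)$ and conclude $\vec{\Delta}=0$.

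Given the equality $\BF_{\f,g,?,m}=\mathbb{BF}^{\f,g,?}_m$, the uniform bound on $s(g)$ is immediate. Indeed, by Definition~\ref{defn_signed_BF_ff_bis}, $\mathbb{BF}^{\f,g,?}_m$ is the image of the rank-two class $\mathbb{BF}^{\f,g}_m$ under the bounded Coleman map $\col_{\QQ(\mu_m)_p,\f,g,?}$ supplied by Theorem~\ref{thm_decompPRLamdaf} (respectively Theorem~\ref{thm_decompose_PR_SSf}), whose target is $\LL_\f(\Gamma_\cyc)\otimes\cO_F$. Consequently $\mathbb{BF}^{\f,g,?}_m$ already lies in the integral cohomology $H^1(\QQ(\mu_m),T_{\f,g}\,\widehat{\otimes}\,\LL_\cO(\Gamma_\cyc)^\iota)$, with no $\varpi^{-s(g)}$ denominator. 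The established identification then forces $\BF_{\f,g,?,m}$ to be integral as well, so one may take $s(g)=0$ uniformly in $g$ and $m\in\cN$, proving the corollary.
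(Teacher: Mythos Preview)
Your proposal is correct and follows exactly the same approach as the paper, which simply says the result is ``immediate on comparing'' the factorizations in Theorem~\ref{thm_BF_factorization_for_families} and Proposition~\ref{prop_when_ESrank2_exists_BF_factorization}. You have supplied the details the paper omits---namely, that the invertibility of $Q_g^{-1}M_g$ over $\cH(\Gamma_\cyc)$ together with the torsion-freeness of the Iwasawa $H^1$ (guaranteed by the vanishing of $H^0$ already used in Theorem~\ref{thm_interpolate_cyc_BF}) forces the difference $\vec{\Delta}$ to vanish, and that the integrality of $\mathbb{BF}^{\f,g,?}_m$ then yields the uniform bound.
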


\begin{proof}
This immediate on comparing the identities we have verified in Theorem~\ref{thm_BF_factorization_for_families} (concerning the elements $\BF_{\f,g,?,m}$) with those in Proposition~\ref{prop_when_ESrank2_exists_BF_factorization} (concerning the elements $\mathbb{BF}^{\f,g,?}_m$).
\end{proof}
\section{$p$-adic $L$-functions}
\label{subsec_padicL}

We next  study the link between Beilinson--Flach classes to $p$-adic $L$-functions. By an abuse of notation, we shall identify the Beilinson--Flach classes with their images under the $p$-localization map.
We first prove the preliminary Lemmas \ref{lem:geom} and \ref{lem:geom-Hida}.
\begin{lemma}\label{lem:geom}Suppose $m\in\cN$.
\item[i)]For   $\mu\in\{\alpha,\beta\}$, we have
\[  \cL^{(\alpha,\mu)}_{f,g,m}\left(\BF^{(\alpha,\mu)}_{f,g,m}\right)=0.
\]
\item[ii)]Suppose that $k_f\ge k_g$, then 
\[
\cL^{(\alpha,\alpha)}_{f,g,m}\left(\BF^{(\alpha,\beta)}_{f,g,m}\right)=-\cL^{(\alpha,\beta)}_{f,g,m}\left(\BF^{(\alpha,\alpha)}_{f,g,m}\right)\in\log_{p,k_g+1}\Lambda_\cO(\Gamma_\cyc)\otimes_{\cO}\QQ(m)_p,
\]
where $\QQ(m)_p=\QQ(m)\otimes_{\QQ}\QQ_p$. We note that we have denoted (by a slight abuse of notation) the image of $\BF_{f,g,m}^{\alpha,\mu}$ under the corestriction map 
\[H^1\left(\QQ(\mu_m),T_{f,g}\otimes\cH_{\ord(\mu_g)}(\Gamma_\cyc)^\iota\right)\lra H^1\left(\QQ(m),T_{f,g}\otimes\cH_{\ord(\mu_g)}(\Gamma_\cyc)^\iota\right)\]
by the same symbol.
\end{lemma}
\begin{proof}
Part (i) follows from \cite[Theorem~7.1.2]{LZ1}.

{For part (ii), note that the eigenvectors $v_{f,\lambda}$ and $v_{g,\mu}$ given in Definition~\ref{def_eigenvectors} differ from the ones given in \cite[\S3.5]{BLLV} by fixed constants that are independent of $\lambda$ and $\mu$. Therefore, the stated equality follows from the same proof as the last assertion of Theorem~3.6.5 in op. cit.}

Let us put $\fL=\cL^{(\alpha,\alpha)}_{f,g,m}\left(\BF^{(\alpha,\beta)}_{f,g,m}\right)$. Let $v$ be a prime of $\QQ(m)$ above $p$ and set $F:=\QQ(m)_v$. Since
the map $\cL_{f,g,F}^{(\alpha,\alpha)}$ sends an element in $\HIw(F(\mu_{p^\infty}),T_{f,g})$ to  $\cH_{\ord_p(\mu_g)}(\Gamma_\cyc)\otimes F$, it follows that  $\fL\in\cH_{k_g+1}(\Gamma_\cyc)\otimes_{\Zp}\QQ(m)_p$. It therefore remains to show that $\fL$ is divisible by $\log_{p,k_g+1}$. To do so, it suffices to show that $\fL$ vanishes at all characters of the form $\chi^j\theta$, where $0\le j\le k_g$ and $\theta$ is a finite Dirichlet character on $\Gamma$. Let $F$ be as above. Note that the natural image of $\BF^{(\alpha,\mu')}_{f,g,m}$ in $H^1(F(\mu_{p^n}),T(-j)\otimes_\cO L)$ falls within $H^1_{\mathrm g}(F(\mu_{p^n}),T(-j)\otimes_\cO L)$ for all $n\ge 0$ and $0\le j\le k_g$ (by \cite{KLZ2} Proposition~3.3.3) and that $H^1_{\mathrm f}=H^1_{\mathrm g}$ for the representation $T(-j)\otimes_\cO L$. The interpolative property of the Perrin-Riou map then shows that $\fL$ does vanish at the aforementioned set of characters.
\end{proof}

We henceforth work under the following additional hypothesis on $\Lambda_\f$:
\begin{itemize}
     \item[\mylabel{reg}{{\bf (Reg)}}] $\Lambda_\f$ is a regular ring.
\end{itemize}

\begin{remark}
\label{remark_regular_is_light_assumption}
Note that if one is content to work over a sufficiently small open disc in the weight space (rather than the entire weight space itself), this condition on the coefficients can be ensured by replacing $\LL_\f$ with the restriction of the universal Hecke algebra to this open disc. 
\end{remark}

Under \ref{reg}, we have the following $\Lambda_\f$-adic version of Lemma~\ref{lem:geom}.
\begin{lemma}\label{lem:geom-Hida}Suppose that \ref{reg} holds. Let $m\in\cN$.
\item[i)]For  $\mu\in\{\alpha,\beta\}$, we have
\[
  \cL_{\f,g,\mu,m}(\BF_{\f,g,\mu,m})=0.
  \]
\item[ii)] We have
  \[
\cL_{\f,g,\alpha,m}(\BF_{\f,g,\beta,m})=-\cL_{\f,g,\beta,m}(\BF_{\f,g,\alpha,m})\in \log_{p,k_g+1}\Lambda_\f(\Gamma_\cyc)\otimes_{\Zp}\QQ(m)_p\,.
\]
\end{lemma}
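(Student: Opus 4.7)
Both parts of the lemma will follow from the corresponding assertions of Lemma~\ref{lem:geom} by a density argument along the Hida family. Under hypothesis~\ref{reg}, $\Lambda_\f$ is a reduced noetherian ring in which the classical arithmetic specializations form a Zariski dense subset, so an element of $\Lambda_\f\,\widehat{\otimes}\,\cH_r(\Gamma_\cyc)\otimes_{\Zp}\QQ(m)_p$ vanishes once its specialization at every such $\kappa$ does.

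For part (i), set $\fL_\f := \cL_{\f,g,\mu,m}(\BF_{\f,g,\mu,m})$. At any classical $\kappa$ giving an eigenform $f_\kappa$, Theorem~\ref{thm:PRHida} identifies the specialization of $\cL_{\f,g,\mu,m}$ (up to an explicit nonzero scalar) with the $p$-stabilized functional $\cL_{F,f_{\kappa,\alpha},g,m}^{(\alpha,\mu)}=\cL_{F,f_\kappa,g,m}^{(\alpha,\mu)}\circ (\Pr^\alpha)_*$ of Definition~\ref{defn:stabliziedmaps}, while the interpolation property of the Hida-adic Beilinson--Flach class (Theorem~\ref{thm_interpolate_cyc_BF} together with Definition~\ref{defn:BF-Hida}) identifies $(\Pr^\alpha)_*(\BF_{\f,g,\mu,m}|_\kappa)$ with the classical Beilinson--Flach class $\BF_{f_\kappa,g,m}^{(\alpha,\mu)}$. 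Composing these two inputs, $\fL_\f|_\kappa$ becomes a nonzero scalar multiple of $\cL_{F,f_\kappa,g,m}^{(\alpha,\mu)}\!\left(\BF_{f_\kappa,g,m}^{(\alpha,\mu)}\right)$, which vanishes by Lemma~\ref{lem:geom}(i); density yields $\fL_\f=0$.

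For part (ii), the antisymmetry is obtained by applying the same specialization-and-density argument to $\cL_{\f,g,\alpha,m}(\BF_{\f,g,\beta,m})+\cL_{\f,g,\beta,m}(\BF_{\f,g,\alpha,m})$, using the first equality of Lemma~\ref{lem:geom}(ii) at each classical $\kappa$. For the divisibility claim, set $\fL_\f:=\cL_{\f,g,\alpha,m}(\BF_{\f,g,\beta,m})$; the growth-rate identity $\ord_p(\alpha_g)+\ord_p(\beta_g)=k_g+1$ places $\fL_\f$ in $\Lambda_\f\,\widehat{\otimes}\,\cH_{k_g+1}(\Gamma_\cyc)\otimes_{\Zp}\QQ(m)_p$, and Lemma~\ref{lem:geom}(ii) makes $\fL_\f|_\kappa$ vanish at every character of $\Gamma_\cyc$ of the form $\chi_\cyc^j\theta$ with $0\le j\le k_g$ and $\theta$ of finite order. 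Fixing such a character and letting $\kappa$ vary, Zariski density forces $\fL_\f$ itself to vanish at each such $\chi_\cyc^j\theta$. I expect the last step---upgrading this system of vanishings to divisibility by $\log_{p,k_g+1}$ inside $\Lambda_\f(\Gamma_\cyc)\otimes\QQ(m)_p$---to be the principal technical point. The plan is to deduce it by tensoring, over the regular base $\Lambda_\f$, the standard one-variable Iwasawa-theoretic fact that any $F\in \cH_{k_g+1}(\Gamma_\cyc)$ vanishing at the characters $\chi_\cyc^j\theta$ for all $0\le j\le k_g$ and all finite $\theta$ is a multiple of $\log_{p,k_g+1}$ in $\Lambda_\cO(\Gamma_\cyc)\otimes\QQ_p$; the exchange of the cyclotomic variable against the ``coefficient'' ring $\Lambda_\f$ is made possible by the $\Lambda_\f$-flatness of $\Lambda_\f\,\widehat{\otimes}\,\cH_{k_g+1}(\Gamma_\cyc)$ combined with~\ref{reg}.
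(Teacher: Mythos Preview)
Your overall strategy---reducing to Lemma~\ref{lem:geom} at classical specializations and invoking Zariski density under \ref{reg}---is the same as the paper's. The paper organizes the density argument more concretely: it writes $\Lambda_\f=\varprojlim_{I}\Lambda_\f/I$ over finite intersections $I=\bigcap_i P_{\kappa_i}$ of crystalline primes of weight $\ge k_g+2$, realizes $\cL_{\f,g,\mu,m}$ as the inverse limit of the maps $\cL_{F,T_{\f_I,g}^{-,\emptyset}}^{(\alpha,\mu)}$, and then at each finite level uses the injection $\Lambda_\f/I\hookrightarrow\bigoplus_i\Lambda_\f/P_{\kappa_i}$ to reduce both (i) and (ii) directly to Lemma~\ref{lem:geom}. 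Two points are worth noting. First, you should restrict to specializations with $k_{f_\kappa}\ge k_g$, since Lemma~\ref{lem:geom}(ii) carries that hypothesis; the paper does this by working only over $X_{\rm cr}^{(k_g)}$. Second, your handling of the divisibility in (ii) diverges from the paper's: you propose to establish vanishing at each character $\chi_\cyc^j\theta$ and then upgrade to divisibility by $\log_{p,k_g+1}$ via a flatness argument, which you flag as the principal technical point and leave as a sketch. The paper bypasses this entirely: at each finite level $\Lambda_\f/I$, the containment in $(\Lambda_\f/I)\otimes\log_{p,k_g+1}\Lambda_\cO(\Gamma_\cyc)\otimes F$ is immediate from Lemma~\ref{lem:geom}(ii) and the injection into $\bigoplus_i$, and the inverse limit over $I$ gives the claim for $\Lambda_\f$ itself. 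This sidesteps the two-variable divisibility issue you raise.
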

\begin{proof}
Let $X_{\rm cr}^{(k_g)}\subset \mathrm{Spec}(\Lambda_\f)$ denote the set of prime ideals of the form $P_\kappa=\ker(\kappa)$, where $\kappa$ is a crystalline (necessarily classical) specialization of $\LL_f$ such that the weight of the eigenform $\f_{\kappa}$ is at least $k_g+2$. Define $\fI$ to be the set of all finite intersections of elements of  $X_{\rm cr}^{(k_g)}$. Since  $X_{\rm cr}^{(k_g)}$ is Zariski dense in $\mathrm{Spec}(\Lambda_\f)$, hypothesis \ref{reg} tells us that
\[
\Lambda_\f=\varprojlim_{I\in\fI} \Lambda_\f/I.
\]

Let $v$ be a fixed prime of $\QQ(m)$ above $p$ and write $F=\QQ(m)_v$. For $I\in\fI$, let 
\[
\cF^-R_{\f_I}^*=\cF^-R_{\f}\otimes_\cO\Lambda_\f/I, \qquad T_{\f,g}^{-,\emptyset}=\cF^-R_{\f_I}^*\otimes R_g^*
\]
and write  $$ \cL_{F,T_{\f_I,g}^{-,\emptyset}}:\HIw(F(\mu_{p^\infty}),T_{\f_I,g}^{-,\emptyset})\lra \Dcris(T_{\f_I,g}^{-,\emptyset})\otimes \cH(\Gamma_\cyc)\otimes F$$ for the Perrin-Riou map of $\cF^-R_{\f_I}^*$ over $F$. By Remark~\ref{rk:explicit} and the fact that $\vp$ acts invertibly on $\Lambda_\f(\alpha_\f^{-1})$, we have
\begin{align*}
\vp^*\NN(F,T_{\f,g}^{-,\emptyset})^{\psi=0}&=\Lambda_\f(\alpha_\f^{-1})\widehat\otimes_\cO\ \vp^*\NN(R_g^*)^{\psi=0}\otimes_{\Zp}\cO_F\\
&=\left(\varprojlim_I \Lambda_\f(\alpha_\f^{-1})/I\right)\widehat\otimes_\cO\ \vp^*\NN(R_g^*)^{\psi=0}\otimes_{\Zp}\cO_F.
\end{align*}  
Therefore, 
we may realize the Perrin-Riou map $\cL_{F,T_{\f,g}^{-,\emptyset}}$ in the proof of Theorem~\ref{thm:PRHida}  as
\begin{align*}
\varprojlim_I  \cL_{F,T_{\f_I,g}^{-,\emptyset}}:\ &\HIw(F(\mu_{p^\infty}),T_{\f,g}^{-,\emptyset})=\varprojlim_I\HIw(F(\mu_{p^\infty}),T_{\f_I,g}^{-,\emptyset})\rightarrow \\
    &\left(\varprojlim_I\Dcris(T_{\f_I,g}^{-,\emptyset})\right)\otimes \cH(\Gamma_\cyc)\otimes F=\DD(\Qp,T_{\f,g}^{-,\emptyset})\otimes \cH(\Gamma_\cyc)\otimes F
\end{align*}

For $\mu\in\{\alpha,\beta\}$, we write $\cL_{F,T_{\f_I,g}^{-,\emptyset}}^{(\alpha,\mu)}$ for the map defined by $ \cL_{F,T_{\f_I,g}^{-,\emptyset}}$ paired with the $\vp$-eigenvector  $(H_\f\cdot\eta_\f\mod I)\otimes v_{g,\mu}$. Then, on taking inverse limits again, we have the morphism
\[
\left(\varprojlim_I\cL_{F,T_{\f_I,g}^{-,\emptyset}}^{(\alpha,\mu)}\right):\HIw(F(\mu_{p^\infty}),T_{\f,g}^{-,\emptyset})\lra \left(\varprojlim_I \Lambda_\f/I\right)\widehat\otimes_\cO\ \cH_{\ord(\mu_g)}(\Gamma_\cyc)\otimes_{\Zp}F.
\]
This allows us to rewrite $\cL_{F,\f,g,\mu}$ as 
$\left(\varprojlim_I\cL_{F,T_{\f_I,g}^{-,\emptyset}}^{(\alpha,\mu)}\right)\circ {\Pr}_{\f,g}$, where $\Pr_{\f,g}$ denotes the natural projection
\[
\HIw(\Qp(\mu_{p^\infty}),T_{\f,g})\lra\HIw(\Qp(\mu_{p^\infty}),T_{\f,g}^{-,\emptyset}).
\]
Therefore, it is enough to show that
\begin{itemize}
    \item[i)]For  $\mu\in\{\alpha,\beta\}$, we have
\[
  \cL_{F,T_{\f_I,g}^{-,\emptyset}}^{(\alpha,\mu)}(\BF_{\f_I,g,\mu,v})=0.
  \]
\item[ii)] We have
  \[
\cL_{F,T_{\f_I,g}^{-,\emptyset}}^{(\alpha,\alpha)}(\BF_{\f_I,g,\beta,v})=-\cL_{F,T_{\f_I,g}^{-,\emptyset}}^{(\alpha,\beta)}(\BF_{\f_I,g,\alpha,v})\in \left(\Lambda_\f/I\right)\otimes\log_{p,k_g+1}\Lambda_\cO(\Gamma_\cyc)\otimes_{\Zp}F,
\] 
\end{itemize}
where $\BF_{\f_I,g,\mu,v}$ denotes the image of $\BF_{\f,g,\mu,m}$ in $H^1(F,T_{\f_I,g}^{-,\emptyset}\otimes \cH_{\ord(\mu_g)}(\Gamma_\cyc)^\iota)$.

Suppose that $I=\bigcap_i P_{\kappa_i} $. For $\tau,\mu\in\{\alpha,\beta\}$, we have the commutative diagram
\[
\resizebox{\displaywidth}{!}{
\xymatrix{
\HIw(F(\mu_{p^\infty}),T_{\f_I,g}^{-,\emptyset}\otimes \cH_{\ord(\tau_g)}(\Gamma_\cyc)^\iota)\ar[r]^{\cL_{F,T_{\f_I,g}^{-,\emptyset}}^{(\alpha,\mu)}}\ar@{^{(}->}[d]&\left(\Lambda_\f/I\right)\otimes_{\cO}\cH_{\ord(\tau_g\mu_g)}(\Gamma_\cyc)\otimes_{\Zp}F\ar@{^{(}->}[d]\\
\bigoplus_i\HIw(F(\mu_{p^\infty}),\cF^-R_{\f_{\kappa_i}}^*\otimes R_g^*\otimes \cH_{\ord(\tau_g)}(\Gamma_\cyc)^\iota)\ar[r]^{\ \ \ \ \ \ \ \ \ \ \ \oplus \cL_{F,\f_{\kappa_i},g}^{(\alpha,\mu)} \ \ \ \ \ \ }&\left(\bigoplus_{i}\Lambda_\f/P_{\kappa_i}\right)\otimes_{\cO}\cH_{\ord(\tau_g\mu_g)}(\Gamma_\cyc)\otimes_{\Zp}F,
}}
\]
where the vertical arrows are injective and $\cL_{F,\f_{\kappa_i},g}^{(\alpha,\mu)}$ are defined in a similar manner as $\cL_{F,f,g}^{(\alpha,\mu)}$. Lemma~\ref{lem:geom} says that (i) and (ii) hold for all $i$. The result follows. 
\end{proof}

In particular, we see that Lemma~\ref{lem:geom}(ii) holds without the assumption that $k_f\ge k_g$, {since we may first directly work with $\f$ and specialize to $f_\alpha$}.
\begin{defn}\label{defn:geopadicL}
 \item[i)]We define the geometric Rankin--Selberg $p$-adic $L$-function  $L_p^{\rm geo}(f_\alpha,g)\in \LL_\cO(\Gamma_\cyc)\otimes_\cO L$ as the unique element satisfying\index{$p$-adic $L$-functions! $L_p^{\rm geo}(f_\alpha,g)$ (Loeffler--Zerbes ``geometric'' $p$-adic $L$-function)}
 \begin{equation} \label{eq:RSfdom}  
\cL^{(\alpha,\mu)}_{f,g}\left(\BF^{(\alpha,\mu')}_{f,g,1}\right)=\frac{\log_{p,k_g+1}}{\mu_g'-\mu_g}\cdot L_p^{\rm geo}(f_\alpha,g)
 \end{equation}
 for $\mu,\mu'\in\{\alpha,\beta\}$ with $\mu\ne\mu'$.
\item[ii)] We define the $\Lambda_\f$-adic geometric Rankin--Selberg $p$-adic $L$-function 
$$L_p^{\rm geo}(\f,g) \in \Lambda_\f(\Gamma_\cyc)\otimes_{\cO}L$$ 
as the unique element satisfying\index{$p$-adic $L$-functions! $L_{p}^{\rm geo}(\f,g)$ (Loeffler--Zerbes ``geometric'' $p$-adic $L$-function)}
 \begin{equation}
      \label{eq:RSfdomHida}  \cL_{\f,g,\mu}(\BF_{\f,\mu',1})=\frac{\log_{p,k_g+1}}{\mu'_g-\mu_g}\cdot L_{p}^{\rm geo}(\f,g),
      \end{equation}
where $\mu$ and $\mu'$ are as in (i).
\end{defn}
We refer the reader to the explicit reciprocity law of Loeffler--Zerbes in \cite[Theorem~9.3.2]{LZ1} for interpolation properties of the $p$-adic $L$-function $L_p^{\rm geo}(\f,g)$.

\begin{proposition}\label{prop:values-of-col-BF}
\item[i)]Suppose that  \ref{item_FLg} holds. For $m\in\cN$ and $\bullet\in\{\#,\flat\}$, we have
\[
\col_{\f,g,\bullet,m}(\BF_{\f,g,\bullet,m})=0.
\]
Furthermore,
\begin{align*}
\col_{\f,g,\#,m}(\BF_{\f,g,\flat,m})&=-\col_{\f,g,\flat,m}(\BF_{\f,g,\#,m})\\
&=\frac{\alpha_g\beta_g}{(\alpha_g-\beta_g)\det M_g}\cL_{\f,g,\alpha,m}(\BF_{\f,g,\beta,m}).
\end{align*}
When $m=1$, 
\[
\col_{\f,g,\#}(\BF_{\f,g,\flat,1})=-\col_{\f,g,\flat}(\BF_{\f,g,\#,1})=D_{g}\delta_{k_g+1}L_{p}^{\rm geo}(\f,g),
\]
where $D_{g}$ is a unit in $\Lambda_\cO(\Gamma_1)$.\index{Coleman maps! $D_g$}\index{Coleman maps! $\delta_{m}$}
\item[ii)]If \ref{item_SSf} holds, the analogous results where one replaces $\#$ and $\flat$ by $+$ and $-$ hold true.
\end{proposition}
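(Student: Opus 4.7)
The entire proposition is extracted from a single matrix identity combining Theorems~\ref{thm_decompPRLamdaf} and \ref{thm_BF_factorization_for_families}. Set $N_g := Q_g^{-1}M_g$ and form the $2\times 2$ matrices
\[
A := \bigl(\col_{\f,g,\bullet,m}(\BF_{\f,g,\bullet',m})\bigr)_{\bullet,\bullet'\in\{\#,\flat\}}, \qquad B := \bigl(\cL_{\f,g,\mu,m}(\BF_{\f,g,\mu',m})\bigr)_{\mu,\mu'\in\{\alpha,\beta\}}.
\]
Applying $\col$ and $\cL$ to the factorizations (or, equivalently, inserting each factorization once) one obtains the identity $B = N_g\, A\, N_g^{T}$ in $M_2\bigl(\Lambda_\f(\Gamma_\cyc)\otimes_\cO L\bigr)$.

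First I would handle the vanishing of the diagonal of $A$. By Lemma~\ref{lem:geom-Hida}, the matrix $B$ is antisymmetric, so the identity $B=N_gAN_g^T$ yields $N_g(A+A^T)N_g^T = 0$. The matrix $N_g$ is invertible over the fraction field (since $\det M_g\neq 0$), hence $A+A^T=0$; as $p$ is odd, $2\in\Lambda_\f(\Gamma_\cyc)^\times$ and so the diagonal entries of $A$ vanish. This gives the first claim of (i).

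Next write $A = a\,J$ with $J=\bigl(\begin{smallmatrix}0&1\\-1&0\end{smallmatrix}\bigr)$ and $a=\col_{\f,g,\#,m}(\BF_{\f,g,\flat,m})$. For any $2\times 2$ matrix $M$ one has $M J M^T = \det(M)\,J$, and therefore $B = a\,\det(N_g)\,J$. A direct computation using Definition~\ref{defn:Q}(ii) gives $\det(Q_g) = \alpha_g\beta_g/(\alpha_g-\beta_g)$, so $\det(N_g)=(\alpha_g-\beta_g)\det(M_g)/(\alpha_g\beta_g)$. Reading off the $(\alpha,\beta)$-entry of $B$ yields
\[
\col_{\f,g,\#,m}(\BF_{\f,g,\flat,m}) \;=\; \frac{\alpha_g\beta_g}{(\alpha_g-\beta_g)\det(M_g)}\,\cL_{\f,g,\alpha,m}(\BF_{\f,g,\beta,m}),
\]
and the antisymmetry $\col_{\f,g,\#,m}(\BF_{\f,g,\flat,m}) = -\col_{\f,g,\flat,m}(\BF_{\f,g,\#,m})$ is built into $A=-A^T$.

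For the final assertion with $m=1$, substitute the defining relation \eqref{eq:RSfdomHida} into the formula above to obtain
\[
\col_{\f,g,\#}(\BF_{\f,g,\flat,1}) \;=\; \frac{-\alpha_g\beta_g\,\log_{p,k_g+1}}{(\alpha_g-\beta_g)^{2}\det(M_g)}\,L_p^{\rm geo}(\f,g).
\]
The hard point is the computation of $\det(M_g)$: one needs the identity
\[
\det(M_g) \;=\; u_g\cdot \frac{\log_{p,k_g+1}}{\delta_{k_g+1}}, \qquad u_g\in\Lambda_\cO(\Gamma_1)^\times,
\]
which is the $2$-dimensional analogue of the formula appearing at the end of Proposition~\ref{prop:semiordlog} (with $\ell_f=\log_{p,k_f+1}/\delta_{k_f+1}$ up to a unit); it can either be cited from the general theory of logarithmic matrices for non-ordinary modular forms, or derived along the lines of the proof of Proposition~\ref{prop:semiordlog} by taking determinants in the Wach-module factorization. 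Granting this formula, cancelling $\log_{p,k_g+1}$ produces the unit $D_g := -\alpha_g\beta_g/\bigl((\alpha_g-\beta_g)^2 u_g\bigr)\in\bigl(\Lambda_\cO(\Gamma_1)\otimes L\bigr)^\times$, which yields the stated equality.

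Part (ii) follows along identical lines: Theorem~\ref{thm_decompose_PR_SSf} and part (ii) of Theorem~\ref{thm_BF_factorization_for_families} produce the matrix identity $B = (Q_g^{-1}M_g')\,A'\,(Q_g^{-1}M_g')^T$ for the $+/-$ objects, Remark~\ref{rk:matrices-ap=0} gives the explicit antidiagonal form of $M_g'$ (whence $\det(M_g')$ is immediately a unit multiple of $\log_{p,k_g+1}^+\log_{p,k_g+1}^-/p^{k_g+1}$, which itself agrees with $\log_{p,k_g+1}/\delta_{k_g+1}$ up to a unit), and the remainder of the argument proceeds unchanged. The main obstacle in the whole proof is genuinely the evaluation of $\det(M_g)$ and the verification that the resulting scalar $D_g$ is a unit in $\Lambda_\cO(\Gamma_1)$ — everything else is pure linear algebra driven by the antisymmetry recorded in Lemma~\ref{lem:geom-Hida}.
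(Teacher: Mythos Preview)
Your approach is essentially identical to the paper's: both form the matrix $B=N_gAN_g^T$ with $N_g=Q_g^{-1}M_g$, use the antisymmetry of $B$ from Lemma~\ref{lem:geom-Hida}, and extract the relation $a=\cL_{\f,g,\alpha,m}(\BF_{\f,g,\beta,m})/\det(N_g)$.

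One small correction: your formula $\det(M_g)=u_g\log_{p,k_g+1}/\delta_{k_g+1}$ with $u_g\in\Lambda_\cO(\Gamma_1)^\times$ is off by a power of $p$; the correct statement (which the paper cites from \cite[Lemma~2.7]{BFSuper} and \cite[Corollary~3.2]{leiloefflerzerbes11}) is that $\det(M_g)$ equals $\log_{p,k_g+1}/(p^{k_g+1}\delta_{k_g+1})$ up to a unit of $\Lambda_\cO(\Gamma_1)$. This extra $p^{k_g+1}$ is exactly what is needed to cancel the valuation of $\alpha_g\beta_g$ and make $D_g$ a genuine unit in $\Lambda_\cO(\Gamma_1)$ rather than only in $\Lambda_\cO(\Gamma_1)\otimes L$, so with the corrected citation your argument goes through and matches the paper's.
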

\begin{proof}
We only prove part (i) since part (ii) can be proved in a similar manner. Consider the anti-symmetric matrix
\begin{align}\label{eq:antisym}
\begin{aligned}
   & \begin{pmatrix}
\cL_{\f,g,\alpha,m}(\BF_{\f,g,\alpha,m})&\cL_{\f,g,\alpha,m}(\BF_{\f,g,\beta,m})\\\\
\cL_{\f,g,\beta,m}(\BF_{\f,g,\alpha,m})&\cL_{\f,g,\beta,m}(\BF_{\f,g,\beta,m})
\end{pmatrix}
\\
&\qquad\qquad\qquad\qquad\qquad=\begin{pmatrix}
0&\cL_{\f,g,\alpha,m}(\BF_{\f,g,\beta,m})\\\\
-\cL_{\f,g,\alpha,m}(\BF_{\f,g,\beta,m})&0
\end{pmatrix}
\end{aligned}
\end{align}
as given in Lemma~\ref{lem:geom-Hida}.
Recall from Proposition~\ref{prop:semiordPR} that 
\[
\begin{pmatrix}
\cL_{\f,g,\alpha,m}\\ \\ \cL_{\f,g,\beta,m}
\end{pmatrix}
=Q_g^{-1}M_g\begin{pmatrix}
\col_{\f,g,\#,m}\\ \\ \col_{\f,g,\flat,m}
\end{pmatrix}.
\]
Thus, together with \eqref{eq:BFdecomp}, we may rewrite the left-hand side of \eqref{eq:antisym} as 
\[
Q_g^{-1}M_g\begin{pmatrix}
\col_{\f,g,\#,m}(\BF_{\f,g,\#,m})&\col_{\f,g,\#,m}(\BF_{\f,g,\flat,m})\\\\
\col_{\f,g,\flat,m}(\BF_{\f,g,\#,m})&\col_{\f,g,\flat,m}(\BF_{\f,g,\flat,m})
\end{pmatrix}(Q_g^{-1}M_g)^t.
\]
This in turn implies
\[
\col_{\f,g,\#,m}(\BF_{\f,g,\#,m})=\col_{\f,g,\flat,m}(\BF_{\f,g,\flat,m})=0
\]
and
\[
\col_{\f,g,\#,m}(\BF_{\f,g,\flat,m})=-\col_{\f,g,\flat,m}(\BF_{\f,g,\#,m})=\frac{1}{\det\left(Q_g^{-1}M_g\right)}\cL_{\f,g,\alpha,m}(\BF_{\f,g,\beta,m})
\]
as required.

The assertion on the case $m=1$ follows from \eqref{eq:RSfdom} and the fact that $\det(M_g)$ is, up to a unit of $\Lambda_\cO(\Gamma_1)$, given by $\log_{p,k_g+1}/p^{k_g+1}\delta_{k_g+1}$; see \cite[Lemma~2.7]{BFSuper} and \cite[Corollary~3.2]{leiloefflerzerbes11}.
\end{proof}

\begin{remark}\label{rk:special}
The exact same proof gives the analogous statements for the Coleman maps in Proposition~\ref{prop:semiordPR} and the bounded Beilinson--Flach elements from Definition~\ref{defn:boundedBF}. Namely, for  $\bullet\in\{\#,\flat\}$ and $m\in\cN$,
\begin{equation}
\label{eqn_BF_locally_restricted}
  \col_{f,g,m}^{(\omega,\bullet)}(\BF^{(\omega,\bullet)}_{f,g,m})=0.
\end{equation}

Furthermore,
\[
\col_{f,g,m}^{(\omega,\#)}(\BF_{f,g,m}^{(\omega,\flat)})=-\col_{f,g,m}^{(\omega,\flat)}(\BF_{f,g,m}^{(\omega,\#)})=\frac{\alpha_g\beta_g}{(\alpha_g-\beta_g)\det M_g}\cL_{f,g,m}^{(\alpha,\alpha)}(\BF_{f,g,m}^{(\alpha,\beta)}).
\]
\end{remark}

We end this section by defining the following bounded $p$-adic $L$-functions.

\begin{defn}\label{defn:padicL}
\item[i)]Suppose {\ref{item_FL} holds} and set $\fC=(\fF_1,\fF_2)$ be an ordered pair, where $$\fF_1\in\{(\omega,\#),(\omega,\flat),(\eta,\#),(\eta,\flat)\},$$ $$\fF_2\in\{(\omega,\#),(\omega,\flat)\}$$
with $\fF_1\ne\fF_2$. We define the signed $($geometric$)$ $p$-adic $L$-function\index{$p$-adic $L$-functions! $L_{\fC}^{\rm geo}(f,g)$ (Doubly-signed $p$-adic $L$-function)}
$$L_{\fC}^{\rm geo}(f,g):=\col_{f,g}^{\fF_1}(\BF_{f,g,1}^{\fF_2})\in\varpi^{-s(g)} \Lambda_\cO(\Gamma_\cyc).$$
\item[ii)]If {\ref{item_SS}}  holds, we define similar objects on replacing $\#$ and $\flat$ by the symbols $+$ and $-$.
\end{defn}
 
\begin{remark}
\label{remark_signed_geometric_padicL}
\item[i)]The fact that $L_\fC^{\rm geo}(f,g)\in \varpi^{-s(g)}\Lambda_\cO(\Gamma_\cyc)$ follows from the integrality of the Coleman maps and Theorem~\ref{thm_BF_factorization_for_families}.
\item[ii)]Suppose that either \ref{item_FLg} or \ref{item_SSf} (instead of \ref{item_FL} and \ref{item_SS})  holds and that \ref{reg} holds. We may still define the objects in Definition~\ref{defn:padicL} as long as $\fF_1\in \{(\omega,\bullet),(\omega,\circ)\}$ using the appropriate specializations of the $\Lambda_\f$-adic Coleman maps and Beilinson--Flach classes, where $(\bullet,\circ)$ denotes $(\#,\flat)$ or $(+,-)$. This results in only two choices of $\fC$ (with $\fF_1=(\omega,\circ)$ and $\fF_2=(\omega,\bullet)$ or vice versa). It follows from \eqref{eq:RSfdom} and Remark~\ref{rk:special} that 
$$L_{\fC}^{\rm geo}(f,g)=\pm D_g\delta_{k_g+1}L_p^{\rm geo}(f_\alpha,g).$$ 
We may also define the $\Lambda_\f$-adic $L$-function given by
$$L_{\fC}^{\rm geo}(\f,g)=\pm D_g\delta_{k_g+1}L_p^{\rm geo}(\f,g)\in \varpi^{-s(g)}\Lambda_\f(\Gamma_\cyc),$$\index{$p$-adic $L$-functions! $L_{\fC}^{\rm geo}(\f,g)$ (Signed $p$-adic $L$-function)}
where $\fF_1=(\omega,\circ)$ and $\fF_2=(\omega,\bullet)$ or vice versa.%:=\col_{\f,g,\circ}(\BF_{f,g,\bullet}
\end{remark}

%%%%%%%%%%%
%%%%%%%%%%%

%-----------------------------------------------------------------------
% End of chap1.tex
%-----------------------------------------------------------------------

\chapter{Selmer groups and main conjectures}
We are now ready to define the various Selmer groups arising from the Coleman maps we defined in \S\ref{S:Coleman} and relate them to the $p$-adic $L$-functions we defined in \S\ref{subsec_padicL}.
\section{Definitions of Selmer groups}
\label{subsec_def_Selmer_classical}
Recall that $T_{\f,g}=R_{\f}^*\otimes R_g^*$ and for a crystalline specialization $f$ of the Hida family $\f$ (corresponding to the ring homomorphism $\kappa_f:\LL_\f\to \cO$), we have set $T=R_f^*\otimes R_g^*$. We shall define discrete Selmer groups for $T_{f,g}^\vee(1)$ and $T_{\f,g}^\vee(1)$ determined by our signed Coleman maps. Throughout this section, suppose either {\ref{item_FLg}   or \ref{item_SSf}  holds and write $\{\bullet,\circ\}$ for $\{\#,\flat\}$ or $\{+,-\}$ accordingly. We also assume that the hypothesis \ref{reg} holds.}

{
\begin{defn}\label{defn:signedSel}
\item[i)]Let $\fC_\omega:=((\omega,\bullet),(\omega,\circ))$\index{Classical and signed Selmer groups! $\fC_\omega$}. The discrete Selmer group $\Sel_{\fC_\omega}(T_{f,g}^\vee(1)/\QQ(\mu_{p^\infty}))$\index{Classical and signed Selmer groups! $\Sel_{\fC_\omega}(T_{f,g}^\vee(1)/\QQ(\mu_{p^\infty}))$} is given by the kernel of the restriction map
\[
 H^1(\QQ(\mu_{p^\infty}),T_{f,g}^\vee(1))\lra \prod_{v|p}\frac{H^1(\QQ(\mu_{p^\infty})_v,T_{f,g}^\vee(1))}{H^1_{\fC_\omega}(\QQ(\mu_{p^\infty})_v,T_{f,g}^\vee(1))} \times \prod_{v\nmid p}{H^1(\QQ(\mu_{p^\infty})_v,T_{f,g}^\vee(1))},
  \]
  where $v$ runs through all primes of $\QQ(\mu_{p^\infty})$, and for $v \mid p$ the local condition $H^1_{\fC_\omega}(\QQ(\mu_{p^\infty})_v,T_{f,g}^\vee(1))$ is the orthogonal complement of 
  $$\ker\left(\col_{f,g}^{(\omega,\bullet)}\right)\,\cap\,\ker\left(\col_{f,g}^{(\omega,\circ)}\right)$$ 
  under the local Tate pairing. 
\item[ii)]Suppose that either \ref{item_FL} or \ref{item_SS} holds. We then define the dual Selmer group $\Sel_\fC(T_{f,g}^\vee(1)/\QQ(\mu_{p^\infty}))$\index{Classical and signed Selmer groups! $\Sel_\fC(T_{f,g}^\vee(1)/\QQ(\mu_{p^\infty}))$} for $\fC=(\fF_1,\fF_2)$\index{Classical and signed Selmer groups! $\fC$}, where $\fF_i\in\{(\omega,\bullet),(\omega,\circ),(\eta,\bullet),(\eta,\circ)\}$ with $\fF_1\ne\fF_2$, with the local condition at $p$ given by the orthogonal complement of $\ker\left(\col_{f,g}^{\fF_1}\right)\cap\ker\left(\col_{f,g}^{\fF_2}\right)$.
\end{defn}}

Definition~\ref{defn:signedSel}(ii) gives rise to a total of six Selmer groups, which is in line with \cite[Definition~6.1.2]{BLLV}. We have the following $\Lambda_\f$-adic version of the Selmer group defined in Definition~\ref{defn:signedSel}(i).
\begin{defn}
The discrete Selmer group $\Sel_{\fC_\omega}(T_{\f,g}^\vee(1)/\QQ(\mu_{p^\infty}))$ is given by the kernel of the restriction map\index{Classical and signed Selmer groups!  $\Sel_{\fC_\omega}(T_{\f,g}^\vee(1)/\QQ(\mu_{p^\infty}))$}
\[
 H^1(\QQ(\mu_{p^\infty}),T_{\f,g}^\vee(1))\lra \prod_{v|p}\frac{H^1(\QQ(\mu_{p^\infty})_v,T_{\f,g}^\vee(1))}{H^1_{\fC_\omega}(\QQ(\mu_{p^\infty})_v,T_{\f,g}^\vee(1))} \times \prod_{v\nmid p}{H^1(\QQ(\mu_{p^\infty})_v,T_{\f,g}^\vee(1))},
  \]
  where $v$ runs through all primes of $\QQ(\mu_{p^\infty})$,  and for $v \mid p$ the local condition $H^1_{\fC_\omega}(\QQ(\mu_{p^\infty})_v,T_{\f,g}^\vee(1))$ is the orthogonal complement of $\ker\left(\col_{\f,g,\bullet}\right)\cap\ker\left(\col_{\f,g,\circ}\right)$ under the local Tate pairing. 
\end{defn}
\begin{remark}
It follows from the last assertion of Theorem~\ref{thm_decompPRLamdaf} and (the analogue of the plus/minus version)  that we have a canonical isomorphism  
$$\Sel_{\fC_\omega}(T_{\f,g}^\vee(1)/\QQ(\mu_{p^\infty}))^\vee\otimes_{\kappa_f}\cO\stackrel{\sim}{\lra}\Sel_{\fC_\omega}(T_{f,g}^\vee(1)/\QQ(\mu_{p^\infty}))^\vee.$$ As a matter of fact, we will see in Corollary~\ref{cor_somesignedSelmergroupsareGreenberg} below that these Selmer groups agree with the classical $($strict$)$ Greenberg Selmer groups. Therefore, the canonical isomorphism above can be seen as an instance of classical control theorems for Greenberg Selmer groups.
\end{remark}

\begin{lemma}
\label{lemma_somesignedSelmergroupsareGreenberg}
The intersection $\ker\left(\col_{\f,g,\circ}\right)\cap\ker\left(\col_{\f,g,\bullet}\right)$ is precisely
\[
{\rm im}\left(\HIw(\Qp(\mu_{p^\infty}),\cF^+R_\f^*\otimes R_g^*)\hookrightarrow \HIw(\Qp(\mu_{p^\infty}),T_{\f,g})\right).
\]
Likewise, for any crystalline specialization $f$ of the Hida family $\f$, the intersection  $  \ker\left(\col_{f,g}^{(\omega,\circ)}\right)\cap\ker\left(\col_{f,g}^{(\omega,\bullet)}\right)$ equals
\[
  {\rm im}\left(\HIw(\Qp(\mu_{p^\infty}),\cF^+R_f^*\otimes R_g^*)\hookrightarrow \HIw(\Qp(\mu_{p^\infty}),T_{f,g})\right).\]
\end{lemma}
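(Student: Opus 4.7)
The overall plan is to exploit the factorization of the Coleman maps through the unramified quotient $T_{\f,g}^{-\emptyset}=\cF^-R_\f^*\otimes R_g^*$. By the construction in Theorem~\ref{thm_decompPRLamdaf} (under \ref{item_FLg}) or Theorem~\ref{thm_decompose_PR_SSf} (under \ref{item_SSf}), both $\col_{\f,g,\bullet}$ and $\col_{\f,g,\circ}$ arise as the composite of the natural projection $\pi:\HIw(\Qp(\mu_{p^\infty}),T_{\f,g})\twoheadrightarrow \HIw(\Qp(\mu_{p^\infty}),T_{\f,g}^{-\emptyset})$ with a pair of $\Lambda_\f(\Gamma_\cyc)$-morphisms $(\overline{\col}_\bullet,\overline{\col}_\circ)$ on the target. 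The inclusion ``$\supseteq$'' is then immediate: any class in the image of $\HIw(\Qp(\mu_{p^\infty}),\cF^+R_\f^*\otimes R_g^*)$ is killed by $\pi$ and hence by both Coleman maps. The same factorization argument applies for the classical specialization, using Proposition~\ref{prop:semiordPR} in place of the $\Lambda_\f$-adic statement.

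For the reverse inclusion, the long exact sequence attached to
\[
0 \to \cF^+R_\f^*\otimes R_g^* \to T_{\f,g} \to T_{\f,g}^{-\emptyset} \to 0
\]
identifies $\ker(\pi)$ with the image of $\HIw(\Qp(\mu_{p^\infty}),\cF^+R_\f^*\otimes R_g^*)$, and shows this embedding to be injective as soon as $H^0_{\Iw}(\Qp(\mu_{p^\infty}),T_{\f,g}^{-\emptyset})=0$. The latter vanishing is straightforward: since $g$ is non-ordinary, $R_g^*\vert_{G_{\Qp}}$ is irreducible with non-trivial Sen weights $\{0,-1-k_g\}$, so tensoring with the unramified character $\cF^-R_\f^*$ cannot produce $G_{\Qp(\mu_{p^\infty})}$-invariants after specializing at any arithmetic prime (and hence, thanks to the density of such primes together with \ref{reg}, not in the family either). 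Consequently, the proof of ``$\subseteq$'' reduces to establishing that $(\overline{\col}_\bullet,\overline{\col}_\circ)$ is injective on $\HIw(\Qp(\mu_{p^\infty}),T_{\f,g}^{-\emptyset})$.

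The main obstacle lies in proving this injectivity. First, by Theorem~\ref{thm_decompPRLamdaf} (or Theorem~\ref{thm_decompose_PR_SSf}), the pair $(\overline{\col}_\bullet,\overline{\col}_\circ)$ is related to $(\cL_{\f,g,\alpha},\cL_{\f,g,\beta})$ by the logarithmic matrix $Q_g^{-1}M_g$ (resp.\ $Q_g^{-1}M_g'$), whose determinant is a non-zero-divisor in $\cH(\Gamma_\cyc)$ by \cite[Lemma~2.7]{BFSuper} combined with Proposition~\ref{prop:semiordlog}; hence the two common kernels coincide. Next, from the construction \eqref{eq:comp} and the proof of Theorem~\ref{thm:PRHida}, the Perrin-Riou functionals $\cL_{\f,g,\mu}$ are obtained by composing the full two-variable Perrin-Riou map
\[
\cL_{\Qp,T_{\f,g}^{-\emptyset}} : \HIw(\Qp(\mu_{p^\infty}),T_{\f,g}^{-\emptyset}) \lra \DD(\Qp,T_{\f,g}^{-\emptyset})\,\widehat{\otimes}\,\cH(\Gamma_\cyc)
\]
with the pairing against $H_\f\cdot\eta_\f\otimes v_{g,\mu}$. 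Since $\{v_{g,\alpha},v_{g,\beta}\}$ is a basis of $\Dcris(R_g)$, the joint vanishing of the two pairings is equivalent to the vanishing of $(H_\f\cdot\eta_\f\otimes\mathrm{id})\circ\cL_{\Qp,T_{\f,g}^{-\emptyset}}$. Under \ref{reg}, the congruence divisor $H_\f$ is a non-zero-divisor in $\Lambda_\f$, so $H_\f\cdot\eta_\f$ is an injective $\Lambda_\f$-linear map on the rank-one free module $\DD(\Qp,\cF^-R_\f^*)=\Lambda_\f(\alpha_\f^{-1})$; and via Lemma~\ref{lem:Hida-Wach} and \eqref{eq:comp}, the kernel of $\cL_{\Qp,T_{\f,g}^{-\emptyset}}$ is the kernel of $1-\vp$ on $\NN(\Qp,T_{\f,g}^{-\emptyset})^{\psi=1}$, which coincides with $(T_{\f,g}^{-\emptyset})^{H_{\Qp}}$ by the Wach-module formalism of \cite{berger03} and vanishes by the same Sen-weight argument as above.

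The second assertion (at the level of a crystalline specialization $\kappa_f$) follows \emph{mutatis mutandis}: one replaces Theorem~\ref{thm_decompPRLamdaf} by Proposition~\ref{prop:semiordPR} to obtain the same factorization through the unramified quotient, the long-exact-sequence argument is identical, and the injectivity of the specialized Perrin-Riou map on $\HIw(\Qp(\mu_{p^\infty}),\cF^-R_f^*\otimes R_g^*)$ rests on the same Wach-module-theoretic vanishing. The congruence-divisor obstruction is absent in the specialized setting, simplifying the last step.
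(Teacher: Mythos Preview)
Your proof is correct and follows essentially the same approach as the paper: both factor the Coleman maps through the projection $\Pr_{\f,g}$ to $\HIw(\Qp(\mu_{p^\infty}),T_{\f,g}^{-\emptyset})$, identify the common kernel of the Coleman maps with that of the Perrin-Riou functionals via the invertible logarithmic matrix, and then establish injectivity of the full Perrin-Riou map on the quotient. The only notable difference is in this last step, where the paper argues more directly via $\ker(\cL_{T_{\f,g}^{-\emptyset}})\subset\NN(\Qp,T_{\f,g}^{-\emptyset})^{\vp=1}\subset\DD(\Qp,T_{\f,g}^{-\emptyset})^{\vp=1}=0$ (the last vanishing because $T_{\f,g}^{-\emptyset}$ has no trivial sub-representation), whereas you route the same vanishing through Sen weights and $G_{\Qp}$-invariants; your additional remarks on the long exact sequence and the congruence divisor $H_\f$ make explicit points the paper leaves implicit.
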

\begin{proof}
Let $\cL_{T_{\f,g}^{-,\emptyset}}$ be the Perrin-Riou map on $\HIw\left(\Qp(\mu_{p^\infty}),T_{\f,g}^{-,\emptyset}\right)$ defined in the proof of Theorem~\ref{thm:PRHida} for $F=\Qp$. It is injective since 
\[
\ker\left(\cL_{T_{\f,g}^{-,\emptyset}}\right)\subset \NN(\Qp,T_{\f,g}^{-,\emptyset})^{\vp=1}\subset\DD(\Qp,T_{\f,g}^{-,\emptyset})^{\vp=1}=0,
\]
where the last equality is a consequence of the fact that the representation $T_{\f,g}^{-,\emptyset}$ does not admit the trivial representation as a sub-representation.

By Theorem~\ref{thm_decompPRLamdaf}, $\bz\in \ker\left(\col_{\f,g,\circ}\right)\cap\ker\left(\col_{\f,g,\bullet}\right)$ if and only if $\bz$ lies inside {$\ker\left(\cL_{\f,g,\alpha}\right)\cap\ker\left(\cL_{\f,g,\beta}\right)$}. This is equivalent to the condition that $\cL_{T_{\f,g}^{-,\emptyset}}\circ\Pr_{\f,g}(\bz)=0$, where  $\Pr_{\f,g}$ denotes the natural projection
\[
\HIw(\Qp(\mu_{p^\infty}),T_{\f,g})\rightarrow\HIw(\Qp(\mu_{p^\infty}),T_{\f,g}^{-,\emptyset}),
\]
since $\cL_{\f,g,\alpha}$ and $\cL_{\f,g,\beta}$ are defined by projecting $\cL_{T_{\f,g}^{-,\emptyset}}\circ \Pr$ to the two chosen $\vp$-eigenvectors in $\DD(\Qp,T_{\f,g}^{-,\emptyset})$. Thus, the injectivity of  $\cL_{T_{\f,g}^{-,\emptyset}}$ implies that
\[
\ker\left(\col_{\f,g,\circ}\right)\cap\ker\left(\col_{\f,g,\bullet}\right)=\ker({\Pr}_{\f,g}).
\]
The first assertion of the lemma now follows. The proof of the second assertion is similar.
 \end{proof}

 We recall the definition of the classical Greenberg Selmer groups.
\begin{defn}.\index{Classical and signed Selmer groups! $\Sel(T_{f,g}^\vee(1)/\QQ(\mu_{p^\infty}))$ (Greenberg Selmer group for families)}
The classical Greenberg Selmer group for $T_{\f,g}^\vee(1) $ over $\Qp(\mu_{p^\infty})$, denoted by $\Sel_{\mathrm{Gr}}(T_{\f,g}^\vee(1)/\QQ(\mu_{p^\infty}))$, is defined as the kernel of
\[
 H^1(\QQ(\mu_{p^\infty}),T_{\f,g}^\vee(1))\lra \prod_{v|p}\frac{H^1(\QQ(\mu_{p^\infty})_v,T_{\f,g}^\vee(1))}{H^1_{\cF_{\rm Gr}}(\QQ(\mu_{p^\infty})_v,T_{\f,g}^\vee(1))} \times \prod_{v\nmid p}{H^1(\QQ(\mu_{p^\infty})_v,T_{\f,g}^\vee(1))}\, ,
\]
where  $H^1_{\cF_{\rm Gr}}(\QQ(\mu_{p^\infty})_v,T_{\f,g}^\vee(1))$  is the orthogonal complement of 
\[
{\rm im}\left(\HIw(\Qp(\mu_{p^\infty}),\cF^+R_\f^*\otimes R_g^*)\hookrightarrow \HIw(\Qp(\mu_{p^\infty}),T_{\f,g})\right)
\]
under the local Tate duality. We also define the classical Greenberg Selmer group  $\Sel_{\rm Gr}(T_{f,g}^\vee(1)/\QQ(\mu_{p^\infty}))$ similarly.
\end{defn} 
  
 We deduce from Lemma~\ref{lemma_somesignedSelmergroupsareGreenberg} the following corollary.
\begin{corollary}
\label{cor_somesignedSelmergroupsareGreenberg}
The Selmer group {$\Sel_{\fC_\omega}(T_{\f,g}^\vee(1)/\QQ(\mu_{p^\infty}))$  coincides} with the classical $($strict$)$ Greenberg Selmer group $\Sel(T_{\f,g}^\vee(1)/\QQ(\mu_{p^\infty}))$\index{Classical and signed Selmer groups! $\Sel_{\mathrm{Gr}}(T_{\f,g}^\vee(1)/\QQ(\mu_{p^\infty}))$ (Greenberg Selmer group for families)} given as the kernel of
\[
 H^1(\QQ(\mu_{p^\infty}),T_{\f,g}^\vee(1))\lra \prod_{v|p}\frac{H^1(\QQ(\mu_{p^\infty})_v,T_{\f,g}^\vee(1))}{H^1_{\cF_{\rm Gr}}(\QQ(\mu_{p^\infty})_v,T_{\f,g}^\vee(1))} \times \prod_{v\nmid p}{H^1(\QQ(\mu_{p^\infty})_v,T_{\f,g}^\vee(1))}\, ,
\]
Similarly, $\Sel_{\fC_\omega}(T_{f,g}^\vee(1)/\QQ(\mu_{p^\infty}))=\Sel_{\rm Gr}(T_{f,g}^\vee(1)/\QQ(\mu_{p^\infty}))$
\end{corollary}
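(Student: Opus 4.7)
The plan is to deduce the corollary immediately from Lemma~\ref{lemma_somesignedSelmergroupsareGreenberg} by unwinding the definitions of the two local conditions at primes above $p$, and observing that the local conditions away from $p$ already agree by construction (both are defined by the full local cohomology, i.e., no condition is imposed).

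First, I would compare the two local conditions at a prime $v\mid p$ of $\QQ(\mu_{p^\infty})$. By definition,
\[
H^1_{\fC_\omega}(\QQ(\mu_{p^\infty})_v, T_{\f,g}^\vee(1))
\]
is the orthogonal complement, under the local Tate pairing, of the intersection $\ker(\col_{\f,g,\bullet})\cap \ker(\col_{\f,g,\circ})$ inside $\HIw(\Qp(\mu_{p^\infty}), T_{\f,g})$, while $H^1_{\cF_{\rm Gr}}(\QQ(\mu_{p^\infty})_v, T_{\f,g}^\vee(1))$ is the orthogonal complement of the image of $\HIw(\Qp(\mu_{p^\infty}), \cF^+R_\f^*\otimes R_g^*)$. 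Lemma~\ref{lemma_somesignedSelmergroupsareGreenberg} asserts that these two submodules of $\HIw(\Qp(\mu_{p^\infty}), T_{\f,g})$ coincide, so their annihilators under local Tate duality coincide as well.

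Next, since the local conditions defining $\Sel_{\fC_\omega}$ and $\Sel_{\rm Gr}$ at primes $v\nmid p$ are identical (namely, the whole group $H^1(\QQ(\mu_{p^\infty})_v, T_{\f,g}^\vee(1))$, so that no restriction is imposed at such $v$), the two Selmer groups are defined as kernels of the same restriction map. Therefore they are equal as subgroups of $H^1(\QQ(\mu_{p^\infty}), T_{\f,g}^\vee(1))$.

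The same argument, applied to the second assertion of Lemma~\ref{lemma_somesignedSelmergroupsareGreenberg} in place of the first, yields the corresponding identification $\Sel_{\fC_\omega}(T_{f,g}^\vee(1)/\QQ(\mu_{p^\infty})) = \Sel_{\rm Gr}(T_{f,g}^\vee(1)/\QQ(\mu_{p^\infty}))$ for each crystalline specialization $f$ of $\f$. No step requires further input; the only substantive content has already been packaged into Lemma~\ref{lemma_somesignedSelmergroupsareGreenberg}, so there is no genuine obstacle here.
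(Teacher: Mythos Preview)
Your proposal is correct and matches the paper's approach: the corollary is stated immediately after Lemma~\ref{lemma_somesignedSelmergroupsareGreenberg} with the line ``We deduce from Lemma~\ref{lemma_somesignedSelmergroupsareGreenberg} the following corollary,'' and no further proof is given. You have simply spelled out the (trivial) unwinding of definitions.

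One small inaccuracy: your parenthetical description of the local condition at $v\nmid p$ is backwards. In both definitions the restriction map lands in the full group $H^1(\QQ(\mu_{p^\infty})_v, T_{\f,g}^\vee(1))$ (not a quotient), so membership in the kernel forces the local image to be \emph{zero} --- this is the strict condition, not ``no restriction.'' This does not affect your argument, since the point is only that the two Selmer groups impose the \emph{same} condition at $v\nmid p$, which is clear from the definitions regardless of what that condition actually is.
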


\section{Classical and signed Iwasawa main conjectures}
\label{subsec_MC_signed_classical}
We now formulate Iwasawa main conjectures relating the $p$-adic $L$-functions defined in Definition~\ref{defn:padicL} to the Selmer groups in Definition~\ref{defn:signedSel}.
\begin{conj}\label{conj:signedIMC}
Suppose that either \ref{item_FL} or \ref{item_SS} holds. For $\fC=(\fF_1,\fF_2)$, where $$\fF_1\in\{(\omega,\bullet),(\omega,\circ),(\eta,\bullet),(\eta,\circ)\},\quad\fF_2\in\{(\omega,\bullet),(\omega,\circ)\}$$ with $\fF_1\ne\fF_2$ and $\theta$ a character of $\Delta$, the $\theta$-isotypic component  $e_\theta\Sel_\fC(T_{f,g}^\vee(1)/\QQ(\mu_{p^\infty}))$ is a cotorsion $\Lambda_\cO(\Gamma_1)$-module. Furthermore,
\[
\Char_{\Lambda_\cO(\Gamma_1)}e_\theta\Sel_\fC(T_{f,g}^\vee(1)/\QQ(\mu_{p^\infty}))^\vee =(e_\theta\varpi^{s(g)} L_{\fC}^{\rm geo}(f,g)/I_\fC),
\]
where $I_\fC=\det(\image(\col_{\fF_1}\oplus\col_{\fF_2}))$.\index{Classical and signed Selmer groups! $I_\fC$}
\end{conj}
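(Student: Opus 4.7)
My plan is to split Conjecture~\ref{conj:signedIMC} into its two divisibilities and outline a route to each. Set $\mathcal{L}_\fC:=e_\theta\varpi^{s(g)}L_{\fC}^{\rm geo}(f,g)/I_\fC$, viewed in $\Lambda_\cO(\Gamma_1)$, and write $X_\fC:=e_\theta\Sel_\fC(T_{f,g}^\vee(1)/\QQ(\mu_{p^\infty}))^\vee$. The sought equality is $(\mathcal{L}_\fC)=\Char_{\Lambda_\cO(\Gamma_1)}(X_\fC)$, which a fortiori forces $X_\fC$ to be $\Lambda_\cO(\Gamma_1)$-torsion. I will attack the divisibility $\Char\mid\mathcal{L}_\fC$ via the signed Beilinson--Flach Euler system, and the divisibility $\mathcal{L}_\fC\mid\Char$ via an Eisenstein-congruence input in the style of \cite{xinwanwanrankinselberg}.

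\textbf{Euler system divisibility $\Char_{\Lambda_\cO(\Gamma_1)}(X_\fC)\mid\mathcal{L}_\fC$.} The input is the integral signed Beilinson--Flach system $\{\BF_{f,g,m}^{\fF}\}_{m\in\cN,\,\fF\in\{\fF_1,\fF_2\}}$ produced (via specialization of $\f$ to $f_\alpha$) from Theorem~\ref{thm_BF_factorization_for_families}. Their local behaviour, recorded in Remark~\ref{rk:special}, says exactly that the ``diagonal'' pairings $\col_{f,g}^{\fF_i}(\BF_{f,g,1}^{\fF_i})$ vanish while the ``off-diagonal'' pairings recover (up to a unit and an explicit factor) the signed $p$-adic $L$-function $L_\fC^{\rm geo}(f,g)$. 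This is precisely the condition ensuring that each $\BF_{f,g,m}^{\fF_i}$ lies in the appropriate signed dual Selmer structure cut out by $\ker(\col_{f,g}^{\fF_1})\cap\ker(\col_{f,g}^{\fF_2})$, making the collection a locally restricted Euler system in the sense of \cite[Ch.~6]{BLLV}. Running the Kolyvagin-derivative argument of op.~cit. with these signed classes yields simultaneously the cotorsionness of $X_\fC$ and the divisibility $\Char_{\Lambda_\cO(\Gamma_1)}(X_\fC)\mid\mathcal{L}_\fC$; the uniform bound on $s(g)$ supplied by Theorem~\ref{thm_BF_factorization_for_families} absorbs the contribution of the denominators into the $\varpi^{s(g)}$ factor, so no additional $\mu$-loss is incurred. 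The quotient by $I_\fC$ on the right-hand side is exactly the factor tracking the image of $\col_{f,g}^{\fF_1}\oplus\col_{f,g}^{\fF_2}$, which is what the Euler system machinery produces naturally.

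\textbf{Opposite divisibility $\mathcal{L}_\fC\mid\Char_{\Lambda_\cO(\Gamma_1)}(X_\fC)$.} There is no Euler system of the first kind on this side, so I would proceed by Eisenstein congruences. Concretely, I would adapt Xin Wan's construction on $\mathrm{GSp}_4$ to the semi-ordinary Rankin--Selberg setting: one assembles a Klingen Eisenstein family whose constant term interpolates the Rankin--Selberg $L$-function $L(f\otimes g,\cdot)$, produces a congruence between this Eisenstein family and a cuspidal family, and then translates the congruence into a Galois-theoretic statement of the form ``$\mathcal{L}_\fC$ divides the Fitting ideal of a Selmer module associated to $T_{f,g}^\vee(1)$''. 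The identification of the resulting Eisenstein-type $p$-adic $L$-function with our signed $L_\fC^{\rm geo}(f,g)$ would be effected through the explicit comparison of Remark~\ref{remark_signed_geometric_padicL}(ii) with the appropriate Loeffler--Zerbes $p$-adic $L$-function. One would then combine with the Euler system divisibility, the Ferrero--Washington-style vanishing of the $\mu$-invariant of $\mathcal{L}_\fC$ (controlled uniformly thanks to the optimized factorization), and the analogue of the ``divisibility criterion in regular rings'' of Appendix~\ref{Appendix_Regular_Rings_Divisibility} to upgrade the two divisibilities to an equality.

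\textbf{The main obstacle.} The Euler system half is genuinely within reach given the machinery assembled in Chapters~\ref{S:Coleman}--\ref{sec_BF_and_padicL}. The opposite divisibility is the hard direction and is the reason Conjecture~\ref{conj:signedIMC} is only stated as a conjecture: Wan's Klingen--Eisenstein construction is formulated in an ordinary setting, and the non-ordinarity of $g$ is a structural obstruction, since one needs a triangulation-compatible interpolation of the Eisenstein family adapted to the two choices $\fF_1,\fF_2$ of signed local condition, and the absence of finite-slope CM families dominating $g$ (the very difficulty addressed in \ref{item_intro_Step1}) prevents an immediate deformation-theoretic reduction to known cases. Absent a ``semi-ordinary'' analogue of Wan's argument or a direct construction of a rank-$2$ Euler system refining Conjecture~\ref{conj_ESrank2} that also saturates the Fitting ideal, the opposite divisibility should be expected to be essentially out of reach with present technology, and this is precisely the gap separating the conjectural equality from the one-sided divisibilities actually established in the paper.
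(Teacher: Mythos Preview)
The statement you are addressing is a \emph{conjecture} in the paper, not a theorem; there is no proof in the paper to compare against. What the paper actually establishes is only the Euler-system divisibility, and only in the reformulated form \eqref{eqn_mainconj_reformulated} for the particular choice $\fC=\fC_\omega$ (Theorem~\ref{thm_cyclo_main_conj_fotimesg}), with a remark that the same argument carries over to the other admissible $\fC$. Your description of that half---run the locally restricted Euler system machine of \cite{BLLV} on the signed Beilinson--Flach classes, using Remark~\ref{rk:special} for the local restriction and Corollary~\ref{cor_somesignedSelmergroupsareGreenberg} plus Remark~\ref{remark_signed_geometric_padicL} to identify the output with the Greenberg Selmer group and the geometric $p$-adic $L$-function---is exactly what the paper does.

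Your assessment of the opposite divisibility is also accurate: the paper makes no attempt at it, and your diagnosis (that a semi-ordinary Klingen--Eisenstein input in the style of \cite{xinwanwanrankinselberg} would be needed but is obstructed by the non-ordinarity of $g$) is a fair summary of the state of the art. So there is no genuine error in your proposal; it is simply a strategy document for an open problem, correctly identifying which half is accessible and which is not. One small imprecision: the Euler system input is a single family $\{\BF_{f,g,m}^{\fF_2}\}_m$ (with $\fF_2\in\{(\omega,\bullet),(\omega,\circ)\}$), not a pair indexed by both $\fF_1$ and $\fF_2$; the local restriction \eqref{eqn_BF_locally_restricted} places $\BF^{\fF_2}$ in $\ker(\col^{\fF_2})$, and its image under $\col^{\fF_1}$ is what produces $L_\fC^{\rm geo}(f,g)$.
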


\begin{remark}\label{rk:imagePR}
\item[i)] When $\fF_1\in\{(\eta,\bullet),(\eta,\circ)\}$, the  $p$-adic $L$-functions $L_{\fC}^{\rm geo}(f,g)$ does not have a simple description  (as far as we are aware).

\item[ii)] It follows from \cite[Appendix]{BLLV} that $I_\fC$ is a product of powers of $\Tw^{-i}X$, $i=0,1,\ldots, k_f+k_g+1$.

\item[iii)] Consider  the special case when $\fC=\fC_\omega$. Since $\{\omega_{f^\star}\otimes \omega_{g^\star},\omega_{f^\star}\otimes \vp(\omega_{g^\star})\}$ is an $\cO$-basis of $\Dcris(\cF^-R_f^*\otimes R_g^*)$, it follows from \cite[\S3.4]{perrinriou94} that 
\begin{align*}
\det \left(\image \left(\cL_{f,g}^{(\alpha,\alpha)}\oplus \cL_{f,g}^{(\alpha,\beta)}\right)\right)&=\frac{1}{\det Q_g}\prod_{i=0}^{k_g}\left(\frac{\log_p}{\log_p\chi(\Gamma_\cyc)}-i\right)\Lambda_\cO(\Gamma_\cyc)\\
&=\frac{\log_{p,k_g+1}}{p^{k_g+1}\det Q_g}\Lambda_\cO(\Gamma_\cyc).
\end{align*}
Hence, we deduce from the last assertions of Propositions~\ref{prop:semiordPR} and \ref{prop:PRpm} that 
$$I_\fC=\frac{\log_{p,k_g+1}}{p^{k_g+1}\det M_g}\Lambda_\cO(\Gamma_\cyc)=\delta_{k_g+1}\Lambda_\cO(\Gamma_\cyc).$$

In view of Proposition~\ref{prop:values-of-col-BF} (which tells us that $D_g\in \Lambda_\cO(\Gamma_1)^\times$) and Remark~\ref{remark_signed_geometric_padicL}(i), Conjecture~\ref{conj:signedIMC} can be rephrased as the conjectural identity
\begin{equation}
    \label{eqn_mainconj_reformulated}
   \Char_{\Lambda_\cO(\Gamma_1)}e_\theta\Sel_{\rm Gr}(T_{f,g}^\vee(1)/\QQ(\mu_{p^\infty}))\stackrel{?}{=}\left(e_\theta \varpi^{s(g)}L_p^{\rm geo}(f,g)\right)
\end{equation}
for every character $\theta$ of $\Delta$.
\end{remark}
We finish this chapter by stating a two-variable  ($\Lambda_\f(\Gamma_1)$-adic) Iwasawa main conjecture.
\begin{defn}
 Suppose $M$ is a finitely generated torsion $\LL_\f(\Gamma_1)$-module. We define its  characteristic ideal on setting
\[
\Char_{\LL_\f(\Gamma_1)}(M):=\prod_{P} P^{{\rm length}_{\LL_\f(\Gamma_1)_P}(M_P)},
\]
 where the product runs over  all height-one primes of $\LL_\f(\Gamma_1)$.
\end{defn}

\begin{conj}\label{conj:bigIMC}
Suppose that either \ref{item_FLg} or \ref{item_SSf} holds and that \ref{reg} holds. For any character $\theta$ of $\Delta$, the $\Lambda_\f(\Gamma_1)$-module $e_\theta \Sel_{\fC_\omega}(T_{\f,g}^\vee(1)/\QQ(\mu_{p^\infty}))^\vee$ is torsion. Furthermore, 
$$\Char_{\Lambda_\f(\Gamma_1)}e_\theta\Sel_{\fC_\omega}(T_{\f,g}^\vee(1)/\QQ(\mu_{p^\infty}))^\vee =(e_\theta \varpi^{s(g)}L_p^{\rm geo}(\f,g)/I_{\fC_\omega}\,.$$
\end{conj}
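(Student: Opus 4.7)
We can only hope to establish one divisibility in Conjecture~\ref{conj:bigIMC}; namely the containment
\[
\Char_{\Lambda_\f(\Gamma_1)}\,e_\theta\Sel_{\fC_\omega}(T_{\f,g}^\vee(1)/\QQ(\mu_{p^\infty}))^\vee \,\supseteq\, \bigl(e_\theta \varpi^{s(g)} L_p^{\rm geo}(\f,g)/I_{\fC_\omega}\bigr)\,\otimes_{\ZZ_p}\QQ_p\,.
\]
The plan is to feed the family of signed Beilinson--Flach classes $\{\BF_{\f,g,\bullet,m}\}_{m\in\cN}$, produced by Theorem~\ref{thm_BF_factorization_for_families}, into a locally restricted Euler system argument, and then use Proposition~\ref{prop:values-of-col-BF} to convert the resulting bounds on Selmer groups into the desired divisibility by the geometric $p$-adic $L$-function $L_p^{\rm geo}(\f,g)$.

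First I would check the Euler system compatibilities. The elements $\BF_{\f,g,\alpha,m}$ and $\BF_{\f,g,\beta,m}$ satisfy the norm relations of Loeffler--Zerbes (after inverting $p$), and these relations are inherited by $\BF_{\f,g,\bullet,m}$ via the factorization matrix $Q_g^{-1}M_g$ (or $Q_g^{-1}M_g'$), whose entries lie in $\cH(\Gamma_1)$ but are non-zero divisors. Since $\bullet\in\{\#,\flat\}$ (or $\{+,-\}$) and both bounds $s(g)$ are \emph{uniform in $m$}, the collection $\{\varpi^{s(g)}\BF_{\f,g,\bullet,m}\}$ defines an honest integral Euler system for $T_{\f,g}$ over the abelian extensions $\QQ(\mu_m)/\QQ$, $m\in\cN$. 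The key locally restricted property is the vanishing
\[
\col_{\f,g,\bullet}\circ\loc_p\bigl(\BF_{\f,g,\bullet,1}\bigr)=0
\]
from Proposition~\ref{prop:values-of-col-BF}, so that the bottom class lands in the kernel that defines the signed local condition at $p$; passing this through the Euler system argument yields a divisibility bounding the signed Selmer group of the dual by the image of the bottom class under the complementary Coleman map $\col_{\f,g,\circ}$.

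Next I would run the Euler--Kolyvagin argument for $T_{\f,g}$ over the $\Lambda_\f(\Gamma_1)$-tower, in the style of Mazur--Rubin and its $\Lambda$-adic refinements. This requires:
\begin{enumerate}
\item[(i)] the big-image hypotheses for $\overline{\rho}_{\f,g}$ (handled by Appendix~\ref{appendix_big_images} once one assumes the relevant conditions on $\overline{\rho}_\f$ and $g$, for instance $\mathrm{SL}_2(\FF_p)\subset \overline{\rho}_\f(G_{\QQ(\mu_{p^\infty})})$);
\item[(ii)] the regularity hypothesis \ref{reg} on $\Lambda_\f$, which guarantees that $\Lambda_\f(\Gamma_1)$ is a 2-dimensional regular local ring so that characteristic ideals are well-behaved and one can localize at height-one primes;
\item[(iii)] the identification of the signed Selmer group with the classical Greenberg Selmer group (Corollary~\ref{cor_somesignedSelmergroupsareGreenberg}, or its analogue for $T_{\f,g}$), so that the Poitou--Tate global duality used inside the Kolyvagin argument is compatible with our local condition at $p$.
\end{enumerate}
The output is a divisibility of the form
\[
\Char_{\Lambda_\f(\Gamma_1)}\,e_\theta\Sel_{\fC_\omega}(T_{\f,g}^\vee(1)/\QQ(\mu_{p^\infty}))^\vee \,\supseteq\, \bigl(e_\theta \varpi^{s(g)}\,\col_{\f,g,\circ}\circ\loc_p(\BF_{\f,g,\bullet,1})\bigr)
\]
up to powers of $p$, where $\{\bullet,\circ\}$ is a complementary pair.

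The third step is the reciprocity identification. By Proposition~\ref{prop:values-of-col-BF},
\[
\col_{\f,g,\#}\circ\loc_p(\BF_{\f,g,\flat,1})=-\col_{\f,g,\flat}\circ\loc_p(\BF_{\f,g,\#,1})=D_g\,\delta_{k_g+1}\,L_p^{\rm geo}(\f,g),
\]
with $D_g\in\Lambda_\cO(\Gamma_1)^\times$; combining with Remark~\ref{rk:imagePR}(iii) (which identifies $I_{\fC_\omega}=\delta_{k_g+1}\Lambda_\cO(\Gamma_\cyc)$, extended naturally to $\Lambda_\f(\Gamma_\cyc)$) converts the bound into the predicted divisibility after dividing by $I_{\fC_\omega}$. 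The torsion statement for $e_\theta\Sel_{\fC_\omega}(T_{\f,g}^\vee(1)/\QQ(\mu_{p^\infty}))^\vee$ is then automatic, provided $L_p^{\rm geo}(\f,g)\neq 0$, which follows from the Euler factor at $\kappa_0$ combined with the non-vanishing of $L_p^{\rm geo}(f_\alpha,g)$ at a single arithmetic point (e.g.\ via the interpolation formula of \cite{LZ1}).

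The main obstacle will be the execution of step (ii): running the locally restricted Euler system machine genuinely over the two-variable ring $\Lambda_\f(\Gamma_1)$, rather than specializing to individual weights. Here the uniform control of $s(g)$ (so that denominators do not blow up along the family) and the uniform control of $\mu$-invariants (which is what the ``optimized'' factorizations of Theorems~\ref{thm:PRHida} and \ref{thm_decompPRLamdaf} are designed to deliver) are both crucial. In particular, to obtain the bound at every height-one prime of $\Lambda_\f(\Gamma_1)$ (including the ``vertical'' primes coming from $\Lambda_\f$), one must localize and invoke the patching criterion of Appendix~\ref{Appendix_Regular_Rings_Divisibility}, reducing the $\Lambda_\f(\Gamma_1)$-adic statement to an infinite collection of one-variable divisibilities for the specializations $f_\alpha\otimes g$, where the Euler system method is on firmer footing.
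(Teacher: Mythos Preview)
First, note that Conjecture~\ref{conj:bigIMC} is a \emph{conjecture}; the paper does not prove it.  What the paper establishes is one containment, up to $\mu$-invariants, stated as Theorem~\ref{thm_cyclo_main_conj_ffotimesg}.  You correctly identify that only this direction is accessible, so I will compare your sketch to the paper's proof of Theorem~\ref{thm_cyclo_main_conj_ffotimesg}.

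Your proposal has the ingredients right but the architecture inverted.  You describe running the locally restricted Euler system machinery \emph{directly} over the two-variable ring $\Lambda_\f(\Gamma_1)$, and only at the very end introduce the patching criterion of Appendix~\ref{Appendix_Regular_Rings_Divisibility} as a device to handle the vertical height-one primes.  The paper does the opposite: patching is the \emph{primary} strategy, not a fallback.  Concretely, the paper never attempts an Euler system argument over $\Lambda_\f(\Gamma_1)$.  Instead it (a) proves the one-variable divisibility for each crystalline specialization $f=\f(\kappa_i)$ separately (this is Theorem~\ref{thm_cyclo_main_conj_fotimesg}, whose proof is the locally restricted Euler system argument over $\Lambda_\cO(\Gamma_1)$ that you describe), and then (b) normalizes both sides to have vanishing $\mu$-invariant, writing $\mathfrak{L}=\varpi^{\mu_1}e_\theta L_p^{\rm geo}(\f,g)$ and $\varpi^{\mu_2}S$ for a generator of the Selmer characteristic ideal, and uses control theorems plus Theorem~\ref{thm_cyclo_main_conj_fotimesg} at each $\kappa_i$ to obtain $S_i\mid \mathfrak{L}_i$ in $\Lambda_\cO(\Gamma_1)$.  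Proposition~\ref{prop_appendix_regular_divisibility} (resting on Chevalley's lemma) then glues these to $S\mid\mathfrak{L}$ in $\Lambda_\f(\Gamma_1)$.

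The reason this matters: the Euler--Kolyvagin machinery you invoke (in the style of \cite{kbbleiPLMS,BLLV}) is developed for coefficient rings of Krull dimension one plus cyclotomic variation, not for the genuinely two-dimensional ring $\Lambda_\f(\Gamma_1)$.  Your steps (i)--(iii) and the reciprocity identification are all correct as inputs, but they feed into the one-variable argument, not a nonexistent two-variable one.  The role of the ``optimized'' constructions (Theorems~\ref{thm:PRHida}, \ref{thm_decompPRLamdaf}, \ref{thm_BF_factorization_for_families}) is precisely to guarantee that the exponent $s(g)$ and the $\mu$-normalizations are \emph{uniform in $\kappa_i$}, so that the infinitely many one-variable divisibilities $S_i\mid\mathfrak{L}_i$ hold without growing denominators and the patching lemma applies cleanly.
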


\begin{remark}
By Corollary~\ref{cor_somesignedSelmergroupsareGreenberg} and Remarks~\ref{remark_signed_geometric_padicL}(ii) and~\ref{rk:imagePR}(iii), Conjecture~\ref{conj:bigIMC} is equivalent to the assertion that 
$$\Char_{\Lambda_\f(\Gamma_1)}e_\theta\Sel_{\mathrm{Gr}}(T_{\f,g}^\vee(1)/\QQ(\mu_{p^\infty}))^\vee =(e_\theta \varpi^{s(g)}L_{p}^{\rm geo}(\f,g))\,.$$
\end{remark}

%%%%%%%%%%%
%%%%%%%%%%%

%-----------------------------------------------------------------------
% End of chap1.tex
%-----------------------------------------------------------------------

\chapter{Applications towards main conjectures}

We now employ the bounded Beilinson--Flach elements we have constructed in \S\ref{subsec_review_BF_elements} to show that one inclusion in the Main Conjectures \ref{conj:signedIMC} and \ref{conj:bigIMC} holds true (under mild hypotheses concerning the images of the Galois representations under consideration). In \S\ref{subsec_IMCimagquadinert}, we discuss implications of these results towards  main conjectures for Rankin--Selberg convolutions $\f_{/K}\otimes \psi$ where $K$ is an imaginary quadratic field in which $p$ remains inert and $\psi$ is an algebraic Hecke character of $K$.

\section{Cyclotomic main conjectures for $f\otimes g$}
\label{subsec_fotimesg}
Let $f\in S_{k_f+2}(\Gamma_1(N_f))$ and $g\in S_{k_g+2}(\Gamma_1(N_g))$ be primitive eigenforms as in \S\ref{subsec_setting}, where $N_f$ and $N_g$ are coprime and $k_f>k_g\geq 0$. We recall also that we are working under the assumption that $f$ is $p$-ordinary and $g$ is non-$p$-ordinary (relative to the embedding $\iota_p$).

Recall the Galois representation $T_{f,g}=R_f^*\otimes R_g^*$, which is a free $\cO$-module of rank $4$, where $\cO$ is the ring of integers of a finite extension $L$ of  $\QQ_p$ containing the images $\iota_p(K_f)$ and $\iota_p(K_g)$ of the Hecke fields of $f$ and $g$, as well as the roots of the Hecke polynomials of $f$ and $g$ at $p$.

Suppose throughout \S\ref{subsec_fotimesg} that the residual representations $\overline{\rho}_f$ associated to $f$ is absolutely irreducible.  Assume in addition that at least one of the following conditions holds true:
\begin{itemize}
    \item[\mylabel{item_Irr1}{$(\mathbf{Irr}_1)$}] $p>k_g+1$\,.
        \item[\mylabel{item_Irr2}{$(\mathbf{Irr}_2)$}]  $a_p(g)=0$, $k_g\in [p+1,2p-2]$\,.
    \item[\mylabel{item_Irr3}{$(\mathbf{Irr}_3)$}]  $a_p(g)=0$, $p+1\nmid k_g+1$  and $k_g\geq 2p-1$.
\end{itemize}

\begin{remark}
\label{remark_HnA_trivial}
We note that any one of the hypotheses \ref{item_Irr1}--\ref{item_Irr3} guarantees that $\overline{\rho}_g{\vert_{G_{\QQ_p}}}$ is absolutely irreducible; c.f. \cite{FontaineEdixhoven92} in the situation of \ref{item_Irr1} and \cite[Th\'eor\`eme 3.2.1]{Berger2010} in the remaining cases. Since $\overline{\rho}_f{\vert_{G_{\QQ_p}}}$ (and therefore, also $\overline{\rho}_f{\vert_{G_{\QQ_p}}}\otimes\omega$) is reducible, we conclude that neither $\overline{\rho}_f{\vert_{G_{\QQ_p}}}$ nor $\overline{\rho}_f{\vert_{G_{\QQ_p}}}\otimes\omega^{-1}$ is isomorphic to $\overline{\rho}_g^\vee{\vert_{G_{\QQ_p}}}$ granted the truth of either one of the hypotheses \ref{item_Irr1}--\ref{item_Irr3}. This in turn ensures the validity of the following  ``non-anomality'' condition: $$H^0(\QQ_p,\overline{T}_{f,g})=0=H^2(\QQ_p,\overline{T}_{f,g})\,.$$
\end{remark}
 
 We consider the following big image condition.\index{Big image conditions! $(\tau_{f\otimes g})$}
\begin{itemize}
     \item[\mylabel{item_BI_fg}{$(\tau_{f\otimes g})$}] There exists $\tau\in \Gal(\overline{\QQ}/\QQ(\mu_{p^\infty}))$ such that $T_{f,g}/(\tau-1)T_{f,g}$ is a free $\cO$-module of rank $1$.
\end{itemize}

 Our main result towards the validity of cyclotomic main conjectures for the Rankin--Selberg convolution $f\otimes g$ is the following:
\begin{theorem}
\label{thm_cyclo_main_conj_fotimesg}
Suppose that the residual representation $\overline{\rho}_f$ is absolutely irreducible. Assume also that one of the hypotheses \ref{item_Irr1}--\ref{item_Irr3} as well as the condition \ref{item_BI_fg} hold true. Then for any character $\theta$ of $\Delta$ we have the following containment in the Iwasawa main conjecture \eqref{eqn_mainconj_reformulated}:
$$e_{\theta}\,\varpi^{s(g)}L_p^{\rm geo}(f,g)\,\in \,\Char_{\Lambda_\cO(\Gamma_1)}e_{\theta}\left(\Sel_{\rm Gr}(T_{f,g}^\vee(1)/\QQ(\mu_{p^\infty}))^\vee\right) \,,$$
where the integer $s(g)$ is given as in the statement of Theorem~\ref{thm_BF_factorization_for_families}. 
\end{theorem}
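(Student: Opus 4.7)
The strategy is to apply a locally restricted Euler system argument with the signed Beilinson--Flach classes $\BF^{(\omega,\bullet)}_{f,g,m}$ produced by Theorem~\ref{thm_BF_factorization_for_families}, where $\{\bullet,\circ\} = \{\#,\flat\}$ under \ref{item_Irr1} and $\{\bullet,\circ\} = \{+,-\}$ under \ref{item_Irr2} or \ref{item_Irr3}. The essential input is the ``locally restricted'' property recorded in \eqref{eqn_BF_locally_restricted}, namely $\col^{(\omega,\bullet)}_{f,g,m}(\BF^{(\omega,\bullet)}_{f,g,m}) = 0$, which shows that $\BF^{(\omega,\bullet)}_{f,g,m}$ lies in $\ker(\col^{(\omega,\bullet)}_{f,g,m})$; coupled with its companion $\BF^{(\omega,\circ)}_{f,g,m}$ that lies in $\ker(\col^{(\omega,\circ)}_{f,g,m})$, the two families of classes fall inside the local conditions defining the signed Selmer group $\Sel_{\fC_\omega}$.

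First I would verify that $\{\varpi^{s(g)}\BF^{(\omega,\bullet)}_{f,g,m}\}_{m\in\cN}$ forms a genuine (integral) Euler system. The tame norm-compatibility at primes $\ell \nmid 6pN_fN_g$ is inherited from the corresponding relations for the unbounded classes $\BF^{(\alpha,\mu)}_{f,g,m}$ (which come from the Rankin--Iwasawa construction of \cite{LZ1}) by combining \eqref{eq:BFdecomp} with the fact that $Q_g^{-1}M_g$ (respectively $Q_g^{-1}M_g'$) is independent of $m$. I would then feed this pair of Euler systems into Kolyvagin-style machinery (in the spirit of Rubin and Mazur--Rubin, as refined in \cite{BLLV} for doubly-signed Selmer structures over $\Lambda_\cO(\Gamma_1)$) to obtain, on each $\theta$-isotypic component, a divisibility
\[
e_\theta\, \col^{(\omega,\circ)}_{f,g}\bigl(\BF^{(\omega,\bullet)}_{f,g,1}\bigr) \;\in\; I_{\fC_\omega}\cdot\Char_{\Lambda_\cO(\Gamma_1)}\, e_\theta\Sel_{\fC_\omega}(T_{f,g}^\vee(1)/\QQ(\mu_{p^\infty}))^\vee.
\]
The hypothesis \ref{item_BI_fg} together with the absolute irreducibility of $\overline{\rho}_f$ supplies the Chebotarev-type inputs needed to produce Kolyvagin primes with the correct splitting behavior, while any one of \ref{item_Irr1}--\ref{item_Irr3} guarantees the local non-anomaly $H^0(\QQ_p,\overline{T}_{f,g}) = 0 = H^2(\QQ_p,\overline{T}_{f,g})$ of Remark~\ref{remark_HnA_trivial}, which ensures that local Tate duality and Poitou--Tate interact cleanly at $p$.

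To translate this bound into the statement of the theorem I would invoke Proposition~\ref{prop:values-of-col-BF}(i) (respectively its $+/-$ analogue), which identifies $\col^{(\omega,\circ)}_{f,g}(\BF^{(\omega,\bullet)}_{f,g,1})$ with $\pm D_g \delta_{k_g+1} L_p^{\rm geo}(f,g)$ for a unit $D_g \in \Lambda_\cO(\Gamma_1)^\times$, together with Remark~\ref{rk:imagePR}(iii), which computes $I_{\fC_\omega} = \delta_{k_g+1}\Lambda_\cO(\Gamma_1)$. The two $\delta_{k_g+1}$ factors cancel, yielding the desired divisibility
\[
e_\theta\, \varpi^{s(g)} L_p^{\rm geo}(f,g)\;\in\; \Char_{\Lambda_\cO(\Gamma_1)}\, e_\theta\Sel_{\fC_\omega}(T_{f,g}^\vee(1)/\QQ(\mu_{p^\infty}))^\vee,
\]
and Corollary~\ref{cor_somesignedSelmergroupsareGreenberg} finally identifies $\Sel_{\fC_\omega}$ with the Greenberg Selmer group $\Sel_{\rm Gr}$.

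The hard part will be running the locally restricted Euler system machinery cleanly in rank $4$. The subtlety is that in the semi-ordinary setting, the $f$-side of $T_{f,g}$ is reducible over $G_{\QQ_p}$ whereas the $g$-side is irreducible, and this asymmetry must be handled carefully both when verifying the Chebotarev hypotheses for Kolyvagin primes and when controlling the derivative classes at primes of ramification. The big-image condition \ref{item_BI_fg} is tailored precisely for this purpose, but one still has to bookkeep the $\varpi^{s(g)}$-denominators uniformly in $m$ to guarantee the integral statement, and one must verify that the signed local conditions (rather than a Greenberg-type propagation from an $\cO$-flat submodule) are compatible with the Kolyvagin derivative construction.
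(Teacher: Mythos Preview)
Your proposal is correct and follows essentially the same approach as the paper: both run the locally restricted Euler system machinery (the paper cites \cite[Theorem~A.14]{kbbleiPLMS} and declares the argument identical to \cite[Theorem~6.2.4]{BLLV}) on the signed Beilinson--Flach classes, then invoke Corollary~\ref{cor_somesignedSelmergroupsareGreenberg} and the comparison of signed with geometric $p$-adic $L$-functions (Remark~\ref{remark_signed_geometric_padicL}) to pass from the $\fC_\omega$-Selmer divisibility to the Greenberg one. Your unpacking of the $\delta_{k_g+1}$ cancellation via Remark~\ref{rk:imagePR}(iii) and the role of Remark~\ref{remark_HnA_trivial} is exactly what is implicit in the paper's references.
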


\begin{proof}
Given the locally restricted Euler system $\{\varpi^{s(g)}\BF_{f,g,m}^{\alpha,\ast}\}_{m\in\cN}$ of signed Beilinson--Flach elements (c.f. \S\ref{subsec_review_BF_elements}), where 
\begin{itemize}
\item $\alpha=\alpha_f$ is the root of the Hecke polynomial of $f$ at $p$ such that $\iota_p(\alpha)$ is a $p$-adic unit,
    \item $\ast\in \{\#,\flat\}$ if \ref{item_Irr1} holds and $\ast\in \{+,-\}$ if \ref{item_Irr2} or \ref{item_Irr3} holds,
\end{itemize} 
the proof of this theorem is identical to the proof of \cite[Theorem 6.2.4]{BLLV}, in view of Corollary~\ref{cor_somesignedSelmergroupsareGreenberg} (which identifies the signed Selmer group with the Greenberg Selmer group) and Remark~\ref{remark_signed_geometric_padicL} (which compares the signed $p$-adic $L$-functions to the geometric Rankin--Selberg $p$-adic $L$-function). 

We recall that this argument builds on the locally restricted Euler system machinery developed in \cite[Appendix A]{kbbleiPLMS} (more specifically, the conclusion of Theorem A.14 in op. cit.; see also  \cite{kbbstarkiwasawa,kbbesrankr,kbbCMabvar,kbbRSsupersingularEllipticCurves} for earlier incarnations and applications of this machinery in a variety of contexts). We note that the adjective ``locally restricted'' refers to the $p$-local property \eqref{eqn_BF_locally_restricted} of the classes $\{\BF_{f,g,m}^{\alpha,\ast}\}_m$, which asserts that 
$$\loc_p\left(\BF_{f,g,m}^{\alpha,\ast}\right)\in\ker\left( \col_{f,g,m}^{(\alpha,\ast)}\right).$$   
\end{proof}

\begin{remark}
\item[i)]By the explicit reciprocity law of Loeffler--Zerbes \cite[Theorem~9.3.2]{LZ1}, the $p$-adic $L$-function $L_p^{\rm geo}(f,g)$ evaluated at $\chi^j$ equals (up to explicit fudge-factors) the complex $L$-value $L(f,g,1+j)$ whenever $k_g< j\le k_f$.  If $j>\dfrac{k_f+k_g+1}{2}$, then $1+j$ falls within the range of absolute convergence for $L(f,g,s)$.  Hence, we have $L(f,g,1+k_f)\ne 0$ when $k_f-k_g \geq 3$. It is also easy to see in that case that the explicit fudge-factors are non-vanishing as well. In particular, $L_p^{\rm geo}(f,g)\ne0$.
\item[ii)]We have proved in Theorem~\ref{thm_cyclo_main_conj_fotimesg} one inclusion of Conjecture~\ref{conj:signedIMC} with the choice of $\fC=\fC_\omega$. The same proof can be carried over to the other choices of $\fC$ in Conjecture~\ref{conj:signedIMC} without any difficulties.
\end{remark}

\section{Cyclotomic main conjectures for $\f\otimes g$}
\label{subsec_ffotimesg}
Suppose that the newform $g\in S_{k_g+2}(\Gamma_1(N_g))$ is as in \S\ref{subsec_fotimesg}. We let $\f$ denote the primitive Hida family of tame level $N_f$ (where we still assume that $N_f$ is coprime to $N_g$), whose basic properties were outlined in \S\S\ref{subsec_setting} and \ref{subsec_twovarPRandColeman}.

Recall that $\LL_{\f}$ stands for the branch of Hida's universal (ordinary) Hecke algebra. Without loss of generality, we may (and will) assume that $\LL_{\f}$ contains $\cO$ as a subring. We assume that $\LL_\f$ is a regular ring (c.f., Remark~\ref{remark_regular_is_light_assumption}). 

Recall the Galois representation $T_{\f,g}=R_\f^*\otimes R_g^*$, which is a free $\LL_\f$-module of rank $4$. We consider the following big image condition on the representation $T_{\f,g}$. \index{Big image conditions! $(\tau_{\f\otimes g})$}
\begin{itemize}
     \item[\mylabel{item_BI_ffg}{$(\tau_{\f\otimes g})$}] There exists $\tau\in \Gal(\overline{\QQ}/\QQ(\mu_{p^\infty}))$ such that $T_{\f,g}/(\tau-1)T_{\f,g}$ is a free $\LL_\f$-module of rank $1$.
\end{itemize}

Our main result towards the validity of cyclotomic main conjectures for the family $\f\otimes g$ of Rankin--Selberg convolutions is the following:

\begin{theorem}
\label{thm_cyclo_main_conj_ffotimesg}
Suppose that the residual representation $\overline{\rho}_\f$ associated to $\f$ is absolutely irreducible. Assume also that one of the hypotheses \ref{item_Irr1}--\ref{item_Irr3} as well as the condition \ref{item_BI_ffg} hold true. For any character $\theta$ of $\Delta$, we have the following containment in Conjecture~\ref{conj:bigIMC} up to $\mu$-invariants: 
$$ e_{\theta}\, L_p^{\rm geo}(\f,g)\,\in \,\Char_{\Lambda_\cO(\Gamma_1)}e_{\theta}\left(\Sel_{\rm Gr}(T_{\f,g}^\vee(1)/\QQ(\mu_{p^\infty}))^\vee\right)\otimes_\cO L\,.$$
\end{theorem}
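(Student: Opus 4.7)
The plan is to deform the argument of Theorem~\ref{thm_cyclo_main_conj_fotimesg} to the $\Lambda_\f$-adic setting, feeding the locally restricted Euler system machinery of \cite[Appendix~A]{kbbleiPLMS} with the signed $\Lambda_\f(\Gamma_\cyc)$-adic Beilinson--Flach classes constructed in Theorem~\ref{thm_BF_factorization_for_families} in place of the one-variable classes $\BF^{(\alpha,\ast)}_{f,g,m}$ used in \S\ref{subsec_fotimesg}. Concretely, depending on whether \ref{item_Irr1} or one of \ref{item_Irr2}, \ref{item_Irr3} holds, we take $\ast\in\{\#,\flat\}$ or $\ast\in\{+,-\}$ and consider the system
$$\left\{\varpi^{s(g)}\BF_{\f,g,\ast,m}\right\}_{m\in\cN}\subset H^1\left(\QQ(\mu_m),T_{\f,g}\,\widehat{\otimes}\,\LL_\cO(\Gamma_\cyc)^\iota\right),$$
which is genuinely integral thanks to Theorem~\ref{thm_BF_factorization_for_families}. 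The Euler system relations for this collection are inherited from those of the (unbounded) $\BF_{\f,g,\mu,m}$ (Theorem~\ref{thm_interpolate_cyc_BF}) by the explicit factorization recorded in Theorem~\ref{thm_BF_factorization_for_families}, since the matrix $Q_g^{-1}M_g$ (resp.\ $Q_g^{-1}M_g'$) is a scalar in the tame variable.

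The first key input is that $\varpi^{s(g)}\BF_{\f,g,\ast,m}$ is \emph{locally restricted} at $p$: Proposition~\ref{prop:values-of-col-BF} gives $\col_{\f,g,\ast,m}\circ\loc_p(\BF_{\f,g,\ast,m})=0$, so the class lands in the local condition cut out by the complementary Coleman map. Running the argument of \cite[Theorem~A.14]{kbbleiPLMS} over the two-dimensional regular local ring $\LL_\f(\Gamma_1)$ (here \ref{reg} and the regularity of $\LL(\Gamma_1)$ give that $\LL_\f(\Gamma_1)$ is regular, so the characteristic ideal formalism applies), and using \ref{item_BI_ffg} together with the absolute irreducibility of $\overline{\rho}_\f$ to verify the Chebotarev/image hypotheses exactly as in the proof of \cite[Theorem~6.2.4]{BLLV}, yields the divisibility
$$\Char_{\LL_\f(\Gamma_1)}e_\theta\Sel_{\fC_\omega}\!\left(T_{\f,g}^\vee(1)/\QQ(\mu_{p^\infty})\right)^{\!\vee}\ \Big|\ e_\theta\cdot \col_{\f,g,\ast}\!\left(\loc_p(\varpi^{s(g)}\BF_{\f,g,\ast',1})\right)$$
after inverting $p$, where $\{\ast,\ast'\}$ denotes $\{\#,\flat\}$ or $\{+,-\}$ accordingly.

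At this stage, two identifications finish the proof. First, Corollary~\ref{cor_somesignedSelmergroupsareGreenberg} gives $\Sel_{\fC_\omega}(T_{\f,g}^\vee(1)/\QQ(\mu_{p^\infty}))=\Sel_{\rm Gr}(T_{\f,g}^\vee(1)/\QQ(\mu_{p^\infty}))$, so the Selmer group appearing on the left is the Greenberg Selmer group. Second, by Proposition~\ref{prop:values-of-col-BF} one has
$$\col_{\f,g,\ast}\circ\loc_p(\BF_{\f,g,\ast',1})=\pm D_g\,\delta_{k_g+1}\,L_p^{\rm geo}(\f,g),$$
with $D_g\in\LL_\cO(\Gamma_1)^\times$. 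After tensoring with $L$ the scalar $D_g\delta_{k_g+1}\varpi^{s(g)}$ is a unit (up to the $\mu$-invariant that the statement tolerates), and combining the two identifications gives the desired containment
$$e_\theta\cdot L_p^{\rm geo}(\f,g)\in\Char_{\LL_\f(\Gamma_1)}\,e_\theta\Sel_{\rm Gr}\!\left(T_{\f,g}^\vee(1)/\QQ(\mu_{p^\infty})\right)^{\!\vee}\otimes_\cO L.$$

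The main obstacle is verifying that the locally restricted Euler system machinery of \cite[Appendix~A]{kbbleiPLMS}, which was written over one-variable Iwasawa algebras with $\cO$-coefficients, goes through over the two-variable regular coefficient ring $\LL_\f(\Gamma_1)$: one has to check Kolyvagin-type derivative arguments, Chebotarev density with the element $\tau$ of \ref{item_BI_ffg}, and the local duality bookkeeping for the signed local condition in families. The big image hypothesis \ref{item_BI_ffg} is tailor-made to supply the required rank-one quotient $T_{\f,g}/(\tau-1)T_{\f,g}$ needed at the Chebotarev step, and the absence of pseudo-null obstructions over a two-dimensional regular ring ensures that the divisibility of Fitting (equivalently, characteristic) ideals extracted from the derivative classes carries over verbatim, so no genuinely new ingredient beyond what has been assembled in Chapters~\ref{S:Coleman}--\ref{sec_BF_and_padicL} is required.
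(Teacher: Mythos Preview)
Your approach is genuinely different from the paper's, and it has a real gap.

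The paper does \emph{not} run the locally restricted Euler system machinery over $\LL_\f(\Gamma_1)$. Instead it argues by specialization and patching: one fixes an infinite sequence of crystalline arithmetic specializations $\kappa_i:\LL_\f\to\cO$, applies Theorem~\ref{thm_cyclo_main_conj_fotimesg} to each $\f(\kappa_i)$ (where the one-variable Euler system argument is already established), uses control theorems to compare $\Sel_{\rm Gr}(T_{\f,g}^\vee(1)/\QQ(\mu_{p^\infty}))^\vee\otimes_{\kappa_i}\cO$ with $\Sel_{\rm Gr}(T_{\f(\kappa_i),g}^\vee(1)/\QQ(\mu_{p^\infty}))^\vee$, and then invokes the divisibility criterion of Appendix~\ref{Appendix_Regular_Rings_Divisibility} (based on Chevalley's lemma) to patch the resulting divisibilities $S_i\mid\fL_i$ in $\LL_\cO(\Gamma_1)$ up to a divisibility $S\mid\fL$ in $\LL_\f(\Gamma_1)$. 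The uniform bound $s(g)$ on the denominators (independent of $m$ and of the specialization) is what makes the $\mu$-invariant bookkeeping go through.

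Your route---feeding the $\LL_\f$-adic signed classes directly into \cite[Appendix~A]{kbbleiPLMS}---is the step you yourself flag as the ``main obstacle,'' and you do not actually overcome it. That appendix is written for Iwasawa modules over $\LL_\cO(\Gamma_1)$; the Kolyvagin-derivative bounds, the structure theory used to pass from annihilation to characteristic ideals, and the comparison of local conditions are all carried out in Krull dimension~$2$. The ring $\LL_\f(\Gamma_1)$ has Krull dimension~$3$, and your assertion that ``the absence of pseudo-null obstructions over a two-dimensional regular ring ensures that the divisibility\ldots carries over verbatim'' both misstates the dimension and glosses over the genuine work needed to extend the machine. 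The paper's patching argument is precisely designed to avoid this.

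There is also a concrete slip: $\delta_{k_g+1}=\prod_{i=0}^{k_g}\Tw^{-i}(X)$ is \emph{not} a unit in $\LL_\f(\Gamma_1)\otimes_\cO L$; it vanishes at the characters $\chi_\cyc^j$ for $0\le j\le k_g$. In the correct bookkeeping (cf.\ Remark~\ref{rk:imagePR}(iii)) this factor cancels against $I_{\fC_\omega}=\delta_{k_g+1}\LL_\cO(\Gamma_\cyc)$ coming from the image of the Coleman maps, so the end formula is right, but your sentence ``After tensoring with $L$ the scalar $D_g\delta_{k_g+1}\varpi^{s(g)}$ is a unit'' is false as written.
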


\begin{proof}
Let us fix $\theta$ as in the statement of our theorem and assume $e_{\theta}L_p^{\rm geo}(\f,g)\neq 0$ without loss of generality, since there is nothing to prove otherwise. Put $\fL:=\varpi^{\mu_1}e_{\theta}L_p^{\rm geo}(\f,g)$ where $\mu_1$ is the unique natural number with $\fL\in \LL_\f(\Gamma_1)\setminus \varpi\LL_\f(\Gamma_1)$. Let us also choose an element $S\in \LL_\f(\Gamma_1)\setminus\varpi\LL_\f(\Gamma_1)$ such that $$\varpi^{\mu_2}\LL_\f(\Gamma_1)S=\Char_{\LL_\f(\Gamma_1)}e_{\theta}\left(\Sel_{\rm Gr}(T_{\f,g}^\vee(1)/\QQ(\mu_{p^\infty}))^\vee\right)$$
for some suitable choice of a natural number $\mu_2$. We contend to prove that $S \mid \mathfrak{L}$, using the conclusion of Theorem~\ref{thm_cyclo_main_conj_fotimesg} for varying $f$ and the divisibility criterion established in Appendix~\ref{Appendix_Regular_Rings_Divisibility}. This will in turn prove that 
\begin{equation}
    \label{eqn_thm_cyclo_main_conj_fotimesg}
    e_{\theta}\, \varpi^{\mu_2-\mu_1} L_p^{\rm geo}(\f,g)\,\in \,\Char_{\Lambda_\cO(\Gamma_1)}e_{\theta}\left(\Sel_{\rm Gr}(T_{\f,g}^\vee(1)/\QQ(\mu_{p^\infty}))^\vee\right)\,.
\end{equation}
We will see along the way $\mu_2\leq \mu_1+s(g)$. This combined with \eqref{eqn_thm_cyclo_main_conj_fotimesg} will verify the required containment. 

Let us choose any sequence $\{\cP_{i}=\ker(\kappa_i)\}_{i=1}^\infty\subset {\rm Spec}(\LL_\f)$ of distinct height-one primes of $\Lambda_\f$, where each $\kappa_i: \LL_\f\to \cO$ is an $\cO$-valued crystalline specialization of weight $\geq 0$ (in the sense that the overconvergent eigenform $\f(\kappa_i)$ is a classical eigenform which is $p$-old and has weight $\geq 2$). For each positive integer $i$, let us put $\fL_i:=\kappa_i(\fL)\in \Lambda_\cO(\Gamma_1)$ and similarly define $S_i\in \Lambda_\cO(\Gamma_1)$.

Let us fix an index $i$. We will explain that 
$$\varpi^{\mu_2}S_i \hbox{\,\,\,divides\,\,\,} \Char_{\Lambda_\cO(\Gamma_1)}e_{\theta}\left(\Sel_{\rm Gr}(T_{\f(\kappa_i),g}^\vee(1)/\QQ(\mu_{p^\infty}))^\vee\right).$$ 
Since we have assumed that $\LL_\f$ is regular, the height-one prime $\cP_i=P_i\LL_\f$ is principal. It follows from \cite[Lemma 3.5.3]{mr02} that
\begin{equation}
\label{eqn_2023_04_11_1030}
\Sel_{\rm Gr}(T_{\f,g}^\vee(1)/\QQ(\mu_{p^\infty}))[\cP_i]\simeq \Sel_{\rm Gr}(T_{\f,g}^\vee[\cP_i](1)/\QQ(\mu_{p^\infty}))\,.
\end{equation}
Moreover, passing to Pontryagin duals in the exact sequence
$$0\lra T_{\f,g} \xrightarrow{\times P_i} T_{\f,g} \lra T_{\f,g}/\cP_i T_{\f,g}\lra 0$$
and recalling that $T_{\f,g}/\cP_i T_{\f,g}=T_{\f(\kappa_i),g}$ by definition, we deduce that the sequence
$$0\lra T_{\f(\kappa_i),g}^\vee \lra T_{\f,g}^\vee \xrightarrow{\times P_i} T_{\f,g}^\vee\lra 0$$
is exact. In particular, we have a natural isomorphism 
\begin{equation}
\label{eqn_2023_04_11_1037}
    T_{\f(\kappa_i),g}^\vee \stackrel{\sim}{\lra} T_{\f,g}^\vee[\cP_i]\,.
\end{equation}
Combining \eqref{eqn_2023_04_11_1030} and \eqref{eqn_2023_04_11_1037}, we deduce that 
\begin{equation}
\label{eqn_2023_04_11_1039}
\Sel_{\rm Gr}(T_{\f,g}^\vee(1)/\QQ(\mu_{p^\infty}))[\cP_i]\simeq \Sel_{\rm Gr}(T_{\f(\kappa_i),g}^\vee(1)/\QQ(\mu_{p^\infty}))\,.
\end{equation}

Only in this paragraph, let us set $M:=\Sel_{\rm Gr}(T_{\f,g}^\vee(1)/\QQ(\mu_{p^\infty}))$ to ease our notation. Passing to Pontryagin duals in the tautological exact sequence
$$0\lra M[\cP_i]\lra M \xrightarrow{\times P_i} P_iM \lra 0\,$$
and noting that $(P_iM)^\vee=P_iM^\vee$ (by the definition of the $\LL_\f$-action on the Pontryagin dual of $M$), yield the short exact sequence
$$0\lra \cP_i M^\vee \lra M^\vee \lra M[\cP_i]^\vee \lra 0\,,$$
and therefore an isomorphism
\begin{equation}
    \label{eqn_2023_04_11_1049}
M^\vee/\cP_i M^\vee \xrightarrow{\,\,\sim\,\,} M[\cP_i]^\vee \,.
\end{equation}
We can now combine \eqref{eqn_2023_04_11_1039} and \eqref{eqn_2023_04_11_1049} to deduce that
$$\Sel_{\rm Gr}(T_{\f,g}^\vee(1)/\QQ(\mu_{p^\infty}))^\vee\Big{/}\cP_{i}\,\Sel_{\rm Gr}(T_{\f,g}^\vee(1)/\QQ(\mu_{p^\infty}))^\vee\xrightarrow{\,\sim\,} \Sel_{\rm Gr}(T_{\f(\kappa_i),g}^\vee(1)/\QQ(\mu_{p^\infty}))^\vee\,.$$
Hence,
\begin{align*}
\Char_{\Lambda_\cO(\Gamma_1)}e_{\theta}&\left(\Sel_{\rm Gr}(T_{\f,g}^\vee(1)/\QQ(\mu_{p^\infty}))^\vee[\cP_{i}]\right)\, \cdot\, (\varpi^{\mu_2}S_i)\\
&\qquad\qquad\qquad\qquad\qquad= \Char_{\Lambda_\cO(\Gamma_1)}e_{\theta}\left(\Sel_{\rm Gr}(T_{\f(\kappa_i),g}^\vee(1)/\QQ(\mu_{p^\infty}))^\vee\right)\,,
\end{align*}
in particular, also that 
\begin{equation}
\label{eqn_reduction_Si_step_1}
    \varpi^{\mu_2}S_i\,\mid\, \Char_{\LL_\cO(\Gamma_1)}e_{\theta}\left(\Sel_{\rm Gr}(T_{\f(\kappa_i),g}^\vee(1)/\QQ(\mu_{p^\infty}))^\vee\right)\,.
\end{equation}
Observe also that 
\begin{equation}
    \label{eqn_reduction_Li_step_1}
   \varpi^{\mu_1}\fL_i=e_\theta L_p^{\rm geo}(\f(\kappa_i),g) 
\end{equation}
 by definition.
 
 Under the assumptions of our theorem, the hypotheses of Theorem~\ref{thm_cyclo_main_conj_fotimesg} are valid with the choice $f=\f(\kappa_i)$, for any positive integer $i$. It follows from Theorem~\ref{thm_cyclo_main_conj_fotimesg} (applied with $f=\f(\kappa_i)$)
 $$\Char_{\Lambda_\cO(\Gamma_1)}e_{\theta}\left(\Sel_{\rm Gr}(T_{\f(\kappa_i),g}^\vee(1)/\QQ(\mu_{p^\infty}))^\vee\right)\,\mid\, e_\theta\,\varpi^{s(g)} L_p^{\rm geo}(\f(\kappa_i),g)\,.$$
 This, combined with \eqref{eqn_reduction_Si_step_1} and \eqref{eqn_reduction_Li_step_1}, allows us to conclude that $\varpi^{\mu_2} S_i \, \mid\, \varpi^{s(g)+\mu_1}  \fL_i $. Since $\mu$-invariants of $\fL_i$ and $S_i$ are both zero (by definition), it follows that $\mu_2\leq s(g)+\mu_1$, and also that
 \begin{equation}
     \label{eqn_reduction_SiLi_step_2}
     S_i\,\mid\,\fL_i\,.
 \end{equation}
 We may now use Proposition~\ref{prop_appendix_regular_divisibility} (with the choices $R=\LL_\f(\Gamma_1)$, $R_0:=\Zp[[1+p\ZZ_p]]$ the weight space, $\iota_0:R_0\to \LL_\f\to R$ the structure map and $F=S$, $G=\fL$), we conclude that $S\mid \fL$, as required.
\end{proof}

\section[Main conjectures at inert primes]{Main conjectures over an imaginary quadratic field where $p$ is inert}
\label{subsec_IMCimagquadinert}
In this subsection, we shall discuss the consequences of Theorem~\ref{thm_cyclo_main_conj_fotimesg} and Theorem~\ref{thm_cyclo_main_conj_ffotimesg} to the Iwasawa theory for ${\rm GL}_2\times {{\rm GL}_1}_{/K}$, where $K$ is an imaginary quadratic field where $p$ is \emph{inert}. Fix such an imaginary quadratic number field $K$ and let $\Gamma_K=\Gal(K_\infty/K)$ denote the Galois group of the unique $\ZZ_p^2$-extension of $K$. 

To that end, we shall pick an eigenform $g$ which has CM by $K$ and has weight $k_g+2\geq 2$. As such, $g$ arises as the theta-series of an $A_0$-type Hecke character $\psi$ of $K$. As a start, in \S\ref{subsubsec_cyclo_main_inert}, we shall recast (in Theorem~\ref{thm_cyclo_main_inert_f} and Theorem~\ref{thm_cyclo_main_inert_ff}) our Theorems~\ref{thm_cyclo_main_conj_fotimesg} and \ref{thm_cyclo_main_conj_ffotimesg} above as results towards the cyclotomic main conjectures for the Rankin--Selberg products $f_{/K}\otimes\psi$ and $\f_{/K}\otimes \psi$ of the base change of a (family of) non-CM form(s) to $K$ and the Hecke character $\psi$. 

In \S\ref{subsubsec_3var_inert_ord}, we explain how to interpolate the conclusions of Theorem~\ref{thm_cyclo_main_inert_ff} under a very natural assumption (which we verify granted the existence of a rank-$2$ Euler system in the present setting in Section~\ref{subsubsec_ESrank2}) to a divisibility statement in the Iwasawa main conjectures over $\LL_\f(\Gamma_K)$; see Theorem~\ref{thm_3var_main_inert_ff} for our main result in this direction. Utilizing the descent formalism of \cite[\S5.3.1]{BL_SplitOrd2020} (which relies crucially on the work of Nekov\'a\v{r}~\cite{nekovar06}), we shall obtain divisibilities in both ``definite'' and ``indefinite'' Iwasawa main conjectures over $\LL_\f(\Gamma_\ac)$, where $\Gamma_\ac:=\Gal(K_\ac/K)$ is the Galois group of the anticyclotomic $\ZZ_p$-extension $K_\ac/K$.

We briefly comment why we use two versions of Selmer groups (namely, \`a la Greenberg versus Nekov\'a\v{r}) in our work. The Euler system machinery, which is what we employ as the first step of our argument, is set up in terms of Greenberg Selmer groups. We translate these statements to those in terms of Selmer complexes for the key reason that Selmer complexes are often perfect objects in the derived category. In favourable scenarios, this key property allows one to identify the characteristic ideals of the relevant Selmer groups with determinants, allowing one to avoid an analysis of pseudo-null submodules when employing descent arguments.

\subsection{The setting}
\label{subsubsec_setting_fKpsi} 
Let $D_K$ denote the discriminant of the imaginary quadratic number field $K$ which we have fixed above, and let $\cO_K$ denote its ring of integers. We let\index{Hecke characters of $K$! $\psi$}
$$\psi:\mathbb{A}_K^\times/K^\times \lra \mathbb{C}^\times$$ denote an $A_0$-type Hecke character with infinity type $(0,k_g+1)$ (where $k_g$ is a natural number) and conductor $\ff$,  where $\ff$ is coprime to $p$. We let $\widehat{\psi}$\index{Hecke characters of $K$! $\widehat{\psi}$ (Galois character attached to $\psi$)} denote the Galois character associated (via the geometrically normalized Artin map of global class field theory) to the $p$-adic avatar of $\psi$. We say that $\psi$ is a crystalline Hecke character\index{Hecke characters of $K$! Crystalline Hecke character} to mean that $\widehat{\psi}$ is crystalline at $p$. In \S\ref{subsubsec_cyclo_main_inert}, we will work with a fixed choice of crystalline $\psi$, whereas in \S\S\ref{subsubsec_3var_inert_ord}--\ref{subsubsec_anticyclo_3}, we will allow $\psi$ vary among crystalline Hecke characters (but keeping $\ff$ fixed) to prove our main results.

We put\index{Hecke characters of $K$! $\theta(\psi)$ ($\theta$-series of $\psi$)}
$$\theta(\psi):=\sum_{(\mathfrak{a},\ff)=1}\psi(\mathfrak{a})q^{\bf{N}\mathfrak{a}}\in S_{k_g+2}(\Gamma_1(N_g))$$ 
where $N_g=|D_K|\,\bf{N}\ff$. In what follows, we shall replace $g$ in \S\ref{subsec_fotimesg} with $\theta(\psi)$. Note that since we have assumed that $p$ is inert in $K/\QQ$, it follows that $a_p(\theta(\psi)))=0$, i.e., the eigenform $\theta(\psi)$ is indeed $p$-non-ordinary.

\begin{defn}
\label{define_CM_nebentype}
Let us define the Dirichlet character $\varepsilon_\psi$ on setting\index{Hecke characters of $K$! $\varepsilon_\psi$ (Nebentype)}
$$\varepsilon_\psi:=\epsilon_K\psi_{\vert_{\mathbb{A}_{\QQ}^\times}}\mathbf{N}^{-k_g-1}$$
where $\mathbf{N}$ is the norm character \index{Hecke characters of $K$! $\mathbf{N}$ (Norm character)} on $\mathbb{A}_\QQ^\times$ and $\epsilon_K$\index{$K$: imaginary quadratic field! $\epsilon_K$ (quadratic character associated to $K/\QQ$)}  is the quadratic character associated to $K/\QQ$. Then $\varepsilon_\psi$ is the nebentype character of the cuspidal eigen-newform $\theta(\psi)$. Let $\varepsilon_f$ denote the nebentype of the eigenform $f$ (as well as the Hida family $\f$ we have fixed). 
\end{defn}

We let $f\in S_{k_f+2}(\Gamma_1(N_f),\varepsilon_f)$ be a primitive eigenforms as in \S\ref{subsec_setting}, where $N_f$ and $N_g$ are coprime and $k_f>k_g$. We recall also that we are working under the assumption that $f$ is $p$-ordinary; we denote by $f^\alpha$ its $p$-ordinary specialization. Thanks to our assumptions that $\ff$, $D_K$ and $N_f$ are pairwise coprime, observe that $\varepsilon_\psi\varepsilon_f$ can never be the trivial character.

We now let $\cO$ be the ring of integers of a finite extension ${\rm Frac}(\cO)$ of $\QQ_p$ containing the images $\iota_p(K_f)$ and $\iota_p(\psi(\mathbb{A}_K^\times))$ of the Hecke fields, as well as the roots of the Hecke polynomials of $f$ and $\theta(\psi)$ at $p$.

As before, we let $\LL_{\f}$ denote the branch of Hida's universal (ordinary) Hecke algebra which admits $f^\alpha$ as a specialization. We again assume (without loss of generality) that $\LL_{\f}$ contains $\cO$ as a subring and that $\LL_\f$ is a regular ring (c.f., Remark~\ref{remark_regular_is_light_assumption}).

We assume throughout \S\ref{subsec_IMCimagquadinert} that the residual representation $\overline{\rho}_\f=\overline{\rho}_f$ is irreducible. The Galois representation $T_{f,\theta(\psi)}:=R_f^*\otimes R_{\theta(\psi)}^*$, which is a free $\cO$-module of rank $4$, can be (and will be) identified with ${\rm Ind}_{K/\QQ}T_{f,\psi}$, where $T_{f,\psi}:=R_f^*\otimes\widehat{\psi}^{-1}$\index{Galois representations! $T_{f,\psi}$}. We set $\TT_{f,\psi}^{\rm cyc}:=T_{f,\psi}\otimes \LL_\cO(\Gamma_1)^{\iota}$,\index{Galois representations! $\TT_{f,\psi}^?$} where $\LL_\cO(\Gamma_1)^{\iota}$ is the $\LL_\cO(\Gamma_1)$-module of rank one on which $G_K$ acts via the character $G_K\twoheadrightarrow \Gamma_1\xrightarrow{\gamma\mapsto\gamma^{-1}}\Gamma_1\hookrightarrow \LL_\cO(\Gamma_1)^\times$, and where $G_K$ acts on this tensor product diagonally. We similarly define $\TT_{\f,\psi}^{\rm cyc}$. We analogously define the ``big'' Galois representations $\TT_{f,\psi}^{\rm ac}:=T_{f,\psi}\otimes \LL_\cO(\Gamma_\ac)^{\iota}$, $\TT_{f,\psi}^K:=T_{f,\psi}\otimes \LL_\cO(\Gamma_K)^{\iota}$ as well as $\TT_{\f,\psi}^{\rm ac}$ and $\TT_{\f,\psi}^K$.\index{Galois representations! $\TT_{\f,\psi}^?$}

We next define the Selmer complexes associated to these $G_K$-representations. We will choose to work with these Selmer complexes (rather than their classical counterparts, the Greenberg Selmer groups) due to the utility of Nekov\'a\v{r}'s base change and descent formalism (which we shall crucially rely on in \S\S\ref{subsubsec_3var_inert_ord}--\ref{subsubsec_anticyclo_3}).

\begin{defn}
\label{defn_Selmer_complex_inert_ord}
Let $\Sigma$ denote the set of places of $K$ which divide $p\ff N_f|D_K|\infty$. In what follows, $?$ stands for any one of the symbols $\{\ac,\cyc,K\}$. The complex \index{Selmer complexes! $\widetilde{{\bf R}\Gamma}_{\rm f}(G_{K,\Sigma},\TT_{f,\psi}^{?};\Delta_{\Gr})$ (Greenberg Selmer complex)}
$$\widetilde{{\bf R}\Gamma}_{\rm f}(G_{K,\Sigma},\TT_{f,\psi}^{?};\Delta_{\Gr})\in D_{\rm ft} (_{\LL_\cO(\Gamma_?)}{\rm Mod})$$
is the Greenberg Selmer complex given by the local conditions which are unramified for all primes in $\Sigma$ that are coprime to $p$, and which is given by the strict Greenberg conditions with the choice
$$j_{p}^+:\,F^+\TT_{f,\psi}^{?}:=F^+R_f^*\otimes \widehat{\psi}^{-1}\otimes \LL_\cO(\Gamma_?)^\iota \longrightarrow \TT_{f,\psi}^{?}$$
at the unique prime of $K$ above $p$ (which we shall abusively denote by $p$ as well). We shall denote its cohomology by $\widetilde{H}^\bullet_{\rm f}(G_{K,\Sigma},\TT_{f,\psi}^{?};\Delta_{\Gr})$.\index{Selmer complexes! $\widetilde{H}^\bullet_{\rm f}(G_{K,\Sigma},\TT_{f,\psi}^{?};\Delta_{\Gr})$ (Extended Greenberg Selmer groups)}

We similarly define the complex $\widetilde{{\bf R}\Gamma}_{\rm f}(G_{K,\Sigma},\TT_{\f,\psi}^{?};\Delta_{\Gr})$ as well as its cohomology groups $\widetilde{H}^\bullet_{\rm f}(G_{K,\Sigma},\TT_{\f,\psi}^{?};\Delta_{\Gr})$.  \index{Selmer complexes! $\widetilde{{\bf R}\Gamma}_{\rm f}(G_{K,\Sigma},\TT_{\f,\psi}^{?};\Delta_{\Gr})$ (Greenberg Selmer complex)}\index{Selmer complexes! $\widetilde{H}^\bullet_{\rm f}(G_{K,\Sigma},\TT_{\f,\psi}^{?};\Delta_{\Gr})$ (Extended Greenberg Selmer groups)}
\end{defn}

Let us denote the set of places of $\QQ$ that lie below the set $\Sigma$ also with $\Sigma$. Attached to the $G_{\QQ,\Sigma}$-representation $T_{f,\theta(\psi)}$, we may similarly define a Selmer complex
$\widetilde{{\bf R}\Gamma}_{\rm f}(G_{\QQ,\Sigma},T_{f,\theta(\psi)};\Delta_{\Gr})$ given by the local conditions which are unramified for all primes in $\Sigma$ that are coprime to $p$, and which is given by the strict Greenberg condition with the choice
$$j_{p}^+:\,F^+T_{f,\theta(\psi)}:=F^+R_f^*\otimes R_{\theta(\psi)}^* \longrightarrow T_{f,\theta(\psi)}$$
at $p$. Shapiro's lemma then induces an isomorphism
$$\mathscr{S}:\widetilde{{\bf R}\Gamma}_{\rm f}(G_{\QQ,\Sigma},T_{f,\theta(\psi)};\Delta_{\Gr})\stackrel{\sim}{\lra} \widetilde{{\bf R}\Gamma}_{\rm f}(G_{K,\Sigma},T_{f,\psi};\Delta_{\Gr})\,.$$

\begin{proposition}
\label{prop_NekvseGre}
Let us denote by $\mathds{1}$ the trivial character of $\Delta$. We have,
$$e_{\mathds{1}}\,\Char_{\LL_\cO(\Gamma_\cyc)}\left(\Sel_{\rm Gr}(T_{f,\theta(\psi)}^\vee(1)/\QQ(\mu_{p^\infty}))^\vee\right)=\Char_{\LL_\cO(\Gamma_1)}\left(\widetilde{H}^2_{\rm f}(G_{K,\Sigma},\TT_{f,\psi}^{\cyc};\Delta_{\Gr})^{\iota}\right)\,.$$
\end{proposition}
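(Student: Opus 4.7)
The plan is to connect the two sides through three intermediate identifications: first reduce to the cyclotomic $\Zp$-direction via the idempotent $e_\mathds{1}$; then apply the standard comparison between Nekov\'a\v{r}'s extended Selmer cohomology and the Pontryagin dual of the classical Greenberg Selmer group; and finally transport the result to the base-change side via the Shapiro isomorphism $\mathscr{S}$ already recorded just before the proposition.

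\emph{Step 1.} Since $p\nmid\lvert\Delta\rvert$ and Pontryagin duality is exact, the idempotent $e_\mathds{1}$ commutes with the formation of Greenberg Selmer groups and induces a canonical identification of $\Lambda_{\cO}(\Gamma_1)$-modules
$$e_\mathds{1}\,\Sel_{\rm Gr}(T_{f,\theta(\psi)}^\vee(1)/\QQ(\mu_{p^\infty}))^\vee \;\stackrel{\sim}{\lra}\; \Sel_{\rm Gr}(T_{f,\theta(\psi)}^\vee(1)/\QQ_{p,\infty})^\vee.$$
\emph{Step 2.} Setting $\TT:=T_{f,\theta(\psi)}\otimes \Lambda_{\cO}(\Gamma_1)^\iota$, Nekov\'a\v{r}'s global duality formalism for Selmer complexes (cf. \cite{nekovar06}) delivers a Poitou--Tate-type comparison whose error terms---the global $\widetilde{H}^0_{\rm f}$, $\widetilde{H}^3_{\rm f}$, and the semi-local contributions at primes in $\Sigma\setminus\{p\}$---all carry trivial characteristic ideal. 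The global pieces vanish thanks to the absolute irreducibility of $\overline\rho_f$ together with $\varepsilon_\psi\varepsilon_f\ne\mathds{1}$, which rules out $G_{\QQ}$-invariants in $\TT$ even after specialization. The semi-local pieces at $v\nmid p$ are $\Lambda_{\cO}(\Gamma_1)$-torsion with characteristic ideal given by an Euler-type polynomial which becomes a unit after the cyclotomic Iwasawa twist. Hence
$$\Char_{\Lambda_{\cO}(\Gamma_1)}\bigl(\Sel_{\rm Gr}(T_{f,\theta(\psi)}^\vee(1)/\QQ_{p,\infty})^\vee\bigr) \;=\; \Char_{\Lambda_{\cO}(\Gamma_1)}\bigl(\widetilde{H}^2_{\rm f}(G_{\QQ,\Sigma},\TT;\Delta_{\Gr})^\iota\bigr).$$

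\emph{Step 3.} The Shapiro isomorphism $\mathscr{S}$ upgrades to the Iwasawa-theoretic setting because the character $\Psi_\cyc: G_K\to\Lambda_{\cO}(\Gamma_1)^\times$ factors through the cyclotomic quotient of $G_\QQ$. By the projection formula,
$${\rm Ind}_{K/\QQ}\bigl(\TT_{f,\psi}^\cyc\bigr) \;\cong\; T_{f,\theta(\psi)}\otimes \Lambda_{\cO}(\Gamma_1)^\iota \;=\; \TT$$
as $G_{\QQ}$-modules. Moreover, after restriction from $G_{\QQ_p}$ to $G_{K_\mathfrak{p}}=G_{\QQ_{p^2}}$, the strict Greenberg local datum $F^+R_f^*\otimes R_{\theta(\psi)}^*$ at $p$ corresponds under Shapiro to the strict Greenberg local datum $F^+R_f^*\otimes\widehat\psi^{-1}$ at the unique prime of $K$ above $p$. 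Consequently $\mathscr{S}$ induces a quasi-isomorphism of Iwasawa Selmer complexes
$$\widetilde{\mathbf{R\Gamma}}_{\rm f}(G_{\QQ,\Sigma},\TT;\Delta_{\Gr}) \;\stackrel{\sim}{\lra}\; \widetilde{\mathbf{R\Gamma}}_{\rm f}(G_{K,\Sigma},\TT_{f,\psi}^\cyc;\Delta_{\Gr});$$
taking $\widetilde{H}^2$ and the $\iota$-twist completes the proof.

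The main obstacle lies in Step~2: verifying that Nekov\'a\v{r}'s comparison error terms indeed have trivial characteristic ideal as $\Lambda_{\cO}(\Gamma_1)$-modules. While the vanishing of the global $\widetilde{H}^0$ and $\widetilde{H}^3$ pieces is essentially immediate from the irreducibility and non-Eisenstein hypotheses, the semi-local analysis at ramified primes requires one to work out explicitly the relevant Euler-type polynomials after Iwasawa twisting and to check that they become units---a standard but attention-requiring bookkeeping that is the technical heart of translating the classical Iwasawa theory over $\QQ(\mu_{p^\infty})$ into the Selmer-complex formalism over the base field $K$.
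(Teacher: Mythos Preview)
Your three-step strategy is sound and would work, but it takes a different route from the paper. The paper does not argue directly at the Iwasawa level; instead it invokes control theorems to reduce the equality of characteristic ideals to the equality of orders
\[
\left|\Sel_{\rm Gr}(T_{f,\theta(\psi)}^\vee(1+\eta)/\QQ(\mu_{p^\infty}))^\vee\right|=\left|\widetilde{H}^2_{\rm f}(G_{K,\Sigma},T_{f,\psi\eta};\Delta_{\Gr})\right|
\]
at infinitely many finite-order characters $\eta$ of $\Gamma_1$, and then cites \cite[Lemmas~5.7 and~5.8]{BL_SplitOrd2020} for this pointwise statement (noting that those lemmas, though proved in the split setting, do not use splitting). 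So the paper outsources exactly the content of your Step~2 to prior work at the specialization level, whereas you propose to prove it directly over $\Lambda_\cO(\Gamma_1)$ via Nekov\'a\v{r}'s duality. Your Step~3 and the paper's use of the Shapiro isomorphism $\mathscr{S}$ are essentially the same.

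What each approach buys: the paper's is shorter on the page because the bookkeeping is already done elsewhere; yours is self-contained and conceptually cleaner. One caution about your Step~2: the phrase ``Euler-type polynomial which becomes a unit after the cyclotomic Iwasawa twist'' is doing real work and deserves to be unpacked. The relevant semi-local error at $v\in\Sigma\setminus\{p\}$ is (up to duality) $H^1(I_v,\TT)^{\mathrm{Frob}_v=1}$, and its vanishing over $\Lambda_\cO(\Gamma_1)$ comes down to the operator $\mathrm{Frob}_v\otimes\gamma_v^{-1}-1$ being injective on $H^1(I_v,T)\otimes_\cO\Lambda_\cO(\Gamma_1)$; this holds because $\gamma_v\in\Gamma_1$ has infinite order, so no eigenvalue of $\mathrm{Frob}_v$ (an element of $\overline{\cO}$) can equal the group-like element $\gamma_v$. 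That is the precise mechanism you should state rather than the informal ``becomes a unit'' formulation.
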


\begin{proof}
By control theorems and the isomorphism $\mathscr{S}$ induced by Shapiro's lemma, it suffices to prove that 
$$|\Sel_{\rm Gr}(T_{f,\theta(\psi)}^\vee(1+\eta)/\QQ(\mu_{p^\infty}))^\vee|=|\widetilde{H}^2_{\rm f}(G_{K,\Sigma},T_{f,\psi\eta}^{\cyc};\Delta_{\Gr})|$$
for infinitely many characters $\eta$ of $\Gamma_1$. This fact was proved in \cite{BL_SplitOrd2020}, Lemma 5.7 and Lemma 5.8. Note that the set up in op. cit. a priori requires $p$ be split in $K/\QQ$, but this assumption plays no role in the proofs of Lemma 5.7 and Lemma 5.8. It is also worth noting we are using the fact that $\eta\circ\iota=\eta^{-1}$ (and this is the reason for the use of the twisted $\Gamma_1$-action in the definitions of $\TT_{f,\chi}^\cyc$). 
\end{proof}

\begin{defn}
\label{defn_geo_for_trivial_char}\index{$p$-adic $L$-functions!  $L_p^{\rm RS}(f_{/K}\otimes\psi)$}
We set 
$$L_p^{\rm RS}(f_{/K}\otimes\psi):=e_{\mathds{1}}L_p^{\rm geo}(f,\theta(\psi))\in \LL_\cO(\Gamma_1)$$ 
and similarly define $L_p^{\rm RS}(\f_{/K}\otimes\psi)\in  \LL_\f(\Gamma_1)$. \index{$p$-adic $L$-functions!  $L_p^{\rm RS}(\f_{/K}\otimes\psi)$}
\end{defn}
\subsection{Cyclotomic main conjectures in the inert case}
\label{subsubsec_cyclo_main_inert}
We consider the following big image condition: \index{Big image conditions! {{\bf (Full)}}}

\begin{enumerate}[align=parleft, labelsep=0.2cm,]
\item[\mylabel{item_fullness_main_body}{{\bf{(Full)}}}]\,\, ${\rm SL}_2(\FF_p)\subset  \overline{\rho}_\f(G_{\QQ(\mu_{p^\infty})})$\,.
\end{enumerate}
In Theorem~\ref{appendix_big_images_subsec_2_thm_3}, we explain that the condition~\ref{item_fullness_main_body} together with the assumption that $p\geq 7$ is sufficient to ensure the validity of the hypothesis \ref{item_BI_fg} when $g=\theta(\psi)$. 

 The following divisibility statement in the cyclotomic main conjecture for the Rankin--Selberg product $f_{/K}\otimes \psi$ is a reformulation of Theorem~\ref{thm_cyclo_main_conj_fotimesg}, in view of Proposition~\ref{prop_NekvseGre} and Definition~\ref{defn_geo_for_trivial_char}. Note also that the hypothesis that $k_g\neq p-1$ and $p+1\nmid k_g+1$ guarantees that one of \ref{item_Irr1}--\ref{item_Irr3} holds true.

\begin{theorem}
\label{thm_cyclo_main_inert_f}
Suppose that $\overline{\rho}_f$ is absolutely irreducible as well as that $k_g\neq p-1$ and $p+1\nmid k_g+1$. Assume also that $p\geq 7$ and \ref{item_fullness_main_body} holds true. Then,
$$\varpi^{s(g)}L_p^{\rm RS}(f_{/K}\otimes\psi)\,\in \,\Char_{\LL_\cO(\Gamma_1)}\left(\widetilde{H}^2_{\rm f}(G_{K,\Sigma},\TT_{f,\psi}^{\cyc};\Delta_{\Gr})^\iota\right) \,.$$
\end{theorem}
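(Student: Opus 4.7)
The plan is to deduce this statement directly from Theorem~\ref{thm_cyclo_main_conj_fotimesg} by specializing to $g=\theta(\psi)$, and then translate the conclusion in two ways: first via Proposition~\ref{prop_NekvseGre} to convert the Greenberg Selmer characteristic ideal to the Nekov\'a\v{r}-style extended Selmer characteristic ideal for $\TT_{f,\psi}^{\cyc}$, and second via Definition~\ref{defn_geo_for_trivial_char} to rewrite $e_{\mathds{1}}L_p^{\rm geo}(f,\theta(\psi))$ as $L_p^{\rm RS}(f_{/K}\otimes\psi)$.

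The real content of the argument is therefore to verify that the hypotheses of Theorem~\ref{thm_cyclo_main_conj_fotimesg} are indeed satisfied with the choice $g=\theta(\psi)$. First, $\overline{\rho}_f$ is assumed absolutely irreducible, so that hypothesis is immediate. Next, I would verify that at least one of \ref{item_Irr1}--\ref{item_Irr3} holds. Since $p$ is inert in $K/\QQ$, the theta-series $\theta(\psi)$ satisfies $a_p(\theta(\psi))=0$, so the conditions \ref{item_Irr2} and \ref{item_Irr3} become applicable outside the Fontaine--Laffaille range for $\theta(\psi)$. If $p>k_g+1$ we are in \ref{item_Irr1}. If $p+1\le k_g+1\le 2p-2$, i.e.\ $k_g\in[p,2p-3]$, then the hypothesis $k_g\neq p-1$ together with the inert condition $a_p(\theta(\psi))=0$ puts us in the range of \ref{item_Irr2} (after noting $k_g=p-1$ is excluded). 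Finally, for $k_g\ge 2p-1$, the assumption $p+1\nmid k_g+1$ combined with $a_p(\theta(\psi))=0$ places us in \ref{item_Irr3}. Together these show that one of the three required irreducibility hypotheses on $\overline\rho_{\theta(\psi)}|_{G_{\QQ_p}}$ is always available.

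The remaining hypothesis to be checked is the big image condition~\ref{item_BI_fg} for $T_{f,\theta(\psi)}$. This is precisely what is established in Appendix~\ref{appendix_big_images}; in particular, Theorem~\ref{appendix_big_images_subsec_2_thm_3} shows that under $p\ge 7$ and the fullness hypothesis \ref{item_fullness_main_body} (specialized from $\f$ to $f$ via the residual isomorphism $\overline\rho_\f=\overline\rho_f$), there exists a $\tau\in\mathrm{Gal}(\overline\QQ/\QQ(\mu_{p^\infty}))$ so that $T_{f,\theta(\psi)}/(\tau-1)T_{f,\theta(\psi)}$ is free of rank $3$ over $\cO$. I expect this verification step to be the substantive input, although since it is quoted from the appendix it reduces to citing the appropriate statement.

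With the three hypotheses in place, Theorem~\ref{thm_cyclo_main_conj_fotimesg} applied to the pair $(f,\theta(\psi))$ with $\theta=\mathds{1}$ yields
\[
e_{\mathds 1}\,\varpi^{s(\theta(\psi))}L_p^{\rm geo}(f,\theta(\psi))\in \Char_{\Lambda_\cO(\Gamma_1)}\,e_{\mathds 1}\!\left(\Sel_{\rm Gr}(T_{f,\theta(\psi)}^\vee(1)/\QQ(\mu_{p^\infty}))^\vee\right).
\]
Applying Proposition~\ref{prop_NekvseGre} to rewrite the right-hand side as $\Char_{\LL_\cO(\Gamma_1)}\!\bigl(\widetilde{H}^2_{\rm f}(G_{K,\Sigma},\TT_{f,\psi}^{\cyc};\Delta_{\Gr})^\iota\bigr)$, and Definition~\ref{defn_geo_for_trivial_char} (together with the convention $s(g)=s(\psi)$) to identify $e_{\mathds 1}L_p^{\rm geo}(f,\theta(\psi))$ with $L_p^{\rm RS}(f_{/K}\otimes\psi)$, concludes the proof.
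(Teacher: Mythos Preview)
Your proposal is correct and follows exactly the paper's approach: the paper states this theorem as a direct reformulation of Theorem~\ref{thm_cyclo_main_conj_fotimesg} with $g=\theta(\psi)$, invoking Proposition~\ref{prop_NekvseGre} and Definition~\ref{defn_geo_for_trivial_char} for the translation, and Theorem~\ref{appendix_big_images_subsec_2_thm_3} for \ref{item_BI_fg}. One small slip: your stated range for \ref{item_Irr2} should be $k_g\in[p+1,2p-2]$ rather than $[p,2p-3]$ (so as written your case analysis misses $k_g=2p-2$), but this is a transcription error and does not affect the argument.
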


Using Theorem~\ref{thm_cyclo_main_inert_f} as $f_\alpha$ varies in the Hida family $\f$, we have the following divisibility statement in the cyclotomic main conjecture for the family $\f_{/K}\otimes \psi$:

\begin{theorem}
\label{thm_cyclo_main_inert_ff}
Suppose that $\overline{\rho}_\f$ is absolutely irreducible, as well as that $k_g\neq p-1$ and $p+1\nmid k_g+1$. Assume also that $p\geq 7$ and \ref{item_fullness_main_body} holds true. Then,
$$ \varpi^{s(\psi)} L_p^{\rm RS}(\f_{/K}\otimes\psi)\,\in \,\Char_{\LL_\cO(\Gamma_1)}\left(\widetilde{H}^2_{\rm f}(G_{K,\Sigma},\TT_{\f,\psi}^{\cyc};\Delta_{\Gr})^\iota\right)$$
where $s(\psi)=s(\theta(\psi))$ is given as in Theorem~\ref{thm_BF_factorization_for_families}.
\end{theorem}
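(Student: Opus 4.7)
The plan is to deduce Theorem~\ref{thm_cyclo_main_inert_ff} from Theorem~\ref{thm_cyclo_main_conj_ffotimesg} applied with $g=\theta(\psi)$, exactly along the lines in which Theorem~\ref{thm_cyclo_main_inert_f} is obtained as a reformulation of Theorem~\ref{thm_cyclo_main_conj_fotimesg} via Proposition~\ref{prop_NekvseGre} and Definition~\ref{defn_geo_for_trivial_char}. The main task is therefore to check that the hypotheses of Theorem~\ref{thm_cyclo_main_conj_ffotimesg} are in force and to upgrade the dictionary between Greenberg Selmer groups over $\QQ(\mu_{p^\infty})$ and Nekov\'a\v{r}'s extended Selmer complexes over $K$ to the $\LL_\f(\Gamma_1)$-adic setting.

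First I would verify the hypotheses. Since $p$ is inert in $K/\QQ$ and $\ff$ is coprime to $p$, the eigenform $\theta(\psi)$ satisfies $a_p(\theta(\psi))=0$. Taking $k_g$ to be the weight of $\theta(\psi)$ minus $2$, the conditions $k_g\neq p-1$ and $p+1\nmid k_g+1$ force exactly one of \ref{item_Irr1}--\ref{item_Irr3} to hold: either $k_g\le p-2$ (yielding \ref{item_Irr1}), or $k_g\in[p+1,2p-2]$ (yielding \ref{item_Irr2}), or $k_g\ge 2p-1$ with $p+1\nmid k_g+1$ (yielding \ref{item_Irr3}). The absolute irreducibility of $\overline{\rho}_\f$ is part of the statement, and the big-image hypothesis \ref{item_BI_ffg} follows from $p\ge 7$ together with \ref{item_fullness_main_body} via Theorem~\ref{appendix_big_images_subsec_2_thm_3} (which supplies an element $\tau\in G_{\QQ(\mu_{p^\infty})}$ with the required fixed-quotient property, analogous to the argument used for Theorem~\ref{thm_cyclo_main_inert_f}).

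Next I would invoke Theorem~\ref{thm_cyclo_main_conj_ffotimesg} with $g=\theta(\psi)$ at the trivial character $\theta=\mathds{1}$ of $\Delta$, and then rewrite both sides over $K$. On the analytic side, Definition~\ref{defn_geo_for_trivial_char} gives
\[
e_{\mathds{1}}\,L_p^{\rm geo}(\f,\theta(\psi))\,=\,L_p^{\rm RS}(\f_{/K}\otimes\psi)\,\in \,\LL_\f(\Gamma_1).
\]
On the algebraic side, a $\LL_\f(\Gamma_1)$-adic version of Proposition~\ref{prop_NekvseGre} is needed: identifying $T_{\f,\theta(\psi)}\cong\mathrm{Ind}_{K/\QQ}T_{\f,\psi}$ and applying Shapiro's lemma to the Greenberg Selmer complex yields an isomorphism $\mathscr{S}:\widetilde{{\bf R}\Gamma}_{\rm f}(G_{\QQ,\Sigma},\TT_{\f,\theta(\psi)}^{\cyc};\Delta_{\Gr})\xrightarrow{\sim}\widetilde{{\bf R}\Gamma}_{\rm f}(G_{K,\Sigma},\TT_{\f,\psi}^{\cyc};\Delta_{\Gr})$, and combining this with Nekov\'a\v{r}'s control theorems (comparing the extended $H^2$ with its discrete dual cotorsion counterpart after specialization at a dense set of characters of $\Gamma_1$) gives
\[
e_{\mathds{1}}\,\Char_{\LL_\f(\Gamma_\cyc)}\!\left(\Sel_{\rm Gr}(T_{\f,\theta(\psi)}^\vee(1)/\QQ(\mu_{p^\infty}))^\vee\right)=\Char_{\LL_\f(\Gamma_1)}\!\left(\widetilde{H}^2_{\rm f}(G_{K,\Sigma},\TT_{\f,\psi}^{\cyc};\Delta_{\Gr})^{\iota}\right).
\]
The proof is essentially that of Proposition~\ref{prop_NekvseGre}, pushed up to the family; the crucial inputs (the two lemmas from \cite{BL_SplitOrd2020} cited in that proof) are insensitive to whether $p$ splits or is inert in $K/\QQ$.

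The one subtle point, and the reason the conclusion of Theorem~\ref{thm_cyclo_main_inert_ff} is sharper than Theorem~\ref{thm_cyclo_main_conj_ffotimesg} (which carries a harmless $\otimes_{\ZZ_p}\QQ_p$), is the explicit $\mu$-invariant bookkeeping. I would therefore revisit the proof of Theorem~\ref{thm_cyclo_main_conj_ffotimesg}: writing $\fL=\varpi^{\mu_1}e_{\mathds{1}}L_p^{\rm geo}(\f,\theta(\psi))$ and $\varpi^{\mu_2}\LL_\f(\Gamma_1)S=\Char(\cdots)$ with $\fL,S\notin\varpi\LL_\f(\Gamma_1)$, the divisibility $S\mid\fL$ combined with the bound $\mu_2\le \mu_1+s(\theta(\psi))$ established there yields the integral divisibility $\varpi^{s(\theta(\psi))}\,e_{\mathds{1}}L_p^{\rm geo}(\f,\theta(\psi))\in\Char(\cdots)$. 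Setting $s(\psi):=s(\theta(\psi))$ and feeding in the two identifications above then delivers
\[
\varpi^{s(\psi)}\,L_p^{\rm RS}(\f_{/K}\otimes\psi)\,\in\,\Char_{\LL_\cO(\Gamma_1)}\!\left(\widetilde{H}^2_{\rm f}(G_{K,\Sigma},\TT_{\f,\psi}^{\cyc};\Delta_{\Gr})^\iota\right),
\]
which is the desired statement. The main obstacle in this line of argument is not the formal reduction but the $\LL_\f(\Gamma_1)$-adic control theorem comparing the Greenberg Selmer module with the extended Selmer complex; this is where Nekov\'a\v{r}'s machinery has to be invoked in the family setting rather than at a single arithmetic point, so some care is required to verify that the dense-set-of-specializations argument used in Proposition~\ref{prop_NekvseGre} survives the passage to $\LL_\f$.
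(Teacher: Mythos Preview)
Your approach is correct, but it differs from the paper's in where the translation between classical Greenberg Selmer groups over $\QQ(\mu_{p^\infty})$ and Nekov\'a\v{r}'s extended Selmer groups over $K$ takes place. You apply Theorem~\ref{thm_cyclo_main_conj_ffotimesg} as a black box (extracting the integral bound $\mu_2\le \mu_1+s(g)$ from its proof) and then translate the conclusion to the $K$-side via a $\LL_\f(\Gamma_1)$-adic analogue of Proposition~\ref{prop_NekvseGre}, which you rightly flag as the nontrivial step. The paper instead \emph{re-runs} the patching argument of Theorem~\ref{thm_cyclo_main_conj_ffotimesg} directly on the Nekov\'a\v{r} side: it uses Nekov\'a\v{r}'s control theorem \cite[Corollary~8.10.2]{nekovar06} for the extended Selmer groups in place of the control theorem for classical Greenberg Selmer groups, and takes Theorem~\ref{thm_cyclo_main_inert_f} (already formulated for $\widetilde{H}^2_{\rm f}$ over $K$) as the input at each crystalline specialization, so that Proposition~\ref{prop_appendix_regular_divisibility} yields the divisibility directly for $\widetilde{H}^2_{\rm f}(G_{K,\Sigma},\TT_{\f,\psi}^{\cyc};\Delta_{\Gr})^\iota$.

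The trade-off is this: the paper's route sidesteps the family-level Shapiro/comparison lemma entirely, since the translation to $K$ has already been done pointwise in Theorem~\ref{thm_cyclo_main_inert_f}; your route reuses Theorem~\ref{thm_cyclo_main_conj_ffotimesg} wholesale but must establish the $\LL_\f(\Gamma_1)$-adic comparison of characteristic ideals separately. Both are sound; the paper's choice is slightly more economical because Nekov\'a\v{r}'s base-change formalism (which is anyway used later, e.g.\ in \S\ref{subsubsec_3var_inert_ord}) already provides the needed control theorem for the extended Selmer groups, so no new comparison lemma is required.
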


\begin{proof}
The proof of this theorem is formally identical to the proof of Theorem~\ref{thm_cyclo_main_conj_ffotimesg} and we shall only provide a brief sketch. 

As in the proof of Theorem~\ref{thm_cyclo_main_conj_ffotimesg}, one relies on the divisibility criterion Proposition~\ref{prop_appendix_regular_divisibility} and the control theorem for Nekov\'a\v{r}'s extended Selmer groups (c.f., \cite{nekovar06}, Corollary 8.10.2) to reduce to the validity of Theorem~\ref{thm_cyclo_main_inert_f} for infinitely many crystalline specializations $\f$. Thanks to our running assumptions, this holds true. 
\end{proof}

\begin{remark}
\label{rem_s_g_uniform_bound_required}
We will use Proposition~\ref{prop_appendix_regular_divisibility} to patch the conclusions of Theorem~\ref{thm_cyclo_main_inert_ff} as $\psi$ varies among crystalline Hecke characters (with conductor dividing $\ff p^\infty$) to a statement towards a 3-variable main conjecture over $\LL_\f(\Gamma_K)$, assuming in addition that the integers $s(\psi)$ that appear in the statement of Theorem~\ref{thm_cyclo_main_inert_ff} are uniformly bounded as $\psi$ varies (see \S\ref{subsubsec_3var_inert_ord} where we employ this idea; see also \S\S\ref{subsubsec_anticyclo_2}--\ref{subsubsec_anticyclo_3} for applications in the anticyclotomic main conjectures in the inert case, both in the definite and the indefinite setting). 

We recall that by Corollary~\ref{cor_s_g_is_uniformly_bounded}, the exponents $s(\psi)$ are uniformly bounded as $\psi$ varies, granted the existence of a rank-$2$ Euler system that the Perrin-Riou philosophy predicts. 
\end{remark}

\begin{remark}
\label{rem_s_g_uniform_bound_required_but_not_enough}
In contrast to the discussion in Remark~\ref{rem_s_g_uniform_bound_required}, the conclusions of Theorem~\ref{thm_cyclo_main_inert_f} as $\psi$ varies among crystalline Hecke characters (with fixed conductor dividing $\ff p^\infty$) \emph{cannot} be patched to a statement towards a 2-variable main conjecture over $\LL_\cO(\Gamma_K)$, since there are only finitely many $\psi$ verifying the conditions of Theorem~\ref{thm_cyclo_main_inert_f}, as this theorem requires $k_g<k_f$. This is primarily the reason for our emphasis on the signed-splitting procedure for Beilinson--Flach elements for families (c.f. \S\ref{subsec_review_BF_elements}) and on Theorem~\ref{thm_cyclo_main_inert_ff}.
\end{remark}

\subsection{Results on the $3$-variable main conjectures over an imaginary quadratic field where $p$ is inert}
\label{subsubsec_3var_inert_ord}
Let us fix a ray class character $\chi$ of $K$\index{Hecke characters of $K$! $\chi$ (branch character)} with conductor dividing $\ff p^\infty$ (where we assume that $\ff$ is as before) and order coprime to $p$. We assume that $\widehat{\chi}_{\vert_{G_{\QQ_{p^2}}}}\neq \widehat{\chi^c}_{\vert_{G_{\QQ_{p^2}}}}$; in particular, the conductor of $\chi$ is necessarily divisible by $p$. Note that $\QQ_{p^2}$ stands for the completion of $K$ at its unique prime above $p$. Let us denote by $$\Psi:G_K\twoheadrightarrow\Gamma_K\xrightarrow{\gamma\mapsto \gamma^{-1}}\Gamma_K \hookrightarrow \LL_\cO(\Gamma_K)^\times$$ 
$$\Psi_{1}:G_K\twoheadrightarrow\Gamma_1\xrightarrow{\gamma\mapsto \gamma^{-1}}\Gamma_1 \hookrightarrow \LL_\cO(\Gamma_1)^\times$$ 
the tautological characters\index{$K$: imaginary quadratic field! Universal characters $\Psi_?$}. Put $\bbchi:=\widehat{\chi}\Psi$ and $r_{\bbchi}:={\rm Ind}_{K/\QQ}\bbchi$.\index{$K$: imaginary quadratic field! Universal characters $\bbchi_?$}
\index{$K$: imaginary quadratic field!  $r_{\bbchi}$}
\begin{defn}
\label{defn_univ_sp_map}
Let us fix an $\cO$-valued character $\rho$ of $\Gamma_K$. Denote by $x_{\rho}:\LL_\cO(\Gamma_K)\to \LL_\cO(\Gamma_1)$ the unique  homomorphism of $\cO$-algebras which induces the isomorphism
$$\Psi\otimes_{x_{\rho}}\LL_\cO(\Gamma_1)\stackrel{\sim}{\lra} \rho^{-1}\otimes\Psi_1\,.$$
In explicit terms, $x_\rho(\gamma)=\rho(\gamma)\Psi_1(\gamma)$ for every $\gamma\in \Gamma_K$. We let $X_\rho:=\ker(x_\rho)\subset \LL_\cO(\Gamma_K)$ denote the corresponding height-one prime ideal.

%\item[ii)] We let ${\rm Tw}_\rho:\LL_{\cO}(\Gamma_K) \to \LL_{\cO}(\Gamma_K)$ be the $\cO$-linear twisting map given by $\gamma\mapsto \rho(\gamma)\gamma$ on group like elements $\gamma\in \Gamma_K\subset\LL_{\cO}(\Gamma_K)$. Let us also set $$\cO_\rho:=\LL_\cO(\Gamma_K)/(\gamma_1-\rho(\gamma_1),\gamma_2-\rho(\gamma_2)),$$ which is a free $\cO$-module of rank one on which $\Gamma_K$ acts via $\rho$.
\end{defn} 
Suppose $\psi$ is a crystalline Hecke character as before, with the additional requirement that $\widehat{\chi\psi}$ factors through $\Gamma_K$. We will repeatedly make use of the following observation:
\begin{equation}
    \label{eqn_specialize_bbchi_at_chipsi}
    \bbchi\,\otimes_{x_{\widehat{\chi\psi}}}\LL_{\cO}(\Gamma_1)=\widehat{\chi}\Psi\,\otimes_{x_{\widehat{\chi\psi}}}\LL_{\cO}(\Gamma_1)\stackrel{\sim}{\lra} \widehat{\chi}\otimes\widehat{\chi\psi}^{-1}\otimes \Psi_1 = \widehat{\psi}^{-1}\otimes \Psi_1\,.
\end{equation}

Given $\rho$ as in Definition~\ref{defn_univ_sp_map}, we shall also denote the induced homomorphism $\LL_\f(\Gamma_K)\to \LL_\f(\Gamma_1)$ by the same symbol $x_{\rho}$.
\begin{remark}
\label{remark_univ_sp_map}
Since $X_\rho$ is a height-one prime of the regular ring $\LL_{\cO}(\Gamma_K)$, it is principal. In this remark, we will describe a certain generator of $X_\rho$, which will be useful in what follows.

We first describe the kernel of the natural continuous surjection of $\cO$-algebras
\begin{equation}
    \label{eqn_x_0_map}
     x_0:\LL_{\cO}(\Gamma_K)\lra \LL_{\cO}(\Gamma_1),
\end{equation}
which is also a height-one (therefore principal) prime ideal of $\LL_{\cO}(\Gamma_K)$. Let ${\rm ver}_{\rm ac}: \Gamma_\ac\to\Gamma_K$ be the verschiebung map, given by $\gamma\mapsto \widehat{\gamma}^{c-1}$, where $\widehat{\gamma}\in \Gamma_K$ is any lift of $\gamma\in \Gamma_\ac$ and $c$ is the generator of $\Gal(K/\QQ)$. Let us put $\gamma_0={\rm ver}_{\rm ac}(\gamma_{\rm ac})$, where $\gamma_{\rm ac}$ is a topological generator of $\Gamma_\ac$. Then $x_0(\gamma_0)=1$ and hence $(\gamma_0-1)\subset \ker(x_0)$. Since both $\ker(x_0)$ and $(\gamma_0-1)$ are height one primes, it follows that $\ker(x_0)=(\gamma_0-1)$. Abusing the language, we shall also put $x_0:=\gamma_0-1$.

Fix now an $\cO$-valued character $\rho$ of $\Gamma_K$ as in Definition~\ref{remark_univ_sp_map}. We will consider the topological sub-algebra $\cO[[\gamma_0-1]]=\cO[[x_0]]\subset \LL_{\cO}(\Gamma_K)$, with its prime ideal generated by $f_\rho:=\gamma_0-\rho(\gamma_0)\subset \cO[[x_0]]$. We will also regard $\rho$ as a ring homomorphism $\cO[[x_0]] \to \cO$, given by $\gamma_0\mapsto \rho(\gamma_0)$. Via the containment $\cO[[x_0]]\subset \LL_{\cO}(\Gamma_K)$, we may and will treat $f_\rho$ as an element of $\LL_{\cO}(\Gamma_K)$. 

We next check that $X_\rho\subset \LL_{\cO}(\Gamma_K)$ is generated by $f_\rho$. To see that, we first observe that 
$$\rho\Psi_1(\gamma_0-\rho(\gamma_0))=\rho\Psi_1(\gamma_0)-\rho(\gamma_0)=\rho(\gamma_0)\Psi(\gamma_0)-\rho(\gamma_0)=0\,,$$
which means that $f_\rho=\gamma_0-\rho(\gamma_0) \in \ker(x_\rho)= X_\rho$. Since both $(f_\rho)$ and $X_\rho$ are height-one primes of $\LL_{\cO}(\Gamma_K)$ and $(f_\rho)\subset X_\rho$, it follows that $(f_\rho)=X_\rho$, as required.
\end{remark}
We consider the following uniform boundedness condition on the variation of the possible denominators $s(\psi)$ as the Hecke character $\psi$ varies:
\begin{enumerate}[align=parleft, labelsep=0.4cm,] \index{Beilinson--Flach elements! ${\mathbf{(Bdd_{s(\psi)})}}$}
\item[\mylabel{item_bounded_spsi}{{{$\mathbf{(Bdd_{s(\psi)})}$}}}] \,\,\,\,\,\,\,\, There exists a sequence of crystalline Hecke characters $\psi_i$ such that the Galois characters $\widehat{\chi\psi}_i$ factor through $\Gamma_K$, and the infinity type of $\psi_i$ equals  $(0,k_i+1)$ where $k_i \in \mathbb{N}$ and $k_i\to \infty$ as $i\to \infty$ for which the collection of integers $\{s(\psi_i)\}$ is bounded independently of $i$.
\end{enumerate}
We recall that by Corollary~\ref{cor_s_g_is_uniformly_bounded}, the condition \ref{item_bounded_spsi} holds true granted the existence of a rank-$2$ Euler system that the Perrin-Riou philosophy predicts. 
\begin{theorem}
\label{thm_3var_main_inert_ff}
Suppose $\chi$ is a ray class character as above and assume that $\overline{\rho}_\f$ is absolutely irreducible.
\item[i)] There exists an element $L_p^{\rm RS}(\f_{/K}\otimes\bbchi)\in \LL_\f(\Gamma_K)$ \index{$p$-adic $L$-functions! $L_p^{\rm RS}(\f_{/K}\otimes\bbchi)$ (``semi-universal'' Rankin--Selberg $p$-adic $L$-function)} with the following interpolation property: For any crystalline Hecke character $\psi$ as in \S\ref{subsubsec_setting_fKpsi} and such that $\widehat{\chi\psi}$ factors through $\Gamma_K$, we have
$$x_{\widehat{\chi\psi}}\left(L_p^{\rm RS}(\f_{/K}\otimes\bbchi)\right)=L_p^{\rm RS}(\f_{/K}\otimes\psi)\,,$$
where the specialization map $x_{\widehat{\chi\psi}}$ is the one described in Definition~\ref{defn_univ_sp_map}.
\item[ii)] Assume that $p\geq 7$ as well as that the conditions \ref{item_fullness_main_body} and \ref{item_bounded_spsi} hold true. We then have the following containment in the Iwasawa main conjecture for the family $\f_{/K}\otimes\bbchi$ of Rankin--Selberg products:
\begin{equation}
    \label{eqn_3_var_main_conj}
    L_p^{\rm RS}(\f_{/K}\otimes\bbchi)\,\in \,\Char_{\LL_\f(\Gamma_K)}\left(\widetilde{H}^2_{\rm f}(G_{K,\Sigma},\TT_{\f,\chi}^{K};\Delta_{\Gr})^\iota\right)\otimes_{\ZZ_p}\QQ_p\,.
\end{equation}
\end{theorem}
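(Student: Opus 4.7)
This is established by the construction carried out in Appendix~\ref{appendix_sec_padicRankinSelbergHida}, where the three-variable Rankin--Selberg $p$-adic $L$-function $L_p^{\rm RS}(\f_{/K}\otimes\bbchi)\in\LL_\f(\Gamma_K)$ is obtained by a mild extension of Loeffler's construction in~\cite{Loeffler2020universalpadic} to cover the minimally ramified universal deformations of the product $R_\f^\ast\otimes\chi$. The interpolation property at $x_{\widehat{\chi\psi}}$ is then an immediate consequence of the identity~\eqref{eqn_specialize_bbchi_at_chipsi} combined with Definition~\ref{defn_geo_for_trivial_char}.

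\textbf{Part (ii).} The plan is to patch the divisibilities furnished by Theorem~\ref{thm_cyclo_main_inert_ff} across the sequence $\{\psi_i\}$ provided by hypothesis~\ref{item_bounded_spsi}, invoking the divisibility criterion Proposition~\ref{prop_appendix_regular_divisibility}. If $L_p^{\rm RS}(\f_{/K}\otimes\bbchi)=0$, there is nothing to prove, so we may assume it is nonzero. Let $S\in\LL_\f(\Gamma_K)$ be a generator of $\Char_{\LL_\f(\Gamma_K)}\widetilde{H}^2_{\rm f}(G_{K,\Sigma},\TT_{\f,\chi}^{K};\Delta_{\Gr})^\iota$. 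Our objective is to show that $S$ divides $L_p^{\rm RS}(\f_{/K}\otimes\bbchi)$ in $\LL_\f(\Gamma_K)\otimes_{\ZZ_p}\QQ_p$. After passing to a subsequence, we may also assume that each $\psi_i$ satisfies $k_i\neq p-1$ and $p+1\nmid k_i+1$, so that Theorem~\ref{thm_cyclo_main_inert_ff} is applicable with $\psi=\psi_i$.

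For each index $i$, consider the specialization homomorphism $x_{\widehat{\chi\psi_i}}:\LL_\f(\Gamma_K)\to \LL_\f(\Gamma_1)$ of Definition~\ref{defn_univ_sp_map}, with principal kernel $X_{\widehat{\chi\psi_i}}=(\gamma_0-\widehat{\chi\psi_i}(\gamma_0))$ as described in Remark~\ref{remark_univ_sp_map}. The identity~\eqref{eqn_specialize_bbchi_at_chipsi} shows that $\TT_{\f,\chi}^{K}$ specializes to $\TT_{\f,\psi_i}^{\cyc}$ under $x_{\widehat{\chi\psi_i}}$, so that Nekov\'a\v{r}'s control theorem for extended Selmer complexes (\cite[Corollary~8.10.2]{nekovar06}) produces an isomorphism
$$\widetilde{H}^2_{\rm f}(G_{K,\Sigma},\TT_{\f,\chi}^{K};\Delta_{\Gr})^\iota\big/X_{\widehat{\chi\psi_i}}\,\stackrel{\sim}{\lra}\,\widetilde{H}^2_{\rm f}(G_{K,\Sigma},\TT_{\f,\psi_i}^{\cyc};\Delta_{\Gr})^\iota\,.$$
Combining this with the interpolation property in part (i) and Theorem~\ref{thm_cyclo_main_inert_ff} applied to $\psi_i$, we obtain
$$x_{\widehat{\chi\psi_i}}(S)\,\,\Big{|}\,\,\varpi^{s(\psi_i)}\cdot x_{\widehat{\chi\psi_i}}\!\left(L_p^{\rm RS}(\f_{/K}\otimes\bbchi)\right)\qquad\text{in }\LL_\f(\Gamma_1)\,.$$

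Let $s_0:=\sup_i s(\psi_i)<\infty$, which is finite by~\ref{item_bounded_spsi}. Since the infinity types $k_i\to\infty$, the scalars $\widehat{\chi\psi_i}(\gamma_0)$ are pairwise distinct for $i\gg 0$, so the principal primes $\{X_{\widehat{\chi\psi_i}}\}$ form a Zariski dense family of height-one primes of $\LL_\f(\Gamma_K)$ lying over the height-one primes $(\gamma_0-\widehat{\chi\psi_i}(\gamma_0))$ of the regular subring $R_0:=\LL_\f\,\widehat\otimes_{\ZZ_p}\ZZ_p[[\gamma_0-1]]$. Applying Proposition~\ref{prop_appendix_regular_divisibility} with $R=\LL_\f(\Gamma_K)$, this $R_0$, and the pair $(F,G)=(S,\varpi^{s_0}L_p^{\rm RS}(\f_{/K}\otimes\bbchi))$, we conclude that $S$ divides $\varpi^{s_0}L_p^{\rm RS}(\f_{/K}\otimes\bbchi)$ in $\LL_\f(\Gamma_K)$; tensoring with $\QQ_p$ yields~\eqref{eqn_3_var_main_conj}. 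The principal technical obstacle is the uniform $\mu$-control that allows the patching to close: ensuring that the $s(\psi_i)$ stay bounded as $k_i\to\infty$ is precisely what~\ref{item_bounded_spsi} (and conjecturally the existence of a rank-$2$ Euler system, via Corollary~\ref{cor_s_g_is_uniformly_bounded}) was designed to provide, and it is the integrally optimized constructions of \S\ref{subsubsec_intro_optimizedPRandBF} that make the hypotheses of Proposition~\ref{prop_appendix_regular_divisibility} accessible.
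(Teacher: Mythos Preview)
Your overall strategy matches the paper's proof closely: specialize via $x_{\widehat{\chi\psi_i}}$, invoke Nekov\'a\v{r}'s control theorem and Theorem~\ref{thm_cyclo_main_inert_ff}, then patch using Proposition~\ref{prop_appendix_regular_divisibility}. However, there is a genuine gap in your application of Proposition~\ref{prop_appendix_regular_divisibility}. That proposition requires $\varpi\nmid F$ (this is used in its proof to ensure that $R/(F)$ is flat over $R_0$). You take $F=S$ to be an arbitrary generator of the characteristic ideal, but you never verify that $\varpi\nmid S$; a priori the Selmer group may well have positive $\mu$-invariant. Likewise, your choice $G=\varpi^{s_0}L_p^{\rm RS}(\f_{/K}\otimes\bbchi)$ is visibly divisible by $\varpi$ when $s_0>0$, so hypothesis~(a) of the proposition fails as written. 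The paper addresses exactly this point: it writes $L_p^{\rm RS}(\f_{/K}\otimes\bbchi)=\varpi^{-\mu_1}\fL$ and $\Char(\ldots)=\varpi^{\mu_2}(S')$ with $\fL,S'$ both $\varpi$-indivisible, then argues that the specializations $\fL_i,S'_i$ have zero $\mu$-invariant for all but finitely many $i$, from which $\varpi^{\mu_2}S'_i\mid\varpi^{s+\mu_1}\fL_i$ forces $\mu_2\le s+\mu_1$ and $S'_i\mid\fL_i$. Only after this normalization can one legitimately invoke Proposition~\ref{prop_appendix_regular_divisibility} (with $F=S'$, $G=\fL$), and the bound $\mu_2\le s+\mu_1$ is precisely what justifies the final passage to $\otimes_{\ZZ_p}\QQ_p$.

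A smaller point: your choice $R_0=\LL_\f\,\widehat\otimes_{\ZZ_p}\ZZ_p[[\gamma_0-1]]$ does not match the shape $R_0=\ZZ_p[[x_0]]$ demanded by Proposition~\ref{prop_appendix_regular_divisibility}; the paper takes $R_0=\ZZ_p[[\gamma_0-1]]$. This is cosmetic (the proof of the proposition adapts), but it is another place where the hypotheses as stated are not literally met.
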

In particular, the containment \eqref{eqn_3_var_main_conj} holds assuming only $p\geq 7$ and the validity of \ref{item_fullness_main_body}, if Conjecture~\ref{conj_ESrank2} (on the existence of rank-$2$ Euler systems) holds true.

\begin{proof}[Proof of Theorem~\ref{thm_3var_main_inert_ff}]
\item[i)] The existence of $L_p^{\rm RS}(\f_{/K}\otimes\bbchi)$ is an almost direct consequence of the recent work \cite{Loeffler2020universalpadic}, extending the results of op. cit. slightly to construct a $p$-adic $L$-function
$$L_p^{\rm RS}(\f_{/K}\otimes\rho^{\rm univ}_{r_{\overline{\chi}}})\in \LL_\f\,\widehat\otimes\, R^{\rm univ}(r_{\overline{\chi}})$$\index{$p$-adic $L$-functions! $L_p^{\rm RS}(\f_{/K}\otimes\rho^{\rm univ}_{r_{\overline{\chi}}})$}
where $r_{\overline{\chi}}:={\rm Ind}_{K/\QQ}\,\overline{\chi}^{-1}$ and   $\rho^{\rm univ}_{r_{\overline{\chi}}}$ is the minimally ramified universal deformation representation, and where $R^{\rm univ}(r_{\overline{\chi}})$ is the minimally ramified universal deformation ring of $r_{\overline{\chi}}$. This construction is carried out in detail (following Loeffler's work very closely) in Appendix~\ref{appendix_sec_padicRankinSelbergHida}. The key point is that our running assumptions on $\chi$ imply that 
\begin{itemize}
    \item The representations $r_{\overline{\chi}}\,{\vert_{G_{\QQ(\mu_p)}}}$ and $r_{\overline{\chi}}\,{\vert_{G_{\QQ_p}}}$ are both absolutely irreducible (since we assumed that $\chi\neq \chi^c$ and that the order of $\chi$ is prime to $p$);
    \item the lift $r_{\bbchi}={\rm Ind}_{K/\QQ}\,\bbchi$ is minimally ramified (since we assumed that the order of $\chi$ is prime to $p$).
\end{itemize}
The second property induces (by the universality of $\rho^{\rm univ}_{r_{\overline{\chi}}}$) a continuous ring homomorphism $\phi_{\bbchi}: R^{\rm univ}(r_{\overline{\chi}})\to \LL_{\cO}(\Gamma_K)$ and the $p$-adic $L$-function $L_p^{\rm RS}(\f_{/K}\otimes\bbchi)$ is defined as the image of $L_p^{\rm RS}(\f_{/K}\otimes\rho^{\rm univ}_{r_{\overline{\chi}}})$ under the map ${\rm id}\otimes \phi_{\bbchi}: \LL_\f\widehat{\otimes}\,R^{\rm univ}(r_{\overline{\chi}})\to \LL_f(\Gamma_K)$.
\item[ii)] The proof of this portion is very similar to the proof of Theorem~\ref{thm_cyclo_main_conj_ffotimesg}. Let us put $\fL:=\varpi^{\mu_1}L_p^{\rm RS}(\f_{/K}\otimes\bbchi)$ where $\mu_1$ is the unique natural number with $\mathfrak{L}\in \LL_\f(\Gamma_K)\setminus p\LL_\f(\Gamma_K)$. Let us also choose an element 
$S\in \LL_\f(\Gamma_K)\setminus p\LL_\f(\Gamma_K)$ such that $$\varpi^{\mu_2}\LL_\f(\Gamma_K)S=\Char_{\LL_\f(\Gamma_K)}\left(\widetilde{H}^2_{\rm f}(G_{K,\Sigma},\TT_{\f,\chi}^{K};\Delta_{\Gr})^\iota\right)$$
for the suitable choice of a (uniquely determined) natural number $\mu_2$. Let us put $s:=\sup_{i=1}^\infty\{s(\psi_i)\}$, where $\psi_i$ are the Hecke characters given as in the statement of our theorem. We will prove that $S \mid \fL$, using the conclusion of Theorem~\ref{thm_cyclo_main_inert_ff} applied with $\psi=\psi_i$ and the divisibility criterion established in Appendix~\ref{Appendix_Regular_Rings_Divisibility}. This will in turn prove the validity of the  containment in the statement of our theorem.

As in Remark~\ref{remark_univ_sp_map} (whose notation we shall adopt in the remainder of this proof), we will consider the topological subring $\cO[[x_0]]\subset \LL_{\cO}(\Gamma_K)$, together with its sequence of prime ideals generated by $x_0-\rho_i(x_0)\subset \cO[[x_0]]$, where we have put $\rho_i:=\widehat{\chi\psi}_i$ (likewise $f_i=f_{\rho_i}:=x_0-\rho_i(x_0)$, $x_i=x_{\rho_i}:\LL_{\cO}(\Gamma_K)\to \LL_{\cO}(\Gamma_1)$ and $X_i=X_{\rho_i} \subset\LL_{\cO}(\Gamma_K$)) to ease notation. As before, we shall also denote by $x_i$ the ring map 
\begin{equation}
    \label{eqn_Fiasspecializationmap1}
    \LL_\f(\Gamma_K)=\LL_\f\widehat{\otimes}\LL_{\cO}(\Gamma_K)\xrightarrow{{\rm id}\otimes x_i}\LL_\f(\Gamma_1)
    \end{equation}
as well as its kernel (which equals $f_i\LL_\f(\Gamma_K)=X_i\LL_\f(\Gamma_K)$) also by $X_i$.
Let us write $\fL_i:=x_i(\fL)\in \LL_{\f}(\Gamma_1)$ and similarly $S_i:=x_i(S)$. Recall that the morphisms \eqref{eqn_Fiasspecializationmap1} induce the morphisms of $G_K$-representations
\begin{equation}
    \label{eqn_Fiasspecializationmap2}
x_i:\,T_{\f,\chi}^K\lra \TT_{\f,\psi_i}^{\cyc}\,,
\end{equation}

Let us fix an index $i$. It follows from the control theorem for Selmer complexes \cite[Corollary 8.10.2]{nekovar06} that 
$$\widetilde{H}^2_{\rm f}(G_{K,\Sigma},\TT_{\f,\chi}^\cyc;\Delta_{\Gr})^\iota\Big{/}X_{i}\,\widetilde{H}^2_{\rm f}(G_{K,\Sigma},\TT_{\f,\chi}^\cyc;\Delta_{\Gr})^\iota\stackrel{\sim}{\lra} \widetilde{H}^2_{\rm f}(G_{K,\Sigma},\TT_{\f,\psi_i}^\cyc;\Delta_{\Gr})^\iota\,.$$
We therefore infer that
\begin{align*}
   \Char_{\LL_\cO(\Gamma_1)}\left(\widetilde{H}^2_{\rm f}(G_{K,\Sigma},\TT_{\f,\chi}^\cyc;\Delta_{\Gr})^\iota[X_{i}]\right) \cdot \,&(\varpi^{\mu_2}S_i)\\
   &= \Char_{\LL_\cO(\Gamma_1)}\left(\widetilde{H}^2_{\rm f}(G_{K,\Sigma},\TT_{\f,\psi_i}^\cyc;\Delta_{\Gr})^\iota\right)\,,
\end{align*}
in particular, also that 
\begin{equation}
\label{eqn_pmuextendedSelmerdescent}
    \varpi^{\mu_2}S_i\,\mid\, \Char_{\LL_\cO(\Gamma_1)}\left(\widetilde{H}^2_{\rm f}(G_{K,\Sigma},\TT_{\f,\psi_i}^\cyc;\Delta_{\Gr})^\iota\right)
\end{equation}
as we contended to prove. Moreover, we have
\begin{equation}
    \label{eqn_pmuLdescent}
   \varpi^{\mu_1}\fL_i= L_p^{\rm RS}(\f_{/K}\otimes\psi_i) 
\end{equation}
 thanks to the interpolative property of the universal (geometric) $p$-adic $L$-function $L_p^{\rm RS}(\f_{/K}\otimes\bbchi)$.
 
 Under the assumptions of our theorem, the hypotheses of Theorem~\ref{thm_cyclo_main_inert_ff} are valid with the choice $\psi=\psi_i$, for any positive integer $i$. It follows from Theorem~\ref{thm_cyclo_main_inert_ff} (applied with $\psi=\psi_i$)
 $$\Char_{\LL_\cO(\Gamma_1)}\left(\widetilde{H}^2_{\rm f}(G_{K,\Sigma},\TT_{\f,\psi_i}^\cyc;\Delta_{\Gr})^\iota\right)\,\mid\, \,\varpi^{s} L_p^{\rm RS}(\f_{/K}\otimes\psi_i)\,.$$
 This, combined with \eqref{eqn_reduction_Si_step_1} and \eqref{eqn_reduction_Li_step_1}, allows us to conclude that $\varpi^{\mu_2} S_i \, \mid\, \varpi^{s+\mu_1}  \fL_i $. Since $\mu$-invariants of $\fL_i$ and $S_i$ are both zero (by definition), it follows that $\mu_2\leq s+\mu_1$, and also that
$$
     S_i\,\mid\,\fL_i\,.
$$
We may now use Proposition~\ref{prop_appendix_regular_divisibility} (with the choices $R=\LL_\f(\Gamma_K)$ and $R_0:=\ZZ_p[[\gamma_0-1]]$ given as in Remark~\ref{remark_univ_sp_map} and $F=S$, $G=\fL$), we conclude that $S\mid \fL$, as required.
\end{proof}

\begin{corollary}
\label{cor_torsion_H2_generic}
In the situation of Theorem~\ref{thm_3var_main_inert_ff}(ii), the $\LL_\f(\Gamma_K)$-module $\widetilde{H}^2_{\rm f}(G_{K,\Sigma},\TT_{\f,\chi}^{K};\Delta_{\Gr})$ is torsion and $\widetilde{H}^1_{\rm f}(G_{K,\Sigma},\TT_{\f,\chi}^{K};\Delta_{\Gr})=0$.
\end{corollary}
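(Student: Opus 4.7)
For the torsion-ness of $\widetilde{H}^2_{\rm f}(G_{K,\Sigma},\TT_{\f,\chi}^{K};\Delta_{\Gr})$, the statement follows immediately from the containment proved in Theorem~\ref{thm_3var_main_inert_ff}(ii), once we check that $L_p^{\rm RS}(\f_{/K}\otimes\bbchi)\neq 0$. For this non-vanishing we specialise along the map $x_{\widehat{\chi\psi}}$ of Definition~\ref{defn_univ_sp_map} using any single Hecke character $\psi$ from the family furnished by \ref{item_bounded_spsi}. By Theorem~\ref{thm_3var_main_inert_ff}(i) the image of $L_p^{\rm RS}(\f_{/K}\otimes\bbchi)$ under this specialisation equals $L_p^{\rm RS}(\f_{/K}\otimes\psi)$, which in view of Remark~\ref{remark_signed_geometric_padicL} agrees up to the unit $D_{\theta(\psi)}\delta_{k_{\theta(\psi)}+1}$ with the Loeffler--Zerbes geometric $p$-adic $L$-function $L_p^{\rm geo}(\f,\theta(\psi))$. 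Picking any arithmetic specialisation $\kappa$ of $\f$ with $k_{\f(\kappa)}\geq k_{\theta(\psi)}+3$ and applying the Loeffler--Zerbes explicit reciprocity law \cite[Theorem~9.3.2]{LZ1} evaluates this $p$-adic $L$-function, up to unit fudge-factors, against the critical Rankin--Selberg $L$-value $L(\f(\kappa),\theta(\psi),1+k_{\f(\kappa)})$, which lies strictly in the region of absolute convergence and is therefore non-zero.

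For the vanishing of $\widetilde{H}^1_{\rm f}(G_{K,\Sigma},\TT_{\f,\chi}^{K};\Delta_{\Gr})$ we assemble three ingredients from Nekov\'a\v{r}'s formalism. First, $\widetilde{H}^0_{\rm f}=0$: the absolute irreducibility of $\overline{\rho}_\f$ enforced by \ref{item_fullness_main_body}, together with our running assumption that the local characters $\widehat{\chi}_{\vert_{G_{\QQ_{p^2}}}}$ and $\widehat{\chi^c}_{\vert_{G_{\QQ_{p^2}}}}$ differ, implies $H^0(G_K,\TT_{\f,\chi}^K)=0$. Second, the global Euler--Poincar\'e characteristic of $\widetilde{\mathbf{R}\Gamma}_{\rm f}$ vanishes in our balanced Pan\v{c}i\v{s}kin setting over the imaginary quadratic field $K$ (compare the Euler characteristic computation in \cite[\S5.3.1]{BL_SplitOrd2020}, which is insensitive to the splitting behaviour of $p$ in $K/\QQ$), yielding
\[
\mathrm{rank}_{\LL_\f(\Gamma_K)}\widetilde{H}^1_{\rm f}\;=\;\mathrm{rank}_{\LL_\f(\Gamma_K)}\widetilde{H}^2_{\rm f}\;=\;0,
\]
so $\widetilde{H}^1_{\rm f}$ is $\LL_\f(\Gamma_K)$-torsion. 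Third, the structural results of Nekov\'a\v{r} \cite[Proposition~9.7.7]{nekovar06} guarantee that $\widetilde{H}^1_{\rm f}$ has no non-zero pseudo-null $\LL_\f(\Gamma_K)$-submodule.

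The main obstacle is to upgrade these three properties to the outright vanishing of $\widetilde{H}^1_{\rm f}$, since over a regular ring of Krull dimension $\geq 2$ a torsion module with no pseudo-null submodule does not automatically vanish. We close this gap using Nekov\'a\v{r}'s Poitou--Tate duality for Selmer complexes, which identifies (up to a twist and Pontryagin duality) the characteristic ideal of $\widetilde{H}^1_{\rm f}(G_{K,\Sigma},\TT_{\f,\chi}^{K};\Delta_{\Gr})$ with that of $\widetilde{H}^2_{\rm f}(G_{K,\Sigma},\TT_{\f,\chi}^{K,\ast}(1);\Delta_{\Gr}^\perp)$ attached to the Tate-dual representation equipped with the orthogonal local conditions. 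The hypotheses of Theorem~\ref{thm_3var_main_inert_ff}(ii) are preserved under this duality, and its proof then bounds the dual characteristic ideal by a companion $p$-adic $L$-function; comparing the two $p$-adic $L$-functions via the functional equation of $L_p^{\rm RS}$ shows that $\mathrm{Char}_{\LL_\f(\Gamma_K)}(\widetilde{H}^1_{\rm f})$ is a unit, so $\widetilde{H}^1_{\rm f}$ is pseudo-null, and combined with the absence of pseudo-null submodules this finally gives $\widetilde{H}^1_{\rm f}=0$.
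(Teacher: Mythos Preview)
Your argument for the torsion-ness of $\widetilde{H}^2_{\rm f}$ is correct and essentially identical to the paper's: reduce via Theorem~\ref{thm_3var_main_inert_ff}(ii) to the non-vanishing of $L_p^{\rm RS}(\f_{/K}\otimes\bbchi)$, then specialise to a point in the range of absolute convergence of the archimedean $L$-series.

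The second half, however, contains a genuine gap. Your Poitou--Tate duality manoeuvre identifies $\Char(\widetilde{H}^1_{\rm f}(\TT_{\f,\chi}^{K}))$ with $\Char(\widetilde{H}^2_{\rm f}(\TT_{\f,\chi}^{K,\ast}(1)))$, and then an application of (the analogue of) Theorem~\ref{thm_3var_main_inert_ff}(ii) on the dual side would yield a \emph{divisibility} of the form $\Char(\widetilde{H}^2_{\rm f}(\TT_{\f,\chi}^{K,\ast}(1)))\mid L_p^{\rm dual}$. Passing through the functional equation, the most this can buy you is $\Char(\widetilde{H}^1_{\rm f})\mid L_p^{\rm RS}(\f_{/K}\otimes\bbchi)^\iota$ (up to a unit). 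That is a divisibility, not a unit-ness statement; nothing here forces the characteristic ideal to be trivial. (In addition, the claim that ``the hypotheses of Theorem~\ref{thm_3var_main_inert_ff}(ii) are preserved under this duality'' is asserted without proof; the orthogonal local condition $\Delta_{\Gr}^\perp$ does not match the Greenberg condition used in this paper, so one cannot simply recycle the theorem.)

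The paper's route is both shorter and avoids this circularity entirely. Once you have $\widetilde{H}^0_{\rm f}=0$ from the irreducibility of $\overline{\TT}_{\f,\chi}^{K}$, the Selmer complex $\widetilde{{\bf R}\Gamma}_{\rm f}(G_{K,\Sigma},\TT_{\f,\chi}^{K};\Delta_{\Gr})$ is perfect over the regular local ring $\LL_\f(\Gamma_K)$ and has cohomology concentrated in degrees $[1,3]$; hence it is represented by a complex $[P^1\to P^2\to P^3]$ of finite free modules. In particular $\widetilde{H}^1_{\rm f}\subset P^1$ is torsion-free, so ``$\widetilde{H}^1_{\rm f}$ is torsion'' is already equivalent to ``$\widetilde{H}^1_{\rm f}=0$''. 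Your Euler--Poincar\'e computation then finishes the job immediately, with no need for duality, pseudo-nullity, or functional equations.
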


\begin{proof}
By the global Euler--Poincar\'e characteristic formula, the $\LL_\f(\Gamma_K)$-module $\widetilde{H}^2_{\rm f}(G_{K,\Sigma},\TT_{\f,\chi}^{K};\Delta_{\Gr})$ is torsion if and only if $\widetilde{H}^1_{\rm f}(G_{K,\Sigma},\TT_{\f,\chi}^{K};\Delta_{\Gr})$ is. Under our running assumptions (which guarantee that the residual representation $\overline{\TT}_{\f,\chi}^{K}$ is irreducible), the latter condition is equivalent to the vanishing of $\widetilde{H}^1_{\rm f}(G_{K,\Sigma},\TT_{\f,\chi}^{K};\Delta_{\Gr})$. It therefore suffices to verify the first assertion.

In view of Theorem~\ref{thm_3var_main_inert_ff}, it suffices to show that $L_p^{\rm RS}(\f_{/K}\otimes\bbchi)\neq 0$. Thanks to its interpolation property, one reduces to checking that the complex $L$-function $L(\f(\kappa)\otimes g,1+j)$ does not vanish for at least one choice of a crystalline specialization $\kappa$, a crystalline Hecke character $\psi$ of infinity type $(0,k_g+1)$ ($g=\theta(\psi)$) such that $\widehat{\chi\psi}$ factors through $\Gamma_K$, and an integer $j$ with $k_g+1\leq j \leq \kappa$. This obviously can be arranged, e.g. making sure that $1+j$ falls within the range of absolute convergence.
\end{proof}

\subsection{Anticyclotomic main conjectures in the inert case: general set up}
\label{subsubsec_anticyclo_1}
In this section, we will descend our Theorem~\ref{thm_3var_main_inert_ff} to the anticyclotomic tower, using the formalism in \cite[\S5.3.1]{BL_SplitOrd2020} (which is, essentially,  due to Nekov\'a\v{r}). In particular, we continue to work in the setting of \S\ref{subsubsec_3var_inert_ord} and retain the notation therein.

Our treatment will naturally break into two threads: The first will concern the definite case, where we generically have $\epsilon(\f(\kappa)_{/K},\kappa/2)\neq -1$ for the global root number at the central critical points of the crystalline specializations $\f(\kappa)$. The second will be a treatment of the indefinite case, where we have $\epsilon(\f(\kappa)_{/K},\kappa/2)=-1$. We shall assume throughout \S\ref{subsubsec_anticyclo_1} that the nebentype character $\varepsilon_f$ is trivial. At the expense of ink and space, one could also the more general scenario where one assumes only that $\varepsilon_f$ admits a square-root. We will also assume that $\chi$ is anticyclotomic, in the sense that $\chi^c=\chi^{-1}$.

The action of $\Gal(K/\QQ)$ on $\Gamma_K$ gives a natural decomposition 
\begin{equation}
\label{eqn_GammaK_eiegnfactorization}
    \Gamma_K=\Gamma_K^-\times \Gamma_K^+=\Gamma_{\ac} \times \Gamma_1\,.
\end{equation}
that the action of $\Gal(K/\QQ)$ on $\Gamma_K$ (by conjugation) determines, where $\Gamma_K^\pm$ is the $\pm1$-eigenspace for this action\index{$K$: imaginary quadratic field! $\Gamma_K^\pm$}. We shall treat both $\LL_{\cO}(\Gamma_\ac)$ and $\LL_{\cO}(\Gamma_1)$ both as a subring and a quotient ring of $\LL_{\cO}(\Gamma_K)$ via the identification \eqref{eqn_GammaK_eiegnfactorization}.  Along these lines, we shall also identify $\LL_\f(\Gamma_K)$ with  $\LL_\f(\Gamma_\ac)[[\Gamma_\cyc]]$.

We put $\bbchi_\ac:=\bbchi \otimes_{\LL_{\cO}(\Gamma_K)}\LL_{\cO}(\Gamma_\ac)$ and $\bbchi_\ac^\iota:=\widehat{\chi}^{c}\Psi^\iota \otimes_{\LL_{\cO}(\Gamma_K)}\LL_{\cO}(\Gamma_\ac)$.\index{$K$: imaginary quadratic field! Universal characters $\bbchi_?$}

We define the $p$-adic $L$-function $L_p^{\rm RS}(\f_{/K}\otimes\bbchi_\ac) \in \LL_{\cO}(\Gamma_\ac)$\index{$p$-adic $L$-functions! $L_p^{\rm RS}(\f_{/K}\otimes\bbchi_\ac)$} as the image of $L_p^{\rm RS}(\f_{/K}\otimes\bbchi)$ under the obvious canonical projection. Let us fix a topological generator $\gamma_+$ (resp., $\gamma_-$) of $\Gamma_1$ (resp., of $\Gamma_\ac$) such that $\{\gamma_-\times {\rm id}_{\Gamma_1},{\rm id}_{\Gamma_\ac}\times \gamma_+\}$ topologically generates $\Gamma_{\ac} \times \Gamma_1=\Gamma_K$.

\begin{defn}
\label{defn_twff_map}
We recall the universal weight character $\bbkappa: G_\QQ\to \LL_\f^\times$ from Definition~\ref{defn_Hecke_algebra_intro}. We denote its square-root by $\bbkappa^{\frac{1}{2}}$. In what follows, $?\in \{K,\ac\}$.
\item[i)] We let ${\rm Tw}_{\f}: \LL_\f(\Gamma_?) \to \LL_\f(\Gamma_?)$\index{Completed group rings! ${\rm Tw}_{\f}$} denote the $\LL_\f$-linear morphism induced by $\gamma\mapsto \bbkappa^{-\frac{1}{2}}(\gamma)\gamma$ for each $\gamma\in\Gamma_?$. We set $L_p^\dagger(\f_{/K}\otimes\bbchi):={\rm Tw}_{\f}(L_p^{\rm RS}(\f_{/K}\otimes\bbchi))$ and similarly define $L_p^\dagger(\f_{/K}\otimes\bbchi_\ac)$.\index{$p$-adic $L$-functions! $L_p^{\dagger}(\f_{/K}\otimes\bbchi_\ac)$ (central critical Rankin--Selberg $p$-adic $L$-function)} \item[ii)] We put 
$\TT_{\f,\bbchi}^\dagger:=\TT_{\f,\chi}^K(-\bbkappa^{\frac{1}{2}})$ and set $\TT_{\f,\bbchi_\ac}^\dagger:=\TT_{\f,\bbchi}^\dagger\otimes_{\LL_{\cO}(\Gamma_K)}\LL_{\cO}(\Gamma_\ac)$. Observe that $\TT_{\f,\bbchi_\ac}^\dagger=\TT_{\f,\chi}^\ac(-\bbkappa^{\frac{1}{2}})$. We shall call $\TT_{\f,\bbchi_\ac}^\dagger$ the central critical twist of $\TT_{\f,\bbchi}^\ac$\,. We set $\TT_{\f,\bbchi_\ac}^{\dagger,\iota}:=\TT_{\f,\bbchi_\ac}^\dagger\otimes_{\LL_{\cO}(\Gamma_\ac)}\LL_{\cO}(\Gamma_\ac)^\iota$. Via the decomposition~\ref{eqn_GammaK_eiegnfactorization}, we will identify $\TT_{\f,\bbchi}^{\dagger}$ with $\TT_{\f,\bbchi_\ac}^{\dagger}\,\widehat{\otimes}_{\LL_{\cO}(\Gamma_\ac)}\LL_{\cO}(\Gamma_\cyc)\,$.
\end{defn}

Thanks to our running hypothesis that $\varepsilon_f=\mathds{1}$, Poincar\'e duality induces a perfect pairing\footnote{In the more general scenario when $\varepsilon_f=\eta_f^2$ for some Dirichlet character $\eta_f$, the same conclusion is valid if one defines $\TT_{\f,\bbchi_\ac}^\dagger:=\TT_{\f,\bbchi_\ac}^K(-\bbkappa^{\frac{1}{2}}\eta_f^{-1})$.}
\begin{equation}
\label{eqn_twisted_conjugate_self_duality}
    \TT_{\f,\bbchi_\ac}^{\dagger}\otimes_{\LL_{\cO}(\Gamma_\ac)} \TT_{\f,\bbchi_\ac^\iota}^{\dagger,\iota}\lra \LL_{\cO}(\Gamma_\ac)(1)\,,
\end{equation}
namely, the  Galois representation $\TT_{\f,\bbchi_\ac}^{\dagger}$ is conjugate self-dual. 

We recall the Selmer complexes 
$\widetilde{{\bf R}\Gamma}_{\rm f}(G_{K,\Sigma},\TT_{\f,\chi}^{?};\Delta_{\Gr})$ $(?={\rm ac}, K)$ that we have introduced in Definition~\ref{defn_Selmer_complex_inert_ord}. In an identical manner, we also have the Selmer complexes\index{Selmer complexes! $\widetilde{{\bf R}\Gamma}_{\rm f}(G_{K,\Sigma},\TT_{\f,\bbchi_?^\cdot}^{\dagger,\cdot};\Delta_{\Gr})$} 
$$\widetilde{{\bf R}\Gamma}_{\rm f}(G_{K,\Sigma},\TT_{\f,\bbchi}^{\dagger};\Delta_{\Gr})\in D_{\rm ft} (_{\LL_\f(\Gamma_K)}{\rm Mod})$$
$$\widetilde{{\bf R}\Gamma}_{\rm f}(G_{K,\Sigma},\TT_{\f,\bbchi_\ac}^{\dagger};\Delta_{\Gr})\,\,,\,\,\widetilde{{\bf R}\Gamma}_{\rm f}(G_{K,\Sigma},\TT_{\f,\bbchi_\ac^\iota}^{\dagger,\iota};\Delta_{\Gr})\in D_{\rm ft} (_{\LL_\f(\Gamma_\ac)}{\rm Mod})$$ \index{Selmer complexes! $\widetilde{\bf H}^\bullet_{\rm f}(G_{K,\Sigma},\TT_{\f,\bbchi_?^\cdot}^{\dagger,\cdot};\Delta_{\Gr})$}
whose cohomology groups we denote by $\widetilde{H}^\bullet_{\rm f}(G_{K,\Sigma},\TT_{\f,\bbchi}^{\dagger};\Delta_{\Gr})$, $\widetilde{H}^\bullet_{\rm f}(G_{K,\Sigma},\TT_{\f,\bbchi_\ac}^{\dagger};\Delta_{\Gr})$ and $\widetilde{H}^\bullet_{\rm f}(G_{K,\Sigma},\TT_{\f,\bbchi_\ac^\iota}^{\dagger,\iota};\Delta_{\Gr})$, respectively. Twisting formalism (c.f., \cite{rubin00}, Lemma 6.1.2) shows that
\begin{equation}
    \label{eqn_twist_char_extended_Selmer}
    {\rm Tw}_\f\left(\Char\left(\widetilde{H}^\bullet_{\rm f}(G_{K,\Sigma},\TT_{\f,\chi}^{K};\Delta_{\Gr})\right) \right)=\Char\left(\widetilde{H}^\bullet_{\rm f}(G_{K,\Sigma},\TT_{\f,\bbchi}^{\dagger};\Delta_{\Gr})\right)\,.
\end{equation}

\begin{defn}
\label{defn_height_regulators_Nek}
We let\index{Selmer complexes! $\mathfrak{h}_{\f,\bbchi_\ac}^{\rm Nek}$ ($p$-adic height pairing)}
$$\mathfrak{h}_{\f,\bbchi_\ac}^{\rm Nek}: \widetilde{H}^1_{\rm f}(G_{K,\Sigma},\TT_{\f,\bbchi_\ac}^{\dagger};\Delta_{\Gr})\otimes \widetilde{H}^1_{\rm f}(G_{K,\Sigma},\TT_{\f,\bbchi_\ac^\iota}^{\dagger,\iota};\Delta_{\Gr})\lra \LL_\f(\Gamma_\ac)$$
denote the $\LL_\f(\Gamma_\ac)$-adic (cyclotomic) height pairing given as \cite[\S11.1.4]{nekovar06}, with $\Gamma=\Gamma^\cyc$. We define the $\LL_\f(\Gamma_\ac)$-adic regulator ${\rm Reg}_{\f,\bbchi_\ac}$ by setting\index{Selmer complexes! ${\rm Reg}_{\f,\bbchi_\ac}$ ($\LL_\f(\Gamma_\ac)$-adic regulator)}  
\\
\resizebox{\textwidth}{!} {
${\rm Reg}_{\f,\bbchi_\ac}:=\Char_{\LL_\f(\Gamma_\ac)}\left({\rm coker}\left(\widetilde{H}^1_{\rm f}(G_{K,\Sigma},\TT_{\f,\bbchi_\ac}^{\dagger};\Delta_{\Gr})\xrightarrow{{\rm adj}(\mathfrak{h}_{\f,\bbchi_\ac}^{\rm Nek})}\widetilde{H}^1_{\rm f}(G_{K,\Sigma},\TT_{\f,\bbchi_\ac^\iota}^{\dagger,\iota};\Delta_{\Gr})\right)\right)$
}
where ${\rm adj}$ denotes adjunction. Note that ${\rm Reg}_{\f,\bbchi_\ac}$ is non-zero if and only if $\mathfrak{h}_{\f,\bbchi_\ac}^{\rm Nek}$ is non-degenerate.
\end{defn}

\subsection{Anticyclotomic main conjectures: definite/inert case}
\label{subsubsec_anticyclo_2} We retain the notation and hypotheses of \S\ref{subsubsec_anticyclo_1}. Let us write $N_f=N^+N^-$ where $N^+$ (resp., $N^-$) is divisible by only those primes which split (resp., remain inert) in $K/\QQ$. We assume in \S\ref{subsubsec_anticyclo_2} that $N^-$ is a square-free product of an odd number of primes (this is what we refer to as the definite case). 

\begin{theorem}
\label{thm_anticyc_main_inert_ff_definite}
Suppose $\chi$ is a ring class character such that $\widehat{\chi}_{\vert_{G_{\QQ_p}}}\neq \widehat{\chi^c}_{\vert_{G_{\QQ_p}}}$. Assume that the following conditions hold true: 
\begin{itemize}
    \item $p\geq 7$ and the condition \ref{item_fullness_main_body} holds true.
    \item \ref{item_bounded_spsi} is valid.
\end{itemize}
Then the $\LL_\f(\Gamma_\ac)$-module $\widetilde{H}^2_{\rm f}(G_{K,\Sigma},\TT_{\f,\bbchi_\ac}^{\dagger};\Delta_{\Gr})$ is torsion and the following containment in the anticyclotomic Iwasawa main conjecture for the family $\f_{/K}\otimes\bbchi_{\rm ac}$ holds:
\begin{equation}
    \label{eqn_anticyc_main_inert_ff_definite}
    L_p^{\dagger}(\f_{/K}\otimes\bbchi_\ac)\,\in \,\Char_{\LL_\f(\Gamma_\ac)}\left(\widetilde{H}^2_{\rm f}(G_{K,\Sigma},\TT_{\f,\bbchi_\ac}^{\dagger};\Delta_{\Gr})^\iota\right)\otimes_{\ZZ_p}\QQ_p\,.
\end{equation}
\end{theorem}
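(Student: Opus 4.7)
The plan is to deduce Theorem~\ref{thm_anticyc_main_inert_ff_definite} from the three-variable divisibility of Theorem~\ref{thm_3var_main_inert_ff} via a two-step descent: a formal central-critical twist, followed by a specialization along the cyclotomic direction in $\Gamma_K = \Gamma_\ac\times\Gamma_\cyc$, carried out in the framework of Nekov\'a\v{r}'s descent formalism as assembled in \cite[\S5.3.1]{BL_SplitOrd2020}.

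First, I apply the $\LL_\f$-linear twisting automorphism ${\rm Tw}_\f$ of Definition~\ref{defn_twff_map}(i) to the conclusion of Theorem~\ref{thm_3var_main_inert_ff}. Combining the definition of $L_p^{\dagger}$ with the twist-compatibility \eqref{eqn_twist_char_extended_Selmer} of characteristic ideals under ${\rm Tw}_\f$, this upgrades the three-variable divisibility to its central-critical normalization:
\begin{equation*}
L_p^{\dagger}(\f_{/K}\otimes\bbchi)\in\Char_{\LL_\f(\Gamma_K)}\!\left(\widetilde{H}^2_{\rm f}(G_{K,\Sigma},\TT_{\f,\bbchi}^{\dagger};\Delta_{\Gr})^\iota\right)\otimes_{\ZZ_p}\QQ_p.
\end{equation*}
This twist places us in the conjugate self-dual setting of \eqref{eqn_twisted_conjugate_self_duality}, where Nekov\'a\v{r}'s machinery applies in its cleanest form.

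Next, using the decomposition $\LL_\f(\Gamma_K)=\LL_\f(\Gamma_\ac)[[\Gamma_\cyc]]$, I project modulo the augmentation ideal $I_\cyc\subset\LL_\f(\Gamma_K)$ of $\Gamma_\cyc$. By construction, the image of $L_p^{\dagger}(\f_{/K}\otimes\bbchi)$ is $L_p^{\dagger}(\f_{/K}\otimes\bbchi_\ac)$, and Nekov\'a\v{r}'s control theorem \cite[Corollary~8.10.2]{nekovar06} yields a canonical isomorphism
\begin{equation*}
\widetilde{H}^2_{\rm f}(G_{K,\Sigma},\TT_{\f,\bbchi}^{\dagger};\Delta_{\Gr})\big/I_\cyc\xrightarrow{\sim}\widetilde{H}^2_{\rm f}(G_{K,\Sigma},\TT_{\f,\bbchi_\ac}^{\dagger};\Delta_{\Gr}).
\end{equation*}
To promote the three-variable divisibility to a two-variable one after this specialization, I invoke the characteristic-ideal descent encoded in the Nekov\'a\v{r} framework (as distilled in \cite[\S5.3.1]{BL_SplitOrd2020}); working with extended Selmer complexes, rather than classical Selmer groups, is precisely what bypasses potential pathologies from pseudo-null submodules. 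The descent is valid provided the specialized $p$-adic $L$-function is non-zero.

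The crux of the argument, and the main obstacle, is the non-vanishing $L_p^{\dagger}(\f_{/K}\otimes\bbchi_\ac)\neq 0$. In the definite regime, where $N^-$ is a square-free product of an odd number of primes, the global root number at the central critical points of the specializations $\f(\kappa)_{/K}\otimes\bbchi_\ac(\rho)$ is generically $+1$, so one expects systematic non-vanishing of central $L$-values along the anticyclotomic tower. The plan is to extract this from a generic non-vanishing theorem of Hsieh-type (see \cite{hsiehnonvanishing} and its extensions) applied to a suitable Zariski-dense family of classical specializations of $\f$ paired with anticyclotomic twists produced from $\bbchi_\ac$, and then to read off non-vanishing via the interpolation formula for $L_p^{\dagger}(\f_{/K}\otimes\bbchi_\ac)$. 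Once non-vanishing is secured, torsionness of $\widetilde{H}^2_{\rm f}(G_{K,\Sigma},\TT_{\f,\bbchi_\ac}^{\dagger};\Delta_{\Gr})$ is immediate from the descended divisibility, and the containment \eqref{eqn_anticyc_main_inert_ff_definite} follows.
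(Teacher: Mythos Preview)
Your approach is essentially the paper's: twist by ${\rm Tw}_\f$, then descend along the cyclotomic direction via Nekov\'a\v{r}'s formalism as packaged in \cite[\S5.3.1]{BL_SplitOrd2020}, with the non-vanishing of $L_p^{\dagger}(\f_{/K}\otimes\bbchi_\ac)$ as the crucial input. Two small points of divergence are worth noting. First, the non-vanishing input in the paper is \cite[Theorem~C]{HungNonVanishing} (Hung's anticyclotomic non-vanishing in the definite setting), not \cite{hsiehnonvanishing}, which the paper invokes only in the indefinite case. Second, your presentation has a slight circularity: you say the descent is valid once non-vanishing holds, and then derive torsion from the descended divisibility, but \cite[Proposition~5.24]{BL_SplitOrd2020} already requires the anticyclotomic $\widetilde{H}^1$ and $\widetilde{H}^2$ to be torsion as input. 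The paper resolves this by first deriving a weaker containment (with the extra factor $\Char\bigl(\widetilde{H}^2_{\rm f}(G_{K,\Sigma},\TT_{\f,\bbchi}^\dagger;\Delta_{\Gr})^\iota[\gamma_+-1]\bigr)$) directly from the control theorem, using non-vanishing to conclude this factor is non-zero and hence the anticyclotomic Selmer group is torsion, and only then invoking Proposition~5.24 to obtain the exact equality $\pi_\ac\Char=\Char$ yielding \eqref{eqn_anticyc_main_inert_ff_definite}.
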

We recall that by Corollary~\ref{cor_s_g_is_uniformly_bounded}, the condition \ref{item_bounded_spsi} holds true granted the existence of a rank-$2$ Euler system that the Perrin-Riou philosophy predicts.

\begin{proof}[Proof of Theorem~\ref{thm_anticyc_main_inert_ff_definite}]
It follows from the fundamental base change property for Selmer complexes~\cite[Corollary 8.10.2]{nekovar06}, it follows that 
$$\widetilde{H}^2_{\rm f}(G_{K,\Sigma},\TT_{\f,\bbchi}^{\dagger};\Delta_{\Gr})\Big{/}(\gamma_\cyc-1)\widetilde{H}^2_{\rm f}(G_{K,\Sigma},\TT_{\f,\bbchi}^{\dagger};\Delta_{\Gr})\xrightarrow{\sim}\widetilde{H}^2_{\rm f}(G_{K,\Sigma},\TT_{\f,\bbchi_\ac}^{\dagger};\Delta_{\Gr})\,.$$
Thence, if we let $\pi_\ac: \LL_\f(\Gamma_K)\to \LL_{\f}(\Gamma_\ac)$ denote the canonical morphism, we conclude that
\begin{align*}
    \Char_{\LL_\f(\Gamma_\ac)}\left(\widetilde{H}^2_{\rm f}(G_{K,\Sigma},\TT_{\f,\bbchi}^{\dagger};\Delta_{\Gr})^\iota[\gamma_\cyc-1]\right)&\,\cdot\,\pi_\ac\,\Char_{\LL_\f(\Gamma_K)}\left(\widetilde{H}^2_{\rm f}(G_{K,\Sigma},\TT_{\f,\bbchi}^{\dagger};\Delta_{\Gr})^\iota \right)\\
    &=\Char_{\LL_\f(\Gamma_\ac)}\left( \widetilde{H}^2_{\rm f}(G_{K,\Sigma},\TT_{\f,\bbchi_\ac}^{\dagger};\Delta_{\Gr})^\iota\right)\,.
\end{align*}
We deduce using this together with Theorem~\ref{thm_3var_main_inert_ff}(ii) combined with \eqref{eqn_twist_char_extended_Selmer} and the definition of $L_p^{\dagger}(\f_{/K}\otimes\bbchi_\ac)$ that
\begin{align}
\label{eqn_anticyc_main_inert_ff_definite_proof_1}
\begin{aligned}
    L_p^{\dagger}(\f_{/K}\otimes\bbchi_\ac)\cdot \Char_{\LL_\f(\Gamma_\ac)}&\left(\widetilde{H}^2_{\rm f}(G_{K,\Sigma},\TT_{\f,\bbchi}^{\dagger};\Delta_{\Gr})^\iota[\gamma_\cyc-1]\right) \\
    &\subset \Char_{\LL_\f(\Gamma_\ac)}\left( \widetilde{H}^2_{\rm f}(G_{K,\Sigma},\TT_{\f,\bbchi_\ac}^{\dagger};\Delta_{\Gr})^\iota\right)\otimes\QQ_p\,.
\end{aligned}
\end{align}

Under our running hypothesis, \cite[Theorem C]{HungNonVanishing} shows that $L_p^{\dagger}(\f_{/K}\otimes\bbchi_\ac)\neq 0$. This also shows (again using Theorem~\ref{thm_3var_main_inert_ff}(ii) together with \eqref{eqn_twist_char_extended_Selmer} and the definition of $L_p^{\dagger}(\f_{/K}\otimes\bbchi_\ac)$) 
\begin{equation}
\label{eqn_anticyc_main_inert_ff_definite_proof_2}
    \gamma_\cyc-1\nmid \Char_{\LL_\f(\Gamma_K)}\left(\widetilde{H}^2_{\rm f}(G_{K,\Sigma},\TT_{\f,\bbchi}^{\dagger};\Delta_{\Gr})^\iota\right), 
\end{equation}
which in turn shows that the $\LL_\f(\Gamma_\ac)$-module $\widetilde{H}^2_{\rm f}(G_{K,\Sigma},\TT_{\f,\bbchi}^{\dagger};\Delta_{\Gr})^\iota[\gamma_\cyc-1]$ is torsion and therefore,
$$\Char_{\LL_\f(\Gamma_\ac)}\left(\widetilde{H}^2_{\rm f}(G_{K,\Sigma},\TT_{\f,\bbchi}^{\dagger};\Delta_{\Gr})^\iota[\gamma_\cyc-1]\right)\neq 0\,.$$
This fact combined with \eqref{eqn_anticyc_main_inert_ff_definite_proof_1} shows that $\Char_{\LL_\f(\Gamma_\ac)}\left( \widetilde{H}^2_{\rm f}(G_{K,\Sigma},\TT_{\f,\bbchi_\ac}^{\dagger};\Delta_{\Gr})^\iota\right)\neq 0$, thence also that the $\LL_\f(\Gamma_\ac)$-module  $\widetilde{H}^2_{\rm f}(G_{K,\Sigma},\TT_{\f,\bbchi_\ac}^{\dagger};\Delta_{\Gr})$ is torsion. This concludes the proof of our first assertion.

We now prove the containment \eqref{eqn_anticyc_main_inert_ff_definite}, which is an improved version of \eqref{eqn_anticyc_main_inert_ff_definite_proof_1}. The proof of \cite[Theorem 5.32(ii)]{BL_SplitOrd2020} (which builds on Proposition 5.24 in op. cit.,  which itself is a translation of the general results in \cite{nekovar06}, \S11.7.11) applies verbatim to show that 
\begin{equation}
    \label{eqn_anticyc_main_inert_ff_definite_proof_4}\pi_\ac\,\Char_{\LL_\f(\Gamma_K)}\left(\widetilde{H}^2_{\rm f}(G_{K,\Sigma},\TT_{\f,\bbchi}^{\dagger};\Delta_{\Gr})^\iota \right)
=\Char_{\LL_\f(\Gamma_\ac)}\left( \widetilde{H}^2_{\rm f}(G_{K,\Sigma},\TT_{\f,\bbchi_\ac}^{\dagger};\Delta_{\Gr})^\iota\right)\,.
\end{equation}
We note that our morphism $\pi_\ac$ coincides with $\partial_\cyc^*$ in op. cit. thanks to \eqref{eqn_anticyc_main_inert_ff_definite_proof_1}. We remark that in order to apply \cite[Proposition 5.24]{BL_SplitOrd2020}, it suffices to verify that both $\LL_\f(\Gamma_\ac)$-modules $\widetilde{H}^1_{\rm f}(G_{K,\Sigma},\TT_{\f,\bbchi_\ac}^{\dagger};\Delta_{\Gr})$ and $\widetilde{H}^2_{\rm f}(G_{K,\Sigma},\TT_{\f,\bbchi_\ac}^{\dagger};\Delta_{\Gr})$ are torsion. %(which then ensures the non-degeneracy of the height pairing $\mathfrak{h}_{\f,\bbchi_\ac}^{\rm Nek}$). 
We have checked the latter above and the fact that $\widetilde{H}^1_{\rm f}(G_{K,\Sigma},\TT_{\f,\bbchi_\ac}^{\dagger};\Delta_{\Gr})$ is torsion follows from this and the global Euler--Poincar\'e  characteristic formulae.

Theorem~\ref{thm_3var_main_inert_ff}(ii) together with \eqref{eqn_anticyc_main_inert_ff_definite_proof_4} combined with \eqref{eqn_twist_char_extended_Selmer} and the fact  that $\pi_\ac(L_p^{\dagger}(\f_{/K}\otimes\bbchi))=L_p^{\dagger}(\f_{/K}\otimes\bbchi_\ac)$ (which follows from definitions) conclude the proof of \eqref{eqn_anticyc_main_inert_ff_definite}.
\end{proof}

\subsection{Anticyclotomic main conjectures: indefinite/inert case}
\label{subsubsec_anticyclo_3} We retain the notation and hypotheses of \S\ref{subsubsec_anticyclo_1}. As in the previous subsection, let us write $N_f=N^+N^-$ but assume (in contrast with the previous subsection) that $N^-$ is a square-free product of even number of primes (the adjective ``indefinite'' is in reference to this condition). In this scenario, we have  
\begin{equation}
\label{eqn_epsilon_factors_1}
    \epsilon(\f(\kappa)_{/K}\otimes\psi,(\kappa+k_g+1)/2)=-1\qquad\qquad (\kappa>k_g)
\end{equation}
for all crystalline specialization $\f(\kappa)$ and crystalline Hecke characters $\psi$ (c.f. \S\ref{subsubsec_setting_fKpsi} to recall our conventions) with infinity type $(0,k_g+1)$ verifying $k_g<\kappa$. Here, $\epsilon(\f(\kappa)_{/K}\otimes\psi,s)$ is the global root number for the Rankin--Selberg $L$-function $L(\f(\kappa)\times\theta(\psi),s)=L(\f(\kappa)_{/K},\psi,s)$; c.f. \cite[\S15]{jacquet} (see also \cite[\S4.1]{bertolinidarmonprasanna13} for a detailed summary which we rely on in the present discussion). We further remark that $s=(\kappa+k_g+1)/2$ is the central critical point. In particular, the $L$-function $L(\f(\kappa)\times\theta(\psi),s)$  vanishes to odd order at its central critical point.

In the complementary scenario where $k_g>\kappa$, we have 
\begin{equation}
\label{eqn_epsilon_factors_2}
    \epsilon(\f(\kappa)_{/K}\otimes\psi,(\kappa+k_g+1)/2)=+1\qquad\qquad (\kappa<k_g)
\end{equation}
for all crystalline specialization $\f(\kappa)$ and crystalline Hecke characters $\psi$ whose infinity type $(0,k_g+1)$ verifies $k_g>\kappa$ (c.f. the discussion in \cite{bertolinidarmonprasanna13}, Page 1036).

One may recast \eqref{eqn_epsilon_factors_1} and \eqref{eqn_epsilon_factors_2} in terms of anticyclotomic characters as follows: For all crystalline specialization $\f(\kappa)$ and crystalline anticyclotomic Hecke characters $\psi$ with infinity type $(m-\kappa/2,\kappa/2-m)$ and conductor coprime to $N_fD_K$ we have
\begin{equation}
\label{eqn_epsilon_factors_3}
    (-1)^{\ord_{s=\frac{\kappa}{2}}L(\f(\kappa)_{/K},\psi,s)}=\epsilon(\f(\kappa)_{/K}\otimes\psi,\kappa/2)=
    \begin{cases}
    -1\qquad\qquad\qquad (0<m<\kappa)\\
    +1\qquad\qquad\qquad (m<0)
    \end{cases}
\end{equation}

\begin{defn}
\label{defn_derivatives}\index{Completed group rings! $\partial_\cyc^{r}$}
Given $A\in \LL_{\cO}(\Gamma_K)$, let us put $r(A):=\ord_{(\gamma_+-1)}A$. For any $r\leq r(A)$\index{Completed group rings! $r(A)$}, we define the $r$th derivative $\partial_\cyc^{r}\, A\in \LL_\f(\Gamma_\ac)$ of $A$ on setting $\partial_\cyc^{r}\,A:=\pi_\ac((\gamma_+-1)^{-r}A)$\,. 

If $I=(A)$ is a principal ideal, we set $r(I)=r(A)$ and define the ideal $\partial_\cyc^{r}\, I:=(\partial_\cyc^{r}\, A)\subset\LL_\f(\Gamma_\ac)$.
\end{defn}
We shall write $\partial_\cyc$ alone in place of $\partial_\cyc^1$.
\begin{defn}
We set\index{Selmer complexes! $r(\f,\bbchi_\ac)$ (generic algebraic rank)}
$$r(\f,\bbchi_\ac):={\rm rank}_{\LL_{\f}(\Gamma_\ac)}\, \widetilde{H}^1_{\rm f}(G_{K,\Sigma},\TT_{\f,\bbchi_\ac}^{\dagger};\Delta_{\Gr})$$  $$r(\f,\bbchi):=r\left(\widetilde{H}^2_{\rm f}(G_{K,\Sigma},\TT_{\f,\bbchi}^{\dagger};\Delta_{\Gr})\right)\,.$$ 
\end{defn}

\begin{proposition}
\label{prop_semisimplicity_Selmer}
We have ${\rm rank}_{\LL_{\f}(\Gamma_\ac)}\, \widetilde{H}^2_{\rm f}(G_{K,\Sigma},\TT_{\f,\bbchi_\ac}^{\dagger};\Delta_{\Gr})=r(\f,\bbchi_\ac)$ and
\begin{equation}
\label{eqn_prop_semisimplicity_Selmer}
    r(\f,\bbchi_\ac)\leq r(\f,\bbchi)\,. 
\end{equation}
with equality if and only if the height pairing $\mathfrak{h}_{\f,\bbchi_\ac}^{\rm Nek}$ is non-degenerate.
\end{proposition}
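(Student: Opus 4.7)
My plan is to deduce both assertions from Nekov\'a\v{r}'s general descent formalism for Selmer complexes, applied to the conjugate self-dual representation $\TT_{\f,\bbchi_\ac}^{\dagger}$.

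First I would establish the equality $\mathrm{rank}_{\LL_{\f}(\Gamma_\ac)}\,\widetilde{H}^1_{\rm f}(G_{K,\Sigma},\TT_{\f,\bbchi_\ac}^{\dagger};\Delta_{\Gr})=\mathrm{rank}_{\LL_{\f}(\Gamma_\ac)}\,\widetilde{H}^2_{\rm f}(G_{K,\Sigma},\TT_{\f,\bbchi_\ac}^{\dagger};\Delta_{\Gr})$. The perfect pairing~\eqref{eqn_twisted_conjugate_self_duality} together with Nekov\'a\v{r}'s Grothendieck--Poitou--Tate duality \cite[\S6.3]{nekovar06} induces a duality in the derived category between $\widetilde{{\bf R}\Gamma}_{\rm f}(G_{K,\Sigma},\TT_{\f,\bbchi_\ac}^{\dagger};\Delta_{\Gr})$ and $\widetilde{{\bf R}\Gamma}_{\rm f}(G_{K,\Sigma},\TT_{\f,\bbchi_\ac^\iota}^{\dagger,\iota};\Delta_{\Gr})[3]$, which after the involution $\iota$ (using that $\Gamma_\ac$ is abelian) identifies the generic ranks of $\widetilde{H}^i$ and $\widetilde{H}^{3-i}$. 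The hypothesis \ref{item_fullness_main_body} ensures that the residual representation has no Galois invariants, so $\widetilde{H}^0=0$ and hence $\widetilde{H}^3=0$. Combined with the global Euler--Poincar\'e characteristic formula (which gives $\chi(\widetilde{{\bf R}\Gamma}_{\rm f})=0$ since the local conditions at $p$ are of half the rank), this forces the two generic ranks to coincide.

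Next I would address the comparison of $r(\f,\bbchi_\ac)$ with $r(\f,\bbchi)$. Writing $\LL_\f(\Gamma_K)=\LL_\f(\Gamma_\ac)[[\gamma_+-1]]$ and applying Nekov\'a\v{r}'s base-change isomorphism \cite[Corollary~8.10.2]{nekovar06} for the specialization $\gamma_+\mapsto 1$, one obtains a distinguished triangle relating $\widetilde{{\bf R}\Gamma}_{\rm f}(G_{K,\Sigma},\TT_{\f,\bbchi}^{\dagger};\Delta_{\Gr})\otimes^{\bf L}\LL_\f(\Gamma_\ac)$ to $\widetilde{{\bf R}\Gamma}_{\rm f}(G_{K,\Sigma},\TT_{\f,\bbchi_\ac}^{\dagger};\Delta_{\Gr})$. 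Taking cohomology yields an exact sequence whose analysis (by the structure theorem for finitely generated modules over the regular ring $\LL_\f(\Gamma_\ac)$) shows that
\[
r(\f,\bbchi)\;\geq\;\mathrm{rank}_{\LL_{\f}(\Gamma_\ac)}\,\widetilde{H}^2_{\rm f}(G_{K,\Sigma},\TT_{\f,\bbchi_\ac}^{\dagger};\Delta_{\Gr})=r(\f,\bbchi_\ac),
\]
which yields the desired inequality~\eqref{eqn_prop_semisimplicity_Selmer}.

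Finally, for the criterion for equality I would appeal to Nekov\'a\v{r}'s derived Bockstein formalism \cite[\S11.7]{nekovar06}, which identifies the defect in the inequality above with the corank of the adjoint to the cyclotomic height pairing $\mathfrak{h}_{\f,\bbchi_\ac}^{\rm Nek}$ from Definition~\ref{defn_height_regulators_Nek}. The general descent statement \cite[\S11.7.11]{nekovar06}, specialized to our setting as in \cite[Proposition~5.24]{BL_SplitOrd2020}, expresses the "secondary" part of the characteristic ideal through a Bockstein operator whose matrix is precisely that of $\mathfrak{h}_{\f,\bbchi_\ac}^{\rm Nek}$; consequently equality in~\eqref{eqn_prop_semisimplicity_Selmer} holds if and only if this Bockstein, equivalently $\mathfrak{h}_{\f,\bbchi_\ac}^{\rm Nek}$, is non-degenerate. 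The main technical point I expect to verify carefully is that Nekov\'a\v{r}'s Bockstein coincides with the adjoint of $\mathfrak{h}_{\f,\bbchi_\ac}^{\rm Nek}$ under our chosen identifications--this is essentially \cite[\S11.2]{nekovar06}, but requires matching the sign and twisting conventions used in Definition~\ref{defn_twff_map} with those in op.\ cit.
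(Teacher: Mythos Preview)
Your proposal is correct and follows essentially the same path as the paper's proof: control theorem for the inequality, and Nekov\'a\v{r}'s height formalism from \cite[\S11.7]{nekovar06} for the equality criterion. Two minor points of comparison: for the rank equality, the paper simply invokes the Euler--Poincar\'e characteristic formula directly (the vanishing of $\widetilde{H}^0$ and $\widetilde{H}^3$ plus $\chi=0$ suffices), so your detour through Poitou--Tate duality is unnecessary; and for the equality criterion, the paper cites the sharper \cite[Proposition~11.7.6(vii)]{nekovar06}, which already packages the statement ``length at $(\gamma_+-1)$ equals generic rank of $\widetilde{H}^1$ over the quotient iff the height pairing is non-degenerate,'' so you need not go through \cite[\S11.7.11]{nekovar06} or \cite[Proposition~5.24]{BL_SplitOrd2020} for this particular proposition.
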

\begin{proof}
The first equality is a consequence of the Euler-Poincar\'e characteristic formula. The inequality \eqref{eqn_prop_semisimplicity_Selmer} follows from the first equality combined with the control theorem for Nekov\'a\v{r}'s extended Selmer groups (c.f., \cite{nekovar06}, Corollary 8.10.2). It remains to prove the asserted criterion for equality.  

It follows from  \cite[Proposition 11.7.6(vii)]{nekovar06} that 
\begin{align}
\label{eqn_nek_1176vii}
\begin{aligned}
    {\rm length}_{\LL_\f(\Gamma_K)_{(\gamma_+-1)}}&\left(\widetilde{H}^2_{\rm f}(G_{K,\Sigma},\TT_{\f,\bbchi}^\dagger;\Delta_{\rm Gr})_{(\gamma_+-1)}\right)\\
    &\qquad\qquad\geq {\rm length}_{\LL_\f(\Gamma_\ac)_{(0)}}\left(\widetilde{H}^1_{\rm f}(G_{K,\Sigma},\TT_{\f,\bbchi_\ac}^\dagger;\Delta_{\rm Gr})_{(0)}\right)
    \end{aligned}
\end{align}
with equality if and only if the height pairing $\mathfrak{h}_{\f,\bbchi_\ac}^{\rm Nek}$ is non-degenerate. Thence,
\begin{align*}
    r(\f,\bbchi)&={\rm length}_{\LL_\f(\Gamma_K)_{(\gamma_+-1)}}\left(\widetilde{H}^2_{\rm f}(G_{K,\Sigma},\TT_{\f,\bbchi}^\dagger;\Delta_{\rm Gr})_{(\gamma_+-1)}\right)\\
    &\stackrel{\eqref{eqn_nek_1176vii}}{\geq} {\rm length}_{\LL_\f(\Gamma_\ac)_{(0)}}\left(\widetilde{H}^1_{\rm f}(G_{K,\Sigma},\TT_{\f,\bbchi_\ac}^\dagger;\Delta_{\rm Gr})_{(0)}\right)\\
    &={\rm rank}_{\LL_\f(\Gamma_\ac)}\left(\widetilde{H}^1_{\rm f}(G_{K,\Sigma},\TT_{\f,\bbchi_\ac}^\dagger;\Delta_{\rm Gr})\right)=r(\f,\bbchi_\ac) 
\end{align*}
(where the first and the final two equalities follows from definitions) with equality if and only if the height pairing $\mathfrak{h}_{\f,\bbchi_\ac}^{\rm Nek}$ is non-degenerate.
\end{proof}

\begin{remark} One expects that the height pairing $\mathfrak{h}_{\f,\bbchi_\ac}^{\rm Nek}$ is always non-degenerate (equivalently, ${\rm Reg}_{\f,\bbchi_\ac}\neq 0$); see \cite{burungale2015,burungaledisegni2020} for progress in this direction (in a setting that unfortunately has no overlap with the scenario we have placed ourselves in). 
\end{remark}

Our main result in \S\ref{subsubsec_anticyclo_3} is Theorem~\ref{thm_anticyc_main_inert_ff_indefinite} below, which is a partial $\LL_\f(\Gamma_\ac)$-adic BSD formula.
\begin{theorem}
\label{thm_anticyc_main_inert_ff_indefinite}
Suppose $\chi$ is a ring class character such that $\widehat{\chi}_{\vert_{G_{\QQ_p}}}\neq \widehat{\chi^c}_{\vert_{G_{\QQ_p}}}$. Assume that the following conditions hold true. 
\begin{itemize}
    \item $\overline{\rho}_\f$ is absolutely irreducible.
     \item $p\geq 7$ and the condition \ref{item_fullness_main_body} holds true.
    \item \ref{item_bounded_spsi} is valid.
\end{itemize}
Then: 
\item[i)] $\ord_{(\gamma_+-1)}\, L_p^{\dagger}(\f_{/K}\otimes\bbchi)\geq 1$.

\item[ii)] The following containment \emph{(partial $\LL_\f(\Gamma_\ac)$-adic BSD formula for the family $\f_{/K}\otimes\bbchi_{\rm ac}$)} is valid:
\begin{equation}
    \label{eqn_anticyc_main_inert_ff_indefinite}
    \begin{aligned}
    \partial_\cyc^{r(\f,\bbchi_\ac)} L_p^{\dagger}(\f_{/K}\otimes\bbchi)\,\in \,{\rm Reg}_{\f,\bbchi_\ac}\cdot\Char_{\LL_\f(\Gamma_\ac)}\left(\widetilde{H}^2_{\rm f}(G_{K,\Sigma},\TT_{\f,\bbchi_\ac}^{\dagger};\Delta_{\Gr})_{\rm tor}^\iota\right)\otimes_{\ZZ_p}\QQ_p\,.
    \end{aligned}
\end{equation}
\end{theorem}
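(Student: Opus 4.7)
The plan is to prove the two parts separately. Part~(i) will follow from the interpolation property of the twisted Rankin--Selberg $p$-adic $L$-function combined with the global root-number computation~\eqref{eqn_epsilon_factors_3}, while part~(ii) will follow from the three-variable main conjecture divisibility of Theorem~\ref{thm_3var_main_inert_ff}(ii), recast in the self-dual normalization via \eqref{eqn_twist_char_extended_Selmer} and then descended to the anticyclotomic tower by the derivative formalism of Nekov\'{a}\v{r}, in direct parallel to the argument of Theorem~\ref{thm_anticyc_main_inert_ff_definite}, but with the projection $\pi_\ac$ replaced by the higher-order derivative $\partial_\cyc^{r(\f,\bbchi_\ac)}$.

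For part~(i), I note that the asserted inequality $\ord_{(\gamma_+-1)}L_p^\dagger(\f_{/K}\otimes\bbchi)\geq 1$ is equivalent to the vanishing $L_p^\dagger(\f_{/K}\otimes\bbchi_\ac)=0$. The latter will be obtained by exhibiting a Zariski-dense locus on which $L_p^\dagger(\f_{/K}\otimes\bbchi_\ac)$ vanishes: namely, the arithmetic specializations at $(\kappa,\psi)$ with $\psi$ anticyclotomic, at which the interpolation formula of Appendix~\ref{appendix_sec_padicRankinSelbergHida} expresses $L_p^\dagger(\f_{/K}\otimes\bbchi_\ac)$ as a nonzero multiple of the central critical value of the Rankin--Selberg $L$-function $L(\f(\kappa)_{/K}\otimes\chi\psi,s)$. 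Under the indefinite assumption that $N^-$ is a product of an even number of inert primes, formula~\eqref{eqn_epsilon_factors_3} (applied to the ring class character $\chi\psi$, which has anticyclotomic infinity type in the relevant range) gives that this global root number equals $-1$, forcing the $L$-value to vanish. Since the set of such specializations is Zariski dense in $\mathrm{Spec}(\LL_\f(\Gamma_\ac))\hookrightarrow\mathrm{Spec}(\LL_\f(\Gamma_K))$, the claim follows.

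For part~(ii), I would start from Theorem~\ref{thm_3var_main_inert_ff}(ii) and twist using \eqref{eqn_twist_char_extended_Selmer} to obtain
\[
L_p^\dagger(\f_{/K}\otimes\bbchi)\,\in\,\Char_{\LL_\f(\Gamma_K)}\!\left(\widetilde{H}^2_{\rm f}(G_{K,\Sigma},\TT_{\f,\bbchi}^{\dagger};\Delta_{\Gr})^\iota\right)\otimes_{\ZZ_p}\QQ_p.
\]
I then descend to $\LL_\f(\Gamma_\ac)$ by applying $\partial_\cyc^{r(\f,\bbchi_\ac)}$. Exactly as in the derivation of~\eqref{eqn_anticyc_main_inert_ff_definite_proof_4} within the proof of Theorem~\ref{thm_anticyc_main_inert_ff_definite}, but replacing the first-order projection $\pi_\ac$ by the higher derivative $\partial_\cyc^{r(\f,\bbchi_\ac)}$, the general height-pairing formula of \cite[\S11.7]{nekovar06} (a restricted version of which is packaged in \cite[Proposition~5.24]{BL_SplitOrd2020}) yields
\[
\partial_\cyc^{r(\f,\bbchi_\ac)}\Char_{\LL_\f(\Gamma_K)}\!\left(\widetilde{H}^2_{\rm f}(G_{K,\Sigma},\TT_{\f,\bbchi}^{\dagger};\Delta_{\Gr})^\iota\right)\subset\mathrm{Reg}_{\f,\bbchi_\ac}\cdot\Char_{\LL_\f(\Gamma_\ac)}\!\left(\widetilde{H}^2_{\rm f}(G_{K,\Sigma},\TT_{\f,\bbchi_\ac}^{\dagger};\Delta_{\Gr})_{\rm tor}^\iota\right)\otimes\QQ_p
\]
whenever the height pairing $\mathfrak{h}^{\rm Nek}_{\f,\bbchi_\ac}$ is non-degenerate. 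Combining this with the three-variable divisibility above yields \eqref{eqn_anticyc_main_inert_ff_indefinite} in the non-degenerate case. If instead $\mathfrak{h}^{\rm Nek}_{\f,\bbchi_\ac}$ is degenerate, Proposition~\ref{prop_semisimplicity_Selmer} gives $r(\f,\bbchi)>r(\f,\bbchi_\ac)$, and then both sides of \eqref{eqn_anticyc_main_inert_ff_indefinite} vanish trivially: the left-hand side because $(\gamma_+-1)^{r(\f,\bbchi)}$ still divides $L_p^\dagger(\f_{/K}\otimes\bbchi)$ by the three-variable divisibility, so its $r(\f,\bbchi_\ac)$-th derivative lies in $(\gamma_+-1)\LL_\f(\Gamma_K)$ and projects to zero in $\LL_\f(\Gamma_\ac)$; the right-hand side because $\mathrm{Reg}_{\f,\bbchi_\ac}=0$.

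The main obstacle I anticipate is confirming that the height-pairing formalism of \cite{nekovar06}, as codified in \cite[\S5.3.1]{BL_SplitOrd2020}, supports the passage to the higher-order derivative $\partial_\cyc^{r(\f,\bbchi_\ac)}$ rather than merely the first derivative $\partial_\cyc$ treated in detail there. This will require a careful book-keeping of the generic ranks and torsion behaviour in the descent from $\LL_\f(\Gamma_K)$ to $\LL_\f(\Gamma_\ac)$, for which Proposition~\ref{prop_semisimplicity_Selmer} is precisely the required input. The argument should proceed uniformly in both the non-degenerate and degenerate cases once the latter is disposed of trivially as indicated above.
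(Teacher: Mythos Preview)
Your proposal is correct and follows essentially the same route as the paper's proof: part~(i) via the interpolation formula and the root-number calculation~\eqref{eqn_epsilon_factors_3}, and part~(ii) by twisting Theorem~\ref{thm_3var_main_inert_ff}(ii) via~\eqref{eqn_twist_char_extended_Selmer}, then splitting into the degenerate case (where both sides vanish by Proposition~\ref{prop_semisimplicity_Selmer}) and the non-degenerate case (where one invokes \cite[Proposition~5.24]{BL_SplitOrd2020}). The paper is slightly more explicit in the non-degenerate case, enumerating the four hypotheses required for \cite[Proposition~5.24]{BL_SplitOrd2020} and verifying each (non-degeneracy via Proposition~\ref{prop_semisimplicity_Selmer}, vanishing Tamagawa factors via \cite[Corollary~8.9.7.4]{nekovar06}, vanishing of $\widetilde{H}^0$ and $\widetilde{H}^3$ via residual irreducibility, and torsionness of $\widetilde{H}^1$ and $\widetilde{H}^2$ over $\LL_\f(\Gamma_K)$ via Corollary~\ref{cor_torsion_H2_generic}); you should do the same, since the ``obstacle'' you flag is resolved precisely by these verifications rather than by any new higher-derivative formalism.
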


We recall that by Corollary~\ref{cor_s_g_is_uniformly_bounded}, the condition \ref{item_bounded_spsi} holds true granted the existence of a rank-$2$ Euler system that the Perrin-Riou philosophy predicts. 

\begin{proof}[Proof of Theorem~\ref{thm_anticyc_main_inert_ff_indefinite}]
\item[i)]  Using the interpolation properties of the $p$-adic $L$-function $L_p^{\rm RS}(\f_{/K}\otimes\bbchi)$ and the definition of $L_p^{\dagger}(\f_{/K}\otimes\bbchi_\ac)$ (c.f. Definition~\ref{defn_twff_map}(i)) and \eqref{eqn_epsilon_factors_3}, it follows that 
$$L_p^{\dagger}(\f_{/K}\otimes\bbchi_\ac)(\kappa,\widehat{\chi\psi})\,\dot{=}\,L(\f_{/K},\psi,\kappa/2)=0$$
(where $a\dot{=}b$ means $a=cb$ for some $c\in \mathbb{C}_p$) for all crystalline specializations $\f(\kappa)$ and  anticyclotomic (not necessarily crystalline) Hecke characters $\psi$ of infinity type $(m-{\kappa}/{2},{\kappa}/{2}-m)$ with $0<m<\kappa$\,.  This shows that $L_p^{\dagger}(\f_{/K}\otimes\bbchi_\ac)=0$, which is equivalent to the assertion in this part of our theorem.

 \item[ii)] If we combine Theorem~\ref{thm_3var_main_inert_ff}(ii) with \eqref{eqn_twist_char_extended_Selmer} and the definition of the twisted $p$-adic $L$-function $L_p^\dagger(\f\otimes\bbchi)$, we reduce to proving that 
\begin{align}
\label{eqn_anticyc_main_inert_ff_indefinite_proof_1}
\begin{aligned}
    \partial_\cyc^{r(\f,\bbchi_\ac)} \Char&_{\LL_\f(\Gamma_K)}\left(\widetilde{H}^2_{\rm f}(G_{K,\Sigma},\TT_{\f,\bbchi}^{\dagger};\Delta_{\Gr})^\iota\right)\,\\
    &\quad\subset \,{\rm Reg}_{\f,\bbchi_\ac}\cdot\Char_{\LL_\f(\Gamma_\ac)}\left(\widetilde{H}^2_{\rm f}(G_{K,\Sigma},\TT_{\f,\bbchi_\ac}^{\dagger};\Delta_{\Gr})_{\rm tor}^\iota\right)\otimes_{\ZZ_p}\QQ_p\,.
\end{aligned}
\end{align}

If $r(\f,\bbchi)\gneq r(\f,\bbchi_\ac)$, we have
$$\partial_\cyc^{r(\f,\bbchi_\ac)} \Char_{\LL_\f(\Gamma_K)}\left(\widetilde{H}^2_{\rm f}(G_{K,\Sigma},\TT_{\f,\bbchi}^{\dagger};\Delta_{\Gr})^\iota\right)=0={\rm Reg}_{\f,\bbchi_\ac}$$
where the second equality follows from Proposition~\ref{prop_semisimplicity_Selmer}. In other words, \eqref{eqn_anticyc_main_inert_ff_indefinite_proof_1} trivially holds true when $r(\f,\bbchi)\neq r(\f,\bbchi_\ac)$. 

We will therefore assume in the remainder of our proof that $r(\f,\bbchi)=r(\f,\bbchi_\ac)$. We contend that
\begin{align}
\label{eqn_anticyc_main_inert_ff_indefinite_proof_2}
\begin{aligned}
    \partial_\cyc^{r(\f,\bbchi)} \Char&_{\LL_\f(\Gamma_K)}\left(\widetilde{H}^2_{\rm f}(G_{K,\Sigma},\TT_{\f,\bbchi}^{\dagger};\Delta_{\Gr})^\iota\right)\,\\
    &\qquad\subset \,{\rm Reg}_{\f,\bbchi_\ac}\cdot\,\Char_{\LL_\f(\Gamma_\ac)}\left(\widetilde{H}^2_{\rm f}(G_{K,\Sigma},\TT_{\f,\bbchi_\ac}^{\dagger};\Delta_{\Gr})_{\rm tor}^\iota\right)\otimes_{\ZZ_p}\QQ_p\,.
    \end{aligned}
\end{align}
 We will verify \eqref{eqn_anticyc_main_inert_ff_indefinite_proof_2} using \cite[Proposition 5.24]{BL_SplitOrd2020}, which is a simplification of Nekov\'a\v{r}'s result \cite[\S 11.7.11]{nekovar06}. To apply Proposition 5.24 in \cite{BL_SplitOrd2020}, we need to check that the following properties hold true.
\begin{enumerate}
\item The height pairing $\mathfrak{h}_{\f,\bbchi_\ac}^{\rm Nek}$ is non-degenerate.
    \item The Tamagawa factors denoted by ${\rm Tam}_v(\TT_{\f,\bbchi_\ac}^{\dagger},P)$ in \cite{BL_SplitOrd2020} (c.f.Definition 5.21 in op. cit.) vanish for every height-one prime $P$ of $\LL_\f(\Gamma_\ac)$.
    \item  For $i=0,3$, we have $\widetilde{H}^i_{\rm f}(G_{K,\Sigma},\TT_{\f,\bbchi}^{\dagger};\Delta_{\Gr})=0=\widetilde{H}^i_{\rm f}(G_{K,\Sigma},\TT_{\f,\bbchi_\ac}^{\dagger};\Delta_{\Gr})$.
    \item Both $\LL_\f(\Gamma_K)$-modules $\widetilde{H}^1_{\rm f}(G_{K,\Sigma},\TT_{\f,\bbchi}^{\dagger};\Delta_{\Gr})$ and $\widetilde{H}^2_{\rm f}(G_{K,\Sigma},\TT_{\f,\bbchi}^{\dagger};\Delta_{\Gr})$ are torsion.
\end{enumerate}
The first property holds thanks to Proposition~\ref{prop_semisimplicity_Selmer} (since $r(\f,\bbchi)=r(\f,\bbchi_\ac)$). The second property follows from \cite[Corollary 8.9.7.4]{nekovar06} applied with $T=\TT_{\f,\bbchi}^\dagger$ and $\Gamma=\Gamma_\ac$. The third property is immediate thanks to our running assumptions, which guarantee that the residual representation $\overline{\TT}_{\f,\bbchi}^\dagger$ ($=\overline{\TT}_{\f,\bbchi_\ac}^\dagger$) is absolutely irreducible. The final property is Theorem~\ref{cor_torsion_H2_generic} combined with \eqref{eqn_twist_char_extended_Selmer}. This completes the proof.
\end{proof}

\begin{remark}
In the situation of Theorem~\ref{thm_anticyc_main_inert_ff_indefinite}, one expects that $${\rm ord}_{(\gamma_+-1)}\,L_p^{\dagger}(\f_{/K}\otimes\bbchi)\stackrel{?}{=}1\stackrel{?}{=}r(\f,\bbchi_\ac)\,.$$ 
In the setting where the prime $p$ splits in $K/\QQ$, the second expected equality follows from \cite[Theorem 3.15]{BLForum}. When the prime $p$ splits in $K/\QQ$, one could also utilize \cite[Theorem 3.30]{BLForum} to show that the first expected equality holds if and only if ${\rm Reg}_{\f,\bbchi_\ac}\neq 0$.
\end{remark}

\begin{corollary}
\label{cor_thm_anticyc_main_inert_ff_indefinite_improvement_1}
In the setting of Theorem~\ref{thm_anticyc_main_inert_ff_indefinite}, suppose in addition that $r(\f,\bbchi_\ac)\geq 1$. Then,
\begin{equation}
    \label{eqn_anticyc_main_inert_ff_indefinite_bis}\index{$p$-adic $L$-functions! $\partial_\cyc\, L_p^{\dagger}(\f_{/K}\otimes\bbchi)$} 
    \partial_\cyc L_p^{\dagger}(\f_{/K}\otimes\bbchi)\,\in \,{\rm Reg}_{\f,\bbchi_\ac}\cdot\Char_{\LL_\f(\Gamma_\ac)}\left(\widetilde{H}^2_{\rm f}(G_{K,\Sigma},\TT_{\f,\bbchi_\ac}^{\dagger};\Delta_{\Gr})_{\rm tor}^\iota\right)\otimes_{\ZZ_p}\QQ_p\,.
\end{equation}
\end{corollary}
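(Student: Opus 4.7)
The plan is to deduce Corollary~\ref{cor_thm_anticyc_main_inert_ff_indefinite_improvement_1} from Theorem~\ref{thm_anticyc_main_inert_ff_indefinite}(ii) through a short case analysis on $r := r(\f,\bbchi_\ac)$, which by hypothesis satisfies $r \geq 1$.

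In the first case, when $r = 1$, the asserted containment \eqref{eqn_anticyc_main_inert_ff_indefinite_bis} is precisely Theorem~\ref{thm_anticyc_main_inert_ff_indefinite}(ii) applied to this value of $r$, since $\partial_\cyc^1 = \partial_\cyc$. No additional argument is needed here.

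In the second case, when $r \geq 2$, I will show that the left-hand side of \eqref{eqn_anticyc_main_inert_ff_indefinite_bis} already vanishes, so that the containment holds vacuously. The key observation is that the mere well-definedness of $\partial_\cyc^{r}\, L_p^{\dagger}(\f_{/K}\otimes\bbchi)$ as an element of $\LL_\f(\Gamma_\ac)$ in the statement of Theorem~\ref{thm_anticyc_main_inert_ff_indefinite}(ii) forces, via Definition~\ref{defn_derivatives}, the inequality $\ord_{\gamma_+-1}\, L_p^{\dagger}(\f_{/K}\otimes\bbchi) \geq r$. (One may alternatively extract this inequality from Theorem~\ref{thm_3var_main_inert_ff}(ii), \eqref{eqn_twist_char_extended_Selmer} and Proposition~\ref{prop_semisimplicity_Selmer}, which together give $\ord_{\gamma_+-1}\,L_p^{\dagger} \geq r(\f,\bbchi) \geq r(\f,\bbchi_\ac) = r$.) Writing $L_p^{\dagger}(\f_{/K}\otimes\bbchi) = (\gamma_+-1)^r\,\widetilde{L}$ for some $\widetilde{L}\in \LL_\f(\Gamma_K)$, and recalling that $\pi_\ac(\gamma_+ - 1) = 0$ (since $\gamma_+$ generates the cyclotomic factor killed by $\pi_\ac$), we compute
$$\partial_\cyc\, L_p^{\dagger}(\f_{/K}\otimes\bbchi) = \pi_\ac\bigl((\gamma_+-1)^{-1} L_p^{\dagger}(\f_{/K}\otimes\bbchi)\bigr) = \pi_\ac\bigl((\gamma_+-1)^{r-1}\widetilde{L}\bigr) = 0\,,$$
since $r-1 \geq 1$. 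Hence $\partial_\cyc\,L_p^{\dagger}$ lies trivially in any ideal of $\LL_\f(\Gamma_\ac)$, and \eqref{eqn_anticyc_main_inert_ff_indefinite_bis} follows.

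No substantive obstacle arises in this argument: the corollary is a purely formal consequence of Theorem~\ref{thm_anticyc_main_inert_ff_indefinite}(ii) together with the definition of the cyclotomic derivative. The only thing one should verify carefully is the well-definedness claim used in the $r \geq 2$ case, which is a direct unpacking of the notation in Definition~\ref{defn_derivatives}.
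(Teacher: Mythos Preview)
Your proposal is correct and follows essentially the same approach as the paper's proof: a case split on whether $r(\f,\bbchi_\ac)=1$ (where the claim is literally Theorem~\ref{thm_anticyc_main_inert_ff_indefinite}(ii)) or $r(\f,\bbchi_\ac)>1$ (where one argues $\partial_\cyc L_p^{\dagger}(\f_{/K}\otimes\bbchi)=0$). For the latter case the paper cites directly Proposition~\ref{prop_semisimplicity_Selmer}, Theorem~\ref{thm_3var_main_inert_ff}(ii) and \eqref{eqn_twist_char_extended_Selmer}, which is your parenthetical alternative; your primary justification via ``well-definedness'' is ultimately the same, since that well-definedness in Theorem~\ref{thm_anticyc_main_inert_ff_indefinite}(ii) was established by exactly those ingredients.
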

\begin{proof}
Combining Proposition~\ref{prop_semisimplicity_Selmer} and Theorem~\ref{thm_3var_main_inert_ff}(ii) together with \eqref{eqn_twist_char_extended_Selmer} and the definition of the twisted $p$-adic $L$-function $L_p^\dagger(\f\otimes\bbchi)$, it follows that $\partial_\cyc L_p^{\dagger}(\f_{/K}\otimes\bbchi)=0$ if $r(\f,\bbchi_\ac)\gneq 1$; and hence, there is nothing to prove when $r(\f,\bbchi_\ac)\gneq 1$. We have therefore reduced to proving \eqref{eqn_anticyc_main_inert_ff_indefinite_bis} when $r(\f,\bbchi_\ac)= 1$, and this is precisely  \eqref{eqn_anticyc_main_inert_ff_indefinite} in this case.
\end{proof}
The containment \eqref{eqn_anticyc_main_inert_ff_indefinite_bis} is in line with Perrin-Riou's anticyclotomic main conjectures. Based on the recent work of Andreatta and Iovita~\cite{AndreattaIovitaBDP}, it seems that  the necessary tools to prove that $r(\f,\bbchi_\ac)\geq 1$ might be available. We discuss this point in the following remark. 

\begin{remark}
\label{remark_andreatta_iovita}
Suppose that we are in the setting of Theorem~\ref{thm_anticyc_main_inert_ff_indefinite}. If $r(\f,\bbchi_\ac)=0$, it follows from control theorems for Selmer complexes that there exists $n_\f \in \ZZ^+$ such that $\widetilde{H}^1_{\rm f}(G_{K,\Sigma},\TT_{\f,\psi}(-\kappa/2);\Delta_{\Gr})=0$ for all crystalline anticyclotomic Hecke characters $\psi$ of infinity type $(-a,a)$ with $a>n_\f$ and such that $\widehat{\chi\psi}$ factors through $\Gamma_K$. This implies (again by the control theorems for Selmer complexes) that for each $\psi$ as above, there exists another constant $M_{\f,\psi}$ such that $\widetilde{H}^1_{\rm f}(G_{K,\Sigma},T_{\f(\kappa),\psi}(-\kappa/2);\Delta_{\Gr})=0$ for all (classical) crystalline weights $\kappa>M_{\f,\psi}$.

In light of the recent work of Andreatta and Iovita \cite{AndreattaIovitaBDP} towards Bertolini--Darmon--Prasanna type formulae in the inert case, one expects\footnote{One could even prove this once the $p$-adic $L$-functions of \cite{AndreattaIovitaBDP} are interpolated as the eigenform $f$ varies in a Hida family.} that 
$${\rm rank}\,\widetilde{H}^1_{\rm f}(G_{K,\Sigma},T_{\f(\kappa),\psi}(-\kappa/2);\Delta_{\Gr})\geq 1\,,$$ 
for $\kappa\gg0$, which would contradict the discussion above and prove that $r(\f,\bbchi_\ac)\geq 1$. In other words, granted a slight extension of the work of Andreatta--Iovita (which we expect to be established in the near future), the containment \eqref{eqn_anticyc_main_inert_ff_indefinite_bis} in Perrin-Riou's anticyclotomic main conjecture is valid.
\end{remark}

\begin{remark}
\label{remark_Hsieh_improve_indefinite_ff}
Suppose that we are still in the setting of Theorem~\ref{thm_anticyc_main_inert_ff_indefinite}. We will explain how an improvement of Hsieh's generic non-vanishing result\footnote{This result asserts in our set up that given a crystalline anticyclotomic Hecke character $\phi$ of infinity type $(m-\kappa/2,\kappa/2-m)$ and for which $\widehat{\phi}$ factors through $\Gamma_K$, we have $L(\f(\kappa)_{/K},\eta\phi,\kappa/2)\neq 0$ for all but finitely many characters $\eta$ of $\Gamma_\ac$, under the additional requirement that all local root numbers of $\f(\kappa)_{/K}\otimes\phi$ are $+1$ at every place of $K$.} \cite[Theorem C]{hsiehnonvanishing} can be used to prove the opposite containment (to what we have discussed in Remark~\ref{remark_andreatta_iovita})  $r(\f,\bbchi_\ac)\leq 1$.  

Suppose that the following non-vanishing statement $($in the flavor of those proved in \cite{Greenberg1985}$)$ holds true: There exist a crystalline specialization $\f(\kappa)$ and infinitely many crystalline anticyclotomic Hecke characters $\psi$ of infinity type $(m-\kappa/2,\kappa/2-m)$ such that $L(\f(\kappa)_{/K},\psi,\kappa/2)\neq 0$ (necessarily with $m<0$). One expects that this is valid for any $\kappa$. We can then  prove using \cite[Theorem  8.1.4]{LZ1} that the Bloch--Kato Selmer group $H^1_{\FFF_{\rm BK}}(K,T_{\f(\kappa),\psi}(-\kappa/2))$ vanishes. Moreover, it is easy to see that 
$${\rm rank}\,\widetilde{H}^1_{\rm f}(G_{K,\Sigma},T_{\f(\kappa),\psi}(-\kappa/2);\Delta_{\Gr})\,\,-\,\,{\rm rank} H^1_{\FFF_{\rm BK}}(K,T_{\f(\kappa),\psi}(-\kappa/2)) \leq 1\,.$$
This in turn shows that ${\rm rank}\,\widetilde{H}^2_{\rm f}(G_{K,\Sigma},T_{\f(\kappa),\psi}(-\kappa/2);\Delta_{\Gr})\leq 1$ for all $\kappa$ and $\psi$ as above. By the control theorems for Selmer complexes, we conclude that $r(\f,\bbchi_\ac)\leq 1$.
\end{remark}

%%%%%%%%%%%
%%%%%%%%%%%

%-----------------------------------------------------------------------
% End of chap1.tex
%-----------------------------------------------------------------------

\appendix

\chapter{A divisibility criterion in regular rings}
\label{Appendix_Regular_Rings_Divisibility}
Suppose $R$ is a complete local regular ring of dimension $n+2$ ($n\geq 1$) and with mixed characteristic $(0,p)$. Let $\mathfrak{m}$ denote its maximal ideal. Assume that $x_0,x_1, \cdots,x_n$ is a regular sequence in $R$ such that $p\not\in (x_0,x_1, \cdots,x_n)$. Let $(\varpi)\subset R$ denote the unique height-one prime of $R$ that contains $p$. The injective ring homomorphism
$$\iota_0:\ZZ_p[[x_0]] \lra R$$
endows $R$ with the structure of a flat $R_0:=\ZZ_p[[x_0]]$-module. 

Suppose that $F,G \in R$ are two non-zero elements with the following properties:
\begin{itemize}
    \item[a)] $\varpi\nmid FG$.
    \item[b)] $x_0\nmid F$.
    \item[c)] There exists a collection of irreducible elements $\{f_i\}_{i=1}^\infty\subset \ZZ_p[[x_0]]$ which are pairwise coprime, with the property that the image of $G$ under the natural map
    $$R \lra R/(f_i,F)=R_0/R_0f_i\bigotimes_{\iota_0: R_0\hookrightarrow R} R/(F)$$
    is zero. Here and also below, we write $x$ in place $\iota_0(x)$ for $x\in R_0$.
\end{itemize}
\begin{proposition}
\label{prop_appendix_regular_divisibility}
 If $F$ and $G$ are as above, then $F$ divides $G$ over $R$.
\end{proposition}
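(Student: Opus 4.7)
The plan is to show that $\bar{G} := G + (F)$ vanishes in $\bar{R} := R/(F)$, which is equivalent to the claim $F \mid G$. Since $\bar{R}$ is Noetherian local, Krull's intersection theorem reduces this to proving $\bar{G} \in \mathfrak{m}^k\bar{R}$ for every $k\geq 1$. The engine that produces this $\mathfrak{m}$-adic smallness from the countable family of congruences in hypothesis (c) is Chevalley's lemma (characterizing the $\mathfrak{m}$-adic topology of complete local Noetherian rings), applied in the two-dimensional complete regular local UFD $R_0 = \ZZ_p[[x_0]]$.

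First I would work inside $R_0$, with maximal ideal $\mathfrak{m}_0 = (p, x_0)$. Because the $f_i$ are pairwise coprime irreducibles in the UFD $R_0$, one has $\bigcap_{i=1}^N (f_i)R_0 = (f_1 \cdots f_N)R_0 =: J_N$, and $\bigcap_N J_N = 0$ since a nonzero element of a UFD has only finitely many prime divisors up to associates. Chevalley's lemma applied to the decreasing chain $\{J_N\}$ in the complete local ring $R_0$ then furnishes, for every $k\geq 1$, an integer $N = N(k)$ with $J_N \subseteq \mathfrak{m}_0^k$; extending scalars yields $J_N R \subseteq \mathfrak{m}^k R$.

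The main technical hurdle I foresee is descending the intersection identity $\bigcap_{i=1}^N(f_i)R_0 = J_N$ all the way down to $\bar{R}$ and the ideals $(f_i,F)R$, namely establishing
\[
\bigcap_{i=1}^N (f_i, F)R \;=\; J_N R + (F)R.
\]
The cleanest route is through flatness. The flatness of $R$ over $R_0$ (given in the setup) immediately yields the cleaner statement $\bigcap_{i=1}^N (f_i)R = J_N R$ in $R$ by flat base change applied to the injection $R_0/J_N \hookrightarrow \bigoplus_i R_0/(f_i)$ coming from pairwise coprimality in the UFD $R_0$. To descend one step further to $\bar{R} = R/(F)$, I would upgrade this to flatness of $\bar{R}$ over $R_0$; by the local criterion of flatness, this amounts to showing that the regular sequence $(p,x_0)$ of $R_0$ remains regular on $\bar{R}$. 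Hypothesis (a), $\varpi\nmid F$, combined with the UFD structure of $R$, gives the $p$-regularity on $\bar{R}$ directly, since no prime divisor of $F$ is associate to $\varpi$. Hypothesis (b), $x_0\nmid F$, together with the fact that $\bar{x}_0$ is part of a regular system of parameters for the complete regular local (hence UFD) ring $R/\varpi R$, should deliver the $x_0$-regularity on $\bar{R}/p\bar{R} = R/(\varpi, F)$.

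Granted the intersection identity, the three ingredients combine at once: hypothesis (c) places $G \in \bigcap_{i=1}^{N(k)} (f_i,F)R = J_{N(k)}R + (F)R \subseteq \mathfrak{m}^k R + (F)R$ for every $k\geq 1$. Reducing modulo $F$ gives $\bar{G} \in \bigcap_k \mathfrak{m}^k\bar{R}$, which vanishes by Krull's intersection theorem for the Noetherian local ring $\bar{R}$. Hence $G \in (F)R$, proving $F\mid G$.
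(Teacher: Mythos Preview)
Your proof is correct and follows essentially the same route as the paper: both deduce flatness of $R/(F)$ over $R_0$ from hypotheses (a) and (b), combine this with the CRT injection $R_0/(g_N)\hookrightarrow\prod_{i=1}^N R_0/(f_i)$ (tensored with the flat module $R/(F)$) to obtain $G\in(g_N,F)$ for every $N$, and then invoke Chevalley's lemma in $R_0$ to push $g_N$ into arbitrarily high powers of $\mathfrak m$. The only difference is in the endgame, and it is cosmetic: the paper shows $\bigcap_N(g_N,F)=(F)$ by an explicit Cauchy-sequence/limit construction in the complete ring $R$, whereas you pass to $\bar R=R/(F)$ and conclude via Krull's intersection theorem.
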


\begin{proof}
For each positive integer $n$, let us define $g_n:= f_1\cdots f_n$, so that we have $Rg_{n+1}\subsetneq Rg_n=: \mathfrak{b}_n$.

Observe that we have an injection
$$R_0/R_0g_{n} \hookrightarrow \prod_{i=1}^n R_0/R_0f_i$$
for each positive integer $n$. Since we assumed $x_0\nmid F$ and $\varpi\nmid F$, and since $R$ is a UFD, it follows that $R/(F)$ is flat as an $R_0$-module. Thence, we have an injection
\begin{align*}
R/(g_{n}, F)=R_0/R_0g_{n} \bigotimes_{\iota_0: R_0\hookrightarrow R} R/(F) \hookrightarrow \prod_{i=1}^n&\left( R_0/R_0f_i \bigotimes_{\iota_0: R_0\hookrightarrow R} R/(F)\right)\\
&\qquad\qquad\qquad=\prod_{i=1}^n R/(f_i,F)\,.
\end{align*}
This injection together with property (c) above shows that $G\in (g_{n}, F)$ for every positive integer $n$.

We will next prove that $\bigcap_n\, (g_{n}, F)=(F)$ and this will conclude the proof of our proposition. 

To that end, suppose $a\in \bigcap_n\, (g_{n}, F)$, so that for every positive integer $n$, there exists $r_n,s_n\in R$ with $a=r_ng_n+Fs_n$. We first prove that the sequence $s_n$ converges $\mathfrak{m}$-adically. Since the ring $R$ is complete with respect to the $\mathfrak{m}$-adic topology, it suffices to prove that $\{s_n\}$ is a Cauchy sequence. Suppose we are given any positive integer $M$. According to \cite[Lemma 7]{Chevalley1943}, there exists $N\in \ZZ^+$ with $g_n \in \mathfrak{m}^M$ for all $n\geq N$. This in turn means that 
$$(s_n-s_m)\cdot F=r_mg_m-r_ng_n \in  \mathfrak{m}^M$$ 
for every $n,m>N$. This concludes the proof that the sequence $\{s_n\}$ is Cauchy, and therefore convergent. Let us put $s:=\displaystyle\lim_{n\rightarrow\infty} s_n \in R$. The lemma of Chevalley also shows that $\displaystyle\lim_{n\rightarrow\infty} g_n=0$, and in turn also that 
$$a=\lim_{n\to \infty} a=\lim_{n\to \infty} r_ng_n+Fs_n=\lim_{n\to \infty} r_ng_n+\lim_{n\to \infty} Fs_n=Fs$$
concluding the proof that $a\in (F)$. This shows that $\bigcap_n\, (g_{n}, F) \subset (F)$, and in turn also $\bigcap_n\, (g_{n}, F)=(F)$, as required.
\end{proof}

%%%%%%%%%%%
%%%%%%%%%%%

%-----------------------------------------------------------------------
% End of chap1.tex
%-----------------------------------------------------------------------

\chapter[$p$-adic $L$-functions and universal deformations]{$p$-adic Rankin--Selberg $L$-functions and universal deformations}
\label{appendix_sec_padicRankinSelbergHida}
In this appendix, we will go over the work of Loeffler's recent work~\cite[\S4]{Loeffler2020universalpadic}, where he constructs $p$-adic Rankin--Selberg $L$-functions and extend it  slightly to cover the case of minimally ramified universal deformation representation.

We make crucial use of Loeffler's construction to study $h_{/K}\otimes \chi$ where $h_{/K}$ is the base change of a $p$-ordinary $p$-stabilized cuspidal eigenform $h$ to the imaginary quadratic field $K$ where $p$ is \emph{inert} and $\chi$ is a ray class character of $K$. 

To be more precise, let us put $K(p^\infty)=\cup_n K(p^n)$, where $K(p^n)$ is the ray class extension modulo $p^n$. Let us also set $H_{p^\infty}:=\Gal(K(p^\infty)/K)$. In this scenario, one is interested in the theta-series $\theta(\chi\psi)$, as $\psi$ ranges over Hecke characters whose $p$-adic avatars $\widehat{\psi}$ give rise to (via global class field theory) a character of $H_{p^\infty}$. These are CM forms, which are non-ordinary at $p$ (since $p$ is inert in $K/\QQ$). It is easy to see that the eigencurve cannot contain families of CM forms {of positive slope  (neither in its cuspidal components nor in the Eisenstein components)}; c.f.~\cite[Corollary 3.6]{CITJHC70}. This is due to the fact that slope of any non-ordinary CM form of weight $k$ is at least $(k-1)/2$, so the refinements of $\theta(\chi\psi)$ cannot be contained in a finite slope family.  
All this tells us that the $p$-adic $L$-function which can have any bearing to our situation cannot descend from a construction over the eigencurve. In other words, one should work instead over the universal deformation space.

Indeed, in contrast to \cite[Corollary 3.6]{CITJHC70}, one can interpolate the Galois representations $\left\{{\rm Ind}_{K/\QQ}\chi\psi\right\}_{\psi}$ to a $p$-adic family of Galois representations ${\rm Ind}_{K/\QQ}\chi\Psi$, where $\Psi$ is the tautological (universal) $p$-adic Hecke character, which is given as the compositum
$$ G_K \lra H_{p^\infty}\lra \LL_{\cO}(H_{p^\infty})^{\times}\,.$$
Loeffler's work~\cite{Loeffler2020universalpadic} gives rise to a $p$-adic $L$-function
\begin{equation}
\label{eqn_Appendix_LoefflerpadicLsingleform}
    L_p^{\rm RS}(f_{/K}\otimes\Psi) \in \frac{1}{H_f}\LL_{\cO}(H_{p^\infty})
\end{equation}
(where $H_f$ is the congruence number of $f$, in the sense of Hida). To be more precise, in order to obtain the $p$-adic $L$-function $L_p(f_{/K}\otimes\Psi)$ above, one in fact first constructs a $p$-adic $L$-function
\begin{equation}
\label{eqn_Appendix_LoefflerpadicLfamily}
L_p^{\rm RS}(\f_{/K}\otimes\Psi) \in \frac{1}{H_\f}\LL_{\f}\,\widehat{\otimes}\,\LL_{\cO}(H_{p^\infty}),
\end{equation}
(where $\f$ is the unique Hida family through $f$ and $H_\f$ is Hida's congruence ideal) and specializes $\f$ to $f$.

In this appendix, we review (and  slightly extend, along the lines of the suggestion in \cite[\S5.1]{Loeffler2020universalpadic}) Loeffler's construction in a setting just a tad more general than what is required to obtain the $p$-adic $L$-function in \eqref{eqn_Appendix_LoefflerpadicLfamily} (we still work under a simplifying ``minimality'' condition). All results herein are due to Loeffler (but mistakes are ours) and we claim no originality.

\section{The set up}
\label{subsec_appendix_setup}
We let $\mathbb{F}$ denote the residue field of $\cO$ (where $\cO$ is the ring of integers of a finite extension $L$ of $\QQ_p$, as in the main body of this article) and let $\overline{\rho}: G_\QQ \to \GL_2(\mathbb{F})$ be a two-dimensional, odd, irreducible Galois representation. It is known (thanks to Khare--Wintenberger) that $\overline{\rho}$ is modular. We assume in addition the following conditions. 
\begin{itemize}
 \item[\mylabel{item_deformrho1}{$(\overline{\rho}1)$}]    The restriction $\overline{\rho}{\vert_{G_{\QQ(\mu_p)}}}$ of $\overline{\rho}$ to $G_{\QQ(\mu_p)}$ is irreducible.
   \item[\mylabel{item_deformrho2}{$(\overline{\rho}2)$}] Either $\overline{\rho}{\vert_{G_{\QQ_p}}}$ is irreducible, or else if we have 
    $$\overline{\rho}^{s.s.}=\chi_{1,p}\oplus \chi_{2,p}$$
    for the semisimplification of $\overline{\rho}$, then $\chi_{1,p}/\chi_{2,p}$ is different from $\mathds{1}$ or $\omega^{\pm 1}$, where $\omega$ is the Teichm\"uller character giving the action of $G_{\QQ_p}$ on $\mu_p$. 
\end{itemize}
We let $N$ denote the Artin conductor of $\overline{\rho}$; this is a positive integer coprime to $p$.
\begin{remark}
In applications, we shall set $\overline{\rho}=\overline{\rho}_g$, where $g$ is the non-ordinary newform in the main body of the present article and the integer $N$ will be the level $N_g$ of $g$. In other words, whenever we refer to Appendix~\ref{appendix_sec_padicRankinSelbergHida}, we will assume that $g$ is minimally ramified at all primes dividing $N_g$, in the sense of \cite[\S3]{Diamond1997FLT}. It should be possible to relax this assumption, but we will content to work under this assumption since it covers our case of interest when $g$ arises as the $\theta$-series of a Hecke character of an imaginary quadratic field where $p$ is inert.
\end{remark}

\begin{example}
\label{app_example_residual_conditions}
Let $K$ be an imaginary quadratic field as above where $p$ remains \emph{inert} and let $\chi$ be a ray class character of $K$ as in 
\S\ref{subsubsec_3var_inert_ord}\footnote[3]{Except that in this appendix, we denote the Galois character that one associates to $\chi$ also by $\chi$, rather than $\widehat{\chi}$.}, of conductor $p\mathfrak{f}_\chi$. We shall always assume that both the order of $\chi$ and {$\mathfrak{f}_\chi$ are coprime to $p$}. %Self-remark: The assumption on the order is solely related to minimality: cond(\chi)=cond(\chi^c). If that can be relaxed, then this assumption can be relaxed as well.

For applications towards Iwasawa main conjectures for $f_{/K}\otimes \chi$ and $\f_{/K}\otimes \chi$ $($where $f$ and $\f$ are as in the main body of this article$)$, we shall take $\overline{\rho}:={\rm Ind}_{K/\QQ}\overline{\chi}$. As in \S\ref{subsubsec_3var_inert_ord}, we shall assume that 
\begin{itemize}
   \item[\mylabel{item_nonEismodp}{\textbf{\textup{(non-Eis)}}}] \hspace{50pt}${\chi}_{\vert_{G_{\QQ_{p^2}}}}\neq {\chi}^c_{\vert_{G_{\QQ_{p^2}}}}\,,$\hspace{10pt} where \hspace{3pt} $c$ is the generator of $\Gal(K/\QQ)$ \hspace{3pt} and \hspace{3pt} ${\chi}^c(\sigma)={\chi}(c\sigma c^{-1})$\,.
\end{itemize}
Let us also put  $N=|D_{K/\QQ}|\,N_{K/\QQ}\mathfrak{f}_\chi$, where $D_{K/\QQ}$ is the discriminant of $K/\QQ$.
\item[i)] We explain that $\overline{\rho}={\rm Ind}_{K/\QQ}\overline{\chi}$ verifies the hypothesis \ref{item_deformrho1}. First of all, since $p$ doesn't divide the order of $\chi$, it follows from the condition \ref{item_nonEismodp} that 
\begin{equation}
    \label{eqn_nonEis_stronger}
    \overline{\chi}\neq \overline{\chi}^c\,.
\end{equation}
Let us set $H_1:=G_K$, $H_2:=G_{\QQ(\mu_p)}$ and $G=G_\QQ$. Observe that both $H_1$ and $H_2$ are normal subgroups of $G$. Since $K$ and $\QQ(\mu_p)$ are linearly disjoint, it follows that $H_1H_2=G$ and $H_1\cap H_2 = G_{K(\mu_p)}$. Moreover, $c$ lifts canonically to a generator of $\Gal(\QQ(\mu_p)/K(\mu_p))=H_2/H_1\cap H_2$, which we still denote by $c$. This discussion is summarized in the following diagram.
$$\xymatrix{&\overline{\QQ}\ar@{-}[d]^{H_1\cap H_2}&\\
&K(\mu_p)\ar@{-}[rd]^{H_2/H_1\cap H_2=\langle c\rangle} \ar@{-}[ld]_{H_1/H_1\cap H_2}&\\
K\ar@{-}[rd]_{G/H_1=\langle c \rangle}&&\QQ(\mu_p)\ar@{-}[ld]^{G/H_2}\\
&\QQ&}$$
Then by Mackey theory, we have an isomorphism
$${\overline \rho}\vert_{H_2}={\rm Ind}_{H_1\cap H_2}^{H_2}\overline{\chi}$$
which is irreducible thanks to \eqref{eqn_nonEis_stronger} and because $H_2/H_1\cap H_2$ is generated by $c$. This concludes the proof that \ref{item_deformrho1} holds true.
\item[ii)] The condition \ref{item_deformrho2} holds true thanks to our running hypothesis \ref{item_nonEismodp} together with our assumption that the order of $\chi$ is prime to $p$.
\end{example}

\section{The minimally ramified universal deformation representation}
\label{subsec_app_minimal_Universal}
%The contents here will be relevant only for the second factor.
Let us fix a representation  $\overline{\rho}$ as in \S\ref{subsec_appendix_setup}. We let $S$ denote the set of places of $\QQ$ consisting of the archimedean place and all primes dividing $Np$ and set $G_{\QQ,S}:=\Gal(\QQ_S/\QQ)$, the Galois group of the maximal extension $\QQ_S$ of $\QQ$ unramifed outside $S$. In what follows, we shall consider $\overline{\rho}$ as a representation of $G_{\QQ,S}$.

Our summary in this subsection is essentially identical to the discussion in \cite[\S3]{Loeffler2020universalpadic} (where $N=1$), since we will only consider minimally ramified deformations.
\begin{defn}
\item[i)] We let $\cR(\overline{\rho})$ denote the minimally ramified universal deformation ring of $\olinerho$, parametrizing deformations of $\olinerho$ to $G_{\QQ,S}$-representations to complete local Noetherian $\cO$-algebras with residue field $\FF$ which are minimally ramified at primes dividing $N$ in the sense of \cite[\S3]{Diamond1997FLT}. We put 
$$\rho^{\rm univ}:G_{\QQ,S}\lra \GL(V(\olinerho)^{\rm univ})\cong \GL_2(\cR(\overline{\rho}))$$ 
to denote the minimally ramified universal deformation representation. We set $\mathfrak{X}(\olinerho):={\rm Spf}\,\cR(\overline{\rho})$ and call it the minimally ramified universal deformation space.
\item[ii)] If $g$ is a classical newform of level $Np^s$ for some non-negative integer $s$ such that $\overline{\rho}_g=\olinerho$, then $\rho_g$ determines a $\overline{\QQ}_p$-valued point of $\mathfrak{X}(\olinerho)$. Such points of the minimally ramified universal deformation space will be called classical.
\item[iii)] We say that a $\overline{\QQ}_p$-valued point of $\mathfrak{X}(\olinerho)$ is nearly-classical of weight $k$ if the said point corresponds to the Galois representation $\rho$ for which we have $\rho\cong \rho_g\otimes\chi_\cyc^t$ for some $($uniquely determined$)$ classical newform $g$ of weight $k$ as in (ii) and $($a uniquely determined$)$  integer $t$. 
\end{defn}

\begin{proposition}
\label{appendix_prop_3_5}
For any integer $k\geq 2$, the nearly-classical points of weight $k$ are Zariski-dense in the minimally ramified universal deformation space.
\end{proposition}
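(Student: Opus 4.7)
The plan is to adapt the strategy of \cite[Proposition 3.5]{Loeffler2020universalpadic} (where $N=1$ is treated) to the present setting of minimally ramified deformations at primes dividing $N$. The argument rests on identifying $\cR(\overline{\rho})$ with a Hecke algebra through an $R=T$ theorem, and then exploiting the density of classical modular points inside such Hecke algebras, combined with cyclotomic twisting to sweep out any fixed weight.

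More precisely, under the running hypotheses \ref{item_deformrho1} and \ref{item_deformrho2}, the residual representation $\overline{\rho}$ is modular by Khare--Wintenberger, and the Taylor--Wiles patching machinery (with the minimally-ramified local deformation conditions of Diamond--Flach--Guo at primes $\ell \mid N$, and the usual Fontaine--Laffaille / Kisin conditions at $p$) identifies $\cR(\overline{\rho})$ with an appropriate big Hecke algebra $\mathbb{T}(\overline{\rho})$ acting on the localization at $\overline{\rho}$ of completed cohomology (or equivalently, with a universal deformation ring of minimally ramified type that is known to be a finite flat module over a three-dimensional regular local $\cO$-algebra parametrising twists and weights). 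Granted this identification, classical modular points---meaning points arising from classical cuspidal eigenforms of tame level $N$ and $p$-power level---form a Zariski-dense subset of $\mathfrak{X}(\overline{\rho})$. I would first carry out this identification, invoking the minimally-ramified $R=T$ theorem as a black box, and then verify density of classical points of \emph{varying} weight, which is a standard consequence of the control theorem for the weight-space map out of $\mathfrak{X}(\overline{\rho})$.

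The passage from ``classical of varying weight, Zariski-dense'' to ``nearly-classical of fixed weight $k$, Zariski-dense'' is the crucial last step, and is the only place where the appearance of a single fixed $k$ matters. The key observation is that twisting by $\chi_\cyc^t$ defines an automorphism of $\mathfrak{X}(\overline{\rho})$ (sending $\rho$ to $\rho \otimes \chi_\cyc^t$) for every $t \in \ZZ_p$, and that nearly-classical points of weight $k$ are by definition the images under these twists of classical points of weight $k - 2t$ (as $t$ ranges over $\ZZ$). Given any classical point $x$ of weight $k' \ne k$, the twist by $\chi_\cyc^{(k'-k)/2}$, interpreted in the appropriate way on the deformation space, produces a nearly-classical point of weight $k$; since classical points of all weights $\geq 2$ are dense, their union of appropriate twists is again dense. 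Formally, one argues that if $Z \subset \mathfrak{X}(\overline{\rho})$ is the Zariski closure of the set of nearly-classical points of weight $k$, then $Z$ is stable under the $\Zp$-action of twisting, and contains every classical point (by the observation just made); hence $Z = \mathfrak{X}(\overline{\rho})$.

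The main obstacle will be ensuring that the minimally-ramified $R=T$ theorem is applicable under our precise hypotheses, and more specifically, that the corresponding deformation problem is representable with a ring of the expected dimension. In our main application (Example~\ref{app_example_residual_conditions}), $\overline{\rho}$ is induced from a character of $G_K$ for $K$ imaginary quadratic inert at $p$, and \ref{item_nonEismodp} ensures the non-Eisenstein condition needed for patching; the primes $\ell \mid N = |D_{K/\QQ}| N_{K/\QQ}\mathfrak{f}_\chi$ are either split or inert in $K/\QQ$ with the local inertial type of $\overline{\rho}$ at $\ell$ controlled by $\overline{\chi}$, so minimal ramification imposes a smooth (in fact, formally smooth) local condition at each such $\ell$. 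Verification of representability and density should then follow without difficulty, completing the proof.
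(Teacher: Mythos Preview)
Your overall plan---follow Loeffler's Proposition~3.5, feed in the structure of $\cR(\overline{\rho})$, then use cyclotomic twisting to pass to a fixed weight---matches the paper's approach. The paper's proof is a one-line reference to Loeffler, citing B\"ockle~\cite{Bockle2001} and Emerton~\cite[Theorem 1.2.3]{Emerton2011LocalGlobal} as the inputs rather than a full Taylor--Wiles $R=T$ theorem; your heavier $R=T$ route would also suffice, so this is a difference of citation rather than of strategy.

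There is, however, a genuine error in your twisting step. You write that ``nearly-classical points of weight $k$ are by definition the images under these twists of classical points of weight $k-2t$,'' and then that twisting a classical point of weight $k'\neq k$ by $\chi_\cyc^{(k'-k)/2}$ produces a nearly-classical point of weight $k$. This is backwards relative to the paper's definition: a point is nearly-classical \emph{of weight $k$} precisely when it is of the form $\rho_g\otimes\chi_\cyc^t$ with $g$ classical of weight $k$ and $t\in\ZZ$. Twisting $\rho_{g'}$ for $g'$ of weight $k'$ yields a nearly-classical point of weight $k'$, not of weight $k$; there is in general no classical form $g$ of weight $k$ with $\rho_{g'}\otimes\chi_\cyc^s\cong\rho_g\otimes\chi_\cyc^t$, since cyclotomic twists of modular Galois representations are not themselves modular (their Hodge--Tate weights are wrong). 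Consequently your claim that the Zariski closure $Z$ of the nearly-classical weight-$k$ points ``contains every classical point'' is unjustified, and the deduction $Z=\mathfrak{X}(\overline{\rho})$ collapses.

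The argument actually runs in the other direction. One uses B\"ockle/Emerton to see that the classical points \emph{of the fixed weight $k$} (arising from $S_k(\Gamma_1(Np^s))$ as $s$ varies) are themselves Zariski-dense in a codimension-one locus of $\mathfrak{X}(\overline{\rho})$. The nearly-classical weight-$k$ points are then exactly the $\ZZ$-orbit of this locus under the twisting action $\rho\mapsto\rho\otimes\chi_\cyc$; since $\ZZ$ is dense in $\ZZ_p$ and the $\ZZ_p$-action moves that locus transversally, the closure of the $\ZZ$-orbit is all of $\mathfrak{X}(\overline{\rho})$. Once you reverse the direction of your twisting argument along these lines, the rest of your outline stands.
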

\begin{proof}
The proof of Proposition 3.5 in \cite{Loeffler2020universalpadic} applies verbatim (except that one needs to work with the spaces $S_k(\Gamma_1(Np^s))$ in the present set up) to deduce the required statement as a consequence of a fundamental result due to B\"ockle~\cite{Bockle2001} and Emerton~\cite[Theorem 1.2.3]{Emerton2011LocalGlobal}.
\end{proof}

\begin{defn}
\label{defn_appendix_Def3_7}
Let us write $\det\olinerho=\epsilon_{\olinerho}^{(N)}\epsilon_{\olinerho}^{(p)}$, where $\epsilon_{\olinerho}^{(N)}$ $($resp., $\epsilon_{\olinerho}^{(p)}$$)$ is a character of conductor $N$ $($resp., of conductor $p$$)$. We let $\epsilon:= G_{\QQ,S}\to W(\FF)^\times \hookrightarrow \cO^\times$ denote the Teichm\"uller lift of $\epsilon_{\olinerho}^{(N)}$ to  the Witt vectors $W(\FF)$ of $\FF$.
%So we are really looking at minimal lifts here too.
\item[i)] We let ${\bf k}:\ZZ_p^\times \to \cR(\olinerho)^\times$ denote the character which is characterized by the property that $\det \rho^{\rm univ}=\epsilon\chi_\cyc^{{\bf k}-1}\,.$
\item[ii)] We let $\{t_n\}_{p\nmid n}\subset \cR(\olinerho)$ be the sequence determined via the formal identity
$$\sum_{p\nmid n}t_n n^{-s}=\prod_{\ell\neq p} \det\left(1-\ell^{-s}{\rm Frob}_\ell^{-1}\vert (V(\olinerho)^{\rm univ})^{I_\ell}\right)^{-1}\,.$$ %Minimality seems to be crucial at this stage: In general, it is difficult to see how $(V(\olinerho)^{\rm univ})^{I_\ell}$ interpolates $V_g^{I_\ell}$, as $g$ varies in the deformation space. Even in that case, things are not crystal clear to me. 
We set 
$$\cG_{\olinerho}^{[p]}:=\sum_{p\nmid n}t_n q^n \in \cR(\olinerho)[[q]]\,.$$
\end{defn}

%I left out Theorem 3.8 here. I think it is OK for any nearly-classical specialization, and that's pretty much the point.

\section{Hida families}
\label{subsec_appendix_Hida_lambda}
Let $f\in S_{k_f+2}(\Gamma_1(N_f),\varepsilon_f)$ be a cuspidal newform which admits a $p$-ordinary $p$-stabilization $f_\alpha \in  S_{k_f+2}(\Gamma_1(N_f)\cap\Gamma_0(p),\varepsilon_f^{(p)})$, as in the main body of our article (where $\varepsilon_f^{(p)}$ is the non-primitive Dirichlet character modulo $N_fp$ associated to $\varepsilon_f$). 

Also as before, we let $\f\in \mathcal{S}^{\rm ord}(N_f,\LL)$ denote the unique primitive $\LL$-adic ordinary eigenform of tame level $N_f$ which admits $f_\alpha$ as a specialization, where $\LL=\ZZ_p[[\ZZ_p^\times]]$ is the weight space. {Let $\LL_\f$ denote the corresponding branch of Hida's universal $p$-ordinary Hecke algebra $\mathbf{h}^{\rm ord}(N_f)$ of tame level $N_f$. Mimicking Hida's construction in~\cite[(7.5)]{hida88}, we define 
\begin{equation}
    \label{eqn_appendix_HidasLambda_0}
    \lambda_\f: \mathcal{S}^{\rm ord}(N_f,\LL)\otimes_{\Lambda} \LL_\f \xrightarrow{<\mathbf{1}_{\f},\,\,>} {\rm Frac}(\LL_\f)\,,
\end{equation}
where $\mathbf{1}_{\f}\in  \mathbf{h}^{\rm ord}(N_f)\otimes_{\LL}{\rm Frac}(\LL_\f)$ is the idempotent that Hida in op. cit. denotes by $\mathbf{1}_{{\rm Frac}(\LL_\f)}$, which corresponds to the choice of the primitive form $\f$, and where the pairing $<\,,\,>$ is the one given in \cite[(7.3)]{hida88}. Note that the image of $\lambda_\f$ a priori lands in ${\rm Frac}(\LL_\f)$, since  $\mathbf{1}_{\f}\in  \mathbf{h}^{\rm ord}(N_f)\otimes_{\LL}{\rm Frac}(\LL_\f)$.  %We may need to use here the Gorenstein property for the rings that appear here, for Hida's construction yields a pairing into $\LL_\f^*=\Hom(\LL_\f,\cO)$.

Let us choose a non-zero annihilator $H_\f\in \LL_\f$ of the congruence module $\mathcal{C}_0(\lambda_\f;\LL_\f)$ that Hida has associated to $\f$ in \cite[\S4]{hida88}. When $\LL_\f$ is Gorenstein, then the annihilator ${\rm Ann}_{\LL_\f}\left(\mathcal{C}_0(\lambda_\f;\LL_\f)\right)$ of the congruence module $\mathcal{C}_0(\lambda_\f;\LL_\f)$ is principal (c.f. Theorem 4.4 (4.6b) in op. cit.) and in this case, we choose $H_\f$ as a generator of ${\rm Ann}_{\LL_\f}\left(\mathcal{C}_0(\lambda_\f;\LL_\f)\right)$. We recall that in the main body of our article, we assume that $\LL_\f$ is regular (see also Remark~\ref{remark_regular_is_light_assumption} for possible variations on this condition). Therefore, with applications in the present work in mind, one may prefer to assume that ${\rm Ann}_{\LL_\f}\left(\mathcal{C}_0(\lambda_\f;\LL_\f)\right)$ is indeed principle (although we will not do so in Appendix~\ref{appendix_sec_padicRankinSelbergHida}).

Hida explains that, by the very definition of the congruence ideal in \cite[(4.3)]{hida88}, it follows that $H_\f\cdot \mathbf{1}_{\f} \in \mathbf{h}^{\rm ord}(N_f)\otimes_{\LL}\LL_\f$, so that the map \eqref{eqn_appendix_HidasLambda_0} factors as 
\begin{equation}
    \label{eqn_appendix_HidasLambda}
    \lambda_\f: \mathcal{S}^{\rm ord}(N_f,\LL)\otimes_{\Lambda} \LL_\f \xrightarrow{<\mathbf{1}_{\f},\,\,>} \frac{1}{H_\f}\LL_\f\,\,\subset\, {\rm Frac}(\LL_\f)\,,
\end{equation}
}

 For any integer $M$ divisible by $N_f$ and coprime to $p$, we will write $\lambda_\f^{(M)}$ for the compositum
\begin{equation}
    \label{eqn_appendix_HidasLambdaM}
    \lambda_\f^{(M)}:\,\,\mathcal{S}^{\rm ord}(M,\LL)\otimes_{\Lambda} \LL_\f\xrightarrow{{\rm tr}^M_{N_f}}\mathcal{S}^{\rm ord}(N_f,\LL)\otimes_{\Lambda} \LL_\f \xrightarrow{\lambda_\f} \frac{1}{H_\f}\LL_\f
\end{equation}
where ${\rm tr}^{M}_{N_f}$ is the trace map.

\section{$p$-adic Rankin--Selberg $L$-function}
Let $\olinerho:G_{\QQ,S}\to \GL_2(\FF)$ be as in \S\ref{subsec_appendix_setup} and recall that $N$ denotes its Artin conductor and $\rho^{\rm univ}$ its minimal universal deformation. We also fix $f$ and $\f$ as in \S\ref{subsec_appendix_Hida_lambda}, with the additional assumption that $\gcd(N_f,N_g)=1$. Following \cite[\S4]{Loeffler2020universalpadic}, we will define a $p$-adic Rankin--Selberg $L$-function 
$$L_p(\rho_\f\otimes \rho^{\rm univ}) \in \frac{1}{H_{\f}}\LL_{\f}\,\widehat{\otimes}\,\cR(\overline{\rho}).$$ 
Note that thanks to this simplification together with the minimality condition on the deformation problem we consider on $\overline{\rho}$, the bad primes require no additional treatment.

Let us put $M=NN_f$. We enlarge $\cO$ so that it contains $M$th roots of unity. We set $\zeta_M:=\iota(e^{2\pi/M})$ where we recall that $\iota:\mathbb{C}\to \mathbb{C}_p$ is the isomorphism we have fixed at the start of this article.

\begin{defn}
\label{def_appendix_universalPadicLfunction}
Consider the universal $p$-depleted Eisenstein series 
$$E^{[p]}_{\bf{k}}:=\sum_{\substack{p\nmid n\\ n\geq 1}}\left( \sum d^{\mathbf{k}-1}\left(\zeta_M^d+(-1)^{\mathbf{k}}\zeta_M^{-d}\right)\right)q^n\in R(\overline{\rho})[[q]]\,.$$
We define the $p$-adic Rankin--Selberg $L$-function $L_p(\rho_\f\otimes \rho^{\rm univ})$
on setting \index{$p$-adic $L$-functions! $L_p(\rho_\f\otimes \rho^{\rm univ})$}
$$L_p(\rho_\f\otimes \rho^{\rm univ}):=\lambda_\f^{(M)}\left( e^{\rm ord}\left(\cG^{[p]}_{\olinerho} \cdot E^{[p]}_{\bf{k}}\right)\right)\,.$$
\end{defn}

The $p$-adic $L$-function $L_p(\rho_\f\otimes \rho^{\rm univ})$ interpolates the critical values of Rankin--Selberg $L$-series. We formulate this as a statement which relates $L_p(\rho_\f\otimes \rho^{\rm univ})$ to the ``geometric'' $p$-adic $L$-functions of Loeffler and Zerbes, which we have recalled in \S\ref{subsec_review_BF_elements}. For a more ``honest'' version of the interpolation property in a particular case of interest involving the special values of the Rankin--Selberg $L$-series, see Theorem~\ref{thm_appendix_interpolation_inertCMpadicLfunction}.

\begin{theorem}\index{$p$-adic $L$-functions! $L_{p}^{\rm geo}(\f,g)$ (Loeffler--Zerbes ``geometric'' $p$-adic $L$-function)}
\label{thm_appendix_interpolation}
Suppose $g \in \mathfrak{X}(\olinerho)(L)$ be an $L$-valued classical point. Then
$$L_p(\rho_\f\otimes \rho^{\rm univ})(g)=L_p^{\rm geo}(\f,g)\,.$$
\end{theorem}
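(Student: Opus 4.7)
The plan is to reduce the identity to the case of a single classical Rankin--Selberg $p$-adic $L$-function by specializing the universal family at the point $g$, and then to invoke (a mild reformulation of) the explicit reciprocity law of Loeffler--Zerbes~\cite{LZ1} that identifies Hida's Rankin--Selberg construction with $L_{p}^{\rm geo}(\f,g)$. The key observation is that the formal power series $\cG_{\overline{\rho}}^{[p]}\in\cR(\overline{\rho})[[q]]$ is, by Definition~\ref{defn_appendix_Def3_7}(ii), built out of the characteristic polynomials of Frobenius on $V(\overline{\rho})^{\rm univ}$ away from $p$. Hence under the specialization $\cR(\overline{\rho})\twoheadrightarrow L$ corresponding to $g$, the coefficient $t_n$ becomes the $n$-th Hecke eigenvalue of $g$ for each $n$ coprime to $p$, so $\cG_{\overline{\rho}}^{[p]}$ specializes to the $p$-depleted $q$-expansion $g^{[p]}$ (in the nearly-classical case one gets the $p$-depletion of the classical form twisted by the auxiliary cyclotomic character, whose effect on $q$-expansions is trivial after $p$-depletion up to the usual bookkeeping). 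Likewise, the universal weight character ${\bf k}$ specializes to the weight of $g$, so $E^{[p]}_{\bf k}$ becomes the $p$-depleted Eisenstein series of the same weight and level used in the classical construction.

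First I would make this specialization precise and verify that the Eisenstein series appearing in Definition~\ref{def_appendix_universalPadicLfunction} agrees (up to the explicit normalizing factor $\zeta_M^d+(-1)^{\bf k}\zeta_M^{-d}$ built in for the $\GL_1$-factor) with the $p$-depleted Eisenstein series of weight $k_g+2$ and level $M$ that is used to build Hida's Rankin--Selberg $p$-adic $L$-function for the pair $(\f,g)$. Second, I would use functoriality of Hida's ordinary projector $e^{\rm ord}$ and of his linear form $\lambda_{\f}^{(M)}$ (both defined in \eqref{eqn_appendix_HidasLambda}--\eqref{eqn_appendix_HidasLambdaM} over the weight space $\Lambda$, which commutes with specialization at classical points) to conclude that
\[
L_{p}(\rho_{\f}\otimes\rho^{\rm univ})(g)\;=\;\lambda_{\f}^{(M)}\!\left(e^{\rm ord}\bigl(g^{[p]}\cdot E^{[p]}_{k_g+2}\bigr)\right),
\]
which is exactly Hida's $p$-adic Rankin--Selberg $L$-function attached to the pair $(\f,g)$.

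Third, I would compare this Hida-style construction with the Loeffler--Zerbes ``geometric'' $p$-adic $L$-function $L_{p}^{\rm geo}(\f,g)$ introduced in Definition~\ref{defn:geopadicL}. The required identification is essentially the content of Proposition~2.7.3 and Theorem~10.2.2 of \cite{KLZ2}, together with the explicit reciprocity law of Loeffler--Zerbes~\cite[Theorem~9.3.2]{LZ1}, which expresses the image of Beilinson--Flach classes under the Perrin-Riou map precisely in terms of Hida's pairing applied to a product of an Eisenstein series and a $p$-depleted cusp form. Both sides are then seen to interpolate the same critical Rankin--Selberg $L$-values on a Zariski-dense set of nearly-classical points (Proposition~\ref{appendix_prop_3_5}), which forces the equality claimed in the theorem.

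The main obstacle will be the bookkeeping of periods, Gauss sums, and the normalizing constants $\lambda_{N_f}(f)$, $(1-\beta_f/p\alpha_f)$, $(1-\beta_f/\alpha_f)$ and the congruence denominator $H_{\f}$ appearing respectively in Definition~\ref{def_appendix_universalPadicLfunction}, in Theorem~\ref{thm:PRHida}, and in the statement of \eqref{eqn_appendix_HidasLambda}: one must verify that after specializing at a classical $g$ these normalizations on the ``universal'' side match exactly those built into $L_{p}^{\rm geo}(\f,g)$ via the eigen-projection used in Definition~\ref{defn:geopadicL}. This is essentially a careful rerun of the computations in \cite[\S10]{KLZ2} and \cite[\S4]{Loeffler2020universalpadic}, now keeping track of the additional auxiliary level $N$ and of the minimally ramified local deformation conditions imposed on $\overline{\rho}$, but since these conditions impose no bad-prime Euler factors beyond those of the classical newform $g$ itself, no new terms arise and the comparison goes through.
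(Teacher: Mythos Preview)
Your proposal is correct and follows essentially the same approach as the paper: specialize the universal construction at $g$, identify the result with Hida's Rankin--Selberg $p$-adic $L$-function via the $q$-expansion of $\cG_{\olinerho}^{[p]}$ and $E_{\bf k}^{[p]}$, and then match with $L_p^{\rm geo}(\f,g)$ using the interpolation formulae. The paper's proof is a two-line citation to \cite[Proposition~2.10, Theorem~6.3]{loeffler18} and \cite[Theorem~4.5]{Loeffler2020universalpadic}, so you have simply unpacked what those references are doing.

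One small correction: in your third step you invoke Proposition~\ref{appendix_prop_3_5}, which gives Zariski-density of nearly-classical points in $\mathfrak{X}(\olinerho)$. But once you have fixed the classical point $g$, the two objects you are comparing live over $\LL_\f$ (with the cyclotomic variable absorbed into the deformation space via twisting), so the density you actually need is that of arithmetic specializations of $\f$ together with cyclotomic twists, not density in $\mathfrak{X}(\olinerho)$. This is of course equally standard, but it is worth citing the right density statement.
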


\begin{proof}
This is immediate thanks to the interpolation properties of both sides, which one obtains via the Rankin--Selberg integral formulae, c.f. \cite[Proposition 2.10]{loeffler18}.  The theorem is proved in a manner similar to \cite[Theorem 6.3]{loeffler18}; see also \cite[Theorem 4.5]{Loeffler2020universalpadic}. 
\end{proof}

\subsection{$p$-adic $L$-functions over an imaginary quadratic field where $p$ is inert}
\label{app_subsubsec_padicLinert}
In this paragraph, we specialize to the setting of Example~\ref{app_example_residual_conditions}. To that end, 
we let $K$ be an imaginary quadratic field as above where $p$ remains \emph{inert} and let $\chi$ be a ray class character of $K$ of conductor $\mathfrak{f}_\chi$. Let us put $N=|D_{K/\QQ}|\,N_{K/\QQ}\mathfrak{f}_\chi$, where $D_{K/\QQ}$ is the discriminant of $K/\QQ$.

We assume that 
\begin{itemize}
    \item the order of $\chi$ and $\mathfrak{f}_\chi$ are coprime to $p$;
    \item The non-Eisenstein hypothesis \ref{item_nonEismodp} holds true.
\end{itemize}

 We will consider the representation $\overline{\rho}:={\rm Ind}_{K/\QQ}\overline{\chi}$. Recall that we have checked in Example~\ref{app_example_residual_conditions} that $\overline{\rho}$ verifies the hypotheses \ref{item_deformrho1} and \ref{item_deformrho2} so that the general theory in \S\ref{subsec_app_minimal_Universal} applies and gives rise to the $p$-adic $L$-function 
 $$L_p(\rho_\f\otimes \rho^{\rm univ})\in \frac{1}{H_\f}\LL_\f\,\widehat{\otimes}\,\cR(\overline{\rho}).$$
 
Let $K_\infty\subset K(p^\infty)$ denote the maximal $\ZZ_p$-power extension of $K$ and put $\Gamma_K:=\Gal(K_\infty/\QQ)$. Consider the tautological character
$$\Psi: G_K\twoheadrightarrow \Gamma_K \hookrightarrow \LL_{\cO}(\Gamma_K)^\times\,.$$
Observe that 
$$\rho_\Psi := {\rm Ind}_{K/\QQ}\,\chi\Psi:\, G_\QQ\to \GL_2(\LL_{\cO}(\Gamma_K))$$ 
is a representation of $G_\QQ$ with coefficients in the complete local Noetherian $\cO$-algebra $\LL_{\cO}(\Gamma_K)$. Moreover, $\rho_\Psi$ is lifts $\overline{\rho}$ and it is evidently minimally ramified. By the universality of $\rho^{\rm univ}$, there exists a unique homomorphism of local rings
\begin{equation}
    \label{eqn_app_universalproperty_pi}
    \pi_\Psi: \cR(\olinerho)\lra  \LL_{\cO}(\Gamma_K) \hspace{1cm} \hbox{ with } \hspace{0.3cm} \rho_\Psi=\rho^{\rm univ}\otimes_{\pi_\Psi}\LL_{\cO}(\Gamma_K)\,.
\end{equation}

\begin{defn}
\label{def_app_rankinselbergPadicInertCM}\index{$p$-adic $L$-functions! $L_p^{\rm RS}(\f_{/K}\otimes\bbchi)$ (``semi-universal'' Rankin--Selberg $p$-adic $L$-function)}
We set 
$$\displaystyle{L_p^{\rm RS}(\f_{/K}\otimes\chi\Psi):=\pi_\Phi\left(L_p(\rho_\f\otimes \rho^{\rm univ})\right)\in \frac{1}{H_\f}\LL_\f\,\widehat{\otimes}\,\LL_{\cO}(\Gamma_K)\,.}$$
\end{defn}

We will conclude this appendix recording an interpolative property that characterizes the $p$-adic $L$-function $L_p^{\rm RS}(\f_{/K}\otimes\chi\Psi)$. Before that, we define the relevant collection of Hecke characters for the said interpolation formula.
\begin{defn}
\label{defn_critical_range_for_Hecke_chars}
Given a Hecke character $\psi$ of $K$, let us write $\infty(\psi)\in \ZZ^2$ for the infinity-type of $\psi$. For any integer $k\geq -1$, we put 
$$\Sigma^{(1)}_{\rm crit}(k):=\{\textup{Hecke characters } \psi \textup{ of } K: \infty(\psi)=(\ell_1,\ell_2) \textup{ with } 0\leq \ell_1,\ell_2\leq k\}\,.$$
\end{defn}

\begin{theorem}
\label{thm_appendix_interpolation_inertCMpadicLfunction}
Let $\kappa\geq 0$  be an integer such that $\kappa\equiv k_f \mod (p-1)$ and let $\f(\kappa)$ denote the unique specialization of $\f$ of weight $\kappa+2$ which is $p$-old. We put $\f^\circ(\kappa)\in S_{\kappa+2}(\Gamma_1(N_f))$ for the newform whose $p$-ordinary $p$-stabilization is $\f(\kappa)$. We write $\alpha(\kappa)$ for the $U_p$-eigenvalue on $\f(\kappa)$ and put $\beta(\kappa):=p^{\kappa+1}\varepsilon_f(p)/\alpha(\kappa)$. Let $\psi\in \Sigma^{(1)}_{\rm crit}(\kappa)$ be a Hecke character whose associated $p$-adic Galois character $\widehat{\psi}$ factors through $\Gamma_K$ and is crystalline at $p$. Then,
\begin{align*}
    L_p^{\rm RS}(\f_{/K}\otimes\chi\Psi)(\f(\kappa),\psi)&=\frac{\left(1-\alpha(\kappa)^{-2}\chi\psi(p)\right)\left(1-p^{-2}\beta(\kappa)^2\chi^{-1}\psi^{-1}(p)\right)}{\left(1-\beta(\kappa)/p\alpha(\kappa)\right)\left(1-\beta(\kappa)/\alpha(\kappa)\right)}\\
    &\qquad\qquad\qquad\quad\times\, \frac{i^{\kappa+1-\ell_1+\ell_2}M^{1+\ell_1+\ell_2-\kappa}\ell_1! \ell_2!}{2^{\ell_1+\ell_2+\kappa}\pi^{\ell_1+\ell_2}}\\
    &\qquad\qquad\qquad\qquad\qquad
    \times \, \frac{L(\f(\kappa)_{/K},\chi^{-1}\psi^{-1},1)}{8\pi^2\langle \f^\circ(\kappa),\f^\circ(\kappa)\rangle_{N_f}}\,.
\end{align*}
\end{theorem}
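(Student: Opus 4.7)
The plan is to evaluate $L_p^{\rm RS}(\f_{/K}\otimes\chi\Psi)$ at $(\f(\kappa),\psi)$ by reducing to a classical specialization of the universal $p$-adic $L$-function and then invoking the Loeffler--Zerbes explicit reciprocity law. By Definition~\ref{def_app_rankinselbergPadicInertCM}, $L_p^{\rm RS}(\f_{/K}\otimes\chi\Psi) = \pi_\Psi\bigl(L_p(\rho_\f \otimes \rho^{\rm univ})\bigr)$. Specializing $\f \mapsto \f(\kappa)$ in $\LL_\f$ and $\LL_\cO(\Gamma_K) \to \cO$ via the $p$-adic character induced from $\widehat{\chi\psi}$ (which is well-defined since $\widehat{\psi}$ factors through $\Gamma_K$ by hypothesis), the composite $\cR(\olinerho) \to \cO$ is, by the universal property \eqref{eqn_app_universalproperty_pi}, the classifying map of the Galois representation $\rho = \mathrm{Ind}_{K/\QQ}(\widehat{\chi\psi})$. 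Factoring $\psi$ as $\psi_0 \cdot \mathbf{N}^{\min(\ell_1,\ell_2)}$ reduces $\rho$, up to a cyclotomic Tate twist by $\chi_\cyc^j$ with $j$ determined by $(\ell_1,\ell_2)$, to the Galois representation attached to the classical non-ordinary theta newform $g := \theta(\chi\psi_0)$ whose weight is governed by $|\ell_2-\ell_1|$ (the case $\ell_1=\ell_2$ being handled symmetrically by a norm twist).

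Having landed at a nearly classical point, I would apply Theorem~\ref{thm_appendix_interpolation} to identify the specialization with $L_p^{\rm geo}(\f(\kappa), g)$ evaluated at $\chi_\cyc^j$. The condition $\psi \in \Sigma^{(1)}_{\rm crit}(\kappa)$ with $0 \leq \ell_1,\ell_2 \leq \kappa$ guarantees $k_g < \kappa$ and places this triple in the critical range for which the Loeffler--Zerbes explicit reciprocity law \cite[Theorem 9.3.2]{LZ1} applies. That formula expresses $L_p^{\rm geo}(\f(\kappa),g)(\chi_\cyc^j)$ as an explicit product of (a) local Euler correction factors at $p$ built from $\{\alpha(\kappa),\beta(\kappa),\alpha_g,\beta_g\}$, (b) Gauss sum and level factors associated to the tame level $M=N_fN_g$, (c) archimedean Gamma factors together with powers of $2\pi i$, and (d) the complex critical value $L(\f(\kappa)\times g, 1+j)$ normalized by the Petersson self-pairing $\langle\f^\circ(\kappa),\f^\circ(\kappa)\rangle_{N_f}$. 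Artin formalism $L(\f(\kappa)\times \theta(\chi\psi_0),s) = L(\f(\kappa)_{/K},\chi\psi_0,s)$, followed by absorption of the cyclotomic twist into the Hecke character, then produces the stated central value $L(\f(\kappa)_{/K},\chi^{-1}\psi^{-1},1)$ on the right-hand side; the inversion $\psi \mapsto \psi^{-1}$ is forced by the twist-by-$\iota$ convention built into the tautological character $\Psi: G_K \to \LL_\cO(\Gamma_K)^\times$.

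The principal obstacle lies in the explicit bookkeeping at the prime $p$. Since $g = \theta(\chi\psi_0)$ has $a_p(g)=0$ (as $p$ is inert in $K$) and $\alpha_g\beta_g \dot{=} \chi\psi_0(\mathfrak{p})$ up to an explicit local constant, the quadruple of Loeffler--Zerbes Euler factors $(1-\alpha(\kappa)^{-1}\alpha_g p^{-1-j})(1-\alpha(\kappa)^{-1}\beta_g p^{-1-j})$ collapses into a single binomial $1-\alpha(\kappa)^{-2}\chi\psi(\mathfrak{p})$, and the corresponding collapse for the $\beta(\kappa)$-factors reproduces $1-p^{-2}\beta(\kappa)^2\chi^{-1}\psi^{-1}(\mathfrak{p})$; matching these against the numerator in the statement, together with the denominator $\bigl(1-\beta(\kappa)/p\alpha(\kappa)\bigr)\bigl(1-\beta(\kappa)/\alpha(\kappa)\bigr)$ that arises from the congruence-ideal normalization $H_\f\cdot\eta_\f$ built into Loeffler's construction (cf.\ Theorem~\ref{thm:PRHida}), is where the most care is required. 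A secondary technical point is verifying that the archimedean packet $i^{\kappa+1-\ell_1+\ell_2}M^{1+\ell_1+\ell_2-\kappa}\ell_1!\,\ell_2!/(2^{\ell_1+\ell_2+\kappa}\pi^{\ell_1+\ell_2}\cdot 8\pi^2)$ assembles correctly from the Rankin--Selberg archimedean local factor evaluated at the critical point together with the contribution of the cyclotomic twist by $\chi_\cyc^j$.
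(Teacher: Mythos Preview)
Your approach is correct and arrives at the right answer, but it takes a detour that the paper avoids. The paper's own proof is a single sentence: the result follows by translating the interpolation formula for $L_p(\rho_\f\otimes\rho^{\rm univ})$ referenced in the proof of Theorem~\ref{thm_appendix_interpolation}. That interpolation formula is obtained directly on the automorphic side, via the classical Rankin--Selberg unfolding (\cite[Proposition~2.10]{loeffler18}, \cite[Theorem~4.5]{Loeffler2020universalpadic}), since by Definition~\ref{def_appendix_universalPadicLfunction} the object $L_p(\rho_\f\otimes\rho^{\rm univ})$ is built purely from Hida's $\lambda_\f^{(M)}$ applied to $e^{\rm ord}(\cG^{[p]}_{\olinerho}\cdot E^{[p]}_{\bf k})$, with no Beilinson--Flach elements or Perrin-Riou maps in sight.

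Your route instead passes through Theorem~\ref{thm_appendix_interpolation} to reach $L_p^{\rm geo}(\f,g)$ and then invokes the Loeffler--Zerbes explicit reciprocity law \cite[Theorem~9.3.2]{LZ1}. But Theorem~\ref{thm_appendix_interpolation} is itself proved by matching interpolation formulae on both sides, so you are traversing a commutative square rather than cutting across it: the Rankin--Selberg integral computation you need is already what underlies that theorem. Your collapse of the $p$-Euler factors using $a_p(g)=0$ and the bookkeeping with the $(\ell_1,\ell_2)$ twist are exactly the ``translation'' the paper alludes to, so the substantive content of your argument is the same; only the packaging differs.
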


\begin{proof}
This follows on translating the interpolation formula for $L_p(\rho_\f\otimes \rho^{\rm univ})$ that we alluded to above in the proof of Theorem~\ref{thm_appendix_interpolation}.
\end{proof}

\begin{remark}
One may give a direct construction of $ L_p^{\rm RS}(\f_{/K}\otimes\chi\Psi)$, without going through the minimally ramified universal deformation ${\rho}^{\rm univ}$. We briefly outline this alternative construction in this remark. 

Let us write $K(\mathfrak{f}_\chi p^{\infty})=\cup_n K(\mathfrak{f}_\chi p^{n})$ denote the ray class extension of $K$ modulo $\mathfrak{f}_\chi p^{\infty}$. We let $H_{\mathfrak{f}_\chi p^{\infty}}:=\varprojlim_n H_{\mathfrak{f}_\chi p^{n}}$ denote the ray class group modulo $\mathfrak{f}_\chi p^{\infty}$, which we identify with $\Gal(K(\mathfrak{f}_\chi p^{\infty})/K)$ via the geometrically normalized Artin reciprocity map $\mathfrak{A}$. Let us write 
$$H_{\mathfrak{f}_\chi p^{\infty}}=\Delta\times H_{\mathfrak{f}_\chi p^{\infty}}^{(p)}$$
where $\Delta$ is a finite group and $H_{\mathfrak{f}_\chi p^{\infty}}^{(p)}\cong \ZZ_p^2$, so that $\mathfrak{A}(H_{\mathfrak{f}_\chi p^{\infty}}^{(p)})=\Gamma_K$. Given $\mathfrak{b}\in H_{\mathfrak{f}_\chi p^{\infty}}$, we write $[\mathfrak{b}]\in \LL_{\cO}(\Gamma_K)^\times$ for the image of $\mathfrak{b}$ under the compositum
$$H_{\mathfrak{f}_\chi p^{\infty}}\lra H_{\mathfrak{f}_\chi p^{\infty}}^{(p)} \xrightarrow{\mathfrak{A}} \Gamma_K \hookrightarrow \LL_{\cO}(\Gamma_K)^\times\,.$$

We define the $p$-depleted universal $\theta$-series for the branch character $\chi$ on setting
$$\cG^{[p]}_{\chi}:=\sum_{\substack{\mathfrak{b}<\cO_K\\ (\mathfrak{b},p)=1}}\chi(\mathfrak{b})[\mathfrak{b}]q^{N\mathfrak{b}}\in \LL_{\cO}(\Gamma_K)[[q]]\,.$$
If one uses $\cG^{[p]}_{\chi}$ in Definition~\ref{def_appendix_universalPadicLfunction} in place of $\cG^{[p]}_{\olinerho}$, one obtains the $p$-adic $L$-function $L_p^{\rm RS}(\f_{/K}\otimes\chi\Psi)$.
\end{remark}

%-----------------------------------------------------------------------
% End of AppB.tex
%-----------------------------------------------------------------------

\chapter[Images of Galois representations]{Images of Galois representations attached to Rankin--Selberg convolutions} 
\label{appendix_big_images} 
\index{Big image conditions! $(\tau_{f\otimes g})$} \index{Big image conditions! $(\tau_{\f\otimes g})$} In this appendix, we study the hypotheses \ref{item_BI_fg} and \ref{item_BI_ffg} when $g=\theta(\psi)$ is a crystalline CM form, and provide sufficient conditions for their validity. It will be clear to reader that our arguments here draw greatly\footnote{We would like to express our gratitude to Jackie Lang for carefully reading through this appendix and pointing out several inaccuracies, which helped us improve our exposition drastically. In particular, we thank her for bringing the papers of Klingenberg~\cite{Klingenberg1960, Klingenberg1961} to our attention and explaining relevant portions of the results in her joint work \cite{ContiLangMedvedovsky} with Conti and Medvedovsky (which lead to a significant simplification of some of our statements and proofs).} from the earlier works \cite{Fischmann2002, Hida2015, Lang2016, LoefflerGlasgow, ContiLangMedvedovsky} on the subject. 

Throughout this appendix, we assume that $p\geq 7$. 
\section{Group theory}
Let $R$ be a complete local ring with finite residue field and residue characteristic $p$. Let $\mathfrak{m}$ denote the maximal ideal of $R$ and $\FF:=R/\mathfrak{{m}}$ its residue field. 
\begin{lemma}
\label{appendix_big_images_lemma_1}
There is a unique maximal normal subgroup $N_0$ of ${\rm SL}_2(R)$, and the quotient group ${\rm SL}_2(R)/N_0$ is isomorphic to ${\rm PSL}_2(\mathbb{F})$. In particular, the quotient ${\rm SL}_2(R)/N_0$ is non-solvable.
\end{lemma}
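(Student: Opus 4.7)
The plan is to exploit the residue-field reduction map $\pi\colon {\rm SL}_2(R) \twoheadrightarrow {\rm SL}_2(\FF)$ in combination with the simplicity of ${\rm PSL}_2(\FF)$ and Klingenberg's classification of normal subgroups of ${\rm SL}_2$ over a local ring. First, I would verify that $\pi$ is surjective: since $R$ is a complete local ring with finite residue field, every elementary matrix in ${\rm SL}_2(\FF)$ lifts to an elementary matrix in ${\rm SL}_2(R)$, and such matrices generate ${\rm SL}_2(\FF)$ as $\FF$ is a field.

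I would then define $N_0 := \pi^{-1}(\{\pm I\})$, which is normal in ${\rm SL}_2(R)$ since $\{\pm I\}$ is the center of ${\rm SL}_2(\FF)$ for odd $p$. The snake lemma yields
$${\rm SL}_2(R)/N_0\;\cong\; {\rm SL}_2(\FF)/\{\pm I\}\;=\;{\rm PSL}_2(\FF).$$
Under our running assumption $p \ge 7$, we have $|\FF|\ge 7$, so ${\rm PSL}_2(\FF)$ is a non-abelian simple group; this immediately delivers both the non-solvability of the quotient and the maximality of $N_0$ as a normal subgroup of ${\rm SL}_2(R)$.

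The uniqueness of such a maximal subgroup reduces to showing that every proper normal subgroup $N \trianglelefteq {\rm SL}_2(R)$ satisfies $N \subseteq N_0$. Here I would invoke Klingenberg's sandwich theorem from \cite{Klingenberg1961} (see also the modern account in \cite{ContiLangMedvedovsky}): over a (complete) local ring $R$ with $|\FF|\ge 4$, every normal subgroup of ${\rm SL}_2(R)$ admits a well-defined \emph{level} $I\subseteq R$ and is sandwiched between the principal congruence subgroup $\Gamma(I):=\ker({\rm SL}_2(R)\to {\rm SL}_2(R/I))$ and the subgroup $G_2(R,I):=\pi_I^{-1}(Z({\rm SL}_2(R/I)))$. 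A proper normal subgroup must have level contained in the maximal ideal $\mathfrak{m}$, and one checks directly that $G_2(R,I)\subseteq G_2(R,\mathfrak{m})=N_0$ whenever $I\subseteq\mathfrak{m}$; this forces $N\subseteq N_0$ as desired.

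The principal obstacle is the clean invocation of Klingenberg's theorem in the precise generality we require, namely an arbitrary complete local ring whose residue field is finite of characteristic $p\ge 7$. I expect only a brief verification will be necessary: our residue-field hypothesis $|\FF|\ge 7$ is comfortably outside the small-residue-field exceptions in the classical statements, and the complete local case follows from the Artinian case by passage to the inverse limit over the quotients $R/\mathfrak{m}^n$.
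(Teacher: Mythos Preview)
Your proposal is correct and follows essentially the same approach as the paper: both define $N_0$ as the kernel of ${\rm SL}_2(R)\to{\rm PSL}_2(\FF)$ and invoke Klingenberg's sandwich theorem to show that any proper normal subgroup has level contained in $\mathfrak{m}$ and hence lies inside $N_0$. The paper's argument is slightly terser (it does not separately verify surjectivity of $\pi$ or discuss the Artinian-to-complete passage), but the key idea is identical.
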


\begin{proof}
The argument we record here was suggested to us by Jackie Lang. 

 For an ideal $J\triangleleft R$ we let ${\rm GC}(J)$ denote the set of all the matrices in $\GL_2(R)$ which reduce to a scalar matrix modulo $J$. We also let ${\rm SC}(J)$ denote the matrices of determinant $1$ that reduce to the identity matrix modulo $J$.  
 
 It follows from \cite[Theorem 3(ii)]{Klingenberg1961} that if $N$ is a proper normal subgroup of ${\rm SL}_2(R)$, then there is an ideal $J(N)\triangleleft R$ such that 
 $${\rm SC}(J(N))\subset N\subset {\rm GC}(J(N)) \cap {\rm SL}_2(R)\,.$$  
Note that since $N$ is not equal to ${\rm SL}_2(R)$ and $N$ contains ${\rm SC}(J(N))$, it follows that the ideal $J(N)\triangleleft R$ must be a proper ideal and hence, it is contained in the maximal ideal $\frak{m}$ of the local ring $R$.  $N$ is therefore contained in the subgroup 
$${\rm GC}(\frak m) \cap {\rm SL}_2(R)\supset {\rm GC}(J(N)) \cap {\rm SL}_2(R).$$  
of ${\rm SL}_2(R)$. Note that 
$${\rm GC}(\frak m) \cap {\rm SL}_2(R)=\ker\left({\rm SL}_2(R) \lra {\rm PSL}_2(\FF)\right)=:N_0\,.$$  
We have therefore proved that every proper normal subgroup of ${\rm SL}_2(R)$ is contained in $N_0$, as required.
\end{proof}

\begin{lemma}
\label{appendix_big_images_lemma_2}
Let $G_1$ and $G_2$ be two groups and $H<G_1\times G_2$ such that for each $i=1,2$, the natural projection map $\pr_i:H\to G_i$ is surjective. Suppose in addition that $G_2$ is solvable and $G_1$ admits a unique proper maximal normal subgroup $N$ with $G_1/N$ is non-solvable. Then $H=G_1\times G_2$. 
\end{lemma}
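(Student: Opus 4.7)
\medskip
\noindent\textbf{Proof proposal.} The natural tool here is Goursat's lemma for subgroups of a direct product. Applied to $H<G_1\times G_2$ whose projections $\pr_i$ are surjective, it produces normal subgroups
\[
N_1:=\pr_1\bigl(H\cap(G_1\times\{e\})\bigr)\triangleleft G_1,\qquad N_2:=\pr_2\bigl(H\cap(\{e\}\times G_2)\bigr)\triangleleft G_2,
\]
together with an isomorphism $\phi:G_1/N_1\xrightarrow{\sim}G_2/N_2$ such that $H$ is precisely the preimage in $G_1\times G_2$ of the graph of $\phi$ inside $(G_1/N_1)\times(G_2/N_2)$. In particular, $H=G_1\times G_2$ if and only if the common quotient $G_1/N_1\cong G_2/N_2$ is trivial. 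The plan is to show that the assumptions on $G_1$ and $G_2$ force exactly this triviality.

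Since $G_2$ is solvable, so is every quotient of it; thus $G_2/N_2$, and therefore $G_1/N_1$, is solvable. Suppose toward a contradiction that $N_1$ is a proper subgroup of $G_1$. By the hypothesis that $N$ is the unique maximal proper normal subgroup of $G_1$ (in the sense that every proper normal subgroup is contained in $N$, as in the conclusion of Lemma~\ref{appendix_big_images_lemma_1}), we have $N_1\subseteq N$. But then $G_1/N$ is a quotient of $G_1/N_1$, hence solvable, which contradicts the assumption that $G_1/N$ is non-solvable.

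Hence $N_1=G_1$, forcing $G_1/N_1$ to be trivial; via the Goursat isomorphism, $G_2/N_2$ is also trivial, so $N_2=G_2$. Therefore $H$ contains both $G_1\times\{e\}$ and $\{e\}\times G_2$, and consequently $H=G_1\times G_2$, as required.

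\medskip
The argument is essentially a direct application of Goursat's lemma once the two inputs (solvability of $G_2$ and the existence of a unique maximal normal subgroup of $G_1$ with non-solvable quotient) are in place; there is no real obstacle, and the only thing to verify carefully is that the hypothesis on $N$ indeed implies every proper normal subgroup of $G_1$ lies inside $N$, which is how the conclusion of Lemma~\ref{appendix_big_images_lemma_1} is phrased.
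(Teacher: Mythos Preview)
Your proof is correct and follows essentially the same approach as the paper's: both apply Goursat's lemma to obtain the isomorphism $G_1/N_1\cong G_2/N_2$, observe that this common quotient must be solvable (being a quotient of $G_2$), and then derive a contradiction from $N_1\subsetneq G_1$ by noting that $N_1\subseteq N$ forces the non-solvable $G_1/N$ to be a quotient of the solvable $G_1/N_1$. Your explicit remark that the hypothesis on $N$ should be read as ``every proper normal subgroup of $G_1$ lies in $N$'' matches exactly how the paper uses Lemma~\ref{appendix_big_images_lemma_1}.
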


\begin{proof}
This is an easy application of Goursat's lemma, which we recall for the convenience of the reader. Let $H<G_1\times G_2$ be a subgroup such that the natural projection map $\pr_i:H\to G_i$ is surjective ($i=1,2$). Set $N_2:=\ker(\pr_1)$ and $N_1:=\ker(\pr_2)$. Let us identify $N_2$ (respectively, $N_1$) as a normal subgroup of $G_2$  (respectively, of $G_1$). Then the image of $H$ in $G_1/N_1\times G_2/N_2$ is the graph of an isomorphism $G_1/N_1\cong G_2/N_2$. It therefore suffices to prove that $N_1=G_1$ (equivalently, $N_2=G_2$) for $N_i$ as above.

Suppose on the contrary that $N_1\triangleleft G_1$ is a proper normal subgroup. Then by definition, $N_1<N$ and $G_1/N_1$ admits $G_1/N$ as a quotient. In particular, $G_1/N_1$ is non-solvable. But since $G_1/N_1\cong G_2/N_2$ and $G_2/N_2$ is solvable (since $G_2$ is). This contradiction concludes our proof that $N_1$ cannot be a proper subgroup, and in turn also the proof our lemma.
\end{proof}

\section{Applications to families on $\GL_2\times {\rm Res}_{K/\QQ}\GL_1$}
\label{appendix_big_images_subsec_2}
Let $\f$ be (a non-CM branch of) a Hida family as in the main body of this article (c.f. Definition~\ref{defn_Hecke_algebra_intro}). We also fix a crystalline Hecke character $\psi$ of our fixed imaginary quadratic field $K$, given as in \S\ref{subsubsec_setting_fKpsi}. We will freely use our notation concerning $\f$  and $\psi$ from the main text. 

In particular, recall the local ring $\LL_\f$, which is a module-finite flat $\Lambda_\cO(1+p\ZZ_p)$-algebra, which we assume contains the ring of integers $\cO$ of a sufficiently\footnote{We do not make this precise, but it at least contains the image of $\widehat{\psi}$.} large extension of $\QQ_p$. Let us denote by $\texttt{k}$ the residue field of $\LL_\f$. Recall also the theta-series $g:=\theta(\psi)\in S_{k_g+2}(\Gamma_1(N_g),\varepsilon_\psi)$, whose associated Galois representation $\rho_{\theta(\psi)}$ giving the action on $R_{\theta(\psi)}^*$ can be explicitly described as follows:
\begin{equation}
\label{eqn_explicit_induced_rep}
    \sigma\stackrel{\rho_{\theta(\psi)}}{\longmapsto}
    \begin{cases}
    \left(\begin{array}{cc}
        \widehat{\psi}^{-1}(\sigma) & 0 \\
        0 &  \widehat{\psi}^{-1}(c\sigma c^{-1})
    \end{array}\right)& \hbox{ if } \sigma\in G_K\,\\\\
     \left(\begin{array}{cc}
        0 & \widehat{\psi}^{-1}(\sigma c) \\
        \widehat{\psi}^{-1}(c \sigma) &  0
    \end{array}\right)& \hbox{ if } \sigma \in G_\QQ\setminus G_K\,.
    \end{cases}
\end{equation}

Throughout \S\ref{appendix_big_images_subsec_2} we shall assume that $\rho_\f$ is residually full, in the sense that  the condition \index{Big image conditions! {{\bf (Full)}}}
\begin{enumerate}[align=parleft, labelsep=0.2cm,]
\item[\mylabel{item_fullness}{{\bf{(Full)}}}] \, ${\rm SL}_2(\FF_p)\subset  \overline{\rho}_\f(G_{\QQ(\mu_{p^\infty})})$
\end{enumerate}
holds true. The condition \ref{item_fullness} is often (but not always) satisfied.

Let us put $\mathscr{G}_1:=\ker\left(\det\,\circ \,{\rho}_{\f}\right)$. Note that $\mathscr{G}_1$ is a subgroup of $G_{\QQ(\mu_{p^\infty})}$ with finite index. If we have $\varepsilon_f=\mathds{1}$ for the tame nebentype of the family $\f$, then  $\mathscr{G}_1=G_{\QQ(\mu_{p^\infty})}$. In general, if we let $K_1:=\QQ(\varepsilon_f)$  denote the abelian (in fact, cyclic) extension cut out by $\varepsilon_f$, then $\mathscr{G}_1=G_{K_1(\mu_{p^\infty})}$. Note also that $\QQ(\varepsilon_f) \cap K=\QQ$, thanks to our assumption that $(N_f,D_K)=1$. 

\begin{lemma}
\label{lemma_fullness_+}
If \ref{item_fullness} holds, then in fact 
$${\rm SL}_2(\FF_p)\subset  \overline{\rho}_\f(\mathscr{G}_1)\,.$$
\end{lemma}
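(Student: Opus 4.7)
The plan is to combine the running hypothesis \ref{item_fullness} with an elementary normal-subgroup argument for $\mathrm{SL}_2(\FF_p)$. First I would observe that $\mathscr{G}_1=G_{K_1(\mu_{p^\infty})}$ is normal in $G_{\QQ(\mu_{p^\infty})}$ (being the common kernel of a system of characters), and that the quotient
\[
G_{\QQ(\mu_{p^\infty})}\Big/\mathscr{G}_1\;=\;\mathrm{Gal}\bigl(K_1(\mu_{p^\infty})/\QQ(\mu_{p^\infty})\bigr)
\]
is a quotient of $\mathrm{Gal}(K_1/\QQ)$, which is cyclic (since $K_1=\QQ(\varepsilon_f)$ is cut out by a Dirichlet character). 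In particular this quotient is abelian.

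Next, set $G:=\overline{\rho}_\f(G_{\QQ(\mu_{p^\infty})})$ and $H:=\overline{\rho}_\f(\mathscr{G}_1)$, so that $H\triangleleft G$ and $G/H$ is a quotient of the abelian group above; hence $G/H$ is abelian. Under \ref{item_fullness} we have $S:=\mathrm{SL}_2(\FF_p)\subset G$, and $H\cap S$ is a normal subgroup of $S$ with $S/(H\cap S)\hookrightarrow G/H$ abelian.

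The key input is then the classification of normal subgroups of $\mathrm{SL}_2(\FF_p)$ for $p\ge 7$: they are $\{1\}$, $\{\pm I\}$ and the whole group, with non-trivial quotients $\mathrm{SL}_2(\FF_p)$ and $\mathrm{PSL}_2(\FF_p)\cong\mathrm{PSL}_2(\FF_p)$, both of which are non-abelian (even non-solvable). (Alternatively one may invoke Lemma~\ref{appendix_big_images_lemma_1} directly, specialised to $R=\FF_p$.) Consequently the only way for $S/(H\cap S)$ to be abelian is to have $H\cap S = S$, i.e.\ $\mathrm{SL}_2(\FF_p)\subset H=\overline{\rho}_\f(\mathscr{G}_1)$, which is the desired conclusion. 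I expect no serious obstacle here: the argument is purely group-theoretic and the only delicate point is the (already assumed) hypothesis $p\ge 7$, which guarantees that $\mathrm{PSL}_2(\FF_p)$ is simple non-abelian.
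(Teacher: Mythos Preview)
Your proof is correct and follows essentially the same approach as the paper: both arguments use that $\mathscr{G}_1$ is normal in $G_{\QQ(\mu_{p^\infty})}$ with abelian quotient, intersect $\overline{\rho}_\f(\mathscr{G}_1)$ with $\mathrm{SL}_2(\FF_p)$, and invoke the classification of normal subgroups of $\mathrm{SL}_2(\FF_p)$ for $p\geq 7$ to force this intersection to be all of $\mathrm{SL}_2(\FF_p)$. (Minor typo: you wrote ``$\mathrm{PSL}_2(\FF_p)\cong\mathrm{PSL}_2(\FF_p)$'' where you presumably meant to name the two quotients $\mathrm{SL}_2(\FF_p)$ and $\mathrm{PSL}_2(\FF_p)$.)
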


\begin{proof}
Let us put $\mathscr{G}_0:=G_{\QQ(\mu_{p^\infty})}$ to ease notation. Note that $\mathscr{G}_0/\mathscr{G}_1$ is a finite abelian group. In particular, the groups 
$${\rm SL}_2(\FF_p)\big{/}\overline{\rho}_\f(\mathscr{G}_1)\cap {\rm SL}_2(\FF_p)\,\, <\,\overline{\rho}_\f(\mathscr{G}_0)\big{/}\overline{\rho}_\f(\mathscr{G}_1)$$
are abelian as well. Moreover, since $\overline{\rho}_\f(\mathscr{G}_1)\cap {\rm SL}_2(\FF_p)$ is a normal subgroup of $ {\rm SL}_2(\FF_p)$ and $p\geq 7$, it follows that either $\overline{\rho}_\f(\mathscr{G}_1)\cap {\rm SL}_2(\FF_p)<\{\pm 1\}$ or else $\overline{\rho}_\f(\mathscr{G}_1)$ contains ${\rm SL}_2(\FF_p)$. In the former scenario, it would follow (using the fact that the quotient group ${\rm SL}_2(\FF_p)\big{/}\overline{\rho}_\f(\mathscr{G}_1)\cap {\rm SL}_2(\FF_p)$ is abelian) that ${\rm PSL}_2(\FF_p)$ is abelian, which is absurd. We therefore conclude that the  latter scenario holds, which is the assertion of our lemma.
\end{proof}

Let us put $\varepsilon_\psi=\eta_\psi\epsilon_K$ where $\eta_\psi$ is a Dirichlet character of conductor coprime to $pD_K$. We define $K_2:=\QQ(\eta_\psi)$ as the field cut out by $\eta_\psi$ and set $K^\prime:=K_1K_2$. We put $\mathscr{G}:=G_{K^\prime(\mu_{p^\infty})}$. It is a normal subgroup of $\mathscr{G}_1$ with finite index and moreover, the quotient $\mathscr{G}_1/\mathscr{G}$ is abelian (in fact cyclic, but we won't need that).

Note that $K^\prime$ is still linearly disjoint from $K(\mu_{p^\infty})$ (since the conductors of $\eta_\psi$ and $\varepsilon_f$ are both coprime to $pD_K$). In particular, $\mathscr{G}\not\subset G_{K(\mu_{p^\infty})}$. We shall use this observation crucially in the proof of Theorem~\ref{appendix_big_images_subsec_2_thm_3}.

\begin{lemma}
\label{lemma_fullness_++}
If \ref{item_fullness} holds, then in fact 
$${\rm SL}_2(\FF_p)\subset  \overline{\rho}_\f(\mathscr{G})\,.$$
\end{lemma}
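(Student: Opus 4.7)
The plan is to mimic the argument given for Lemma~\ref{lemma_fullness_+}, replacing the pair $(\mathscr{G}_0,\mathscr{G}_1)$ there with $(\mathscr{G}_1,\mathscr{G})$. The two ingredients that made the previous proof go through were: (i) $\mathscr{G}_1$ is normal in $\mathscr{G}_0$ with abelian quotient, and (ii) the only normal subgroups of ${\rm SL}_2(\FF_p)$ (for $p\geq 7$) are $\{1\}$, $\{\pm 1\}$ and the whole group, thanks to the simplicity of ${\rm PSL}_2(\FF_p)$. Both analogues hold in our setting: by construction $\mathscr{G}=G_{K'(\mu_{p^\infty})}\triangleleft \mathscr{G}_1=G_{K_1(\mu_{p^\infty})}$, and the quotient $\mathscr{G}_1/\mathscr{G}\simeq \Gal(K'/K_1)$ is a (cyclic, in fact) finite abelian group.

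First, I would observe that since $\mathscr{G}\triangleleft \mathscr{G}_1$, we have $\overline{\rho}_\f(\mathscr{G})\triangleleft \overline{\rho}_\f(\mathscr{G}_1)$; and hence
\[
H:=\overline{\rho}_\f(\mathscr{G})\cap {\rm SL}_2(\FF_p)\,\triangleleft\,\overline{\rho}_\f(\mathscr{G}_1)\cap{\rm SL}_2(\FF_p)\,=\,{\rm SL}_2(\FF_p),
\]
where the last equality uses Lemma~\ref{lemma_fullness_+}. Next, the natural inclusion induces an injection
\[
{\rm SL}_2(\FF_p)\big{/}H\,\hookrightarrow\,\overline{\rho}_\f(\mathscr{G}_1)\big{/}\overline{\rho}_\f(\mathscr{G}),
\]
and the right-hand side is a quotient of the abelian group $\mathscr{G}_1/\mathscr{G}$, hence abelian. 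Therefore the quotient ${\rm SL}_2(\FF_p)/H$ is abelian as well.

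Finally, using $p\geq 7$, the normal subgroups of ${\rm SL}_2(\FF_p)$ are exhausted by $\{1\}$, $\{\pm 1\}$ and ${\rm SL}_2(\FF_p)$. If $H\subset\{\pm 1\}$, the abelianness of ${\rm SL}_2(\FF_p)/H$ would force ${\rm PSL}_2(\FF_p)$ to be abelian, which is absurd. We therefore conclude $H={\rm SL}_2(\FF_p)$, i.e. ${\rm SL}_2(\FF_p)\subset \overline{\rho}_\f(\mathscr{G})$, as required. No serious obstacle is expected; the argument is essentially a verbatim transcription of the proof of Lemma~\ref{lemma_fullness_+}, with the key point being that $\mathscr{G}_1/\mathscr{G}$ remains abelian (a fact we have already recorded above Lemma~\ref{lemma_fullness_++}).
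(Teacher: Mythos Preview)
Your proposal is correct and follows essentially the same approach as the paper: the paper's proof simply states that it is identical to that of Lemma~\ref{lemma_fullness_+} with $\mathscr{G}_0$ replaced by $\mathscr{G}_1$ and $\mathscr{G}_1$ replaced by $\mathscr{G}$, which is exactly what you carry out.
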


\begin{proof}
The proof of this lemma is identical to that of Lemma~\ref{lemma_fullness_+}: One simply replaces $\mathscr{G}_0$ with $\mathscr{G}_1$ and $\mathscr{G}_1$ with $\mathscr{G}$ everywhere in the proof.
\end{proof}

\begin{proposition}
\label{appendix_big_images_subsec_2_prop_1}
Let $f$ be a classical specialization of $\f$ and let $\rho_f$ denote the $G_\QQ$-representation giving rise to the action on $R_f^*$. We then have, 
$$\rho_f\times \rho_{\theta(\psi)}(\mathscr{G})=\rho_f(\mathscr{G})\times \rho_{\theta(\psi)}(\mathscr{G})\,,$$
$$\rho_\f\times \rho_{\theta(\psi)}(\mathscr{G})=\rho_\f(\mathscr{G})\times \rho_{\theta(\psi)}(\mathscr{G})\,.$$
\end{proposition}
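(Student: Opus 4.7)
The plan is to apply Lemma~\ref{appendix_big_images_lemma_2} (a Goursat-style criterion). Set $H := (\rho_\f \times \rho_{\theta(\psi)})(\mathscr{G})$ and view it as a subgroup of $G_1 \times G_2$ with $G_1 := \rho_\f(\mathscr{G})$ and $G_2 := \rho_{\theta(\psi)}(\mathscr{G})$; both projections $H \twoheadrightarrow G_i$ are surjective by construction. The second identity in the proposition will follow once we verify the two hypotheses of Lemma~\ref{appendix_big_images_lemma_2} for the pair $(G_1,G_2)$, namely the solvability of $G_2$ and the existence of a unique maximal proper normal subgroup of $G_1$ with non-solvable quotient. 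The first identity is then obtained by running the same argument with $\rho_\f$ replaced by $\rho_f$ and $\LL_\f$ replaced by $\cO$ throughout.

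The solvability of $G_2$ is immediate from the explicit description \eqref{eqn_explicit_induced_rep}: the restriction $\rho_{\theta(\psi)}|_{G_K}$ takes values in the diagonal torus of $\GL_2(\cO)$, so $\rho_{\theta(\psi)}(G_K)$ is abelian, and consequently $\rho_{\theta(\psi)}(G_\QQ)$---which is an extension of a subgroup of $\Gal(K/\QQ) \cong \ZZ/2\ZZ$ by this abelian group---is metabelian. Its subgroup $G_2$ is therefore solvable.

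For $G_1$ we first note that $\mathscr{G} \subseteq \mathscr{G}_1 = \ker(\det \circ \rho_\f)$ forces $\rho_\f(\mathscr{G}) \subseteq {\rm SL}_2(\LL_\f)$; consequently the mod-$\mathfrak{m}_{\LL_\f}$ reduction $\overline{\rho}_\f(\mathscr{G})$ lies in ${\rm SL}_2$ of the residue field of $\LL_\f$, and the residual fullness hypothesis \ref{item_fullness} together with Lemma~\ref{lemma_fullness_++} guarantees that it contains ${\rm SL}_2(\FF_p)$. The main obstacle---and the technical heart of the argument---is to upgrade this residual fullness to the equality
\[
\rho_\f(\mathscr{G}) = {\rm SL}_2(\LL_\f).
\]
Once this is in hand, Lemma~\ref{appendix_big_images_lemma_1} identifies the unique maximal proper normal subgroup of $G_1$ and exhibits its quotient as ${\rm PSL}_2(\FF)$, where $\FF$ denotes the residue field of $\LL_\f$; for $p \geq 7$ this quotient is simple non-abelian and hence non-solvable. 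All hypotheses of Lemma~\ref{appendix_big_images_lemma_2} are then satisfied, yielding $H = G_1 \times G_2$, as required.

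The "big image" upgrade invoked above is precisely the subject of the works cited in the introduction to this appendix. At the level of a single classical specialization $f$ (so with $\LL_\f$ replaced by $\cO$), it is a classical consequence of Serre's lifting lemma for closed subgroups of ${\rm SL}_2(\cO)$ whose mod-$\varpi$ image contains ${\rm SL}_2(\FF_p)$, which is valid in our range $p \geq 7$. For the $\LL_\f$-adic statement one invokes the big-image theorems for non-CM Hida families due to Hida, Lang, and Conti--Lang--Medvedovsky: under residual fullness \ref{item_fullness} combined with the standing non-CM assumption on $\f$, these give $\rho_\f(\mathscr{G}) = {\rm SL}_2(\LL_\f)$, completing the argument.
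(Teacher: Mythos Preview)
Your overall strategy via Goursat's lemma (Lemma~\ref{appendix_big_images_lemma_2}) matches the paper's approach exactly, and your verification of the solvability of $G_2$ is correct. However, your claimed ``upgrade'' $\rho_\f(\mathscr{G}) = {\rm SL}_2(\LL_\f)$ (and similarly $\rho_f(\mathscr{G}) = {\rm SL}_2(\cO)$) is too strong and is not what the big-image theorems you cite actually establish. In general the image lies in ${\rm SL}_2$ of a proper subring of the coefficient ring; Serre's lifting lemma, for instance, only yields ${\rm SL}_2(\ZZ_p) \subset \rho_f(\mathscr{G})$ from residual fullness over $\FF_p$, not equality with ${\rm SL}_2(\cO)$ when $\cO \supsetneq \ZZ_p$. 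The Hida-family analogue is similarly constrained.

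The paper avoids this overclaim by invoking \cite[Proposition~6.7]{ContiLangMedvedovsky}, which gives $G_1 = {\rm SL}_2(A)$ for \emph{some} complete local ring $A$ with finite residue field: a finite $\ZZ_p$-algebra in the single-form case, and the closed subring $A_0(\rho_\f) \subset \LL_\f$ of \cite[\S1]{ContiLangMedvedovsky} in the family case. This weaker conclusion is all that Lemma~\ref{appendix_big_images_lemma_1} requires. In the family case one must additionally verify that $A_0(\rho_\f)$ is itself complete local with finite residue field---a point the paper handles separately in a footnote, since it is not entirely formal. Once you replace your false equality with this correct input, your argument goes through unchanged.
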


\begin{proof}
We will verify the first assertion using Lemma~\ref{appendix_big_images_lemma_2} with the choices $H:=\rho_f\times \rho_{\theta(\psi)}(\mathscr{G})$, $G_1:=\rho_f(\mathscr{G})$, and $G_2:=\rho_{\theta(\psi)}(\mathscr{G})$. Once we check that the conditions of Lemma~\ref{appendix_big_images_lemma_2} are satisfied with these choices, our proposition follows at once from this lemma. 

Indeed, $H$ is clearly a subgroup of $G_1\times G_2$ and the natural projections ${\pr}_i: H\to G_i$ ($i=1,2$) are surjective. Moreover, since $\rho_{\theta(\psi)}(G_{K})$ is abelian, it follows that $G_2$ is solvable. %Under our running hypotheses that \ref{item_fullness} holds and $p\geq 7$, it follows that the regularity hypothesis of \cite{ContiLangMedvedovsky} is verified (owing to the existence of  $$\left(\begin{array}{cc}    x &  0\\    0 & x^{-1}\end{array}\right)\in {\rm SL}_2(\FF_p)$$ with $x^2\neq \pm 1$). 
It follows from \cite[Proposition 6.7]{ContiLangMedvedovsky} that $G_1={\rm SL_2}(A)$ for some finite $\ZZ_p$-algebra $A$ (in particular, $A$ is a complete local ring). We can now invoke Lemma~\ref{appendix_big_images_lemma_1} to see that $G_1$ too verifies the conditions required in Lemma~\ref{appendix_big_images_lemma_2}, and thereby concludes the proof of our first assertion.

The proof of the second part proceeds in an identical manner, using the fact that the local ring $A_0=A_0(\rho)$ given as in \cite[\S1]{ContiLangMedvedovsky} with $A=\LL_\f$ and $\rho=\rho_\f$ is a complete local ring with finite residue field\footnote{The following argument was supplied to us by Jackie Lang (and it is in part due to Anna Medvedovsky). Lacking a specific reference, we record the proof of this claim here. We thank Jackie Lang for explaining this argument to us and allowing us to reproduce it here. 

Recall that $A_0(\rho)$ is a closed subring of $\LL_\f$, which is local and pro-$p$.  Any closed subring of a pro-$p$ ring is also pro-$p$, and hence complete (in fact more generally, any closed subgroup of a profinite group is also profinite). This shows that $A_0(\rho)$ is pro-$p$ (and therefore, complete). We next explain why $A_0(\rho)$ is local. Let us put $\frak m_0 := A_0(\rho) \cap \frak m$.  Note then that $k_0 := A_0(\rho)/\frak m_0$ injects into $k$ and is therefore a field. This in turn tells us that $\frak m_0$ is a maximal of $A_0(\rho)$ and it follows that $A_0(\rho)$ is a $W(k_0)$-algebra.  To complete the proof that $A_0(\rho)$ is local, we need to verify that every element in $A_0(\rho) \setminus \frak m_0$ is invertible in $A_0(\rho)$. Indeed, if $x\in \frak m_0$, then $1/(1+x)$ is a power series in $x$, hence also in $A_0(\rho)$ since $A_0(\rho)$ is closed. Thence, $1 + \frak m_0 \subset A_0(\rho)^\times$.  But every element $a \in A_0(\rho)\setminus \frak m_0$ factors as $a=um$ where $u\in W(k_0)$ (the Teichm\"uller representative) and $m\in 1+\frak m_0$.  This concludes the proof that  $A_0(\rho) \setminus \frak m_0 \subset A_0(\rho)^\times$.}. 
\end{proof}

Recall that $\varepsilon_\psi$ is the nebentype character for $\theta(\psi)$. It is a Dirichlet character of conductor dividing $|D_K|\mathbf{N}\ff$; see Definition~\ref{define_CM_nebentype} for its explicit description.

\begin{theorem}
\label{appendix_big_images_subsec_2_thm_3} Let $\f$ be a non-CM Hida family with trivial nebentype and suppose $\psi$ is a crystalline Hecke character as above. Suppose that $p\geq 7$ and the condition $\ref{item_fullness}$ holds true. %We assume that the following condition holds true: If $\varepsilon_\psi\neq\epsilon_K$, then there exists an integer $u$ such that $\epsilon_K(u)=1$ and $v_p(\varepsilon_\psi(u)-1)=0$.
\item[i)] Let $f_\alpha$ be a crystalline specialization of $\f$, which arises as the $p$-stabilization of a cuspidal eigen-newform $f$. Then the condition \ref{item_BI_fg} is satisfied with $g=\theta(\psi)$.
\item[ii)] The condition \ref{item_BI_ffg} holds true with $g=\theta(\psi)$. %In particular,  the condition \ref{item_BI_ffpsivaries} is also satisfied.
\end{theorem}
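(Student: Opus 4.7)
The strategy is to construct a Galois element $\tau\in\mathscr{G}=G_{K'(\mu_{p^\infty})}$ whose action on $T_{f,g}$ (respectively $T_{\f,g}$) produces the required quotient structure. The two main ingredients are Proposition~\ref{appendix_big_images_subsec_2_prop_1}, asserting that $\rho_?\times\rho_{\theta(\psi)}$ has image on $\mathscr{G}$ equal to the full direct product $\rho_?(\mathscr{G})\times\rho_{\theta(\psi)}(\mathscr{G})$ (with $?\in\{f,\f\}$ in parts (i) and (ii) respectively), and Lemma~\ref{lemma_fullness_++}, which supplies ${\rm SL}_2(\FF_p)\subset \overline{\rho}_\f(\mathscr{G})$. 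The former decouples the two $\GL_2$-factors on $\mathscr{G}$, while the latter furnishes abundant choices on the first factor.

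One would first restrict to $\tau\in\mathscr{G}\cap G_K$. Frobenius reciprocity together with $\rho_{\theta(\psi)}={\rm Ind}_{K/\QQ}\widehat{\psi}^{-1}$ yields a $G_K$-equivariant decomposition
\[
T_{f,g}\big|_{G_K}\;\simeq\;T_{f,\psi}\;\oplus\;T_{f,\psi^c},
\]
which reduces the eigenvalue analysis on the tensor product to that on two rank-$2$ summands. Letting $a,a'$ denote the eigenvalues of $\rho_?(\tau)$ (which lies in ${\rm SL}_2$, since the nebentype of $\f$ is trivial and $\chi_\cyc$ vanishes on $\mathscr{G}$) and setting $\alpha=\widehat{\psi}^{-1}(\tau)$, $\beta=\widehat{\psi^c}^{-1}(\tau)$, the eigenvalues of $\tau$ on $T_{f,g}$ become $\{a\alpha,\,a'\alpha,\,a\beta,\,a'\beta\}$. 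Proposition~\ref{appendix_big_images_subsec_2_prop_1} together with Lemma~\ref{lemma_fullness_++} allow $\rho_?(\tau)$ to be prescribed independently of $\rho_{\theta(\psi)}(\tau)$ so as to contain, say, $\alpha^{-1}$ among its eigenvalues, producing an eigenvalue equal to $1$ exactly in the target ring.

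Verification that the remaining three eigenvalues satisfy $\lambda-1\in\cO^\times$ (respectively $\LL_\f^\times$) reduces to the non-triviality modulo the maximal ideal of the characters $\widehat{\psi^2}$, $\widehat{\psi/\psi^c}$, and $\widehat{\psi\cdot\psi^c}$ evaluated at the chosen $\tau$. The first two are non-trivial in view of the running hypothesis $\chi\neq\chi^c$ (so that $\psi\neq\psi^c$) and the non-trivial infinity type of $\psi$. The main obstacle is the third character $\widehat{\psi\cdot\psi^c}$, which may become trivial on $\mathscr{G}\cap G_K$; to handle this contingency, one picks $\tau\in\mathscr{G}\setminus G_K$ instead, in which case $\rho_{\theta(\psi)}(\tau)$ is antidiagonal with eigenvalues $\pm\sqrt{xy}$ where $xy=\widehat{\psi}^{-1}(\tau c)\widehat{\psi}^{-1}(c\tau)$, and a parallel arithmetic applies. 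For part (ii), the $\LL_\f$-adic prescription of the eigenvalues additionally requires a measure of openness of $\rho_\f(\mathscr{G})$ inside ${\rm SL}_2(\LL_\f)$, which follows from the residual fullness in Lemma~\ref{lemma_fullness_++} via standard $p$-adic Lie-theoretic lifting arguments, thereby completing the construction of $\tau$ and verifying the two hypotheses \ref{item_BI_fg} and \ref{item_BI_ffg}.
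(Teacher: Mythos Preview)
Your approach differs substantially from the paper's and contains a real gap.  The paper does \emph{not} try to match a semisimple eigenvalue of $\rho_?(\tau)$ against $\widehat\psi(\tau)$; instead it chooses the unipotent element $M_1=\begin{pmatrix}1&1\\0&1\end{pmatrix}$ on the $\f$-side and, on the $\theta(\psi)$-side, an element $M_2=\begin{pmatrix}0&x\\x^{-1}&0\end{pmatrix}$ coming from any $\sigma\in\mathscr{G}\setminus G_{K(\mu_{p^\infty})}$ (such $\sigma$ exist because $K'(\mu_{p^\infty})$ is linearly disjoint from $K(\mu_{p^\infty})$, and then $\det\rho_{\theta(\psi)}(\sigma)=\epsilon_K(\sigma)=-1$ forces the stated shape).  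Since $M_2$ has eigenvalues $\pm1$, the tensor $\mathfrak{t}(M_1,M_2)$ is conjugate to a block-diagonal matrix with Jordan blocks $\begin{pmatrix}1&1\\0&1\end{pmatrix}$ and $\begin{pmatrix}-1&-1\\0&-1\end{pmatrix}$, so $(\tau-1)$ has image $\cO e_1\oplus\cO e_3\oplus\cO e_4$ (using $2\in\cO^\times$).  The same argument works verbatim over $\LL_\f$, because the unipotent $M_1$ lies in $\SL_2(A_0)$ for \emph{any} subring $A_0$, and the paper only needs (via Conti--Lang--Medvedovsky) that $\rho_\f(\mathscr{G})=\SL_2(A_0)$ for some complete local $A_0$.

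The gap in your argument is precisely the step ``prescribe $\rho_?(\tau)$ so as to contain $\alpha^{-1}$ among its eigenvalues.''  Proposition~\ref{appendix_big_images_subsec_2_prop_1} gives the direct-product structure $\rho_?(\mathscr{G})\times\rho_{\theta(\psi)}(\mathscr{G})$, but $\rho_?(\mathscr{G})$ equals $\SL_2(A_0)$ for a subring $A_0$ that need not contain the values of $\widehat\psi$; an element of $\SL_2(A_0)$ with eigenvalues $\alpha^{\pm1}$ exists only when $\alpha+\alpha^{-1}\in A_0$, and nothing in the hypotheses guarantees this.  Your appeal to ``openness of $\rho_\f(\mathscr{G})$ inside $\SL_2(\LL_\f)$ via $p$-adic Lie-theoretic lifting'' is not justified: residual fullness in Lemma~\ref{lemma_fullness_++} gives $\SL_2(\FF_p)\subset\overline{\rho}_\f(\mathscr{G})$, but the lifted image is $\SL_2(A_0)$ with $A_0$ possibly much smaller than $\LL_\f$ (this is exactly the content of the Conti--Lang--Medvedovsky structure theorem the paper invokes).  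Even for part~(i) the analogous issue arises with the finite $\ZZ_p$-algebra $A$.  Your fallback to $\tau\in\mathscr{G}\setminus G_K$ is closer to what the paper does, but you still pair it with a semisimple $M_1$ and leave the ``parallel arithmetic'' unspecified; the point of the paper's unipotent choice is that it removes any dependence on $\psi$, on the subring $A_0$, and on residual non-triviality of auxiliary characters altogether.
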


\begin{proof}
Since the proofs of both parts are very similar, we shall only provide details for the proof of (i). Let us denote by $\frak{t}$ the natural map
$$\GL_2(\cO)\times \GL_2(\cO) \cong \GL(R_f^*)\times \GL(R_{\theta(\psi)}^*)\stackrel{\mathfrak{t}}{\lra} \GL(T_{f,\theta(\psi)})\cong \GL_4(\cO)\,.$$

We shall prove that there exist $M_1\in \rho_f(\mathscr{G})\subset \GL(R_f^*)$ and $M_2\in \rho_{\theta(\psi)}(\mathscr{G})\subset \GL(R_{\theta(\psi)}^*)$ such that the minimal polynomial of $\frak{t}(M_1,M_2)$ equals $(X-1)^2(X+1)^2$. The proof of our theorem then follows from Proposition~\ref{appendix_big_images_subsec_2_prop_1}.

As we have explained in the proof of Proposition~\ref{appendix_big_images_subsec_2_prop_1}, the group $\rho_f(\mathscr{G})$ contains $M_1:=\left(\begin{array}{cc}
    1 &1  \\
    0 & 1
\end{array}\right)$. Moreover, it follows from the explicit description \eqref{eqn_explicit_induced_rep} of $\rho_{\theta(\psi)}$ that the group $\rho_{\theta(\psi)}(\mathscr{G})$ contains an element $M_2$ of the form $\left(\begin{array}{cc}
    0 & x  \\
    x^{-1} & 0
\end{array}\right)$
for some $x\in \cO^\times$. Indeed, for any $\sigma\in \mathscr{G}$, we have 
$$\det\rho_{\theta(\psi)}(\sigma)=\eta_f(\sigma)\epsilon_K(\sigma)=\epsilon_K(\sigma),$$ 
where the second equality follows from the definition of the group $\mathscr{G}$. In particular, if $\sigma\in \mathscr{G}\setminus G_{K(\mu_{p^\infty})}$, then $\det\rho_{\theta(\psi)}(\sigma)=-1$. (Recall our remark in the paragraph preceding the statement of Lemma~\ref{lemma_fullness_++} that $\mathscr{G}\setminus G_{K(\mu_{p^\infty})}$ is non-empty.) Given the description of the representation $\rho_{\theta(\psi)}$ in \eqref{eqn_explicit_induced_rep}, it follows that $\rho_{\theta(\psi)}(\sigma)$ has necessarily the required form for any $\sigma\in \mathscr{G}\setminus G_{K(\mu_{p^\infty})}$.

Note that $M_2$ is conjugate to $\left(\begin{array}{cc}
    1 & 0  \\
    0 & -1
\end{array}\right)$ and in turn, $\frak{t}(M_1,M_2)$ is conjugate to the upper triangular matrix 
$$\left(\begin{array}{cccc}
    1 & 1 & 0 & 0   \\
    0 & 1 & 0 & 0 \\
0 & 0 & -1 & -1\\
0 & 0 & 0 & -1
\end{array}\right)\,.$$
The proof of our theorem is now complete.
\end{proof}

%\begin{remark}
%\label{remark_conclusion_appendix_C}
%In view of Theorem~\ref{appendix_big_images_subsec_2_thm_3}(i) and Remark~\ref{rem_hypotau_central_char_ff}, we can replace the assumption \ref{item_BI_fpsi} in Theorem~\ref{thm_cyclo_main_inert_f}, the assumption \ref{item_BI_ffpsi} in Theorem~\ref{thm_cyclo_main_inert_ff},  the assumption \ref{item_BI_ffpsivaries} in Theorems~\ref{thm_3var_main_inert_ff}, \ref{thm_anticyc_main_inert_ff_definite} and \ref{thm_anticyc_main_inert_ff_indefinite} with the following two hypothesis: \index{Big image conditions! $(\tau_{f_{/K}\otimes \psi})$} \index{Big image conditions! $(\tau_{\f_{/K}\otimes \psi})$}
   % \begin{itemize}
      %  \item[\mylabel{item_BI_1}{{\bf BI)}}] $p\geq 7$ and the condition $\ref{item_fullness}$ holds true.
    %    \item[\mylabel{item_BI_2}{{\bf BI.2)}}] If $\varepsilon_\psi\neq\epsilon_K$, then there exists an integer $u$ such that $\epsilon_K(u)=1$ and $v_p(\varepsilon_\psi(u)-1)=0$.
    %\end{itemize}
 %\index{Big image conditions! $(\tau_{\f_{/K}\otimes \bbchi})$}
%\end{remark}
%%%%%%%%%%%
%%%%%%%%%%%

%-----------------------------------------------------------------------
% End of appC.tex
%-----------------------------------------------------------------------

\backmatter
\bibliographystyle{amsalpha}
\bibliography{references}

\include{index}
\printindex
\end{document}